\theoremstyle{plain}
\newtheorem{theorem}{Theorem}[section]
\newtheorem{lemma}[theorem]{Lemma}
\newtheorem{proposition}[theorem]{Proposition}
\newtheorem{corollary}[theorem]{Corollary}
\theoremstyle{definition}
\newtheorem{remark}[theorem]{Remark}
\newtheorem{definition}[theorem]{Definition}
\numberwithin{equation}{section}
\def\ds{\displaystyle}
\def\R{\mathbb{R}}
\def\N{\mathbb{N}}
\def\dist{\textup{dist}}
\def\osc{\textup{osc}}
\def\H{\mathcal{H}}
\def\E{\mathcal{E}}
\def\F{\mathcal{F}}
\def\M{\mathcal{M}}
\def\RT{\R^N\times[0,T]}
\def\QT{\R^N\times(0,T)}
\def\Jet{\mathcal{J}}
\def\PJet{\mathcal{P}}
\def\half{\frac{1}{2}}
\def\ov{\overline}
\def\oc{\ov{c}}
\def\uc{\underline{c}}
\def\Ins{\mathfrak{M}}
\def\Reg{\mathfrak{C}}
\newcommand{\regu}[1][]{{\ell #1,\beta}}
\def\Regm{\mathfrak{C}^{1,1}}
\def\la{\left\langle}
\def\ra{\right\rangle}
\newcommand{\SymN}{\mathbb M^{N\times N}_{\rm sym}}
\newcommand{\f}{\varphi}
\newcommand{\e}{\varepsilon}
\newcommand{\de}{\delta}
\newcommand{\cp}{\mathfrak{Cap}_p}
\newcommand{\inter}[1]{\mathring{#1}}
\newcommand{\pa}{\partial}
\newcommand{\medint}{-\kern -,375cm\int}
\newcommand{\medintinrigo}{-\kern -,315cm\int}
\newcommand{\kto}{\stackrel{\H}\to}
 \title[Nonlocal curvature flows]{Nonlocal curvature flows}
 \author[A. Chambolle]
 {Antonin Chambolle}
 \address[Antonin Chambolle]{CMAP, Ecole Polytechnique, CNRS, France}
 \email[A. Chambolle]{antonin.chambolle@cmap.polytechnique.fr}
 \author[M. Morini]
 {Massimiliano Morini}
 \address[Massimiliano Morini]{Dip.~di Matematica, Univ.~Parma, Italy}
 \email[M. Morini]{massimiliano.morini@unipr.it}
 \author[M. Ponsiglione]
 {Marcello Ponsiglione}
 \address[Marcello Ponsiglione]{Dip.~di Matematica,
 Univ.~Roma-I ``La Sapienza'', Roma, Italy}
 \email[M. Ponsiglione]{ponsigli@mat.uniroma1.it}
\begin{document}
\vskip .2truecm

\begin{abstract}
\small{
This paper aims at building a unified framework to deal with a wide class of local and nonlocal translation-invariant geometric flows. 
First, we  introduce a class of generalized curvatures, and  prove the existence and uniqueness for the level set formulation of  the corresponding geometric flows. 

We then introduce a class of generalized perimeters, whose first variation is an admissible generalized curvature.  Within this class, we implement a minimizing movements scheme and we prove that it approximates the viscosity solution of the corresponding level set PDE.

We also describe several examples and applications.  Besides recovering and presenting in a unified way  existence, uniqueness, and approximation results for several geometric motions already studied and scattered in the literature, the theory developed in this paper allows us to  establish also new  results.


\vskip .3truecm \noindent Keywords: Geometric evolution equations, Minimizing movements, Viscosity solutions.
\vskip.1truecm \noindent 2000 Mathematics Subject Classification: 
53C44, 	49M25, 35D40. 
}
\end{abstract}

\maketitle
\tableofcontents
 \bibliographystyle{plain}

\section{Introduction}
In this paper we present a unified approach to deal  with a large class of possibly nonlocal
geometric flows; i.e., evolutions of sets $t\mapsto E(t)$ governed by a law of the form   
\begin{equation}
\label{oee} 
V(x,t) = -\kappa(x,E(t)),
\end{equation}
where $V(x,t)$ stands for the (outer) normal velocity of the boundary $\pa E(t)$ at $x$ and the function $\kappa(\cdot, E)$ will be referred to as a  {\em generalized curvature} 
of $\pa E$, in analogy with the classical theory.  

If the function $\kappa$ depends only on how $\pa E$ looks   around $x$, then the flow is local in nature. This is of course  the case of the classical {\it mean curvature flow}, where $\kappa(\cdot, E)$ is nothing but the mean curvature of $\partial E$, i.e.  the first variation of the standard perimeter functional at $E$. On the other hand, for some of the relevant flows that have been intensively studied in recent years the generalized curvature $\kappa$ is truly nonlocal and depends on the global shape of the evolving set $E(t)$ itself. It happens for instance for  {\em fractional mean curvature flows}, where the corresponding curvatures are  defined as the first variation of the so-called {\em fractional perimeters}, and hence represent the natural nonlocal counterparts of the classical mean curvature in the fractional framework (\cite{CRS}, \cite{CS},  \cite{I}, \cite{V}).

As already made clear by  the aforementioned examples,  a relevant class of curvatures is given by those that can be seen as the first variation of  some  {\em generalized perimeters}; we refer to such a class  as {\it variational curvatures}. 
It is important to observe that when $\kappa$ is variational, then \eqref{oee}  can be interpreted as  the gradient flow of the corresponding perimeter, with respect to a suitable $L^2$ Riemannian structure. In the case of the classical mean curvature flow, this observation underpins
the {\em minimizing movements} algorithm implemented by Almgren-Taylor-Wang  in their pioneering work \cite{ATW} (see also \cite{LS}). 

The strong formulation of the motion \eqref{oee}, which requires smoothness,  faces the possible formation of singularities in finite time. Thus, the evolution  can only be defined locally in time, which is clearly unsatisfactory from the applications point of view. On the other hand,  Brakke~\cite{Br}
proposed a weak  formulation  for motion by mean curvature that  resulted in deep regularity results but had the disadvantage of producing a lack of uniqueness. These uniqueness issues are often overcome by the more recent  notion of {\em generalized motion} that is associated to the so-called {\em level set approach}.  Such an approach is based on representing the evolving set as  the zero super-level set of a function $u(x,t)$,  which is defined for all times as  viscosity solution to the  (degenerate) parabolic partial differential equation
\begin{equation}\label{lsfi}
u_t(x,t) + |D u(x,t)| \kappa(x, \{y: \, u(y,t) > u(x,t)\} ) = 0.
\end{equation}
 The level-set method  was proposed in \cite{OS},  analytically validated in  \cite{EvansSpruckI} for the motion by mean curvature and in \cite{CGG} for more general local motions. In the case of the classical mean curvature (and of several different local curvatures) viscosity solutions to \eqref{lsfi} with a prescribed initial datum are unique. Note also that \eqref{lsfi} prescribes that all the super-level sets of $u$ evolve according to \eqref{oee}.

The paper is divided into two parts: The main focus of Part 1  is to develop a general level set approach for the  geometric motions \eqref{oee}, while Part 2  is aimed at  implementing  a general minimizing movements scheme \`a la  Almgren-Taylor-Wang for a large class of variational curvature motions,  and at exploring the connections between the two approaches. In carrying out the program of Part 1 and Part 2, we recover and present in a unified way  existence, uniqueness, and approximation results for several geometric motions already studied and scattered in the literature, but we also establish new  results (see the end of this Introduction).

We now describe the content of  the paper  in more details. In Part 1 we  introduce the class of generalized curvatures we deal with and then
we set up the viscosity theory for the corresponding generalized level set equation \eqref{lsfi}. 

To be more specific, a generalized curvature  $\kappa$ is a function defined on the admissible pairs $(x, E)$, where $E$ is a  $C^2$-set\footnote{As is often
the case in viscosity solution approaches, we may in fact assume that the
curvature is \textit{a priori} defined only for smoother sets, and will later on consider also stronger regularities.} with compact boundary and $x\in \pa E$,  and such that $\kappa(x, \cdot)$  is monotone non-increasing with respect to the  inclusion between sets touching at $x$ and  continuous  with respect to $C^2$-convergence of sets.  We also assume the translation invariance, i.e., $\kappa(x, E)=\kappa(x+y, E+y)$ for all admissble pairs $(x,E)$ and for all $y\in \R^N$. 

In order to fully exploit the second order viscosity solutions formalism, we need to extend the definition of  the right-hand side of \eqref{lsfi} to non-smooth sets. This 
is achieved by considering suitable lower and upper semi-continuous envelopes $\kappa_*$ and $\kappa^*$ of $\kappa$ that are then employed to define viscosity subsolutions and supersolutions, respectively. The domain of definition of the relaxed curvatures $\kappa_*$ and $\kappa^*$
 is made up of the elements of  the form $(x, p, X, E)$, where $E$ is now any measurable set, $x\in \pa E$, and $(p, X)$ belongs to the second order super-jet (as far as $\kappa_*$ is concerned) or sub-jet (as far as $\kappa^*$ is concerned) of $E$ at $x$. By construction, $\kappa_*$ and $\kappa^*$ turn out to be lower and upper semicontinuous, respectively, with respect to the Hausdorff convergence of sets and a suitable notion of uniform convergence of super-  and sub-jets (see Definition \ref{defuniconv}). 

Remarkably, the above semicontinuity property is weaker than the semicontinuity with respect to $L^1$-convergence, which is one of the main hypotheses in the approach of \cite{S}. This significantly increases  the class of admissible curvatures we can treat. For instance,  the aforementioned fractional curvatures  are not semicontinuous with respect to the $L^1$-convergence and the corresponding $L^1$-relaxation would be useless (equal to $-\infty$ for every closed set). Let us also notice that $\kappa_*$ and $\kappa^*$ are defined only on ``geometrically meaningful'' objects and this represents a further difference from \cite{S}, while the relaxation procedure used to define the semicontinuous envelopes of $\kappa$  is reminiscent of the approach of Cardaliaguet and co-authors (see \cite{C-NLI, C-NLII, CL07, CL08, CardaliaguetRouy}). 
Section~\ref{sec:2} is entirely devoted to  setting up the viscosity formalism. The latter task being accomplished,  a general 
 existence theorem for the level set formulation of \eqref{oee} and for the class  of generalized translation invariant curvatures we specified before is easily obtained through an application of the standard Perron method (see Theorem~\ref{thexun}).  

The drawback of such a generality is that the classical 
  strategy to prove the comparison principle and, in turn, the uniqueness of viscosity solutions may fail. Uniqueness is the main focus of Section~\ref{sec:uniqueness}, where we provide two different treatments, distinguishing between {\em first order} and {\em second order} geometric flows. 
  
  Roughly speaking, we say that a geometric flow is of first order if the envelopes $\kappa_*$ and $\kappa^*$ do not depend on the 
  second derivative variables (see condition (FO) at the beginning of Subsection~\ref{subsec:1storder}). Fractional mean curvature motions and the {\em shape flow generated by the $p$-capacity} in $\R^N$ are relevant examples of first order geometric motions, see below.  Uniqueness for such motions follows from the Comparison Principle provided by Theorem~\ref{th:CP1st}. Let us mention that the main technical tool used in the first order uniqueness theory is represented by the well-known Ilmanen Interposition Lemma (see \cite{IL,C-NLI,C-NLII}).

The uniqueness theory for second order flows is harder and developed in Subsection~\ref{subsec:2ndorder}.  In order to understand the source of difficulty, notice that semicontinuity properties  of $\kappa_*$ and $\kappa^*$ are not sufficient to conclude that the subsolution and the supersolution inequalities extend to elements of the {\em closure} of the {\em parabolic} super- and sub-jets, respectively. This means that the usual machinery to establish uniqueness for second order equations, which is   based on the celebrated Ishii's Lemma (see \cite{CIL}),  cannot be applied.  Indeed,   Ishii's Lemma states that if $u$ is upper semicontinuous, $v$ is lower semicontinuous, and $u-v$ attains a (local) maximum at $(x, t)$, then there exists at least one element belonging to both the {\em closure} of the parabolic super-jet of $u$ and  the {\em closure} of the parabolic sub-jet of $v$. Such a separating element is obtained through a limiting procedure, which involves  regularizations via inf- and sup-convolutions, the Alexandrov theorem on the a.e. second order differentiability of semi-convex functions,  and a perturbation argument due to Jensen (\cite[Lemma~A.3]{CIL}). Since we lack the proper semicontinuity properties, we need to avoid ``passing to the limit''.
Our proof of  the second order Comparison Principle (see Theorem~\ref{th:CP2nd}) still uses all the aforementioned tools but combines them 
with some new insight, allowing us to avoid the limiting procedure. The little price we have to pay is a reinforced continuity assumption on $\kappa$ (see beginning of Subsection~\ref{subsec:2ndorder}), which is nevertheless satisfied by all the relevant examples we have in mind.

In Part 2 of the paper we take on a  variational approach to geometric flows based on the minimizing movements. To this aim, we introduce a class of functionals referred to as {\it generalized perimeters}. More precisely, denoting by $\Ins$ the class 
of Lebesgue-measurable sets, we call a generalized perimeter any {\em translation invariant} set function $J:\Ins \to [0,+\infty]$, which is insensitive  to modifications on negligible sets,   finite on  $C^2$-sets with compact boundary,   lower semicontinuous with respect to $L^1_{loc}$-convergence, and satisfying the following {\em submodularity condition}:
For any masurable sets $E, \, F\subset \R^N$,
\begin{equation}\label{eq:submoi}
J(E\cup F)\,+\, J(E\cap F)\ \le\ J(E)\,+\,J(F)\ .
\end{equation}
It turns out that the latter condition is  a convexity condition in the following sense: Extend  $J$  to   $L^1_{loc}(\R^N)$ by enforcing the {\em generalized coarea formula} \eqref{visintin}(see  \cite{V}). Then,  the submodularity property of $J$ is equivalent to the convexity of the extended functional (see \cite{ChamGiacoLuss}). 
A first important consequence of  submodularity is that if $J$ admits a first variation $\kappa$, then such a curvature is monotone. More in general, if $J$ is smooth enough, then its first variation is an admissible generalized curvature. 

As mentioned before, the main achievement of Part 2 is the implementation of a {\em generalized Almgren-Taylor-Wang minimizing movements scheme} to approximate  geometric motions associated with variational generalized curvatures. We recall that, given an initial set $E_0$  and a time step $h>0$,  such a scheme provides a discrete-in-time evolution obtained by  solving iteratively suitable incremental minimum problems. The energy to be minimized at each discrete time  is the sum of the generalized perimeter and of a suitable dissipation that penalizes the $L^2$-distance from the boundary of the set obtained at the previous step. It turns out, once again due to submodularity,  that the discrete evolutions satisfy the comparison principle.    Adopting the point of view introduced in \cite{Chambolle}, we combine the minimizing movements scheme with the level set framework. More precisely, given an initial function $u_0$, we let all its super-level sets evolve according to our discrete scheme, selecting at each time step the minimal solution (the maximal one would work as well). In light of the discrete comparison principle, the evolving sets are themselves the super-levels sets of  a discrete-in-time  evolving function $u_h(\cdot, t)$. 

We then study the limiting behavior of $u_h$ as the time step $h\searrow 0$ and in one of the main results of this paper  we establish the following general {\em consistency principle}: For all variational generalized curvatures, the discrete evolutions $u_h$ provided by the minimizing movements scheme converge 
(up to subsequences) to a viscosity solution of the level set equation. In particular, under the additional assumptions that guarantee uniqueness, the whole sequence converges (to the unique viscosity solution). Let us mention that in the case of the mean curvature motion  the consistency between the level set and the minimizing movements approach has been established in \cite{Chambolle}
(see also~\cite{EGK}). From the technical point of view, the convergence analysis combines several ingredients, among which we mention some careful estimates on the speed of propagation of the support of the initial function $u_0$ and a subgradient inequality involving the generalized curvature, which is crucial in obtaining the limiting sub(super)-solution property. The subgradient inequality is a consequence of the analysis developed in Section~\ref{secfirstvar} (see \eqref{intW}). 
Note that since we don't have a regularity theory for the minimizers of the incremental problems, all our argument are necessarily  variational in nature.

A relevant  consequence of our general consistency principle is that the generalized perimeter  of the super-level sets for which no fattening occurs during the evolution is non-increasing in time (note that the no fattening condition is satisfied by almost all super-level sets).  Moreover, we show that a suitable inner regularization of the generalized perimeter $J$, defined on open sets $A$ as the $\inf- \liminf$ of $J$ along sequences of open sets approximating $A$ from the interior (see Definition \ref{relJ}), is always non-increasing in time (see Subsection~\ref{subsec:pd}).

We conclude this introduction by highlighting  some relevant examples and applications of our general theory that are presented in the paper (see Section~\ref{sec:examples}). Such examples are by no means exhaustive and  serve indicating  the scope of our theory:
 
 --\,{\it Local motions:} The theory of local generalized  mean curvature motions established in \cite{CGG} fits into our theory as  particular case.

--\,{\it Fractional mean curvatures motions:} As mentioned before, these are the first order  geometric flows associated with  the so-called fractional perimeters. Existence and uniqueness of viscosity solutions to the corresponding level set equation were already established in \cite{I} and are recovered here. On the other hand, the convergence of the minimizing movements scheme provided by our theory is  new for these motions and furnishes an 
 approximation algorithm that is alternative to the threshold-dynamics-based one studied in \cite{CS}.
 
 --\,{\it Capacity flows:} Given $1<p<N$, consider the generalized perimeter that  coincides with the $p$-capacity in $\R^N$ on bounded sets of class $C^2$. It can be shown that the associated curvature $\kappa(x,E)$ is given by $|Dw_E(x)|^p$, where $w_E$ denotes the {\em capacitary potential} of $E$. Thus, the associated geometric motion is somewhat related to the Hele-Show type flows studied in \cite{CL07, CL08, CardaliaguetRouy} (see also \cite{C-NLI, C-NLII}).  Our general results yield  existence, uniqueness, and approximation via minimizing movements also for this shape flow, which turns out to be of first order according to our terminolgy (see Subsection~\ref{subsec:capacity}).

 --\,{\it Second order nonlocal motions}: Our theory includes all the  generalized curvatures treated in \cite{S}, that are in addition translation invariant. As already mentioned, compared to our approach the theory of \cite{S} requires stronger continuity assumptions on the Hamiltonians and more restrictive growth conditions that rule out singular behavior  of generalized curvatures along shrinking balls. 
 As a further example, which  is not covered by the theory developed in  \cite{S} while fitting into ours, we mention here the generalized perimeter
  introduced in \cite{BKLMP} in the framework of two-phase Image Segmentation. We will refer to it  as  {\em regularized pre-Minkowski content} of a set since it consists  in a suitable 
  regularization  of the Lebesgue measure of the $\rho$-neighborhood of the essential boundary, for some fixed $\rho>0$.  The corresponding geometric motion was studied in \cite{CMP0}. In the latter work we computed the associated generalized curvature and  proved  convergence of the minimizing movements scheme to a viscosity solution of the level set equation, but we were unable to establish uniqueness. The theory of the present paper allows us to recover the existence and approximation   results of \cite{CMP0} and, in addition, yields the  uniqueness of the geometric motion.

A final remark regarding the translation invariance and the continuity assumptions on $\kappa$ is in order. Concerning the former, we believe that it  
can be removed at the expense of  some additional technical effort  but within the main theoretical framework introduced in this paper. 
On the other hand, the  continuity assumptions that guarantee the stability property and  the comparison principle are of a more subtle and essential nature. They exclude from our theory some relevant irregular perimeters, as crystalline perimeters (that also would require a different specific viscosity level set formulation).
It is not clear at the present which is (if any) the weakest continuity assumption on $\kappa$ yielding  the uniqueness of the geometric flow.

\newpage
\part{Nonlocal Curvature Flows}\label{part:1}
In this part we introduce the class of generalized curvatures we deal with. Then, we introduce the notion of viscosity solutions for the level set equation of the geometric flow, 
and we prove the existence and uniqueness of the solution. 
\section{Viscosity solutions: definition, properties, existence}\label{sec:2}

We begin this section by introducing the class of generalized curvatures we will deal with. 
\subsection{Axioms of a nonlocal curvature}\label{sec:curvature}
Let $\Reg$ be the class of  subsets of $\R^N$, which can be obtained as the closure of an open set with  compact $C^{\regu}$ boundary, and  let $\Ins$ be the class of all measurable subsets of $\R^N$. Throughout the paper
$\ell\ge 2$ and $\beta\in [0,1]$ will be fixed, the reader may simply
assume $\ell=2$, $\beta=0$.

In this part, we are given for every set $E\in\Reg$  a function $\kappa(x,E)\in\R$ defined at each point $x\in\partial E$, and referred to as the ``curvature'' of the set $E$ at $x$. This curvature will satisfy the following axioms:
\begin{itemize}
\item[A)] Monotonicity: If 
$E, F\in\Reg$ with $E\subseteq F$, and if $x\in \partial F\cap \partial E$,
then $\kappa(x,F)\le\kappa(x,E)$;
\item[B)] Translational invariance: for any $E\in\Reg$, $x\in\partial E$,
$y\in\R^N$, $\kappa(x,E)=\kappa(x+y,E+y)$;
\item[C)] Continuity:
If $E_n\to E$ in $\Reg$ and $x_n\in \partial E_n \to x\in \partial E$, then $\kappa(x_n,E_n)\to \kappa(x,E)$.
\end{itemize}
Axioms A), B) and C) are enough to prove the existence of a generalized solution of the geometric flow~\ref{oee}. 
For axiom C), we say that $E_n\to E$ in $\Reg$ if the boundaries converge
in the $C^{\regu}$ sense (meaning for instance
that they can be described locally
with a finite number of graphs of functions which converge in $C^{\regu}$).

By assumption C), for any $\rho>0$ we can define the quantities
\begin{equation}\label{defbarc0}
\oc(\rho) \ :=\ 
\max_{x\in\partial B_\rho} \max\{\kappa(x,B_\rho),-\kappa(x,\R^N\setminus B_\rho)\}\,,
\end{equation}
\begin{equation}\label{defbarc}
\uc(\rho) \ :=\ 
\min_{x\in\partial B_\rho} \min\{\kappa(x,B_\rho),-\kappa(x,\R^N\setminus B_\rho)\}\,,
\end{equation}
which are continuous functions of $\rho>0$.
Assumption D) below will guarantee that the curvature flow starting from
a bounded set remains bounded at all times.
\begin{itemize}
\item[D)] Curvature of the balls: 
There exists $K>0$ such that
\begin{equation}\label{lwrbdkappa}
\uc(\rho)\ >\ -K \ >\ -\infty\,.
\end{equation}
\end{itemize}

Without assuming  D)
most of the results in this paper  remain true, except
that the flow starting from a given set { with compact boundary}  will be defined possibly
up to some time $T^*<+\infty$ where { its boundary } becomes unbounded
and the framework of this paper cannot be applied anymore. The
time $T^*$ can be estimated from $\uc(\rho)$. Observe that thanks
to the monotonicity axiom A), 
the functions $\rho\mapsto\oc(\rho)$
and $\rho\mapsto\uc(\rho)$ are nonincreasing.

\subsection{Viscosity solutions}
Here we introduce the level set formulation of the 
geometric evolution problem 
$V = -\kappa$, 
where $V$ represents the normal velocity of the boundary of the evolving sets $t \mapsto  E_t$, and 
we give a proper notion of viscosity solution. We refer to~\cite{GigaBook}
for a general introduction of this approach for geometric evolution problems.  
The level set approach consists in solving the following parabolic
Cauchy problem 
\begin{equation}
\label{levelsetf}
\begin{cases}
\ds \partial_t u(x,t) + |D u(x,t)| \kappa(x,  \{ y: \, u(y,t)\ge u(x,t)\}) \,=\,0 \\ 
 u(0,\cdot) = u_0.
\end{cases}
\end{equation}
Here and  in the following, $D$ and $D^2$ stand for the spatial gradient and the spatial Hessian matrix, respectively.
Notice that if the superlevel sets of $u$ are not smooth, the meaning of \eqref{levelsetf} is unclear. For this reason, it is necessary to use a definition
based on appropriate smooth test functions whose curvature of level sets are
well defined. The appropriate setting is of course the framework of
viscosity solutions.
%
Let us first introduce a class of test functions appropriate for this problem.

Following \cite{IS} {(see also \cite{CMP0})}, we introduce the family  $\F$ of functions {$f\in C^\infty([0,\infty))$}, such that $f(0) = f'(0)= f''(0)=0$, $f''(r) >0$ for all $r$ in a neighborhood of $0$, $f$ is constant in $[M, +\infty)$ for some $M>0$, and 
\begin{equation}\label{IS}
\lim_{\rho\to 0^+} f'(\rho)\, \overline c(\rho) = 0,
\end{equation}
where $\overline c(\rho)$ is the function introduced in \eqref{defbarc0}.
We refer to  \cite[p. 229]{IS} for the proof  that the family $\F$ is not empty.
Note that \eqref{IS} {(recall also \eqref{lwrbdkappa})} implies
\begin{equation}\label{ISvera}
\lim_{\rho\to 0^+} f'(\rho)\, \overline c(f^{-1}(\rho)) = 0,
\end{equation}
since $f^{-1}(\rho)>\rho$ for small values of $\rho$ and $\overline c$ is decreasing.

{We fix $T>0$, and look for geometric evolutions in the time interval $[0,T]$.
In the following, with a small abuse of language, we will say that a function $g:\R^N\times A\to \R$, with $A\subseteq [0, T]$, is constant outside a compact set  if
there exists a compact set $\mathcal K\subseteq \R^N$ such that $g(\cdot,t)$ is constant in $ (\R^N\setminus \mathcal K)$ for every $t\in  A$ (with the constant possibly depending on $t$).
}

\begin{definition}\label{defadmissible}
Let $\hat z=(\hat x,\hat t)\in  \R^N\times (0, T)$ and let $A\subseteq (0, T)$ be any open interval containing $\hat t$.   We will say that $\f\in C^{0}( \R^N\times \overline A)$ is
{\em admissible at the point $\hat z=(\hat x,\hat t)$} if it is { 
of class $C^2$ in a neighborhood of $\hat z$, 
if it is constant out of a compact set, 
  and,}
in case $D\f(\hat z)=0$, the following holds: 
there exists   $f\in\F$  and { $\omega\in  C^\infty([0,\infty))$ with
 $\omega'(0)=0$, $\omega(r)>0$ for $r\neq 0$}
such that
$$
|\f(x,t) - \f(\hat z) - \f_t(\hat z)(t-\hat t)|\le f(|x-\hat x|) + \omega(|t -\hat t|)
$$
for all $(x,t)$ in $\R^N\times A$.
\end{definition}

{
\begin{definition}\label{defviscoC2}
An upper semicontinuous function $u:\R^N\times [0,T]\to \R$ (i.e. $u\in USC(\R^N\times [0,T])$),
constant outside a compact set, is
a viscosity subsolution of the Cauchy problem \eqref{levelsetf} if $u(0,\cdot) \le u_0$, and for all 
$z:= (x,t) \in \R^N\times (0,T)$ and for all $C^\infty$  test functions $\f$
such that $\f$ is admissible at $z$ and $u-\f$ has a
maximum at $z$ (in the domain of definition of $\f$)  the following holds:
\begin{itemize}
\item[i)] If $D\f(z) = 0 $, then $\f_t(z) \le 0$;
\item[ii)]  If the level set $\{\f(\cdot,t) = \f(z)\}$ is noncritical, then 
$$\f_t(z)+ |D\f(z)| \, \kappa\left(x,\{y: \f(y,t)\ge \f(z)\}\right)
\le 0.
$$ 
\end{itemize}
A lower semicontinuous function $u$ (i.e. $u\in LSC(\R^N\times [0,T])$), constant outside a compact set,
is a viscosity supersolution of the Cauchy problem \eqref{levelsetf} if $u(0,\cdot) \ge u_0$, and for all $z\in\R^N\times (0,T)$ 
and for all $C^\infty$  test functions $\f$
such that $\f$ is admissible at $z$ and $u-\f$ has a
minimum at $z$ (in the domain of definition of $\f$)  the following holds:
\begin{itemize}
\item[i)] If $D\f(z) = 0 $, then $\f_t(z) \ge 0$;
\item[ii)] If the level set $\{\f(\cdot,t) = \f(z)\}$ is noncritical, then 
$$
\f_t(z)+|D\f(z)| \, \kappa\left(x,\{y: \f(y,t) > \f(z)\}\right)
\ge 0.
$$ 
\end{itemize}
Finally, a function $u$ is a viscosity solution of the Cauchy problem \eqref{levelsetf} if its upper semicontinuous envelope is a subsolution and 
 its lower semicontinuous envelope is a supersolution of~\eqref{levelsetf}.
 \end{definition}
 }

In this whole paper, we will use (with a small abuse of terminology)  the terms subsolutions and supersolutions (omitting the locution ``of the Cauchy problem \eqref{levelsetf}'') also for functions which do not 
satisfy the corresponding inequalities at time zero.   

While Definition~\ref{defviscoC2} is quite natural, it has the
drawback that the family of possible test functions is too restrictive
to be handy.
As usual in the viscosity theory, we will introduce suitable
lower and upper semicontinuous extensions of $\kappa$, in particular
to general measurable sets. This will allow to give definitions
equivalent to Definition~\ref{defviscoC2}, based on less smooth
test functions {as in Definition \ref{defadmissible} (See Definition \ref{defvisco})},  or on the notion of sub/superjets.

\subsection{Convergence of sets with uniform superjet}

\begin{definition}\label{jets}
Let $E\subseteq \R^N$, $x_0\in\partial E$, $p\in\R^N$, and  $X\in \SymN$. We say that $(p,X)$  is in the superjet $\Jet^{2,+}_E(x_0)$ of $E$ at $x_0$  if for every $\delta>0$ there exists a neighborhood $U_\delta$ of $x_0$ such that, for every $x\in E\cap U_\delta$   
\begin{equation}
 (x-x_0)\cdot  p  + \frac12 (X+\delta  I )  (x-x_0)\cdot (x-x_0)  \ge 0.
\end{equation}
Moreover, we say that $(p,X)$ is in the subjet $ \Jet^{2,-}_E(x_0)$ of $E$ at $x_0$  if
$(-p,-X)$  is in the superjet $\Jet^{2,+}_{\R^N \setminus E}(x_0)$ of $\R^N \setminus E$ at $x_0$. Finally, 
we say that $(p,X)$ is in the jet $\Jet^{2}_E(x_0)$ of $E$ at $x_0$  if
$(p,X)\in  \Jet^{2,+}_{E}(x_0) \cap \Jet^{2,-}_{E}(x_0)$.
\end{definition}
The   above definition of  superjet of sets is consistent  with the classical notion of  superjet of u.s.c. functions. Indeed, if  $u$ is an u.s.c. function, then  $(p,X)$ is in the superjet of $u$ at $x_0$ if and only if $(p,  X )$ is in the superjet of the superlevel set $\{u\ge u(x_0)\}$, according to Definition \ref{jets}.   
In the same way, the above definition of subjet is consistent  with the classical notion of  subjet of l.s.c. functions. 
\begin{remark}
\rm
It can be checked that  the condition $(p,X)\in \Jet^{2,+}_E(x_0)$ is equivalent to $(\lambda  p, \lambda  X+\mu\, p \otimes p)\in \Jet^{2,+}_E(x_0)$ for all $\lambda>0$ and for all $\mu\in \R$. Thus,  $(p,X)\in \Jet^{2,+}_E(x_0)$ if and only if $(\frac{p}{|p|}, \frac{1}{|p|}\pi_{p^\perp}  X \pi_{p^\perp})\in \Jet^{2,+}_E(x_0)$, where  $\pi_{p^\perp}$ denotes the projection operator on $p^\perp:= \{ v\in\R^N :  p\cdot  v = 0\}$. Note that if $E=\{u\ge u(x_0)\}$, with $u$ of class $C^2$ and $D u\neq 0$ on $\partial E$, then, setting $p:=D u(x_0)$ and $X:=D^2 u(x_0)$, $ \frac{p}{|p|}$ is the inner normal to 
$\{u\ge u(x_0)\}$ at $x_0$, while $\frac{1}{|p|}\pi_{p^\perp}  X \pi_{p^\perp}$ represents the the second fundamental form of $\partial \{u\ge u(x_0)\}$ (oriented with the external normal) at $x_0$.
\end{remark}

Let us  introduce the notion of uniform superjet. 
\begin{definition}\label{defuniconv0}
Let $E_n\subseteq \R^N$ and $x_0\in\partial E_n$. We say that the $(p_n,X_n)$'s are in the superjet $\Jet^{2,+}_{E_n}(x_0)$ uniformly, if 
for every positive $\delta >0$ there exists a neighborhood $U_\delta$ of $x_0$ (independent of $n$) such that, { for all $n\in N$,} 
\begin{equation}
  (x-x_0)\cdot p_n + \frac12 (X_n+\delta  I )  (x-x_0)\cdot (x-x_0)  \ge 0 \text{ for every }  x\in E_n \cap U_\delta.
\end{equation}
\end{definition}
{In the following, given a set $E\subset \R^N$, $E^c:=\R^N\setminus E$ denotes its complement.} 
We also recall that a sequence of closed sets $C_n$ converges to a closed set $C$ in the Hausdorff metric ($C_n\kto C$) if 
$$
\max \left\{ \sup_{x\in C_n} \text{dist}(x,C), \sup_{x\in C} \text{dist}(x,C_n)\right\}\to 0 \qquad \text{ as } n\to\infty.
$$

\begin{definition}\label{defuniconv}
We say that $(p_n,X_n,E_n)$ converge to $(p,X,E)$  with uniform superjet at $x_0$ if 
$\ov E_n\to \ov E$ in the Hausdorff sense,
the $(p_n,X_n)$'s are in the superjet $\Jet^{2,+}_{E_n}(x_0)$ uniformly and $(p_n,X_n)\to (p, X)$ as $n\to\infty$. 

{Moreover, we say that $(p_n,X_n,E_n)$ converge to $(p,X,E)$  with uniform subjet at $x_0$ if  $(-p_n,-X_n,E^c_n)$ converge to $(-p,-X,E^c)$  with uniform superjet.}
\end{definition}

\subsection{Semi continuous extensions of $\kappa$}

We now introduce two suitable lower and upper semicontinuous extensions of $\kappa$, which will be instrumental in developing the level set formulation of the geometric flow. This is reminiscent of the approach in~\cite{C-NLI,C-NLII} for
evolution of ``tubes'' by geometric motions
(see also~\cite{CL07,CL08,CardaliaguetRouy}).
For every $F\subseteq \R^N$ with compact boundary and $(p,X)\in \Jet^{2,+}_{ F}(x)$, we define 
\begin{equation}\label{defkappal}
\kappa_*(x,p,X,F) :=\ \sup\left\{
\kappa(x,E)\,:\, E \in \Reg\,, E\supseteq F\,,
(p,X)\in \Jet^{2,-}_{{E}}(x)
\right\}
\end{equation}
Analogously, for any $(p,X)\in \Jet^{2,-}_{ F}(x)$ we set
\begin{equation}\label{defkappau}
\kappa^*(x,p,X,F)\ =\ \inf\left\{
\kappa(x,E)\,:\, E \in \Reg\,, \inter  E\subseteq  F\,,
(p,X)\in \Jet^{2,+}_{ {E}}(x)
\right\}.
\end{equation}
(It is clear that $\kappa_*$ only depends on the closure of $F$ while
$\kappa^*$ depends on its interior, in practice the first one will be
evaluated at superlevels of usc functions, while the second one at strict
superlevels of lsc functions.)

Clearly, it follows from the monotonicity property A) 
that if $E\in \Reg$, and $(p,X)\in \Jet^2_E(x)$, 
then $\kappa_*(x,p,X, {E})=\kappa^*(x,p,X, {E})
=\kappa(x,E)$.
Notice that the monotonicity of $\kappa$ 
clearly extends to $\kappa_*$ and $\kappa^*$.
More precisely, $\kappa_*(x,p,X, {E}) \ge \kappa_*(x,p,X, {F})$ (resp. $\kappa^*(x,p,X, {E}) \ge \kappa^*(x,p,X, {F})$) whenever
$E\subseteq F$ and $(p,X)\in \Jet^{2,+}_E(x)\cap \Jet^{2,+}_F(x)$ (resp. $(p,X)\in \Jet^{2,-}_E(x)\cap \Jet^{2,-}_F(x)$). 

In the next Lemma we show that $\kappa_*$ and $\kappa^*$ are the l.s.c. and the u.s.c envelope of $\kappa$ with respect to the convergence defined in 
Definition \ref{defuniconv}, respectively. 
\begin{lemma}\label{lemmalsc}
Let $F\subseteq \R^N$ with compact boundary. 
Then,
$$
\kappa_*(x,p,X,F)= \inf  \liminf_n \kappa(x,E_n)  
$$
where the infimum is over  all $(p_n,X_n,E_n)\to (p,X,F)$ with uniform superjet at $x$; 
$$
\kappa^*(x,p,X,F) = \sup \limsup_n \kappa(x,E_n) 
$$
where the supremum is over all {$(p_n,X_n, E_n)\to (p,X,F)$  with uniform subjet at $x$.}
\end{lemma}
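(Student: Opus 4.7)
The plan is to prove the first equality (for $\kappa_*$); the second one (for $\kappa^*$) follows by a completely parallel argument, obtained by systematically replacing, in view of Definition~\ref{defuniconv} and the complementary structure of $\kappa^*$ versus $\kappa_*$, the uniform superjet with the uniform subjet, the condition $E\supseteq F$ with $\inter E \subseteq F$, and $\Jet^{2,-}$ with $\Jet^{2,+}$. Write $I := \inf \liminf_n \kappa(x, E_n)$ for the right-hand side, the infimum over $(p_n, X_n, E_n) \to (p, X, F)$ with uniform superjet at $x$ and $E_n\in\Reg$ (so that $\kappa(x,E_n)$ is defined). I split into the two inequalities $\kappa_*\le I$ and $\kappa_* \ge I$.

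For $\kappa_*\le I$: fix any admissible sequence and any competitor $E\in\Reg$ with $E\supseteq F$ and $(p,X)\in\Jet^{2,-}_E(x)$ from the sup defining $\kappa_*$. The key step is to construct, for each $\alpha>0$, a set $\tilde E_\alpha\in\Reg$ with $x\in\partial\tilde E_\alpha$, $\tilde E_\alpha\supseteq E_n$ for all $n$ sufficiently large, and $\kappa(x,\tilde E_\alpha)\to\kappa(x,E)$ as $\alpha\to 0$. Near $x$, $\tilde E_\alpha$ coincides with a smoothed version of the enlarged parabolic region $\{y: q(y-x)\ge -\alpha|y-x|^2\}$, with $q(v) = p\cdot v + \tfrac12 Xv\cdot v$: this region locally contains $E_n$ for $n$ large by the uniform superjet bound combined with $(p_n,X_n)\to(p,X)$. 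Far from $x$, $\tilde E_\alpha$ is an $\alpha$-neighborhood of $E$, which contains $E_n$ by the Hausdorff convergence $E_n\to F\subseteq E$. A $C^{\regu}$ interpolation in a thin transitional annulus yields $\tilde E_\alpha\in\Reg$. Then monotonicity (axiom A) gives $\kappa(x,E_n)\ge\kappa(x,\tilde E_\alpha)$ for $n$ large, continuity (axiom C) yields $\kappa(x,\tilde E_\alpha)\to\kappa(x, E)$ as $\alpha\to 0$, so that $\liminf_n\kappa(x,E_n)\ge\kappa(x,E)$; taking sup over $E$ concludes.

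For $\kappa_*\ge I$: I produce a single admissible sequence with $\liminf_n\kappa(x, E_n)\le\kappa_*(x,p,X,F)$. The idea is to construct $E_n\in\Reg$ that simultaneously lies in the class defining the supremum (so $\kappa(x,E_n)\le\kappa_*$ automatically) and converges to $F$ with uniform superjet at $x$. Such $E_n$ is obtained as a smoothed version of $F\cup(\{q(y-x)\ge 0\}\cap B_{r_n}(x))$ with $r_n\to 0$: the ``parabolic bump'' forces $(p,X)\in\Jet^{2,-}_{E_n}(x)$ (so $E_n$ is a competitor in the supremum), while the inequality $q\ge 0$ on the bump, combined with $(p,X)\in\Jet^{2,+}_F(x)$ (valid on a common neighborhood of $x$), gives $(p,X)\in\Jet^{2,+}_{E_n}(x)$ uniformly in $n$; the shrinking of the bump to $\{x\}$ provides the Hausdorff convergence $E_n\to F$. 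The main technical obstacle in both directions is the $C^{\regu}$ smooth patching---gluing the ``parabolic'' piece to the Hausdorff tail in the $\le$ direction, and gluing the bump to $F$ in the $\ge$ direction---with quantitative control of the errors; it is the continuity axiom C that allows us to transfer this $C^{\regu}$ geometric control into control on the curvature itself.
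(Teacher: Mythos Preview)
Your argument has genuine gaps in both directions.

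For $\kappa_*\le I$: the set $\tilde E_\alpha$ you build does not contain $E_n$ near $x$. The enlarged parabola $\{q\ge -\alpha|y-x|^2\}$ is aligned with $p$, while the uniform superjet only gives $E_n\cap U_\delta\subseteq\{p_n\cdot(y-x)+\tfrac12(X_n+\delta I)(y-x)\cdot(y-x)\ge 0\}$. Subtracting, the discrepancy $(p-p_n)\cdot(y-x)$ is linear in $|y-x|$, whereas the quadratic cushion $\alpha|y-x|^2$ vanishes faster; for $|y-x|\lesssim |p-p_n|/\alpha$ the set $E_n$ can protrude from your parabola. Since $p_n\neq p$ in general, no fixed $\tilde E_\alpha$ captures all $E_n$ for $n$ large. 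Independently, $\tilde E_\alpha$ does not converge to $E$ in $\Reg$: near $x$ it tends to the parabola $\{q\ge 0\}$, not to the generic $C^{\regu}$ set $E$, so axiom~C) cannot yield $\kappa(x,\tilde E_\alpha)\to\kappa(x,E)$.

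The paper handles this direction differently. It first builds (as part of the $\kappa_*\ge I$ argument) a \emph{decreasing} sequence $E_j\in\Reg$ of competitors with $(p,X+\tfrac{\delta_j}{2}I)\in\Jet^2_{E_j}(x)$, $\partial E_j\cap\partial F=\{x\}$, and $\kappa(x,E_j)\to\kappa_*$. Then, given any admissible $(p_n,X_n,F_n)$, it rotates each fixed $E_j$ by $R_n$ taking $p$ to $p_n$; the strict jet gap $\delta_j/2$ absorbs the quadratic error and the separation $\partial E_j\cap\partial F=\{x\}$ handles the far field, yielding $F_n\subseteq x+R_n(E_j-x)$ eventually. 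Monotonicity plus continuity of $\kappa$ along the rotated copies of the fixed set $E_j$ give $\liminf_n\kappa(x,F_n)\ge\kappa(x,E_j)$. The rotation, and the $n$-dependent comparison set, are what your fixed $\tilde E_\alpha$ is missing.

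For $\kappa_*\ge I$: your plan (produce competitors that also converge with uniform superjet) is the paper's, but ``smoothed version of $F\cup(\{q\ge 0\}\cap B_{r_n}(x))$'' is not a construction: $F$ is only measurable with compact boundary, and there is no reason a smoothing of it lies in $\Reg$, contains $F$, and converges to $F$ in Hausdorff simultaneously. The paper instead starts from near-optimal competitors $\tilde E_n\in\Reg$ (already smooth and containing $F$), perturbs them to have strict subjet $(p,X+\delta_n I)$, and runs an inductive refinement producing a monotone decreasing sequence $E_n$ with $(p,X+\tfrac{\delta_n}{2}I)\in\Jet^2_{E_n}(x)$; the monotonicity of the sequence is what delivers the uniform superjet for free.
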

\begin{proof}
We will prove the statement only for $\kappa_*$, the other case being  analogous. 
We start by showing that there exists a sequence $(p, X_n, E_n)$ such that $(p, X_n, E_n) \to (p, X, F)$ with uniform superjet at $x$  and $\kappa_*(x,p,X,F)= \lim_n \kappa(x,E_n)$. To this aim, recall that  
by definition of $\kappa_*(x,p,X,F)$, for every $n\in \N$
there exists a set $\tilde E_n\in\Reg$ with $F\subseteq \tilde E_n$, $(p,X)\in \Jet^{2,-}_{{\tilde E_n}}(x)$ and such that $ 0\le \kappa_*(x,p,X,F) - \kappa(x, \tilde E_n) \le \frac{1}{n}$.  
By the monotonicity of $\kappa$
we may  also assume that $\tilde E_n\to \ov F$ in the Hausdorff metric.   Moreover,  by the continuity assumption C) we can suitably modify the sequence $\tilde E_n$ so that, in addition to the previous properties, we have 
$(p,X +\delta_n I) \in \Jet^{2,-}_{{\tilde E_n}}(x)$, $\partial \tilde E_\e\cap \partial F = \{x\}$ and 
$ 0\le \kappa_*(x,p,X,F) - \kappa(x, \tilde E_n) \le \frac{2}{n}$ for some suitable $\delta_n\searrow 0$. 
Now we construct the sequence $E_n$ according to the following inductive procedure. Assume that $E_1, \ldots , E_n$ have been defined with the following properties:
\begin{itemize}
\item[1)] $ F \subseteq E_n \subseteq E_{n-1} \subseteq \ldots \subseteq E_1$,
\item[2)] $E_i\subseteq \tilde E_i$ for $i=1,\ldots n$,
\item[3)] $\partial  E_i \cap \partial F = \{x\}$ for every $i=1,\ldots n$
\item[4)] $ (p,X+\frac{\delta_i}{2} I) \in \Jet^2_{E_i}(x)  $ 
for every $i=1,\ldots n$.
\end{itemize}
 Since 
 $$ 
 X+\frac{\delta_{n+1}}{2} I <  X+ \frac{\delta_n}{2} I,
 \qquad
 X+\frac{\delta_{n+1}}{2} I <  X+ \delta_{n+1} I,
 $$
 recalling that  $ (p,X+\frac{\delta_n}{2} I) \in \Jet^2_{E_n}(x)  $ and $(p,X+\delta_{n+1} I) \in \Jet^{2,-}_{\tilde E_{n+1}}(x)$ we can easily construct $E_{n+1}\in \Reg$ such that 
 $$
(p,X+\frac{\delta_{n+1}}{2} I) \in \Jet^2_{E_{n+1}}(x), \,\,  F \subseteq E_{n+1}\subseteq \tilde E_{n+1}, \,\, E_{n+1}\subseteq E_n,\,\,  \partial  E_{n+1} \cap \partial F = \{x\}.  
 $$
The sequence $E_n$ just constructed still converges to $\ov F$ in the sense of Hausdorff. Moreover, since $E_n$ is  monotone decreasing, we have $(p,X+\frac{\delta_n}{2}I, E_n)\to (p,X,F)$ with uniform superjet.  Note also that since $(p, X)\in \Jet^{2,-}_{E_n}(x)$, by \eqref{defkappal} we immediately have
$\kappa_*(x, p, X, F)\geq \kappa(x, E_n)$ for all $n$. Finally, by the monotonicity of $\kappa$,
$$
\kappa_*(x, p, X, F)\geq\limsup_n \kappa(x,E_n)\ge\liminf_n \kappa(x,E_n)\ge
\lim_n \kappa(x,\tilde E_n) = \kappa_*(x,p,X,F).
$$
Now, let   $(p_n,X_n,F_n)\to (p,X,F)$ with uniform superjet  at $x$. 
Let $j\in\N$ and let $E_j$ the set constructed above. Notice that $F_n$ is definitively contained in $x+ R_n (E_j - x)$, where $R_n$ is any rotation such that $R_n(p)= p_n$. By  the monotonicity assumption A) 
 and the continuity assumption C) on $\kappa$ we deduce 
 \begin{equation}\label{interb}
 \kappa(x,E_j) = \lim_n\kappa(x,x+ R_n (E_j - x))  \le  \liminf_n \kappa(x,F_n).
 \end{equation}
We conclude that
 $$
 \liminf_n \kappa(x,F_n) \ge \lim_j \kappa(x,E_j) = \kappa_*(x,p,X,F).
 $$
 \end{proof}

\begin{lemma}\label{scif}
Let $\f_n, \f \in C^0(\R^N)$ be constant outside a compact set $\mathcal K$ (independent of $n$).  Assume that  $\f_n\to \f$ uniformly and 
$\f_n\to \f$ in $C^2(B(x,\delta))$ for some $\delta>0$ and $x\in\R^N$ with $D \f(x)\neq 0$.
If $x_n\to x$, then
\begin{multline}\label{eq:sciC2}
\kappa_*(x,D\f(x),D^2\f(x),\{\f\ge \f(x)\})
\\
\ \le\ \liminf_{n} \kappa_*(x_n,D\f_n(x_n),D^2\f_n(x_n),\{\f_n\ge \f_n(x_n)\})
\end{multline}
and
\begin{multline}\label{eq:scsC2}
\kappa^*(x,D\f(x),D^2\f(x),\{\f> \f(x)\})
\\
\ \ge\ \limsup_{n} \kappa^* (x_n,D\f_n(x_n),D^2\f_n(x_n),\{\f_n> \f_n(x_n)\}).
\end{multline}
\end{lemma}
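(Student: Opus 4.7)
I treat \eqref{eq:sciC2} in detail; the statement \eqref{eq:scsC2} follows by the symmetric argument applied to complements, with $\kappa^*$ and \eqref{defkappau} replacing $\kappa_*$ and \eqref{defkappal}. Axiom B) transfers translation invariance to $\kappa_*$, so setting $\tilde\f_n(\cdot) := \f_n(\cdot + x_n - x)$ I may assume $x_n = x$ for all $n$ and $\tilde\f_n \to \f$ in $C^2$ near $x$. Writing $p_n := D\tilde\f_n(x)$, $X_n := D^2\tilde\f_n(x)$, $F_n := \{\tilde\f_n \ge \tilde\f_n(x)\}$, and $(p,X,F)$ for the corresponding limiting data, the supremum-definition \eqref{defkappal} of $\kappa_*$ reduces the lemma to the following construction: for each $E \in \Reg$ with $F \subseteq E$ and $(p,X) \in \Jet^{2,-}_E(x)$, exhibit $E_n \in \Reg$ with $F_n \subseteq E_n$ in a neighborhood of $x$, $x \in \pa E_n$, and $E_n \to E$ in $\Reg$. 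Indeed the local inclusion, combined with $(p_n,X_n) \in \Jet^2_{F_n}(x) \subseteq \Jet^{2,-}_{F_n}(x)$ (since $F_n$ is smooth with exact $2$-jet $(p_n,X_n)$), yields $(p_n,X_n) \in \Jet^{2,-}_{E_n}(x)$ via $E_n^c \subseteq F_n^c$ near $x$; axiom C) and \eqref{defkappal} then give $\kappa(x,E) = \lim_n \kappa(x,E_n) \le \liminf_n \kappa_*(x,p_n,X_n,F_n)$, and a supremum over $E$ concludes.

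To build $E_n$ I work in local graph coordinates near $x$ adapted to $p/|p| = e_N$, so that, on a small disk $D \subset \R^{N-1}$, $\pa F$, $\pa F_n$, $\pa E$ are graphs $y_N = h(y')$, $y_N = h_n(y')$, $y_N = g(y')$ with $h,h_n \in C^2(D)$, $g \in C^{\ell,\beta}(D)$, $h_n \to h$ in $C^2(D)$, $h(0) = h_n(0) = g(0) = 0$, $Dh(0) = Dg(0) = 0$, $g \le h$ on $D$, and (from the subjet condition on $E$) $D^2 g(0) \le D^2 h(0)$ in the matrix ordering. Let $\eta_n$ denote the second-order Taylor polynomial of $h_n - h$ at $0$ (a smooth quadratic whose coefficients tend to $0$), and set $\e_n := \|h_n - h\|_{C^2(D)} \to 0$. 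Define
$$
g_n(y') := g(y') + \chi(y') \bigl( \eta_n(y') - \e_n |y'|^2 \bigr),
$$
with $\chi \in C^\infty_c(D)$ equal to $1$ near $0$. Then $g_n \in C^{\ell,\beta}(D)$ with $g_n \to g$ in $C^{\ell,\beta}(D)$; moreover the elementary Taylor remainder estimate $|h_n - h - \eta_n|(y') \le \e_n |y'|^2$, combined with $g \le h$, gives $g_n \le h_n$ on $\{\chi = 1\}$. Setting $E_n := \{y : y_N \ge g_n(y')\}$ inside the chart and $E_n := E$ outside (the two agree on the transition region where $\chi = 0$), we obtain $E_n \in \Reg$, $E_n \to E$ in $\Reg$, and $F_n \subseteq E_n$ in a neighborhood of $x$, as required.

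\emph{Main obstacle.} The crux is to reconcile three simultaneous constraints on $g_n$: (i) $C^{\ell,\beta}$-regularity of $\pa E_n$ (needed for $E_n \in \Reg$ and for continuity of $\kappa$); (ii) the local inclusion $F_n \subseteq E_n$ (so that $E_n$ enters the $\sup$ defining $\kappa_*(x,p_n,X_n,F_n)$); and (iii) the subjet condition at $x$. Directly adding the raw perturbation $h_n - h$ to $g$ breaks (i) when $\ell > 2$, since $\tilde\f_n$ is only assumed $C^2$; whereas a generic smooth perturbation risks violating (ii) or (iii). Replacing $h_n - h$ by its smooth second-order Taylor polynomial $\eta_n$ preserves both the $2$-jet matching at $x$ (giving (iii) through the inclusion $F_n \subseteq E_n$) and the $C^{\ell,\beta}$-regularity (giving (i)), while the small correction $-\e_n|y'|^2$ -- with $\e_n$ chosen to dominate the uniform Taylor remainder -- ensures (ii). Globalization via the cutoff $\chi$ is standard and does not affect any of the local computations at $x$.
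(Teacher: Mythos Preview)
Your route differs from the paper's, which invokes Lemma~\ref{lemmalsc}: after translating the superlevel sets to base them at $x$, one extracts a Hausdorff-convergent subsequence with limit $\tilde{\mathcal K}\subseteq F$; the local $C^2$ convergence makes this a convergence \emph{with uniform superjet}, and Lemma~\ref{lemmalsc} together with the monotonicity of $\kappa_*$ then yield \eqref{eq:sciC2} in three lines. You instead bypass Lemma~\ref{lemmalsc} and work directly from the supremum in \eqref{defkappal}, attempting to build, for each competitor $E$ admissible for $F$, a nearby $E_n\in\Reg$ admissible for $F_n$. The graph-level perturbation and the Taylor-polynomial trick that reconciles the $C^{\regu}$ regularity of $\partial E$ with the merely $C^2$ data $h_n$ are both correct.

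There is, however, a genuine gap. Definition \eqref{defkappal} requires the \emph{global} inclusion $E_n\supseteq F_n$, whereas your construction gives only $g_n\le h_n$ on $\{\chi=1\}$, i.e.\ $F_n\subseteq E_n$ near $x$. Outside the chart $E_n=E$, but nothing rules out $F_n$ protruding past $\partial E$ there: away from $x$ only uniform convergence of the $\f_n$ is available, and for a generic admissible $E$ the boundaries $\partial E$ and $\partial F$ may coincide on a large set. The fix is to first reduce, via axiom~C), to competitors with $\partial E\cap\partial F=\{x\}$ --- push $\partial E$ outward along its normal by $\e\psi$ with $\psi\ge 0$ smooth on $\partial E$ vanishing to second order only at $x$, and let $\e\to 0$. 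For such $E$ one has $F\subseteq\inter E\cup\{x\}$; the closed sets $\{\f\ge\f(x)-\eta\}$ decrease to $F$ and (using that $\f$ is constant off a compact) are eventually contained in any open neighborhood of $F$, so for $\eta$ small $\{\f\ge\f(x)-\eta\}\setminus(\text{chart})\subseteq\inter E$. Uniform convergence then gives $F_n\subseteq\{\f\ge\f(x)-\eta\}$ for $n$ large, and together with your local estimate (and $g_n\to g$ in the transition zone) this yields the global inclusion $F_n\subseteq E_n$. With this amendment your proof goes through, though the reduction step is doing work closely parallel to the first half of the proof of Lemma~\ref{lemmalsc}.
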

\begin{proof}
Up to a subsequence, we can assume that $\mathcal K_n:=x-x_n +\{\f_n\ge \f_n(x_n)\}$ converge in the sense of Hausdorff to some closed set $\tilde {\mathcal K}$ contained in  $\mathcal K:= \{\f \ge \f(x)\}$. Since $D \f(x)\neq 0$, then  $(D \f_n(x_n), D^2 \f_n(x_n), \mathcal K_n) \to (D \f(x),D^2 \f(x), \tilde{\mathcal K})$ with uniform superjet at $x$.
Thus, by Lemma \ref{lemmalsc} and the monotonicity of $\kappa_*$ 
we may conclude
\begin{multline*}
\kappa_*(x,D \f(x),D^2 \f(x),  \mathcal K)\leq  \kappa_*(D \f(x),D^2 \f(x), \tilde {\mathcal K})\\
\le \liminf_n \kappa_*(x,D \f_n(x_n), D^2 \f_n(x_n),  \mathcal K_n), 
\end{multline*}
which proves  \eqref{eq:sciC2}. The proof of \eqref{eq:scsC2} is identical. 
\end{proof}
\begin{remark}\label{mds}
{\rm 
In \cite{S},  a class of nonlocal Hamiltonians  $H(x,p,X,F)$ that are lower semicontinuous with respect to the $L^1$ convergence of sets and the standard convergence of the other variables is considered. Notice that such Hamiltonians are also lower semicontinuous  with respect to the convergence with uniform superjets introduced in Definition \ref{defuniconv}. Indeed, 
 if  $(p_n,X_n,F_n)\to (p,X,F)$ with uniform superjet  at $x$, then one can show that
  { $F_n\cup 
 \overline F \to \overline F$} in $L^1_{loc}$. Thus,
$$
{H(x,p,X,\overline F)\le \liminf_n H (x,p_n,X_n,F_n\cup \overline F) \le \liminf_n H (x,p_n,X_n,F_n) ,} 
$$

where the last inequality follows from the { monotonicity assumption on $H$ with respect to the set variable.}
}
\end{remark}

\subsection{Equivalent definition of the viscosity solutions}

We now can give a second definition of viscosity solutions of~\eqref{levelsetf}. It is seemingly more restrictive than the previous Definition~\ref{defviscoC2},
but we will check later on that it is equivalent. 
{
\begin{definition}\label{defvisco}
An upper semicontinuous function $u:\R^N\times [0,T]\to \R$,
constant outside a compact set, is
a viscosity subsolution of the Cauchy problem \eqref{levelsetf} if $u(0,\cdot) \le u_0$, and for all 
 $z:= (x,t) \in \R^N\times (0,T)$ and
all   $\f$ admissible at $z$, such that $u-\f$ has a
maximum at $z$ (in the domain of definition of $\f$) we have 
\begin{itemize}
\item[i)] If   $D\f(z) =  0$, then $\f_t(z) \le 0$;
\item[ii)] If  $D\f(z)\neq 0$, then
\begin{equation}\label{eqsubsol}
\f_t(z)+ |D\f(z)| \, \kappa_*\left(x,D\f(z),D^2\f(z),\{y: \f(y,t)\ge \f(z)\}\right)
\le 0.
\end{equation}
\end{itemize}
The definition of viscosity supersolutions and of viscosity solutions are given accordingly in the obvious way. 
 \end{definition}
}


{
We will need  the following technical lemma.}
\begin{lemma}\label{Qlemma}
 Let {$Q\in C^{\infty}(\R^N)$}, $Q\ge 0 $ with equality only for $x=0$,
$Q$  convex in $B_1$,  and constant in $\R^N\setminus B_2$. 

Let $\f\in C^2(\R^N)$, and let $\bar x$ be such that $D \f(\bar x) \neq 0$. 
 For every $\eta\in\R$ set
$$
\f^\eta(x):= \f(x)+\eta Q(x-\bar x).
$$
Then, for almost every $\eta$ small enough  the $\f(\bar x)$-level set of $\f^\eta$ is not critical.
\end{lemma}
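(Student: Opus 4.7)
My plan is to recast the claim as a statement about critical values of a smooth projection on an auxiliary hypersurface, and then invoke Sard's theorem. The key is to lift the problem from $\R^N$ to $\R^N\times\R$ by treating $\eta$ as an additional coordinate.

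First I would introduce
$$\Psi(x,\eta):=\f(x)+\eta Q(x-\bar x)-\f(\bar x)$$
on $\R^N\times\R$, whose gradient is $\nabla\Psi(x,\eta)=\bigl(D\f^\eta(x),\,Q(x-\bar x)\bigr)$. The first check is that this gradient never vanishes: the equation $\partial_\eta\Psi=Q(x-\bar x)=0$ forces $x=\bar x$, and at $x=\bar x$ one has $D\f^\eta(\bar x)=D\f(\bar x)+\eta DQ(0)=D\f(\bar x)\ne 0$, using that $DQ(0)=0$ ($Q\ge 0$ achieves its global minimum at $0$) together with the standing hypothesis $D\f(\bar x)\ne 0$. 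Consequently $\Sigma:=\Psi^{-1}(0)$ is a $C^2$ embedded hypersurface of $\R^{N+1}$.

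The geometric translation of the statement is the next step. The $\f(\bar x)$-level set of $\f^\eta$ is exactly the horizontal slice $\Sigma\cap(\R^N\times\{\eta\})$, and the condition ``$\eta$ is bad'' (the level set is critical) means that there exists $x$ with $(x,\eta)\in\Sigma$ and $D\f^\eta(x)=0$. At any such pair, $\nabla\Psi(x,\eta)=(0,Q(x-\bar x))$ is purely vertical and nonzero (necessarily $x\ne\bar x$), so the tangent plane $T_{(x,\eta)}\Sigma$ is horizontal, i.e.~the projection $\pi:\Sigma\to\R$, $\pi(x,\eta):=\eta$, has a critical point at $(x,\eta)$. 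Conversely, every critical point of $\pi|_\Sigma$ arises in this way. Thus the set of bad $\eta$'s coincides exactly with the set of critical values of $\pi|_\Sigma$.

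Finally I would apply Sard's theorem to $\pi|_\Sigma:\Sigma\to\R$ to conclude that the bad set has Lebesgue measure zero in $\R$; in particular for every $\eta_0>0$ almost every $\eta\in(-\eta_0,\eta_0)$ is good, which is the claim. The only subtle point is the smoothness needed for the classical Sard theorem when mapping an $N$-manifold to $\R$; this is harmless in the viscosity-theoretic use of the lemma, where the test functions are $C^\infty$, and causes no issue in low dimensions under the bare $C^2$ assumption. The main conceptual engine is the identity $\partial_\eta\Psi=Q(x-\bar x)>0$ away from $\bar x$, which is precisely what reduces the ``noncritical level set'' condition to the horizontal-tangency condition that Sard handles.
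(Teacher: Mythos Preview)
Your argument is correct and rests on the same idea as the paper's proof --- reduce the claim to Sard's theorem --- but the packaging differs. The paper first disposes of a neighbourhood of $\bar x$ by noting that $D\f^\eta\neq 0$ there for $\eta$ small, and then, away from $\bar x$, explicitly solves the constraint $\f^\eta(x)=\f(\bar x)$ for $\eta$, obtaining the scalar function
\[
g(x)=-\frac{\f(x)-\f(\bar x)}{Q(x-\bar x)}
\]
on $\R^N\setminus\overline{B(\bar x,\delta/2)}$, whose critical values are precisely the bad $\eta$'s; Sard is then applied to $g$. Your lifting to $\Sigma=\Psi^{-1}(0)\subset\R^{N+1}$ and Sard for the height function $\pi|_\Sigma$ is the implicit version of the same computation: away from $x=\bar x$, $\Sigma$ is the graph of $g$ and $\pi|_\Sigma=g$. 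Your approach has the minor advantage of handling the whole domain at once without a case split near $\bar x$, and of yielding the conclusion for a.e.\ $\eta\in\R$ rather than only for small $\eta$; the paper's approach is more elementary in that it avoids the manifold language. Both proofs share the same regularity caveat you flag (Sard from an $N$-dimensional domain to $\R$ needs $C^N$, not just $C^2$), which is indeed harmless in the paper's applications since the test functions are taken smooth.
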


\begin{proof}
Let $B(\bar x,\delta)$ be a neighborhood of $\bar x$ where $D \f  \neq 0$.
Clearly in $B(\bar x,\delta)$ $D\f+\eta DQ(x-\bar x)\neq 0$ if $\eta$
is small enough.

Then, in $\R^N\setminus \overline{B(\bar x,\delta/2)}$,
we consider the {$C^\infty$} function 
$$
x \mapsto - \frac{ \f(x) - \f(\bar x)}{Q(x-\bar x)},
$$
and by
Sard's theorem, we know that for a.e. $\eta>0$, the level set
$\eta$ of this function is not critical. This means that for
all~$x\not\in \overline{B(\bar x,\delta/2)}$ with $\f(x)+\eta Q(x-\bar x)= \f(\bar x)$, one has
\begin{multline*}
0\,\neq\,\frac{1}{Q^2(x-\bar x)}\left( -D \f(x)Q(x-\bar x) + D Q(x-\bar x) (\f(x) -  \f(\bar x))   \right)
\\=\, -\frac{1}{ Q(x-\bar x)} \left( D \f(x)+\eta D Q(x-\bar x) \right)\,,
\end{multline*}
so that the $\f(\bar x)$-level set  of $\f^\eta$ is not
critical.
\end{proof}

\begin{remark}\label{convo}
 As it is standard in the theory of viscosity solutions, the  maximum in Definition \ref{defvisco} of  subsolutions can be assumed to be strict (and similarly for supersolutions).
Assume for instance that $u$ is a subsolution, 
{
$u-\f$ has a  maximum
}
 at some $(\bar x, \bar t)$, with $\f$ admissible at $(\bar x,\bar t)$. If $D \f(\bar x, \bar t)\neq 0$ we replace $\f$ by
$$
\f^s(x,t):= \f(x,t)+s  \min \{1, | x-\bar x|^2  +|t-\bar t|^2\}.
$$
Then the maximum of $u-\f^s$ at $(\bar x, \bar t)$ is strict, and we recover the inequality \eqref{eqsubsol} for $\f$ by letting $s \to 0$ and using the semicontinuity property of $\kappa_*$ provided by Lemma~\ref{scif}.
If $D \f(\bar x, \bar t) = 0$,  we choose {$f \in \F$} as in Definition~\ref{defadmissible},
and replace $\f$ by 
{
$$
\tilde \f(x,t):= \f(x,t) +f(|y-x|) +   |t-s|^2.
$$ 
}
We still have $D \tilde\f(\bar x, \bar t) = 0$,  $\tilde \f$ is  admissible at $( \bar x, \bar t)$, 
$\tilde\f_t( \bar x, \bar t)=\f_t(\bar x,\bar t)$ and  now the 
{ maximum 
}
of $u-\tilde \f$ is strict.

Moreover,  one can assume without loss of generality that $\f$ is smooth. 
{
If $D \f(\bar x, \bar t)\neq 0$, this follows again by Lemma~\ref{scif} and by standard mollification arguments.
If $D \f(\bar x, \bar t) = 0$,
since $\f$ is admissible at $z$, there are  $f \in \F$ and $\omega \in C^\infty(\R)$ with $\omega'(0)= 0$ such that
$$
|\f(x,t) - \f(\bar x, \bar t) - \f_t(x,t)(t-\bar t)|\le f(|x- \bar x|) + \omega(t-\bar t).
$$ 
Then it is enough to replace $\f$ by 
$$
  \psi(x,t):=\f_t(z)(t-\bar t)+ f(|x-\bar x|)+\omega (t-\bar t). 
$$
Finally, in view of Lemma \ref{Qlemma} one can assume that the superlevel set of $\f$ in  Definition \ref{defvisco},  ii)  is not critical. 
}
\end{remark}
\begin{remark}\label{rm:equivdef}
Note that in view of the above observations we have shown, in particular, that Definitions~\ref{defviscoC2} and~\ref{defvisco} are equivalent.
\end{remark}


We now introduce the notion of parabolic sub/superjets. 
\begin{definition}\label{def:parajet}
Let $u:\R^N \times (0, T) \to \R$ be upper semicontinuous
  at $(x,t)$. We say that $(a,p,X)\in \R\times\R^N\times\mathbb{M}_{sym}^{N\times N}$ is in the parabolic superjet 
$ \PJet^{2,+}u(x,t)$ of $u$ at $(x,t)$,  if  
$$
u( y,s)\leq u(x,t) +  a(s-t)\,+\, p\cdot(y-x)\,+\, \half(X(y-x))\cdot(y-x)
\,+\,o(|t-s|+|x-y|^2)
$$
 for $(y,s)$ in a neighborhood of $(x,t)$.  If $u$ is lower semicontinuous at $(x,t)$ we can define the parabolic subjet
  $ \PJet^{2,-}u(x,t)$ of $u$ at $(x,t)$ as  $ \PJet^{2,-}u(x,t):=- \PJet^{2,+}(-u)(x,t)$.
\end{definition}

{
The next lemma 
provides another  equivalent definition of viscosity solutions
in terms of the superlevel sets of $u$ and the corresponding parabolic jets.
}

\begin{lemma}\label{laura}
Let $u$ be a viscosity subsolution of~\eqref{levelsetf} in the sense
of { Definition~\ref{defvisco}. }
Then, for all $(x,t)$ in $\R^N\times (0,T)$, if $(a,p,X)\in \PJet^{2,+}u(x,t)$,
and $p\neq 0$, then
\begin{equation}\label{eqsubsolslepcev}
a\,+\, |p|\kappa_*\left(x,p,X,\{y: u(y,t)\ge u(x,t)\}\right) \ \le\ 0.
\end{equation}
A similar statement holds for supersolutions.
\end{lemma}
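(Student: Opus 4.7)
My plan is to exploit the sup-characterization~\eqref{defkappal} of $\kappa_*$ and reduce the claim to the pointwise inequality
\[
a + |p|\kappa(x,E) \le 0 \qquad (\ast)
\]
for every $E\in\Reg$ with $F_u:=\{y:u(y,t)\ge u(x,t)\}\subseteq E$ and $(p,X)\in\Jet^{2,-}_E(x)$. Since $\kappa_*(x,p,X,F_u)$ is by definition the supremum of $\kappa(x,E)$ over precisely such $E$, $(\ast)$ in this generality is equivalent to~\eqref{eqsubsolslepcev}.

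To prove $(\ast)$, I would fix such an $E$ and construct an admissible test function $\varphi$ whose spatial superlevel set at time $t$ equals $E$ globally. Let $d$ be the signed distance to $\partial E$ (positive inside), which is $C^2$ near $\partial E$ by the $C^{\regu}$ regularity of $E$, and set $Y:=|p|D^2 d(x)$; then $(p,Y)\in \Jet^{2,+}_E(x)\cap\Jet^{2,-}_E(x)$. Choose a $C^\infty$ strictly increasing $f:\R\to\R$ with $f(0)=0$, $f'(0)=1$, $f''(0)=\mu|p|$ for some $\mu>0$ to be fixed, and define
\[
\varphi(y,s) := u(x,t) + a(s-t) + |p|f(d(y)) + \omega(s-t)
\]
in a neighborhood of $(x,t)$, with $\omega\in C^2$ a time correction satisfying $\omega(0)=\omega'(0)=0$. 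A direct Taylor expansion gives $\varphi_t(x,t)=a$, $D\varphi(x,t)=p$, and $D^2\varphi(x,t)=\tilde X:=Y+\mu\,p\otimes p$; by the scaling property in the remark following Definition~\ref{jets}, $(p,\tilde X)\in\Jet^{2,-}_E(x)$. Choosing $\mu$ large enough so that $\tilde X\ge X+\varepsilon I$ for some $\varepsilon>0$, the parabolic superjet bound yields
\[
u(y,s)-\varphi(y,s) \le -\tfrac{\varepsilon}{2}|y-x|^2 - \omega(s-t) + o(|y-x|^2+|s-t|),
\]
so that with a properly chosen $\omega$, $u-\varphi$ attains a local maximum at $(x,t)$. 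Outside a tubular neighborhood of $\partial E$ I would extend $\varphi$ continuously, constant outside a compact set, preserving $\{\varphi(\cdot,t)\ge\varphi(x,t)\}=E$ globally; this is possible since the local superlevel set already equals $E$ (as $f$ is increasing with $f^{-1}(0)=0$).

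With such $\varphi$ in hand, Definition~\ref{defvisco}(ii) applied at $(x,t)$ yields $a+|p|\kappa_*(x,p,\tilde X,E)\le 0$, and taking $E$ itself as competitor in~\eqref{defkappal} for $\kappa_*(x,p,\tilde X,E)$ (using $(p,\tilde X)\in\Jet^{2,-}_E(x)$) gives $\kappa_*(x,p,\tilde X,E)\ge\kappa(x,E)$. These two combine to yield $(\ast)$; taking the supremum over $E$ completes the subsolution case. The supersolution case is handled symmetrically via~\eqref{defkappau}, the strict superlevel set $\{u(\cdot,t)>u(x,t)\}$, and $\Jet^{2,+}$ applied to complements.

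The main technical obstacle is the construction of the time correction $\omega$: the $o(|s-t|)$ error inherent in the parabolic superjet cannot be dominated, near $s=t$, by any closed-form $C^2$ function with $\omega(0)=\omega'(0)=0$, since such a function is necessarily $O((s-t)^2)$. This is resolved by the classical viscosity-theoretic smoothing argument that produces a $C^2$ majorant of the given modulus with vanishing first derivative at zero (cf.\ Proposition~2.5 of \cite{CIL}, or a mollification-plus-Lemma~\ref{scif} scheme as in Remark~\ref{convo}). Once this technicality is settled, all remaining steps are routine Taylor expansion and an application of the jet-scaling property.
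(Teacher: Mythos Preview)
Your reduction to proving $(\ast)$ for each admissible $E$ is sound, and the idea of building a test function whose spatial superlevel set equals $E$ is natural. The gap, however, is not the time correction $\omega$ (which can indeed be handled as you sketch), but the passage from \emph{local} to \emph{global} maximum of $u-\varphi$. Definition~\ref{defvisco} requires the maximum to be taken over $\R^N\times\bar A$, and the curvature term depends on the \emph{global} set $\{\varphi(\cdot,t)\ge\varphi(x,t)\}$. The usual device of adding a large bump supported away from $(x,t)$ to upgrade a local max to a global one alters precisely this set: a nonnegative bump enlarges the superlevel set, and by monotonicity of $\kappa_*$ this only yields $a+|p|\kappa_*(x,p,\tilde X,\tilde E)\le 0$ for some $\tilde E\supsetneq E$, which is weaker than $(\ast)$. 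Concretely, your extension of $\varphi$ outside the tubular neighborhood must simultaneously satisfy $\varphi(\cdot,t)\ge u(\cdot,t)$ (for the global max) and $\varphi(\cdot,t)<u(x,t)$ on $E^c$ (to keep the superlevel set equal to $E$); there is no reason such a continuous function exists, since you have no quantitative control on how $u(\cdot,t)$ falls below $u(x,t)$ on $E^c$.

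The paper's proof circumvents this by a genuinely different mechanism: it approximates $u$ from above by a decreasing sequence of smooth functions $\psi^k$ with $\inf_k\psi^k=u$, and tests with $\varphi^k=\min\{\psi^k,p_{\varepsilon,\delta}\}$, where $p_{\varepsilon,\delta}$ is a global quadratic barrier furnished by the superjet. This guarantees $\varphi^k>u$ everywhere, so the maximum is automatically global; the price is that the superlevel sets $\{\varphi^k(\cdot,t)\ge\varphi^k(x,t)\}$ are not any prescribed $E$ but instead decrease to $\{u(\cdot,t)\ge u(x,t)\}$, and one passes to the limit via the uniform-superjet semicontinuity of Lemma~\ref{lemmalsc}. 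In short, your approach tries to hit $\kappa(x,E)$ directly for each competitor $E$, while the paper approximates the set $F_u$ itself and exploits the lower semicontinuity of $\kappa_*$.
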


\begin{proof}[Proof of Lemma~\ref{laura}]
 By definition of parabolic subjets,  given $\e>0$, $\delta>0$,
there exists a neighborhood $U$ of $(x,t)$ in $\R^N\times (0,t]$ where
\[
u(y,s)\,\le\, u(x,t) +  (a-\e)(s-t)\,+\, p\cdot(y-x)\,+\,
\half(X+\delta I)(y-x)\cdot(y-x)
\]
with a strict inequality if $y\neq x$ or $s<t$. Let $p_{\e,\delta}:\R^N\times [0,T]$ be  a continuous function such that
$$
p_{\e,\delta}(y,s)=u(x,t) + (a-\e)(s-t)+ p\cdot(y-x)+\half(X+\delta I)(y-x))\cdot(y-x)
$$
in  $U$, $p_{\e,\delta}\ge u$ in $\R^N\times (0,t]$, with equality only on $(x,t)$, 
$p_{\e,\delta}\ge u + c$ in $(\R^N\times (0,t])\setminus U$ for some  $c>0$, and $p_{\e, \delta}$ is   constant {(possibly depending on time) in $(\R^N\setminus \mathcal K)$, where also $u$ is constant.} 
Consider a decreasing sequence $\psi^k$ of smooth functions, such that $\psi^k$ is  constant in { $(\R^N\setminus \mathcal K)$}, 
$\inf_k \psi^k=u$, and  $\psi^k\ge u+1/k$. Such a sequence exists
because $u$ is upper-semicontinuous. 
{ Let $\f^k:= \min\{\psi^k,p_{\e,\delta}\}$, so that
$\f^k> u$ in $\R^N\times (0,t]$,
except at $(x,t)$ where equality holds, and 
$\f^k=p_{\e, \delta}$ near $(x,t)$.}

For any $n\in \N$ large enough,
the function $(y,s)\mapsto u(y,s) - \f^k(y,s) - 1/[n(t-s)]$
attains a  maximum at a point $z_n = (y_n,s_n) \in (0,t)\times \R^N$, where $z_n\to z=(x,t)$
as $n\to \infty$. Moreover, $D\f^k(z_n)=p+(X+\delta I)(y_n-x)\neq 0$
for $n$ large.

{Hence, by Definition~\ref{defvisco} of a viscosity subsolution,
}
\begin{multline*}
\f^k_t(z_n)\, +\,  \frac{1}{n(t-s_n)^2}
\\
+ |D\f^k(z_n)| \, \kappa_* (y_n, D\f^k(z_n), D^2\f^k(z_n), \{\f^k(\cdot,s_n)\ge \f^k(z_n)\}) \,\le\, 0. 
\end{multline*}
Since $\partial_t \f^k_t(z_n) = a-\e$ it follows that
\[
a\,+\, |D\f^k(z_n)| \, \kappa_* (y_n, D\f^k(y_n,s_n),D^2\f^k(y_n,s_n), \{\f^k(\cdot,s_n)\ge \f^k(y_n,s_n)\}) 
\,\le\,\e.
\]
Letting $n\to\infty$ and invoking Lemma~\ref{scif} 
we obtain
\[
a\,+\,  |D\f^k(x,t)| \, \kappa_*(x, D\f^k(x,t),D^2\f^k(x,t), \{\f^k(\cdot,t)\ge \f^k(x,t)\}) 
\,\le\,\e\,,
\]
that is
\[
a\,+\, |p|\kappa_*(x, p,X+\delta I, \{\f^k(\cdot,t)\ge \f^k(x,t)\}) 
\,\le\,\e.
\]
Now, as $\{\f^k(\cdot,t)\ge \f^k(x,t)\}$ is a decreasing sequence converging to $\{u(\cdot,t)\ge u(x,t)\})$, we get that  $(p,X+\delta I, \{\f^k(\cdot,t)\ge \f^k(x,t)\})\to (p,X+\delta I, \{u(\cdot,t)\ge u(x,t)\})$ with uniform superjet, as $k\to\infty$. Therefore,  by Lemma~\ref{lemmalsc} we infer
\[
a\,+\, |p|\kappa_*(x, p,X+\delta I, \{u(\cdot,t)\ge u(x,t)\})
\,\le\,\e.
\]
The conclusion follows by applying again  Lemma~\ref{lemmalsc}, after observing that  
$$
( p,X+\delta I, \{u(\cdot,t)\ge u(x,t)\})\to ( p,X, \{u(\cdot,t)\ge u(x,t)\})$$
 with uniform superjet, as $\delta\to 0$.
\end{proof}

In the next lemma we show that  equation \eqref{levelsetf} is satisfied in a suitable viscosity sense also  for $t=T$. To this purpose, we notice that the notion of admissible test functions $\f$ given in Definition \ref{defadmissible} can be extended also at points $(\hat x,T)\in \R^N\times\{T\}$ without any change. 
This is a classical fact, we adapt here the proof in~\cite{IS}.

\begin{lemma}\label{Tfinale} Let $u\in USC(\RT)$ be a subsolution of \eqref{levelsetf}. 
If $\f$ is admissible at  $(\hat x,T)$ 
and $u-\f$ has a (one-sided w.r.t. time) maximum in $\RT$ at $(\hat x,T)$, then  { i) and ii) of Definition \ref{defvisco} are satisfied}  at $(\hat x,T)$. An analogous statement holds for supersolutions.
\end{lemma}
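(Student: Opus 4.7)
The plan is to reduce the boundary case $t=T$ to the interior situation already covered by Definition~\ref{defvisco} by inserting a time-blow-up perturbation, and then passing to the limit. Arguing as in Remark~\ref{convo}, we may suppose without loss of generality that the maximum of $u-\varphi$ at $(\hat x,T)$ is \emph{strict} on $\RT$, i.e.\ $(u-\varphi)(x,t)<(u-\varphi)(\hat x,T)$ for every $(x,t)\neq(\hat x,T)$ with $t\le T$, and that $\varphi$ is smooth. Also, outside a compact set $\varphi$ is constant and $u$ is constant, so the localization arguments below work globally.

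For $\varepsilon>0$ define on $\R^N\times[0,T)$
\[
\varphi^\varepsilon(x,t)\ :=\ \varphi(x,t)\,+\,\frac{\varepsilon}{T-t}.
\]
Since $u-\varphi$ is bounded above and $(u-\varphi^\varepsilon)(x,t)\to -\infty$ as $t\to T^-$, the upper semicontinuous function $u-\varphi^\varepsilon$ attains its maximum on $\R^N\times[0,T)$ at some interior point $(x_\varepsilon,t_\varepsilon)\in\R^N\times(0,T)$. A standard argument based on the strictness of the original maximum yields $(x_\varepsilon,t_\varepsilon)\to(\hat x,T)$ and $\varepsilon/(T-t_\varepsilon)\to 0$ as $\varepsilon\to 0$ (at least up to subsequences). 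Note that $D\varphi^\varepsilon=D\varphi$, $D^2\varphi^\varepsilon=D^2\varphi$, the superlevel sets of $\varphi^\varepsilon(\cdot,t)$ coincide with those of $\varphi(\cdot,t)$, and $\varphi^\varepsilon$ is admissible at $(x_\varepsilon,t_\varepsilon)$ whenever the gradient vanishes there (using the same $f\in\F$ and, if needed, a slightly enlarged $\omega$ to absorb the smooth time-dependence $\varepsilon/(T-t)$ near $t_\varepsilon<T$).

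Since $(x_\varepsilon,t_\varepsilon)$ is an interior maximum point of $u-\varphi^\varepsilon$, Definition~\ref{defvisco} applied to $\varphi^\varepsilon$ gives: either $D\varphi(x_\varepsilon,t_\varepsilon)=0$ and then
\[
\varphi_t(x_\varepsilon,t_\varepsilon)\,+\,\frac{\varepsilon}{(T-t_\varepsilon)^2}\ \le\ 0,
\]
or $D\varphi(x_\varepsilon,t_\varepsilon)\neq 0$ and
\[
\varphi_t(x_\varepsilon,t_\varepsilon)\,+\,\frac{\varepsilon}{(T-t_\varepsilon)^2}\,+\,|D\varphi(x_\varepsilon,t_\varepsilon)|\,\kappa_*\bigl(x_\varepsilon,D\varphi(x_\varepsilon,t_\varepsilon),D^2\varphi(x_\varepsilon,t_\varepsilon),\{\varphi(\cdot,t_\varepsilon)\ge\varphi(x_\varepsilon,t_\varepsilon)\}\bigr)\ \le\ 0.
\]
Dropping the nonnegative $\varepsilon/(T-t_\varepsilon)^2$ and letting $\varepsilon\to 0$ proves the claim in all cases. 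Indeed, if $D\varphi(\hat x,T)\neq 0$, the second alternative holds for small $\varepsilon$ and Lemma~\ref{scif} yields inequality~ii) of Definition~\ref{defvisco} at $(\hat x,T)$. If instead $D\varphi(\hat x,T)=0$, either the first alternative occurs along a subsequence, giving $\varphi_t(\hat x,T)\le 0$ directly; or $D\varphi(x_\varepsilon,t_\varepsilon)\neq 0$ tends to zero, in which case the admissibility bound at $(\hat x,T)$ traps the superlevel set $\{\varphi(\cdot,t_\varepsilon)\ge\varphi(x_\varepsilon,t_\varepsilon)\}$ inside a ball whose radius is controlled by $f^{-1}$ of a small quantity, and the monotonicity of $\oc$ together with \eqref{ISvera} forces $|D\varphi(x_\varepsilon,t_\varepsilon)|\,\kappa_*(\cdots)\to 0$, so that $\varphi_t(\hat x,T)\le 0$ as well.

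The principal technical obstacle is precisely the last step, controlling the product $|D\varphi|\,\kappa_*$ when $D\varphi(\hat x,T)=0$ but the approximating gradients are nonzero; this is exactly the role of admissibility and of condition~\eqref{ISvera}, already used in the interior case (cf.\ Remark~\ref{convo} and Lemma~\ref{laura}), and no new ideas beyond those are required.
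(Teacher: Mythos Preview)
Your approach is the same as the paper's: penalize with $\varepsilon/(T-t)$ to push the maximum into the interior, apply the subsolution property there, and pass to the limit. The case $D\varphi(\hat x,T)\neq 0$ is handled correctly and matches the paper.

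The degenerate case $D\varphi(\hat x,T)=0$ has a genuine gap. You work with the original (smooth) $\varphi$ and claim that ``the admissibility bound at $(\hat x,T)$ traps the superlevel set $\{\varphi(\cdot,t_\varepsilon)\ge\varphi(x_\varepsilon,t_\varepsilon)\}$ inside a ball'' and that $|D\varphi(x_\varepsilon,t_\varepsilon)|\kappa_*(\cdots)\to 0$. Neither follows. The admissibility bound $|\varphi(x,t)-\varphi(\hat z)-\varphi_t(\hat z)(t-T)|\le f(|x-\hat x|)+\omega(|t-T|)$ is a two-sided estimate on \emph{values} of $\varphi$; it does not force the superlevel sets of $\varphi$ to be small (or ball-shaped), nor does it bound $|D\varphi(x_\varepsilon,t_\varepsilon)|$ in terms of $f'$. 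Likewise, your claim that $\varphi^\varepsilon$ is admissible at $(x_\varepsilon,t_\varepsilon)$ when $D\varphi(x_\varepsilon,t_\varepsilon)=0$ fails in general: since every $f\in\F$ satisfies $f''(0)=0$, one has $f(r)=o(r^2)$, so the admissibility inequality centered at $(x_\varepsilon,t_\varepsilon)$ cannot hold if $D^2\varphi(x_\varepsilon,t_\varepsilon)\neq 0$.

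The fix --- and this is exactly what the paper does, following \cite[Prop.~1.3]{IS} --- is to \emph{replace} $\varphi$ by the explicit model function $\psi(x,t)=\varphi_t(\hat z)(t-T)+2f(|x-\hat x|)+2\omega(t-T)$ (this replacement is part of what Remark~\ref{convo} prescribes, but you do not carry it through). For $\psi$ the situation is transparent: $D\psi(y,s)=0$ iff $y=\hat x$, in which case admissibility at $(\hat x,t_\varepsilon)$ is immediate; otherwise $|D\psi(y,s)|=2f'(|y-\hat x|)$ and the superlevel $\{\psi(\cdot,t_\varepsilon)\ge\psi(x_\varepsilon,t_\varepsilon)\}$ is exactly $\R^N\setminus B_{|x_\varepsilon-\hat x|}(\hat x)$, whose curvature is bounded by $\oc(|x_\varepsilon-\hat x|)$. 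Then $|D\psi|\,|\kappa_*|\le 2f'(\rho)\oc(\rho)\to 0$ by \eqref{IS}, and the conclusion $\varphi_t(\hat z)\le 0$ follows.
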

\begin{proof} 
First assume that $D\f(\hat x,T) \neq 0$. As usual, we can assume that the maximum is strict. For  $n$ large enough, the function 
$u(x,t) - \f(x,t) - 1/[n(T-t)]$ has a  maximum at a point $z_n:=(\hat x_n,t_n) \in \QT$ converging to $z:=(\hat x,T)$ as $n\to\infty$. 
Since $u$ is a subsolution in $\QT$, for $n$ large enough we have
$$
\f_t(z_n)+ \frac{1}{n(T-t_n)^2} + |D\f(z_n)| \, \kappa_*\left(x_n,D\f(z_n),D^2\f(z_n),\{\f(\cdot,t_n)\ge \f(z_n)\}\right)
\le 0. 
$$
Letting $n\to\infty$, the conclusion follows from Lemma \ref{scif}.

If now  $D\f(z) = 0$, we follow the lines of \cite[Proposition 1.3]{IS}. 
Since $\f$ is admissible at $z$, there are {$f \in \F$} and {$\omega \in C^\infty(\R)$ with $\omega'(0)= 0$} such that
$$
|\f(x,t) - \f(z) - \f_t(z)(t-T)|\le f(|x- \hat x|) + \omega(t-T).
$$ 
{ Set 
\begin{align}
& \psi(x,t)=\f_t(z)(t-T)+ 2f(|x-\hat x|)+2\omega(t-T)\,,\nonumber\\
 & \psi_n(x,t)=\psi(x,t) -\frac{1}{n(T-t)}\,. \label{psienne} 
\end{align}
}
We have that $ u-\psi$ has a strict maximum at $z$.
Hence for $n$ large enough $ u-\psi_n$ has a { maximum }
at $z_n=(\hat x_n,t_n) \in \QT$, with $z_n\to z$. 
As $\psi_n$ is admissible at $z_n$ and $u$ is  a subsolution, we have
{
\begin{multline}\label{boh}
\f_t(z)+\omega'(t_n-T)
\\
+ { 2 } f'(|\hat x_n-x|)\kappa_*(\hat x_n,  D \psi_{n}(\hat x_n),  D^2 \psi_{n}(\hat x_n), \{\psi_n(\cdot, t_n)\geq \psi_{n}(z_n)\})\leq 0
\end{multline}
if $\hat x_n\neq x$, while $\f_t(z)+\omega'(t_n-T)\leq 0$ if $\hat x_n=x$. 
Note that $\{\psi_n(\cdot, t_n)\geq \psi_{n}(z_n)\}=\R^N\setminus B_{f^{-1}(|\hat x_n-x|)}(x)$. 
Letting $n\to \infty$, we get $\f_t(z)\leq 0$ thanks to \eqref{ISvera}.
Hence, as claimed, $u$ is a subsolution. 
}
\end{proof}
\begin{remark}\label{rm:laura}
A similar reasoning also shows that the alternative characterization of sub- and super-solutions provided by Lemma~\ref{laura} holds also at 
points of the form $(x, T)$.
\end{remark}
\subsection{Existence of a viscosity solution}
Let $u_0:\R^N\mapsto \R$ be a continuous function, constant out of a compact set $\mathcal K$.  The  existence of a viscosity solution
to the Cauchy problem~\eqref{levelsetf} follows by standard arguments once the existence of at least one supersolution and a stability property for supersolutions are established. To this purpose, we first prove a confinement condition.

\begin{lemma}\label{barriera}
Let $R,T>0$ be fixed.  
There exists a constant $R' >R$ such that if
 $u\in USC( \RT)$ is a  subsolution  of \eqref{levelsetf}  with
 $u(x,0)\le C_0$ for $|x|\ge R$, then
 $$
u(x,t) \le  C_0 \qquad \text{for }|x|>R'\text{ and }t\in [0, T]\,.
$$
\end{lemma}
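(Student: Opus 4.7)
My strategy is to build a smooth radial ``bump'' barrier $v$ which is a strict supersolution, lies above $u$ at $t=0$, and equals $C_0$ far from the origin. Since $u\in USC(\RT)$ is constant outside a compact set, $M:=\sup u<+\infty$. I set $R(t):=R+(K+1)t$ with $K$ from axiom~D), fix small $\delta,\epsilon_1>0$, and choose $g\in C^2(\R)$ nonincreasing with $g\equiv M+1$ on $(-\infty,0]$, $g\equiv C_0$ on $[\epsilon_1,+\infty)$, $g'<0$ on $(0,\epsilon_1)$, and (crucially) ``flattened'' at $s=\epsilon_1$ at a rate matching some chosen $f\in\F$. The barrier is then
\[
v(x,t)\,:=\,g(|x|-R(t))+\delta t,
\]
and a direct check gives $v(\cdot,0)\ge u(\cdot,0)$ (inside $B_R$, $v=M+1>u$; outside, $v\ge C_0\ge u$ by hypothesis).

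I will argue $u\le v$ on $\RT$ by contradiction. Since $u-v$ is USC and constant in $x$ outside a fixed compact set, if $\sup(u-v)>0$ then it is attained at some $(x_0,t_0)$ with $t_0>0$ (invoking Lemma~\ref{Tfinale} if $t_0=T$). I then carry out a three-way case analysis on the position of $x_0$ relative to $R(t_0)$. If $|x_0|\le R(t_0)$, then $v(x_0,t_0)\ge M+1>u(x_0,t_0)$, an immediate contradiction. If $|x_0|-R(t_0)\in(0,\epsilon_1)$, then $Dv\ne 0$, the superlevel set $\{v(\cdot,t_0)\ge v(x_0,t_0)\}$ equals the ball $B_{|x_0|}$, and axiom~D) via $\kappa(x_0,B_{|x_0|})\ge\uc(|x_0|)>-K$ gives
\[
v_t(x_0,t_0)+|Dv(x_0,t_0)|\kappa(x_0,B_{|x_0|})\,\ge\,|g'|(K+1)+\delta-|g'|K\,=\,|g'|+\delta\,>\,0;
\]
since $\kappa_*=\kappa$ on the smooth ball $B_{|x_0|}$, the subsolution inequality \eqref{eqsubsol} (applied with $v$ as test function, admissible because $v\in C^2$ and $Dv\ne 0$) forces the reverse inequality, contradiction. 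Finally, if $|x_0|-R(t_0)\ge\epsilon_1$ then $Dv(x_0,t_0)=0$ and $v_t(x_0,t_0)=\delta>0$, and $v$ is admissible at $(x_0,t_0)$ (trivially if $|x_0|-R(t_0)>\epsilon_1$ strictly, by the calibrated flattening of $g$ if $|x_0|-R(t_0)=\epsilon_1$), so part~(i) of Definition~\ref{defvisco} forces $v_t(x_0,t_0)\le 0$, contradiction.

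Thus $u\le v$ on $\RT$. For $|x|\ge R':=R+(K+1)T+\epsilon_1$ and $t\in[0,T]$ one has $|x|-R(t)\ge\epsilon_1$, so $v(x,t)=C_0+\delta t\le C_0+\delta T$. The same $R'$ works for every $\delta>0$, so letting $\delta\to 0^+$ yields $u(x,t)\le C_0$ for $|x|\ge R'$, as claimed. The most delicate technical point is the admissibility of $v$ as a test function at an outer-corner maximum $|x_0|=R(t_0)+\epsilon_1$: one must design $g$ so that $g(s)-C_0$ is bounded near $s=\epsilon_1$ by $f(\epsilon_1-s)$ for the chosen $f\in\F$, which is arranged by a standard soft-cutoff construction (the $t$-dependence is absorbed into a smooth $\omega$ with $\omega'(0)=0$ via the Lipschitz bound $|R(t)-R(t_0)|\le(K+1)|t-t_0|$). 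Everything else is routine.
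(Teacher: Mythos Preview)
Your approach is essentially the same as the paper's: build a radial barrier whose superlevel balls expand with normal speed at least $K$ (the paper uses speed exactly $K$, you use $K+1$; both work), add a $\delta t$ penalty, and derive a contradiction at a maximum point via the subsolution inequality and the bound $\kappa\ge\uc>-K$ on balls. The paper is terser---it does not split into cases or discuss admissibility at the outer corner, simply asserting $(\f_\delta)_t(z_\delta)\le 0$ when $D\f_\delta(z_\delta)=0$---so your more explicit case analysis and your remark that $g$ must be flattened at $\epsilon_1$ compatibly with some $f\in\F$ are welcome additions rather than departures from the paper's argument.
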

\begin{proof}
Let {$\psi\in C^\infty([0,+\infty))$} be such that $\psi'(0)=0$, $\psi(s)\equiv C_0$ for $s\geq 2R$,  $\psi$ strictly decreasing in $[0, 2R]$, and $\psi(|x|)\geq u(x, 0)$ for all $x\in \R^N$. We now construct a test function $\f$, by letting all the superlevel sets of $\psi(|\cdot|)$ expand with constant normal velocity equal to $K$, where $K$ is the constant appearing in \eqref{lwrbdkappa}. Precisely, we set
\begin{equation}\label{esesub}
\f(x,t):=\begin{cases}
\psi(|x|-Kt) & \text{if $|x|\leq 2R+Kt$,}\\
C_0 & \text{otherwise.}
\end{cases}
\end{equation}
The lemma is proven (with $R':= 2R+KT$) once we show that $\f\geq u$. Assume toward a contradiction that there exists $\delta>0$ and $(x,t)\in \QT$ such that $(u-\f )(x,t)> \delta t$. Then,  setting $\f_\delta(x,t):=\f(x,t)+\delta t$, we have
$\max_{\RT} u-\f_\delta>0$. Let $z_\delta:= (x_\delta, t_\delta)$ be a maximum point and note that necessarily $t_\de>0$.
If $D \f_\delta(z_\delta) = 0$, recalling the definition of a subsolution we get the contradiction $0\le \delta = (\f_\delta)_t (z_\delta)\le 0$. 
If $D \f_\delta(z_\delta) \neq  0$, recalling again Definition~\ref{defvisco} of subsolution we get the contradiction
\begin{multline*}
\delta = \f_t(z_\delta)+\delta -  K|D\f(z_\delta)|  
\\ <  \f_t(z_\delta)+\delta 
+ |D\f(z_\delta)|  \kappa_*\left(x_\delta,D\f(z_\delta),D^2\f(z_\delta),
\{\f(\cdot,t_\delta)\ge \f(z_\delta)\}\right) \le 0.
\end{multline*}
\end{proof}

It is very easy to show that, as in the classical case, the maximum of two subsolutions  is still a subsolution.  In the following we show that the notion of subsolution is stable also with respect to taking upper relaxed limits.
\begin{proposition}\label{propstab}
Let $(u_n)_{n\ge 1}$ be a sequence of viscosity subsolutions such that $u_n = c_n$ in $(\R^N\setminus \mathcal K) \times [0,T)$, for some constant $c_n\in \R$ and some compact $\mathcal K\subseteq \R^N$.
Let, for any $z=(x,t)$,
\begin{equation}\label{uceu}
u^*(z)\ =\ \lim_{r\downarrow 0}\,\, \sup\left\{ 
u_n(\zeta)\,:\, |z-\zeta|\le r\,, n\ge \frac{1}{r}\right\}.
\end{equation}
If $u^*(z)<+\infty$ for all $z$, then $u^*$ is a subsolution.
\end{proposition}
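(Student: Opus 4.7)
The plan is to follow the standard viscosity stability argument, adapted to the nonlocal framework developed above. First I verify that $u^*$ is upper semicontinuous and constant outside $\mathcal K$, both properties being inherited from the $u_n$ together with the finiteness assumption. Given $z=(x,t)\in \R^N\times(0,T)$ and a test function $\f$ admissible at $z$ such that $u^*-\f$ has a local maximum at $z$, the goal is to establish the subsolution inequalities of Definition~\ref{defvisco}. By Remark~\ref{convo} one may assume this maximum is strict on a compact neighborhood $\overline U$ of $z$ contained in the admissibility domain of $\f$.

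The next step is the standard approximation of the maximum point. From the definition \eqref{uceu} of $u^*$, pick $n_k\to\infty$ and $\zeta_k\to z$ with $u_{n_k}(\zeta_k)\to u^*(z)$, and let $z_k=(x_k,t_k)\in\overline U$ maximize $u_{n_k}-\f$ over $\overline U$ (existence guaranteed by the upper semicontinuity of $u_{n_k}$). The strict-max property of $z$, combined with $u^*(\overline z)\ge \limsup_k u_{n_k}(z_k)$ for any accumulation point $\overline z$, forces $z_k\to z$ and $u_{n_k}(z_k)\to u^*(z)$. In particular, eventually $z_k$ lies in the interior of $\overline U$.

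If $D\f(z)\neq 0$, then $D\f(z_k)\neq 0$ for $k$ large, so $\f$ is automatically admissible at $z_k$ and the subsolution property of $u_{n_k}$ gives
\[
\f_t(z_k)+|D\f(z_k)|\,\kappa_*\bigl(x_k,D\f(z_k),D^2\f(z_k),\{\f(\cdot,t_k)\ge \f(z_k)\}\bigr)\le 0.
\]
Letting $k\to\infty$ and invoking Lemma~\ref{scif} (applied with the constant sequence $\f_k\equiv \f$ and $x_k\to x$) delivers the required inequality at $z$. If instead $D\f(z)=0$, the admissibility of $\f$ at $z$ provides $f\in\F$ and $\omega$ with $\omega'(0)=0$ controlling $|\f(y,s)-\f(z)-\f_t(z)(s-t)|$ by $f(|y-x|)+\omega(|s-t|)$. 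As in Remark~\ref{convo} and in the proof of Lemma~\ref{Tfinale}, I replace $\f$ by
\[
\psi(y,s):=\f(z)+\f_t(z)(s-t)+2f(|y-x|)+2\omega(|s-t|),
\]
which satisfies $\psi\ge\f$ near $z$ with equality at $z$, so $u^*-\psi$ still has a strict local maximum at $z$. Finding approximating maxima $z_k=(x_k,t_k)$ of $u_{n_k}-\psi$ as above, one applies the subsolution property of $u_{n_k}$ at $z_k$: either case (i) of Definition~\ref{defvisco} yields $\psi_t(z_k)\le 0$ directly when $x_k=x$, or case (ii) applies with level set $\{\psi(\cdot,t_k)\ge \psi(z_k)\}=\R^N\setminus B_{|x_k-x|}(x)$ and the Ishii--Souganidis condition \eqref{IS} forces the curvature term $f'(|x_k-x|)\,\kappa_*(\cdots)$ to vanish in the limit. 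In both sub-cases $\psi_t(z_k)\to\f_t(z)$, so $\f_t(z)\le 0$.

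The main obstacle is the degenerate case $D\f(z)=0$: one must ensure that the auxiliary test function $\psi$ is admissible at the approximating maximum points $z_k$ (which holds with the same $(f,\omega)$ data when $x_k=x$, and is automatic otherwise since $D\psi(z_k)\neq 0$) and that the vanishing-gradient/blowing-up-curvature product is controlled, which is precisely the purpose of the class $\F$ and of condition \eqref{IS}. The remaining case $t=T$, if needed, is handled by the usual penalization $1/[n(T-t)]$ exactly as in the proof of Lemma~\ref{Tfinale}.
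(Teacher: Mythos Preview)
Your proof is correct and follows essentially the same approach as the paper: strict maximum via Remark~\ref{convo}, approximating maxima $z_k\to z$, then Lemma~\ref{scif} in the non-degenerate case and the auxiliary function $\psi=\f_t(z)(s-t)+2f(|y-x|)+2\omega(|s-t|)$ together with~\eqref{IS} in the degenerate case. The only cosmetic differences are that you pass explicitly to a subsequence $n_k$ and you correctly identify the level set of $\psi$ as $\R^N\setminus B_{|x_k-x|}(x)$ (the paper writes $f^{-1}(|y_n-x|)$ for the radius, which is a typo).
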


Of course, a symmetric result holds for supersolutions.
\begin{proof}
Let $A\subset (0,T)$ be an open interval and let $\f:\R^N\times \ov A\to \R$ be an admissible  test function at $z=(x,t)$ with 
$\f(\cdot, s) = C(s)$ in $(\R^N\setminus \tilde {\mathcal K})\times \ov A$ for some compact set $\tilde {\mathcal K}$ and for all $s\in \ov A$, and  such that $u-\f$ has a strict maximum at $z$. Assume first that  $D\f(z)\neq 0$.  
Let $z_n$ be a  maximum point of $u_n-\f$ in { $(\mathcal K\cup \tilde {\mathcal K}) \times \bar A$}. By standard arguments  it follows that $z_n\to z$. Since for every $n$
$$
\f_t(z_n)+ |D\f(z_n)| \, \kappa_*\left(x,D\f(z_n),D^2\f(z_n),\{y: \f(y,t)\ge \f(z_n)\}\right)\le 0,
$$
by Lemma~\ref{scif} we conclude that
$$
\f_t(z)+ |D\f(z)| \, \kappa_*\left(x,D\f(z),D^2\f(z),\{y: \f(y,t)\ge \f(z)\}\right)\le 0.
$$

If now  $D\f(z) = 0$, we follow the lines of \cite[Proposition 1.3]{IS}. 
Since $\f$ is admissible at $z$, there are $\delta>0$, { $f \in \F $  and $\omega \in C^\infty(\R)$ with $\omega'(0)=  0$}, $\omega(t)>0$ for $t>0$  such that
$$
|\f(y,s) - \f(z) - \f_t(z)(s-t)|\le f(|x-y|) + \omega(s-t)
$$ 
for all $(y,s)\in \R^N\times A$. 
{ Set 
\begin{equation}\label{psienne2}
 \psi(y,s)=  \f_t(z)(s-t)+ 2f(|y-x|)+2\omega(s-t). 
\end{equation}
}
Note that  $u-\psi$ has a unique maximum at $z$ in $\R^N\times  A$. 
Let $z_n=(y_n,  s_n)$ be a maximum point of 
$u_n-\psi$ in $\R^N\times \bar A$. Then $z_n\to z$. 
If $D\psi(z_n)=0$, then $y_n=x$ and $\psi_n$ is admissible at $z_n$
thanks to~\eqref{psienne2}.
We deduce $\f_t(z)+\omega' (s_n-t)\leq 0$. Otherwise $y_n\neq x$,
$\psi$ is still admissible at $z_n$ and we have
\begin{multline}
\f_t(z)+\omega'(s_n-t)
\\
+ {2} f'(|y_n-x|)\kappa_*(y_n,  D \psi_{n}(y_n),  D^2 \psi(y_n), \{\psi(\cdot, s_n)\geq \psi(z_n)\})\leq 0.
\end{multline}
Note that $\{\psi(\cdot, s_n)\geq \psi(z_n)\}=\R^N\setminus B_{f^{-1}(|y_n-x|)}(x)$. 
Letting $n\to \infty$, we get $\f_t(z)\leq 0$ thanks to \eqref{ISvera}.
Hence, as claimed, $u$ is a subsolution. 
\end{proof}

We now can state a general existence  result:
\begin{theorem}\label{thexun}
Let $u_0:\R^N\to \R$ be a uniformly continuous function with $u_0 = C_0$ for $|x|\ge R$. Let $R'$ be the constant  given by Lemma \ref{barriera}.  
Then, there exists a   viscosity 
solution $u:\R^N\times[0,T]\to \R$ of~(\ref{levelsetf})  with $u=C_0$ for $|x|\ge R'$.
\end{theorem}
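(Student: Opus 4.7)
The plan is to run Perron's method. Three of the four ingredients needed are already in place: Proposition~\ref{propstab} provides stability of subsolutions under upper relaxed limits, the pointwise maximum of two subsolutions is again a subsolution (observed just before the proposition), and Lemma~\ref{barriera} gives a uniform confinement of every subsolutional competitor outside a common ball. What remains is to produce pointwise barriers at $t=0$ and to carry out the standard bump-up argument for pushing the lower envelope to a supersolution.

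\emph{Barriers.} Let $\omega$ be a modulus of continuity of $u_0$. For each $y\in\R^N$, I would choose a nondecreasing profile $g\in C^\infty([0,\infty))$ that dominates $\omega$ on a fixed compact containing $\mathcal K-y$, is constant past some $M>0$, and lies in $\F$ near the origin (i.e.\ $g(0)=g'(0)=g''(0)=0$, $g''>0$ in a right neighbourhood of $0$, and $\lim_{r\to 0^+}g'(r)\,\overline c(r)=0$). Set
$$
\f^{\pm}_y(x,t)\ :=\ u_0(y)\pm g(|x-y|)\pm\lambda t,
$$
with $\lambda:=\sup_{r>0}g'(r)(\overline c(r)\vee K)$, which is finite by \eqref{IS}, the continuity of $\overline c$ on $(0,\infty)$, and the fact that $g'\equiv 0$ on $[M,\infty)$. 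Using translation invariance B) and the bound $-\overline c(r)\le \kappa(\cdot,\R^N\setminus\overline{B_r(y)})\le K$ coming from \eqref{defbarc0}, \eqref{defbarc} and \eqref{lwrbdkappa}, one checks that $\f^+_y$ is a classical supersolution and $\f^-_y$ a classical subsolution of \eqref{levelsetf} at points where $D\f^{\pm}_y\neq 0$, while the degenerate point $x=y$ is handled by $g\in\F$, which is precisely the admissibility condition there. By construction $\f^-_y(\cdot,0)\le u_0\le \f^+_y(\cdot,0)$, with equality at $y$.

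\emph{Perron and bump-up.} Set $\overline U(x,t):=\inf_y\f^+_y(x,t)$ and
$$
u(x,t)\ :=\ \sup\bigl\{v(x,t):v\text{ viscosity subsolution of \eqref{levelsetf},}\ v\le\overline U\bigr\}.
$$
Each $\f^-_y$ is a competitor in the supremum, so $\f^-_y\le u\le \overline U$ for every $y$; this forces $u_*(y,0)\ge u_0(y)$ and $u^*(y,0)\le u_0(y)$, so both envelopes agree with $u_0$ at $t=0$, and Lemma~\ref{barriera} gives $u\equiv C_0$ outside $B_{R'}$. Proposition~\ref{propstab} (combined with the max-of-subsolutions fact) shows that $u^*$ is a subsolution. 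The main and only non-routine step is to prove that $u_*$ is a supersolution. I would use the classical bump-up argument: if $u_*$ violated the supersolution inequality of Definition~\ref{defvisco} at some $z_0=(x_0,t_0)\in\QT$ for an admissible test function $\f$ with $u_*-\f$ attaining a strict local minimum there, then after a preliminary shift ensuring $\f(z_0)=u_*(z_0)<\overline U(z_0)$ one replaces $u$ by $\max\{u,\f+\e\}$ in a small parabolic neighbourhood of $z_0$; for $\e>0$ small enough this new function is still below $\overline U$, is still a viscosity subsolution (by the lower semicontinuity of $\kappa_*$ from Lemma~\ref{scif}), and strictly exceeds $u$ at $z_0$, contradicting the maximality of $u$. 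Carrying out this bump-up cleanly within the admissibility formalism of Definition~\ref{defadmissible}, and in particular verifying the admissibility of the modified test functions at possibly critical points, is the main technical obstacle; it goes through thanks to Lemma~\ref{scif} and the superjet reformulation of Lemma~\ref{laura}.
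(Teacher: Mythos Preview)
Your overall scheme is the legitimate dual of the paper's: you run Perron from below (sup of subsolutions), while the paper runs it from above (inf of supersolutions). Both routes are standard, and the bump argument you sketch is the mirror image of the one the paper outlines. There is, however, a genuine gap in your barrier construction for the initial data.

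You require a profile $g$ that simultaneously (i) lies in $\F$ near the origin and (ii) dominates the modulus of continuity $\omega$ of $u_0$. These two demands are incompatible in general: any $g\in\F$ satisfies $g(0)=g'(0)=g''(0)=0$, hence $g(r)=o(r^2)$ as $r\to 0$, whereas a merely uniformly continuous $u_0$ can have $\omega(r)\ge cr$ (Lipschitz) or worse, so $g(r)\ge\omega(r)$ fails in every neighbourhood of $0$. Consequently your inequalities $\f^-_y(\cdot,0)\le u_0\le\f^+_y(\cdot,0)$ break down near $y$, and you lose the conclusion $u_*(\cdot,0)=u^*(\cdot,0)=u_0$. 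The standard remedy (and what is behind the reference to \cite[Theorem~1.8]{IS} that the paper invokes) is to insert an $\varepsilon$-shift: for each $\varepsilon>0$ choose $\delta>0$ with $\omega(\delta)<\varepsilon$, then pick $g_\varepsilon\in\F$ that is arbitrary on $[0,\delta]$ but large enough on $[\delta,\infty)$ to dominate $\omega-\varepsilon$ there, and use the barriers $u_0(y)\pm\varepsilon\pm g_\varepsilon(|x-y|)\pm\lambda_\varepsilon t$. This yields $u_0(y)-\varepsilon\le u_*(y,0)\le u^*(y,0)\le u_0(y)+\varepsilon$, and letting $\varepsilon\to 0$ recovers the initial datum.

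A smaller point: Lemma~\ref{barriera} only gives the one-sided confinement $u^*\le C_0$ outside $B_{R'}$; it does not by itself yield $u\equiv C_0$ there. The paper obtains the matching lower bound by exhibiting an explicit subsolution of the form $C_0+\lambda(C_0-\f)$, with $\f$ the supersolution from~\eqref{esesub}. Your barriers $\f^-_y$ do not do this job, since $\f^-_y(y,t)=u_0(y)-\lambda t<C_0$ for $t>0$ even when $|y|\ge R$.
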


\begin{proof}
We briefly sketch  the proof of this result which is  very classical, see~\cite{CIL,IS}, and
based on Perron's method. 
Let $\f$ be the function defined in \eqref{esesub}, and notice that it is a supersolution. 
Then, we can set
\begin{equation*}
 u(x,t) = \inf \{ u(x,t):\, u \textrm{ is a supersolution of the Cauchy problem \eqref{levelsetf}} \}.
\end{equation*}
The fact that $u$ is bounded from below follows easily by using a
smooth barrier as in Lemma~\ref{barriera}. {For instance,  the
barrier $C_0 + \lambda(C_0-\f)$ will work for $\lambda$ so large that $C_0+\lambda(C_0-\psi(|\cdot |))\leq u(\cdot,0)$}, also showing that that   $u=C_0$ for $|x|\ge R'$
(this is also is a consequence of the fact that $u(\cdot,0) = u_0$, which
we explain at the end of this proof). 
Let $u^*$, $u_*$ be the upper and lower  semicontinuous envelope of $u$.
The fact that $u_*$
is a supersolution   follows from Proposition~\ref{propstab},
observing that at each point $(x,t)$ we can find a suitable sequence
of supersolutions $(u_n)_{n\ge 1}$, constant on $(\R^N\setminus B_{R'})\times[0,T)$ (see Lemma \ref{barriera}), whose relaxed lower limit is $u_*(x,t)$.

The fact that $u^*$ is  a subsolution is standard and obtained
by contradiction, assuming that at some point $\bar z=(\bar x,\bar t)$
of (strict) contact with a test function $\f\ge u^*$, $\f$ does not
satisfy~\eqref{eqsubsol}. 
If $D\f(\bar z)\neq 0$, one can use the test function $\f$ to construct a new supersolution $ u< u_*$ in a neighborhood of $\bar z$, thus contradicting the minimality of $\bar u$. 
To treat the case $D\f(\bar z)=0$ one repeats the same construction, but  (as  in the proof of \cite[Prop. 1.3]{IS}) with $\f$ replaced by
\[
\psi(x,t)=\f(\bar z)
+ \f_t(\bar z)(t-\bar t) + 2f(|x-\bar x|) + 2\omega(t-\bar t).
\]

Finally, the  initial condition $u(\cdot,0) = u_0$  
can be shown as in the last part of the proof of \cite[Theorem 1.8]{IS}.
\end{proof}

\section{Uniqueness of viscosity solutions}\label{sec:uniqueness}
In this section we will prove that, under some additional assumptions, \eqref{levelsetf} admits a unique viscosity solution. 
In the first subsection, we consider {\em first order} geometric flows, corresponding to the  case where the relaxed curvatures $\kappa_*$ and $\kappa^*$ depend only on the first order super-jet and sub-jet, respectively.
Examples of relevant first-order flows are given in Section~\ref{sec:examples}.

 In the second subsection, we deal with truly second order flows, under  an additional  uniform continuity assumption on the nonlocal curvature $\kappa$. 

Before entering the details of the uniqueness theory, it is convenient to give the following definition and state an auxiliary lemma.

\begin{definition}\label{setf}
We say that a set valued function $F: [0,T]\to \Ins$ is a subsolution of the geometric flow \eqref{oee} if $\chi_{F(t)}$ is a viscosity subsolution of \eqref{levelsetf} in the sense of Definition \ref{defvisco}. 
The definition of  supersolutions and solutions of the geometric flow are analogous. 
\end{definition}

\begin{lemma}\label{L1}
Let $u$ be a subsolution of \eqref{levelsetf}. Then, for every $s\in\R$ the set function $t \to F(t):=\{u(\cdot,t) \ge s\}$ is a subsolution of the geometric flow \eqref{oee}, according with Definition \ref{setf}. 
The analogous statement holds for supersolutions.
\end{lemma}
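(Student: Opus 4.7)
My strategy is to realize $v(x,t) := \chi_{F(t)}(x)$ as the upper relaxed limit (in the sense of \eqref{uceu}) of a sequence $v_n := \Psi_n(u)$, where $\Psi_n$ are smooth non-decreasing approximations of $\chi_{[s,+\infty)}$, and then conclude via the stability Proposition \ref{propstab}. The main work reduces to proving the geometric invariance of the level set equation \eqref{levelsetf}. First observe that $v$ is USC (since $\{u \ge s\}$ is closed in $\RT$ by USC of $u$) and constant outside a compact set, matching the hypotheses of Proposition \ref{propstab}. Fix $\Psi_n \in C^\infty(\R;[0,1])$ non-decreasing with $\Psi_n \equiv 0$ on $(-\infty,s-1/n]$ and $\Psi_n \equiv 1$ on $[s,+\infty)$, and set $v_n := \Psi_n(u)$. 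The USC of $u$ readily yields that the upper relaxed limit of $\{v_n\}$ coincides with $v$: if $u(x,t) \ge s$ then $v_n(x,t) = 1$ for every $n$; if $u(x,t) < s$, USC provides a neighborhood where $u < s - \eta$ for some $\eta > 0$, on which $v_n \equiv 0$ for all $n > 1/\eta$.

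The central step is the geometric invariance: if $u$ is a subsolution of \eqref{levelsetf} and $\Psi \in C^\infty(\R)$ is non-decreasing, then $\Psi(u)$ is a subsolution. I first handle the case where $\Psi$ is strictly increasing on the (bounded) range of $u$, so that $\Psi^{-1}$ is smooth there. Given an admissible test function $\tilde\f$ at $(x_0, t_0)$ with $\Psi(u) - \tilde\f$ achieving a local maximum there, set
\[
\f(x,t) := \Psi^{-1}\bigl(\tilde\f(x,t) - \tilde\f(x_0,t_0) + \Psi(u(x_0,t_0))\bigr).
\]
Then $u - \f$ has a local maximum at $(x_0, t_0)$ (by monotonicity of $\Psi^{-1}$), and $\f$ inherits admissibility at $(x_0, t_0)$ from $\tilde\f$ via Taylor expansion of $\Psi^{-1}$. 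Applying the subsolution property of $u$ to $\f$: the chain rule gives $D\f = D\tilde\f/\Psi'(u(x_0,t_0))$ together with a similar expression for $D^2\f$, and combined with the scaling invariance $\kappa_*(x, \lambda p, \lambda X + \mu p \otimes p, E) = \kappa_*(x, p, X, E)$ for $\lambda > 0$ (noted in the remark after Definition \ref{jets}) plus the identity $\{\f(\cdot, t_0) \ge \f(x_0, t_0)\} = \{\tilde\f(\cdot, t_0) \ge \tilde\f(x_0, t_0)\}$ (from strict monotonicity of $\Psi^{-1}$), the inequality transfers exactly to the subsolution inequality for $\Psi(u)$ at $(x_0, t_0)$. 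The case $D\tilde\f(x_0, t_0) = 0$ is dispatched similarly via condition (i). The weaker non-decreasing case (covering our $\Psi_n$) is recovered by approximating $\Psi$ by $\Psi + \e\,\mathrm{id}$ and passing $\e \downarrow 0$ through the uniform convergence on the bounded range of $u$ and Proposition \ref{propstab}.

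Each $v_n$ is thus a subsolution, and a final application of Proposition \ref{propstab} to $\{v_n\}$ yields that $v$ is a subsolution of \eqref{levelsetf}; hence $F(t)$ is a subsolution of the geometric flow in the sense of Definition \ref{setf}. The supersolution statement is symmetric: use $\tilde\Psi_n \uparrow \chi_{(s,+\infty)}$ with $\tilde\Psi_n \equiv 0$ on $(-\infty, s]$ and $\tilde\Psi_n \equiv 1$ on $[s+1/n,+\infty)$, and apply the dual of Proposition \ref{propstab}; the relevant set function there is $t \mapsto \{u(\cdot, t) > s\}$, whose indicator is LSC. The main technical obstacle will be verifying the admissibility bound for $\f$ when $D\tilde\f(x_0, t_0) = 0$: one must check that the characteristic estimate $|\tilde\f - \tilde\f(x_0, t_0) - \tilde\f_t(t-t_0)| \le f(|x-x_0|) + \omega(|t-t_0|)$ with $f \in \F$ persists for $\f$ after composition with $\Psi^{-1}$. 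A Taylor expansion of $\Psi^{-1}$ yields a bound of the same form with $f, \omega$ rescaled by a positive smooth factor, but confirming that the rescaled $\tilde f$ still satisfies the defining condition $\lim_{\rho \to 0^+} \tilde f'(\rho)\, \oc(\rho) = 0$ of $\F$ is a mild but essential consistency check.
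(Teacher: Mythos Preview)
Your proposal is correct and follows essentially the same route as the paper: approximate the Heaviside function by smooth non-decreasing $\Psi_n$, use geometric invariance of \eqref{levelsetf} to see that each $\Psi_n(u)$ is a subsolution, and pass to the limit. The only cosmetic difference is that the paper phrases the last step as ``$\chi_F=\sup_n H_n(u-s)$'' and invokes the (easier) fact that a supremum of subsolutions is a subsolution, whereas you use the upper relaxed limit and Proposition~\ref{propstab}; for a sequence that increases pointwise to the already-USC function $\chi_{\{u\ge s\}}$ these two devices coincide. Your more detailed discussion of geometric invariance (including the admissibility check for $\f=\Psi^{-1}(\tilde\f-c)$ at points where $D\tilde\f=0$) simply fills in what the paper dismisses as ``classical''; the rescaled $\tilde f$ does remain in $\F$ since $(Cf+Cf^2)'(\rho)\,\oc(\rho)\to 0$ follows from $f'(\rho)\,\oc(\rho)\to 0$ and $f(\rho)\to 0$.
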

The proof is classical and follows
 by approximating the Heavyside function as a supremum
of smooth increasing functions $H_n$, {so that}
$\chi_F(x,t)=\sup_n H_n(u(x,t)-s)$.

 \subsection{Uniqueness for first-order flows}\label{subsec:1storder}

Here we consider the case of curvatures, which generate  a first order flow.  
Namely, we denote by $\Regm$ the class of sets of $\R^N$, which are the closure of an open set of class $C^{1,1}$ with compact boundary, and  we  assume that the following property holds:
\begin{itemize}
\item[(FO):] Let $\Sigma\in \Regm$, let  $x\in \pa \Sigma$, and let $(p,X)$ and $(p,Y)$ be elements
of $\Jet^{2,+}_\Sigma(x)$ and $\Jet^{2,-}_\Sigma(x)$, respectively. 
Then,
\begin{equation}\label{eq:FO}
\kappa_*(x, p, X, \Sigma)=\kappa^*(x, p, Y, \Sigma)\,.
\end{equation}
\end{itemize}
Note that the above assumption postulates that the semicontinuous extensions $\kappa_*$ and $\kappa^*$ are  independent of the second derivative variables $X$ and $Y$, at least on $C^{1,1}$-sets. Under these circumstances, we may regard the common value of the quantities in \eqref{eq:FO} as an extension of the definition of  curvature to sets of class $C^{1,1}$; i.e., for all $\Sigma\in \Regm$ 
 we may set 
 \begin{multline*}
\kappa(x, \Sigma):=\kappa_*(x, p, X, \Sigma)=\kappa^*(x, p, Y, \Sigma)\\
 \text{for any $(p,X)\in \Jet^{2,+}_\Sigma(x)$ and $(p,Y)\in \Jet^{2,-}_\Sigma(x)$.}
\end{multline*}

In this situation, the problem becomes similar to the methodology developed
in~\cite{C-NLII} by P.~Cardaliaguet. In particular, as show
the following Lemma~\ref{lm:kstarFO}, our extensions
$\kappa_*$, $\kappa^*$ correspond here precisely to the extensions
$h^\sharp$, $h^\flat$ found in eqn~(6), (7) of~\cite{C-NLII} (with $h=-\kappa$).
For completeness, and also because of slight differences (in particular,
we have no sign restriction on our curvatures), we present here complete
proofs of uniqueness, which rely as in~\cite{C-NLII} on Ilmanen's
interposition lemma.

\begin{lemma}\label{lm:kstarFO}
Assume that {\rm(FO)} holds and let $F\in\Ins$. Then, for any  $(p,X)\in \Jet^{2,+}_F(x)$ we have
\begin{multline}\label{k_*FO}
\kappa_*(x, p, X, F)=\sup\bigl\{\kappa(x, \Sigma): \Sigma\in \Regm, \\
F\subset\Sigma,\, x\in \pa \Sigma, 
\text{ and } p\perp \pa\Sigma\text{ at }x \bigr\}\,.
\end{multline}
Analogously, for any  $(p,Y)\in \Jet^{2,-}_F(x)$ we have
\begin{multline}\label{k^*FO}
\kappa^*(x, p, Y, F)=\inf\bigl\{\kappa(x, \Sigma): \Sigma\in \Regm, \\
\inter\Sigma\subset F,\, x\in \pa \Sigma, 
\text{ and } p\perp \pa\Sigma\text{ at }x \bigr\}\,.
\end{multline}
\end{lemma}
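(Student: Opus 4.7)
I will prove the identity~\eqref{k_*FO} for $\kappa_*$ by a double inclusion; the dual identity~\eqref{k^*FO} for $\kappa^*$ is obtained by the same argument applied to complements (alternatively, to $(-p,-Y)$ and the set $F^c$). Let $A$ denote the supremum on the right-hand side of~\eqref{k_*FO}.

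\emph{The easy inclusion $\kappa_*(x,p,X,F)\le A$.} Any $E\in\Reg$ admissible in the supremum~\eqref{defkappal} belongs to $\Regm$ and contains $F$, while for such a smooth $E$ the condition $(p,X)\in\Jet^{2,-}_E(x)$ forces $p$ to be a positive multiple of the inner unit normal to $\partial E$ at $x$, so that $p\perp\pa E$. Since on $\Reg$ the extended value of $\kappa$ (defined through assumption~(FO)) coincides with the original one, $\kappa(x,E)$ is a value competing in $A$, and passing to the supremum over $E$ yields the inequality.

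\emph{The nontrivial inclusion.} Fix a competitor $\Sigma\in\Regm$ for $A$. The key step is to produce a symmetric matrix $X'$ satisfying $X'\ge X$ and $(p,X')\in\Jet^{2,+}_\Sigma(x)$. To do so, note that since $\Sigma$ is $C^{1,1}$ near $x$ with inner normal along $p$, representing $\pa\Sigma$ locally as the graph of a $C^{1,1}$ function $g$ over $p^\perp$ with $g(0)=0$ and $Dg(0)=0$ gives the uniform estimate $|g(y')|\le C|y'|^2$; a direct computation then yields $(p,MI)\in\Jet^{2,+}_\Sigma(x)$ for every $M$ sufficiently large. Since $\Jet^{2,+}_\Sigma(x)$ is clearly upward closed in the matrix slot, it suffices to choose $X':=\mu I$ with $\mu$ larger than both the largest eigenvalue of $X$ and the above threshold. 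By assumption~(FO),
\[
\kappa(x,\Sigma)=\kappa_*(x,p,X',\Sigma),
\]
so the definition~\eqref{defkappal} furnishes, for every $\varepsilon>0$, a set $E\in\Reg$ with $E\supseteq\Sigma$, $(p,X')\in\Jet^{2,-}_E(x)$, and $\kappa(x,E)\ge\kappa(x,\Sigma)-\varepsilon$. The subjet $\Jet^{2,-}_E(x)$ is downward closed in the matrix variable (a direct consequence of passing to complements and the upward closure of $\Jet^{2,+}$), so from $X\le X'$ we conclude $(p,X)\in\Jet^{2,-}_E(x)$. Together with $E\supseteq\Sigma\supseteq F$, this makes $E$ admissible in the supremum defining $\kappa_*(x,p,X,F)$, whence $\kappa_*(x,p,X,F)\ge \kappa(x,\Sigma)-\varepsilon$. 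Sending $\varepsilon\downarrow 0$ and taking the supremum over $\Sigma$ yields $\kappa_*(x,p,X,F)\ge A$.

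The conceptual heart of the proof is the observation that, under (FO), the matrix argument of $\kappa_*$ (and, dually, of $\kappa^*$) becomes inactive on $\Regm$, so the supremum can be enlarged from $\Reg$ to $\Regm$ while only the first-order condition $p\perp\pa\Sigma$ is retained. The only genuinely delicate point is the construction of the auxiliary matrix $X'$ simultaneously dominating $X$ and lying in $\Jet^{2,+}_\Sigma(x)$; the rest reduces to the elementary order-closure properties of super- and subjets.
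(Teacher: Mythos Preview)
Your proof is correct and follows the same two-step strategy as the paper: the easy inclusion uses that every $E\in\Reg$ admissible in~\eqref{defkappal} is already a competitor in the right-hand supremum, while the hard inclusion picks a $C^{1,1}$ competitor $\Sigma$, applies (FO) to reduce to $\kappa_*$ on $\Sigma$, and then extracts an $E\in\Reg$ from~\eqref{defkappal} which is automatically admissible for $F$. Your treatment is in fact slightly more careful than the paper's in one respect: the paper writes $\kappa_*(x,p,X,\Sigma)=\kappa(x,\Sigma)$ directly, tacitly assuming $(p,X)\in\Jet^{2,+}_\Sigma(x)$, which need not follow from $(p,X)\in\Jet^{2,+}_F(x)$ and $F\subset\Sigma$; you handle this by explicitly constructing $X'\ge X$ with $(p,X')\in\Jet^{2,+}_\Sigma(x)$ (using the exterior ball condition of a $C^{1,1}$ set) and then passing back to $X$ via the downward closure of $\Jet^{2,-}_E(x)$.
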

\begin{proof}
We only prove \eqref{k_*FO}, the proof of \eqref{k^*FO} being analogous. Denote by $\underline\kappa (x, p, F)$ the quantity defined by
the right-hand side of \eqref{k_*FO}. Clearly, by definition of $\kappa_*$ we immediately have that $\kappa_*(x, p, X, F)\leq \underline\kappa (x, p, F)$. 

To prove the opposite inequality, fix $\e>0$ and let $\Sigma$ be a $C^{1,1}$-set, admissible for the definition of $\underline\kappa (x, p, F)$, such that 
\begin{equation}\label{FO1}
\kappa(x, \Sigma)\geq \underline\kappa (x, p, F)-\e\,.
\end{equation}
Moreover, let {$A\in \Reg$},  
admissible for the definition of $\kappa_* (x, p, X, \Sigma)= \kappa (x,  \Sigma)$, such that 
\begin{equation}\label{FO2}
\kappa(x, A)\geq \kappa (x,  \Sigma)-\e\,.
\end{equation}
Since $A$ is also admissible for $F$, we have
\begin{equation}\label{FO3}
\kappa_*(x, p, X, F) \ge \kappa(x, A)\geq \kappa (x, \Sigma)-\e \ge  \underline\kappa (x, p, F)-2\e,
\end{equation}
and the conclusion follows from the arbitrariness of $\e$. 
\end{proof}

We continue with the following lemma, which provides a crucial comparison property between $\kappa_*$ and $\kappa^*$. 

\begin{lemma}\label{lm:mono1*}
Assume that condition {\rm(FO)} above  holds and let  $F$, $G$ be a closed and an open set, respectively,  with compact boundaries   and such that $F\subset G$.  Let  $x\in \pa F$,  $y\in \pa G$ satisfy
\begin{equation}\label{mindist}
|x-y|=\dist(\pa F, \pa G)\,.
\end{equation}
Then,  for all $(p,X) \in \Jet^{2,+}_F(x)$ and $(p,Y) \in \Jet^{2,-}_G(y)$, with $p:=x-y$, we have  
$$
\kappa_*(x,p,X,F) \ge \kappa^*(y,p,Y,G)\,. 
$$
\end{lemma}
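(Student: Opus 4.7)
The plan is to combine the first-order characterization provided by condition (FO) and Lemma~\ref{lm:kstarFO} with Axiom B) and a refined Ilmanen interposition. By that lemma, $\kappa_*(x,p,X,F)$ and $\kappa^*(y,p,Y,G)$ are independent of $X$ and $Y$, and equal respectively to
\[
\sup\{\kappa(x,\Sigma):\Sigma\in\Regm,\,F\subset\Sigma,\,x\in\pa\Sigma,\,p\perp\pa\Sigma\text{ at }x\}
\]
and
\[
\inf\{\kappa(y,\Sigma):\Sigma\in\Regm,\,\inter\Sigma\subset G,\,y\in\pa\Sigma,\,p\perp\pa\Sigma\text{ at }y\}.
\]
It therefore suffices to exhibit a single $\Sigma\in\Regm$ admissible in the infimum such that $\Sigma+p$ is admissible in the supremum; Axiom B) will then yield $\kappa(x,\Sigma+p)=\kappa(y,\Sigma)$, and the desired inequality will follow by chaining.

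First I would establish the geometric inclusion $F-p\subset\overline G$. Setting $d=|p|=\dist(\pa F,\pa G)$, for any $r\in(0,d)$ one has $F+\overline{B_r(0)}\subset G$: otherwise some $w=z+v$ with $z\in F$ and $|v|\le r$ would lie outside $G$, so the segment from $z\in G$ to $w\notin G$ would cross $\pa G$ at some $\tilde w$; joining $\tilde w$ back to $\pa F$ would then produce a pair in $\pa F\times\pa G$ at distance strictly less than $d$, contradiction. Choosing $v=-r\,p/|p|$ and letting $r\nearrow d$ gives $F-p\subset\overline G$.

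Next, $y=x-p$ lies in $\pa(F-p)\cap\pa G$, and the minimum-distance geometry, together with the given super- and subjets, forces $p/|p|$ to be the common inward normal to both $F-p$ and $G$ at $y$. I would then apply Ilmanen's interposition lemma in the refined form developed in \cite{IL, C-NLI, C-NLII}: it yields $\Sigma\in\Regm$ with $F-p\subset\Sigma$, $\inter\Sigma\subset G$, $y\in\pa\Sigma$, and $p\perp\pa\Sigma$ at $y$. Setting $\Sigma':=\Sigma+p$, translation invariance gives $\Sigma'\in\Regm$ with $F\subset\Sigma'$, $x\in\pa\Sigma'$, and $p\perp\pa\Sigma'$ at $x$. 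Hence
\[
\kappa_*(x,p,X,F)\ge\kappa(x,\Sigma')=\kappa(y,\Sigma)\ge\kappa^*(y,p,Y,G),
\]
the middle equality by Axiom B).

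The main obstacle will be the Ilmanen step. The standard lemma guarantees only a smooth surface strictly separating two disjoint closed sets, whereas here $F-p$ and $\overline G$ are tangent at $y$. One must therefore use a localized variant respecting this tangency, producing an interposing $\Sigma$ for which $y$ is genuinely a boundary point with the correct normal. This refinement is precisely what the constructions in \cite{C-NLII} provide in the first-order setting.
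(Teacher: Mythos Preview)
Your overall strategy coincides with the paper's: invoke Lemma~\ref{lm:kstarFO} to reduce to the $\Regm$ characterizations, then combine Ilmanen interposition with translation invariance (Axiom~B) to produce a single $C^{1,1}$ set linking the supremum in~\eqref{k_*FO} to the infimum in~\eqref{k^*FO}. The difference lies in the order of operations, and your ordering creates a real (if avoidable) obstacle.

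You first translate $F$ by $-p$ so that $F-p$ and $\overline G$ become \emph{tangent} at $y$, and then ask for an interposing $\Sigma\in\Regm$ through that contact point. As you yourself note, this is not the standard Ilmanen lemma, which only separates \emph{strictly} disjoint closed sets; you then defer the difficulty to a ``refined'' version in~\cite{C-NLII}. Even granting that such a tangent-point interposition can be extracted from those references, you are importing a substantially harder statement than what is needed.

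The paper sidesteps this entirely by reversing the order: apply the \emph{standard} Ilmanen lemma to $F$ and $G^c$, which are genuinely a positive distance apart, to obtain $\Sigma\in\Regm$ with $F\subset\Sigma$, $\overline\Sigma\subset G$, and a point $\hat z\in\pa\Sigma$ on the segment $[x,y]$ satisfying
\[
|x-\hat z|=\dist(\pa F,\pa\Sigma),\qquad |y-\hat z|=\dist(\pa G,\pa\Sigma).
\]
These equidistance relations then allow one to \emph{translate} $\Sigma$: the set $\Sigma+(x-\hat z)$ still contains $F$, touches $\pa F$ at $x$ with normal direction $p$, and is therefore admissible in~\eqref{k_*FO}; symmetrically, $\inter\Sigma+(y-\hat z)\subset G$ with $y\in\pa(\Sigma+(y-\hat z))$, admissible in~\eqref{k^*FO}. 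Since $\kappa(x,\Sigma+(x-\hat z))=\kappa(y,\Sigma+(y-\hat z))$ by Axiom~B), the chaining goes through exactly as in your last display. In effect, the $\Sigma$ you were seeking at the tangency is simply $\Sigma_{\text{Ilmanen}}+(y-\hat z)$, obtained \emph{after} interposition rather than before. Reworking your argument along these lines removes the dependence on any non-standard interposition result.
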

\begin{proof}
Exploiting \eqref{mindist},  we may apply the Ilmanen Interposition Lemma (\cite{IL}), to find 
a  set $\Sigma\in \Regm$  such that $F\subset \Sigma$,  $\overline \Sigma\subset G$, and $[x, y]\cap\pa\Sigma=\{\hat z\}$,
with $\hat z$ satisfying
$$
|x-\hat z|=\dist(\pa F, \pa\Sigma)= |y-\hat z|=\dist(\pa G, \pa\Sigma)\,.
$$
Here $[x,y]$ stands for the segment with endpoints $x$ and $y$.  In particular, $F\subset \Sigma+x-\hat z$ with $x\in \pa (\Sigma+x-\hat z)$ and
$p\perp \pa (\Sigma+x-\hat z)$ at $x$.   Analogously $\inter{\Sigma}+y-\hat z\subset G$, with $y\in \pa ({\Sigma}+y-\hat z)$ and 
$p\perp \pa ({\Sigma}+y-\hat z)$ at $y$ .  Recalling \eqref{k_*FO} and \eqref{k^*FO}, we may conclude
$$
\kappa_*(x,p,X,F)\geq \kappa(x,  \Sigma+x-\hat z)=\kappa(x, {\Sigma}+y-\hat z)\geq \kappa^*(y,p,Y,G)\,.
$$
\end{proof}

Uniqueness of viscosity solutions is a  straightforward consequence of the following  Comparison Principle, which is the main result of this subsection.

\begin{theorem}[First Order Comparison Principle] \label{th:CP1st}
Assume that condition {\rm(FO)} holds. Let $u\in USC( \RT)$ and $v\in LSC(\RT)$, both constant (spatially)
out of a compact set,
be a subsolution and a supersolution of \eqref{levelsetf}, respectively. If $u(\cdot,0) \le v(\cdot,0)$,  
then $u\le v$ in $\RT$.
\end{theorem}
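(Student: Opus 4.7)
The plan is to argue by contradiction via a doubling of variables, the mismatch at the doubled maximum being killed by Lemma~\ref{lm:mono1*}. Suppose $M:=\sup_{\RT}(u-v)>0$. After the standard reductions (replace $u$ by $u-\eta t$ for a small $\eta>0$, and if necessary add a time penalization $-\mu/(T-t)$), one may assume this supremum is attained at some interior point $(\bar x,\bar t)\in\R^N\times(0,T)$. Fix an integer $k\ge 2$ such that $f(r):=r^{2k}/(2k\,\varepsilon^{2k-1})$, suitably truncated to a constant outside a compact set, belongs to the class $\F$ of Definition~\ref{defadmissible}. For $\varepsilon>0$ small I would consider
\[
\Phi_\varepsilon(x,y,t,s)\,=\,u(x,t)-v(y,s)-f(|x-y|)-\frac{(t-s)^2}{2\varepsilon}-\eta t\,,
\]
whose maximum is attained at some $(x_\varepsilon,y_\varepsilon,t_\varepsilon,s_\varepsilon)$ satisfying, by the usual estimates, $(x_\varepsilon,y_\varepsilon)\to(\bar x,\bar x)$, $(t_\varepsilon,s_\varepsilon)\to(\bar t,\bar t)$ with $t_\varepsilon,s_\varepsilon\in(0,T)$ and both penalty terms vanishing in the limit.

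From the maximum condition one extracts $(a_\varepsilon,p_\varepsilon,X_\varepsilon)\in\PJet^{2,+}u(x_\varepsilon,t_\varepsilon)$ and $(b_\varepsilon,p_\varepsilon,Y_\varepsilon)\in\PJet^{2,-}v(y_\varepsilon,s_\varepsilon)$, with $p_\varepsilon=f'(|x_\varepsilon-y_\varepsilon|)(x_\varepsilon-y_\varepsilon)/|x_\varepsilon-y_\varepsilon|$ and $a_\varepsilon-b_\varepsilon=\eta$. If $x_\varepsilon=y_\varepsilon$, so that $p_\varepsilon=0$, then because $f\in\F$ the frozen-$y$ test function is admissible at $(x_\varepsilon,t_\varepsilon)$, and clause~(i) of Definition~\ref{defvisco} forces $a_\varepsilon\le 0$ and $b_\varepsilon\ge 0$, contradicting $\eta>0$. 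Hence $p_\varepsilon\neq 0$, and Lemma~\ref{laura} (with Remark~\ref{rm:laura}) delivers
\begin{align*}
a_\varepsilon+|p_\varepsilon|\,\kappa_*(x_\varepsilon,p_\varepsilon,X_\varepsilon,F_\varepsilon)&\,\le\,0,\\
b_\varepsilon+|p_\varepsilon|\,\kappa^*(y_\varepsilon,p_\varepsilon,Y_\varepsilon,G_\varepsilon)&\,\ge\,0,
\end{align*}
where $F_\varepsilon:=\{u(\cdot,t_\varepsilon)\ge u(x_\varepsilon,t_\varepsilon)\}$ and $G_\varepsilon:=\{v(\cdot,s_\varepsilon)>v(y_\varepsilon,s_\varepsilon)\}$. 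Subtracting yields $\eta\le |p_\varepsilon|\bigl[\kappa^*(y_\varepsilon,p_\varepsilon,Y_\varepsilon,G_\varepsilon)-\kappa_*(x_\varepsilon,p_\varepsilon,X_\varepsilon,F_\varepsilon)\bigr]$.

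The heart of the argument --- and the only step in which hypothesis~(FO) is used --- is the geometric comparison $\kappa_*(x_\varepsilon,p_\varepsilon,X_\varepsilon,F_\varepsilon)\ge\kappa^*(y_\varepsilon,p_\varepsilon,Y_\varepsilon,G_\varepsilon)$. Testing the maximum at $(z,z+y_\varepsilon-x_\varepsilon,t_\varepsilon,s_\varepsilon)$ for arbitrary $z\in F_\varepsilon$ gives $v(z+y_\varepsilon-x_\varepsilon,s_\varepsilon)\ge v(y_\varepsilon,s_\varepsilon)$, that is, $F_\varepsilon+(y_\varepsilon-x_\varepsilon)\subseteq\{v(\cdot,s_\varepsilon)\ge v(y_\varepsilon,s_\varepsilon)\}$, the two closed sets sharing the boundary point $y_\varepsilon$ and an outer normal proportional to $p_\varepsilon$. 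Invoking Ilmanen's interposition lemma, exactly as in the proof of Lemma~\ref{lm:mono1*}, produces a $\Sigma\in\Regm$ sandwiched between them with $y_\varepsilon\in\pa\Sigma$ and $p_\varepsilon\perp\pa\Sigma$ at $y_\varepsilon$; the characterizations~\eqref{k_*FO}--\eqref{k^*FO} of Lemma~\ref{lm:kstarFO}, together with the translation invariance~B), then give
\[
\kappa_*(x_\varepsilon,p_\varepsilon,X_\varepsilon,F_\varepsilon)\,\ge\,\kappa(x_\varepsilon,\Sigma-(y_\varepsilon-x_\varepsilon))\,=\,\kappa(y_\varepsilon,\Sigma)\,\ge\,\kappa^*(y_\varepsilon,p_\varepsilon,Y_\varepsilon,G_\varepsilon)\,,
\]
which closes the contradiction. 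The main obstacle lies precisely in this last inequality: the inclusion $F_\varepsilon+(y_\varepsilon-x_\varepsilon)\subseteq\{v\ge v(y_\varepsilon,s_\varepsilon)\}$ is not strict, whereas $\kappa^*(\cdot,G_\varepsilon)$ is an infimum over $\Regm$-competitors whose interior lies strictly inside the open set $G_\varepsilon$. This gap is bridged by a small inward $C^{1,1}$ perturbation of $\Sigma$ combined with the continuity axiom~C), exactly as in the touching-set form of interposition already used for Lemma~\ref{lm:mono1*}. It is hypothesis~(FO) that crucially makes the Hessian arguments $X_\varepsilon,Y_\varepsilon$ drop out of the curvature evaluation on $\Regm$-sets --- precisely the mismatch that survives in the second-order setting and that necessitates the different strategy of Subsection~\ref{subsec:2ndorder}.
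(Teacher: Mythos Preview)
Your overall strategy --- doubling variables, extracting jets via Lemma~\ref{laura}, and closing with a geometric comparison of $\kappa_*$ and $\kappa^*$ --- is the same as the paper's. The difficulty you yourself flag, however, is a genuine gap, and the patch you propose does not work.

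Concretely, testing the maximum at the exact translate $(z,z+y_\e-x_\e,t_\e,s_\e)$ only yields
\[
F_\e+(y_\e-x_\e)\ \subseteq\ \{v(\cdot,s_\e)\ge v(y_\e,s_\e)\}\,,
\]
a non-strict inclusion into the \emph{closed} superlevel. Ilmanen's interposition lemma, as used in Lemma~\ref{lm:mono1*}, requires strictly separated sets (a closed $F$ contained in an open $G$ with $\dist(\pa F,\pa G)>0$); in your situation the two sets touch at $y_\e$, so no $C^{1,1}$ set can be strictly interposed. Even if one invokes a ``touching'' variant of Ilmanen to produce a $\Sigma\in\Regm$ with $F_\e+(y_\e-x_\e)\subseteq\Sigma$ and $\inter\Sigma\subseteq\{v\ge v(y_\e,s_\e)\}$, an inward $C^{1,1}$ perturbation of $\Sigma$ does not force $\inter\Sigma\subseteq G_\e$: interior points of $\Sigma$ where $v=v(y_\e,s_\e)$ (which are not excluded) will remain outside $G_\e$ however you shrink $\Sigma$. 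So the characterization~\eqref{k^*FO} cannot be invoked, and the bracket $\kappa^*(y_\e,\ldots,G_\e)-\kappa_*(x_\e,\ldots,F_\e)$ is not controlled.

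The clean fix --- and this is precisely what the paper does --- is to exploit the \emph{strict} monotonicity of the penalty $f$ near $0$. Instead of testing only at translates $y=z+(y_\e-x_\e)$, compare the maximum at $(x_\e,y_\e,t_\e,s_\e)$ with $(x,y,t_\e,s_\e)$ for any $x\in F_\e$ and any $y$ with $|y-x|<|y_\e-x_\e|$: since $f(|x-y|)<f(|x_\e-y_\e|)$, one obtains
\[
v(y_\e,s_\e)-v(y,s_\e)\ \le\ u(x_\e,t_\e)-u(x,t_\e)+f(|x-y|)-f(|x_\e-y_\e|)\ <\ 0\,,
\]
hence $y\in G_\e$ strictly. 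This gives the buffered inclusion
\[
F_\e + B\bigl(0,|x_\e-y_\e|\bigr)\ \subset\ G_\e\,,
\]
so that $\dist(\pa F_\e,\pa G_\e)=|x_\e-y_\e|>0$ and Lemma~\ref{lm:mono1*} applies directly, yielding $\kappa_*(x_\e,p_\e,X_\e,F_\e)\ge\kappa^*(y_\e,p_\e,Y_\e,G_\e)$ without any perturbation argument. With this correction your proof goes through and is essentially the paper's.
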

\begin{proof}
Assume by contradiction that there exists $\ov z:= (\bar x, \bar t)\in \R^N\times (0,T]$ such that 
$u(\bar z)-v(\bar z)>0$.
Let $f\in \F$, $\alpha,\e>0$ and set 
$$
\Phi(x,t,y,s):= u(x,t) - v(y,s) - \alpha f(|x-y|) - \alpha(t-s)^2 -\e t - \e s.
$$ 

Notice that $\Phi$ is u.s.c. Let $(\hat x, \hat t, \hat y, \hat s) \in \RT \times \RT$
be a maximum point of $\Phi$. We may choose  $\e$ so small that the maximum is strictly positive. Moreover, as $\alpha\to\infty$ we clearly have $|\hat x - \hat y|$, $|\hat s - \hat t| \to 0$. Thus, since $\Phi(x,0,x,0)\le 0$,  we can choose  $\alpha$ so large that   $\hat s$ and $\hat t$ are strictly positive. We consider now two cases.

{\it First case: $\hat x = \hat y$.} Let 
\begin{equation}\label{eq:f}
 \f(x,t):=  v(\hat y,\hat s) + \alpha f(|x-\hat y|) + \alpha(t-\hat s)^2 +\e t + \e \hat s.
 \end{equation}
\begin{equation}\label{eq:psi}
 \psi(y,s):=  u(\hat x,\hat t) - \alpha f(|\hat x-y|) - \alpha(\hat t-s)^2 - \e \hat t - \e s.
 \end{equation}

Since $u$ is a subsolution and $v$ is a supersolution we have
$$
0\ge\f_t(\hat x,\hat t) =  
2\alpha (\hat t - \hat s) + \e, \qquad    0 \le \psi_t(\hat y,\hat s) = 2\alpha (\hat t - \hat s) - \e,
$$
which yields a contradiction.

{\it Second case: $\hat x \neq \hat y$.} 
Note that
$$
\Bigl(2\alpha (\hat t - \hat s) + \e, \alpha f'(|\hat p| ) \frac{\hat p}{|\hat p|} , X\Bigr) \in \PJet^{2,+} u(\hat x,\hat t), 
$$
$$
\Bigl(2\alpha (\hat t - \hat s) - \e, \alpha f'(|\hat p| ) \frac{\hat p}{|\hat p|} , -X\Bigr) \in  \PJet^{2,-} v(\hat y,\hat s),
$$
where  $\hat p:=\hat x-\hat y$ and $X:=D^2\f(\hat x, \hat t)$, with $\f$ defined in \eqref{eq:f}. Thus, by Lemma~\ref{laura} and Remark~\ref{rm:laura}, we have
\begin{equation}\label{sub&super}
\begin{array}{c}
\displaystyle 2\alpha (\hat t - \hat s) + \e+k_*\Bigl(\hat x,  \alpha f'(|\hat p| ) \frac{\hat p}{|\hat p|}, X, \{ u(\cdot,\hat t) \ge u(\hat x, \hat t)\}\Bigr)\le 0, \vspace{10pt} \\
\displaystyle 2\alpha (\hat t - \hat s) - \e+k^*\Bigl(\hat y,  \alpha f'(|\hat p| ) \frac{\hat p}{|\hat p|}, -X, \{ v(\cdot,\hat s) > v(\hat y, \hat s)\}\Bigr)\geq 0.
\end{array}
\end{equation}

Observe now that if $x\in  \{ u(\cdot,\hat t) \ge u(\hat x, \hat t)\}$
and $|y-x|< |\hat{y}-\hat{x}|$, then
\[
v(\hat y,\hat s) - v(y,\hat s)
\ \le\ u(\hat x,\hat t)-u(x,t) +\alpha f(|x-y|)-\alpha f(|\hat x-\hat y|)
\ <\ 0
\]
so that $y\in \{ v(\cdot,\hat s) > v(\hat y, \hat s)\}$. In other words, 
$$
\{ u(\cdot,\hat t) \ge u(\hat x, \hat t)\}+B(0,|\hat{y}-\hat{x}|)\subset  \{ v(\cdot,\hat s) > v(\hat y, \hat s)\},
$$
which by Lemma~\ref{lm:mono1*} implies
\begin{multline*}
k_*\Bigl(\hat x,  \alpha f'(|\hat p| ) \frac{\hat p}{|\hat p|}, X, \{ u(\cdot,\hat t) \ge u(\hat x, \hat t)\}\Bigr)\\
\geq k^*\Bigl(\hat y,  \alpha f'(|\hat p| ) \frac{\hat p}{|\hat p|}, -X, \{ v(\cdot,\hat s) > v(\hat y, \hat s)\}\Bigr)\,.
\end{multline*}
This inequality, combined with \eqref{sub&super}, easily leads to the  contradiction  $2\e\leq 0$. This  concludes the proof of the theorem.
\end{proof}

 \subsection{Uniqueness for second-order flows}\label{subsec:2ndorder}
Here we consider general second-order flows. In order to establish uniqueness we will need to assume the following reinforced continuity property, which replaces C) of Subsection~\ref{sec:curvature}:

\begin{itemize}
\item[C')] Uniform continuity: Given $R>0$, there exists a modulus of continuity $\omega_R$ such that the following holds.
For all $E\in\Reg$, $x\in\partial E$, such that $E$ has both an internal
and external ball condition of radius $R$ at $x$,
 and for all $\Phi:\R^N\to \R^N$  diffeomorphism of class {$C^{\regu}$},
with $\Phi(y)=y$ for $|y-x|\ge 1$, we have  
$$
|\kappa(x,E) - \kappa(\Phi(x),\Phi(E))|\le \omega_R(\|\Phi - Id\|_{C^{\regu}}).
$$
\end{itemize}
(We can observe that if $\Phi$ is a translation out of
$B(x,1)$, then the estimate still holds by translational invariance
of the curvature.)

We now prove that the uniform continuity property stated in C') extends to $\kappa_*$ and $\kappa^*$.
\begin{lemma}\label{lm:Bkappastar}
Assume that C') holds. Then,
given $R>0$, there exists a modulus of continuity $\omega_R$ such that the following holds.
For all $F\in\Ins$, $x\in\partial F$, with internal and external
ball condition at $x$ of radius $R$,
any $(p,X)\in \Jet^{2,+}_F(x)$ with $p\neq 0$,
$|X|/|p|\le 1/R$,
then for any $\Phi:\R^N\to \R^N$  diffeomorphism of class {$C^{\regu}$},
we have
\begin{multline*}
|\kappa_*(x,p,X,F)- \kappa_*(\Phi(x),D (f \circ  \Phi^{-1})(\Phi(x)),
D^2 (f \circ \Phi^{-1})(\Phi(x)) ,\Phi(F))|\\
\le \omega_R(\|\Phi - Id\|_{C^{\regu}})
\end{multline*}
where $f(y) =  (y-x)\cdot p   +  \frac{1}{2} X(y-x)\cdot (y-x)$.
The same holds true for  $\kappa^*$.
\end{lemma}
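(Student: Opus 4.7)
The strategy is to use the characterization of $\kappa_*$ as a supremum in \eqref{defkappal} and to push each admissible competitor forward by $\Phi$, controlling the change in $\kappa$ by invoking assumption C') applied to that competitor. The argument for $\kappa^*$ is symmetric (inf in place of sup, sub-jets in place of super-jets), so I would treat $\kappa_*$ in detail and sketch the obvious modifications for $\kappa^*$. Without loss of generality, we may assume $\Phi=Id$ outside $B(x,1)$ (as in the hypothesis of C')); moreover only small $\|\Phi-Id\|_{C^{\regu}}$ matters.

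The first step is to reduce the supremum in \eqref{defkappal} to competitors $E\in\Reg$ that enjoy simultaneously internal and external ball conditions at $x$ of some radius $R'$ close to $R$. Given $\e>0$, I would build $E^\e\in\Reg$ so that, inside a small ball $B(x,r)$, $\partial E^\e$ coincides with the smooth paraboloid
\[
\bigl\{y\,:\,(y-x)\cdot p+\tfrac{1}{2}(X-\e I)(y-x)\cdot(y-x)=0\bigr\},
\]
while outside $B(x,r)$ the set $E^\e$ is a smooth modification of $F$ containing $F$. Because $|X|/|p|\le 1/R$, the principal curvatures of $\partial E^\e$ at $x$ are bounded by $1/R+\e$, giving an external ball condition of radius $R-O(\e)$, and the internal ball condition of $F$ at $x$ of radius $R$ is inherited by $E^\e\supseteq F$. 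By construction $(p,X-\e I)\in\Jet^{2,-}_{E^\e}(x)$, so $E^\e$ is admissible in \eqref{defkappal}. Combining with Lemma~\ref{lemmalsc} and sending $\e\to 0$ (with a uniform-superjet argument to replace $X-\e I$ by $X$), one checks that $\kappa_*(x,p,X,F)$ is reached as the supremum of $\kappa(x,E^\e)$ over such regular $E^\e$.

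The second step is to transfer $E^\e$ by $\Phi$. Set $\tilde x:=\Phi(x)$, $\tilde E:=\Phi(E^\e)$, $\tilde F:=\Phi(F)$ and $\tilde f:=f\circ\Phi^{-1}$. Since $\Phi$ is a $C^{\regu}$ diffeomorphism close to the identity, $\tilde E\in\Reg$ and $\tilde E\supseteq\tilde F$. A direct chain-rule computation shows that the $2$-jet of $\tilde f$ at $\tilde x$ is precisely $(D\tilde f(\tilde x),D^2\tilde f(\tilde x))$, and because locally near $x$ the inclusion $E^\e\supseteq \{f\ge \tfrac{\e}{2}|\cdot-x|^2\}$ holds, one gets $\tilde E\supseteq \{\tilde f\ge\tfrac{\e}{2}|\Phi^{-1}(\cdot)-x|^2\}$ near $\tilde x$, hence $(D\tilde f(\tilde x),D^2\tilde f(\tilde x))\in\Jet^{2,-}_{\tilde E}(\tilde x)$, so $\tilde E$ is admissible for the right-hand side of the claim. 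Moreover $\tilde E$ inherits from $E^\e$ internal and external ball conditions at $\tilde x$ of some radius $R''\to R-O(\e)$ as $\|\Phi-Id\|_{C^{\regu}}\to 0$. Applying assumption C') to the set $E^\e$ and the diffeomorphism $\Phi$ (with the radius $R/2$ absorbing all perturbations),
\[
|\kappa(x,E^\e)-\kappa(\tilde x,\tilde E)|\ \le\ \omega_{R/2}(\|\Phi-Id\|_{C^{\regu}}).
\]
Taking the supremum over admissible $E^\e$ and then $\e\to 0$, again via Lemma~\ref{lemmalsc} to remove the $\e$-shift in the Hessian, yields one of the two inequalities. The reverse inequality follows by the same argument applied to $\Phi^{-1}$ at the point $\tilde x$, using that $\|\Phi^{-1}-Id\|_{C^{\regu}}\le C\|\Phi-Id\|_{C^{\regu}}$ when $\Phi$ is close enough to the identity.

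The main obstacle will be the first step: producing a family of competitors $E^\e\in\Reg$ that simultaneously contain $F$, satisfy the sub-jet condition, have $C^{\regu}$-boundary, and carry \emph{both} a uniform internal and external ball condition of radius close to $R$ at $x$. The hypothesis $|X|/|p|\le 1/R$ is essential here, as it is what prevents the chosen boundary from developing unbounded curvature at $x$ once matched to the paraboloid. The transformation rule for the $2$-jet under $\Phi$ and the stability of ball conditions under a $C^{\regu}$-small diffeomorphism are routine but must be checked, and one should also verify that the passage $\e\to 0$ in the Hessian is justified by the uniform-superjet semicontinuity of $\kappa_*$ established in Lemma~\ref{lemmalsc}.
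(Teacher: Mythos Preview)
Your approach is essentially the same as the paper's: reduce to competitors $E$ enjoying both an internal and an external ball condition at $x$ (so that C') applies), push forward by $\Phi$, and use that $\Phi(E)$ is admissible for the transformed problem. However, your execution of the first step is more complicated than necessary and contains a sign slip.

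First, the sign issue: if $\partial E^\e$ coincides near $x$ with the paraboloid $\{(y-x)\cdot p+\tfrac12(X-\e I)(y-x)\cdot(y-x)=0\}$, then the region on the side of $F$ does \emph{not} contain $F$ locally, because $(p,X)\in\Jet^{2,+}_F(x)$ only guarantees $F\subset\{f\ge -\delta|\cdot-x|^2\}$. You need the paraboloid for $X+\e I$ (or equivalently to take $E^\e$ locally equal to $\{f+\tfrac{\e}{2}|\cdot-x|^2\ge 0\}$) to ensure $E^\e\supseteq F$ near $x$; then $(p,X)\in\Jet^{2,-}_{E^\e}(x)$ holds and $E^\e$ is admissible in \eqref{defkappal}.

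Second, and more substantively, the paper avoids building a specific family $E^\e$ altogether. It takes an \emph{arbitrary} admissible $E\in\Reg$ with $E\supseteq F$ and $(p,X)\in\Jet^{2,-}_E(x)$, observes that $E$ already inherits the internal ball condition of radius $R$ from $F$, and then for any $R'<R$ simply removes from $E$ the external ball $B_{R'}=B(x-R'p/|p|,R')$ (which is also external to $\{f\ge 0\}$ thanks to $|X|/|p|\le 1/R$), smoothing to obtain $E'\in\Reg$ with $F\subseteq E'\subseteq E\setminus B_{R'}$ and $(p,X)\in\Jet^{2,-}_{E'}(x)$. By monotonicity $\kappa(x,E')\ge\kappa(x,E)$, and now $E'$ has both ball conditions of radius $R'$, so C') applies directly. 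Taking the sup over $E$ then gives the one-sided inequality immediately, with no need for a limiting $\e\to 0$ step or for Lemma~\ref{lemmalsc}. This is both shorter and bypasses the need to argue that your specific $E^\e$ exhaust the supremum, which you only sketched.
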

\begin{proof}
Let $E\in \Reg$ with $E\supseteq F$ and $(p,X)\in \Jet^{2,-}_{{E}}(x)$.
A first remark is that $E$ has an inner ball condition at $x$ of
radius $R$, since it contains $F$. Given $R'<R$, and letting
$B_{R'}=B(x-R'p/|p|,R')$ be the external ball
of radius $R'$ which touches $\partial F$ at $x$ (only),
we observe that
we may find a set $E'\in \Reg$ such that $E'\subseteq E\setminus B_{R'}$
and $E'\supseteq F$, and still (since the ball $B_{R'}$ is
also exterior and tangent to the set $\{f\ge 0\}$, thanks to the
condition $|X|/|p|\le 1/R$), $(p,X)\in \Jet^{2,-}_{{E'}}(x)$.
By assumption A) 
 one has
$\kappa(x,E')\ge \kappa(x,E)$. By assumption C'),
\[
\kappa(x,E')  - \kappa(\Phi(x),\Phi(E'))\le \omega_{R'}(\|\Phi - Id\|_{C^{\regu}})\,.
\]
Since $\Phi(E')\in \Reg$, $\Phi(E')\supseteq \Phi(F)$ and
$(D (f \circ  \Phi^{-1})(\Phi(x)),D^2 (f \circ \Phi^{-1})(\Phi(x)) )
\in \Jet^{2,-}_{{\Phi(E')}}(\Phi(x))$, then
\begin{multline*}
\kappa_*(\Phi(x),D (f \circ  \Phi^{-1})(\Phi(x)),
D^2 (f \circ \Phi^{-1})(\Phi(x)) ,\Phi(F)) 
\\ \ge 
\kappa(\Phi(x),\Phi(E'))
\ge \kappa(x,E')-\omega_{R'}(\|\Phi - Id\|_{C^{\regu}})
\\ \ge \kappa(x,E)-\omega_{R'}(\|\Phi - Id\|_{C^{\regu}})\,.
\end{multline*}
Taking the the supremum over all suitable sets $E$,
we deduce
\begin{multline*}
\kappa_*(\Phi(x),D (f \circ  \Phi^{-1})(\Phi(x)),
D^2 (f \circ \Phi^{-1})(\Phi(x)) ,\Phi(F)) 
\\ \ge
\kappa_*(x,p,X,F)
-\omega_{R'}(\|\Phi - Id\|_{C^{\regu}})\,.
\end{multline*}
{The other inequality in order to conclude the proof (possibly redefining slightly $\omega_R$) follows analogously, swiching the role of $E$ with $\Phi(E)$.}
\end{proof}

Now, if C') holds, we also deduce a natural comparison property
for the curvatures $\kappa_*$, $\kappa^*$ of sets included in one
another with a unique contact point.
\begin{lemma}\label{lm:mono*}
Assume C') holds.
Let $x\in\R^N$,  $F,G\in \Ins$  with $F\subset G\cup \{x\}$
and $\partial F \cap \partial G=\{x\}$.
Let $(p,X) \in \Jet^{2,+}_F(x)$,   
$(p,Y) \in \Jet^{2,-}_G(x)$, with $X\le Y$. Then,
$$
\kappa_*(x,p,X,F) \ge \kappa^*(x,p,Y,G). 
$$
\end{lemma}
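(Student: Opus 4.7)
The strategy parallels the proof of Lemma~\ref{lm:mono1*}, but since we no longer assume (FO), the interposing smooth set must carry specific second-order data at $x$, and the reinforced continuity C') (through Lemma~\ref{lm:Bkappastar}) will be used to control the effect of unavoidable small perturbations. For brevity write $f_A(y):=p\cdot(y-x)+\tfrac12 A(y-x)\cdot(y-x)$ for any symmetric matrix $A$.

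The heart of the proof is to construct, for every sufficiently small $\delta>0$, a set $\Sigma_\delta\in\Reg$ with $x\in\partial\Sigma_\delta$, $F\subseteq\Sigma_\delta$, $\inter\Sigma_\delta\subseteq G$, and whose exact second-order jet $(p,Z_\delta)$ satisfies $X-c\delta I\le Z_\delta\le Y+c\delta I$ for some constant $c>0$. Once such $\Sigma_\delta$ is built, by the monotonicity of jets with respect to the ordering of $X$ (recorded at the very end of Subsection~\ref{sec:curvature} and used throughout Section~\ref{sec:2}), we will have $(p,X-c\delta I)\in\Jet^{2,-}_{\Sigma_\delta}(x)$ and $(p,Y+c\delta I)\in\Jet^{2,+}_{\Sigma_\delta}(x)$, so that
\[
\kappa_*(x,p,X-c\delta I,F)\ \ge\ \kappa(x,\Sigma_\delta)\ \ge\ \kappa^*(x,p,Y+c\delta I,G).
\]
Letting $\delta\to 0$ and invoking the semicontinuity provided by Lemma~\ref{lm:Bkappastar} (or by the same idea used in Lemma~\ref{scif}) to compare $\kappa_*(\cdot,X-c\delta I,\cdot)$ with $\kappa_*(\cdot,X,\cdot)$ and analogously for $\kappa^*$, we will reach the desired inequality.

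The construction of $\Sigma_\delta$ proceeds by gluing. Near $x$, we use that $(p,X)\in\Jet^{2,+}_F(x)$ and $(p,Y)\in\Jet^{2,-}_G(x)$: for each $\delta>0$ there is $r_\delta>0$ such that, inside $B(x,r_\delta)$, $F\subseteq\{f_{X+\delta I}\ge 0\}$ and $G\supseteq\{f_{Y-\delta I}>0\}$. Choose any $Z\in[X,Y]$ (which is nonempty because $X\le Y$) and set $Z_\delta:=Z$ if $X<Y$ with $\delta$ small enough that $X+\delta I\le Z\le Y-\delta I$; if $X=Y$ one instead takes $Z_\delta:=Z\pm c\delta I$ with the sign chosen so that the paraboloid sits on the right side. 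The $C^\infty$ graph of a mollified parabolic profile with second-order jet $(p,Z_\delta)$ at $x$ then provides a local piece of $\partial\Sigma_\delta$ that locally traps $F$ inside $\Sigma_\delta$ and keeps $\Sigma_\delta$ inside $G$. Away from $x$, the closed sets $\overline F\setminus B(x,r)$ and $\overline{G^c}\setminus B(x,r)$ are disjoint (using $F\subseteq G\cup\{x\}$ and compactness of the relevant boundaries) and therefore at strictly positive distance; Ilmanen's interposition lemma furnishes a $C^{\regu}$ smooth hypersurface separating them, which we smoothly glue to the local parabolic piece to complete $\Sigma_\delta\in\Reg$.

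\emph{Main difficulty.} The most delicate step is gluing the two pieces without destroying the prescribed second-order jet at $x$, especially in the borderline case $X=Y$ where the second-order data at $x$ is completely rigid and any extra curvature added by the gluing must be absorbed in a controlled $\delta$-perturbation. It is precisely here that assumption C')—via Lemma~\ref{lm:Bkappastar}—does its work: it allows us to replace $\Sigma_\delta$ by a $C^{\regu}$-small diffeomorphic image of itself that has the \emph{exact} desired jet $(p,Z)$ at $x$ while only changing $\kappa(x,\Sigma_\delta)$ by a modulus $\omega_R(\delta)$, which vanishes as $\delta\to 0$. The rest of the argument (the outer Ilmanen step, the monotonicity manipulations, and the passage to the limit) is routine given the tools already established in Section~\ref{sec:2}.
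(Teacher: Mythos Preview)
Your plan works cleanly only when $X<Y$ strictly. In that regime one can indeed pick $Z$ with $X+\delta I\le Z\le Y-\delta I$ for $\delta$ small, build a $\Reg$-set with exact jet $(p,Z)$ sandwiched between $F$ and $G$, and conclude directly; this is essentially what the paper does in its first paragraph. The real issue is the borderline case $X\le Y$ with $X\not< Y$, and here your construction of $\Sigma_\delta$ cannot be carried out as written. From $(p,X)\in\Jet^{2,+}_F(x)$ and $(p,Y)\in\Jet^{2,-}_G(x)$ you only get, in $B(x,r_\delta)$, $F\subseteq\{f_{X+\delta I}\ge 0\}$ and $G\supseteq\{f_{Y-\delta I}>0\}$. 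A $\Reg$-set $\Sigma_\delta$ with $F\subseteq\Sigma_\delta$ and $\inter\Sigma_\delta\subseteq G$ must therefore have jet $Z_\delta$ satisfying $X+\delta I\le Z_\delta\le Y-\delta I$, and this interval is \emph{empty} unless $Y-X\ge 2\delta I$; in particular it is always empty when $X=Y$. Your suggestion ``take $Z_\delta=Z\pm c\delta I$'' does not resolve this: either sign breaks one of the two containments. And your proposed fix via Lemma~\ref{lm:Bkappastar}---deforming $\Sigma_\delta$ by a small diffeomorphism to adjust its jet---misfires for the same reason: the diffeomorphism moves $\Sigma_\delta$ itself, so the containments $F\subseteq\Sigma_\delta$ or $\inter\Sigma_\delta\subseteq G$ are precisely what get destroyed, and Lemma~\ref{lm:Bkappastar} says nothing about those.

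The paper's way out is to apply the diffeomorphism to $F$ rather than to the (nonexistent) interpolant. One first thickens $F$ to $F'=F\cup B(x+Rp/|p|,R)$ (with $R$ small so that $x$ remains the unique contact point with $\partial G$), which preserves $(p,X)\in\Jet^{2,+}_{F'}(x)$ and creates the two-sided ball condition needed to invoke Lemma~\ref{lm:Bkappastar}. Then a diffeomorphism $\Phi_\e(y)=y+\e\eta(y)p$ with $\eta(x)=0$, $D^2\eta(x)=I$, supported near $x$, produces $\Phi_\e(F')\subset G\cup\{x\}$ still touching $\partial G$ only at $x$, but now with superjet $(p,X_\e)$ where $X_\e=X-\e|p|^2 I<Y$. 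Lemma~\ref{lm:Bkappastar} gives $\kappa_*(x,p,X,F')\ge\kappa_*(x,p,X_\e,\Phi_\e(F'))-\omega_R(\e\|\eta\|_{C^{\regu}}|p|)$, and since $X_\e<Y$ strictly the easy interpolation step yields $\kappa_*(x,p,X_\e,\Phi_\e(F'))\ge\kappa^*(x,p,Y,G)$. Sending $\e\to 0$ finishes. The key conceptual point you are missing is that C') should be spent on perturbing one of the \emph{given} sets to manufacture the strict inequality $X_\e<Y$, rather than on trying to squeeze an interpolant into a gap that is not there.

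A secondary issue: your final limit $\kappa_*(x,p,X-c\delta I,F)\to\kappa_*(x,p,X,F)$ is not justified by lower semicontinuity (which would go the wrong way), and in fact $(p,X-c\delta I)$ need not lie in $\Jet^{2,+}_F(x)$, so the left-hand side may not even be defined in the paper's framework.
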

\begin{proof}
First we observe that if $X<Y$ then one may find $X',Y'$ with
$X<X'<Y'<Y$ so that near the contact point $x$, $F$ lies
inside the set $\{\la p,y-x\ra + \la X'(y-x),y-x \ra/2 \ge 0 \}$
while $G$ contains $\{\la p,y-x\ra + \la Y'(y-x),y-x \ra/2 > 0 \}$.
Then, since $x$ is the unique contact point of $\partial F$ and
$\partial G$, one can build a set $E \in \Reg$ with jet $(p,X')$
at $x$ and $F\subseteq E$, $\inter E\subseteq G$. It follows,
by definition, that
\[
\kappa_*(x,p,X,F) \ge \kappa(x,E) \ge \kappa^*(x,p,Y,G). 
\]
Now, if $X\not< Y$, we let $F'=F\cup B$ where
$B=B(x+R p/|p|,R)$ with $R<|p|/|X|$ small enough
so that $x$ remains the unique point in $\partial F'\cap\partial G$.
This new set has both an
internal and external ball condition of radius $R$ at $x$, and
one still has $(x,p,X) \in \Jet^{2,+}_{F'}(x)$, moreover
since $F\subset F'$,
$\kappa_*(x,p,X,F) \ge \kappa_*(x,p,X,F')$. We can  find
a diffeomorphism of the form  $\Phi_\e(y)=y+ \e\eta(y)p$ where
$\eta:\R^d\to [0,1]$
is a smooth function with support in a neighborhood of $x$, with
$\eta(x)=0$ and $D^2\eta(x)=I$, such that the set $\Phi_\e(F')$
still has $x$ as unique contact point with $\partial G$.
Lemma~\ref{lm:Bkappastar} ensures that
$\kappa_*(x,p,X,F') \ge \kappa_*(x,p,X_\e,\Phi_\e(F')) - \omega_R(\e\|\eta\|_{C^{\regu}}|p|)$, where $X_\e = X-\e|p|^2 I$.
Since $X_\e<Y$, one has $\kappa_*(x,p,X_\e,F') \ge \kappa^*(x,p,Y,G)$,
hence
\[
\kappa_*(x,p,X,F)  \ge \kappa^*(x,p,Y,G) - \omega_R(\e\|\eta\|_{C^{\regu}}|p|)
\]
and sending $\e$ to zero we recover the thesis of the Lemma.
\end{proof}

As in the previous subsection, uniqueness of viscosity solutions is a  straightforward consequence of the following  Comparison Principle, which is the main result of this subsection.

\begin{theorem}[Second Order Comparison Principle]\label{th:CP2nd}
Assume C') holds. Let $u\in USC( \RT)$ and $v\in LSC(\RT)$, both constant (spatially)
out of a compact set,
be a subsolution and a supersolution of \eqref{levelsetf}, respectively. If $u(\cdot,0) \le v(\cdot,0)$,  
then $u\le v$ in $\RT$.
\end{theorem}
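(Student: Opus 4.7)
I plan to argue by contradiction, following the pattern of the first-order proof (Theorem~\ref{th:CP1st}) but with a genuine second-order doubling. Assume $u(\bar z)-v(\bar z)>0$ at some $\bar z=(\bar x,\bar t)\in\R^N\times(0,T]$, and double variables with a quadratic penalty in space and time:
\[
\Phi_\alpha(x,t,y,s)\ =\ u(x,t)-v(y,s)-\tfrac{\alpha}{2}|x-y|^2-\alpha(t-s)^2-\e(t+s)\,,
\]
choosing $\e$ small and $\alpha$ large so that a maximum point $(\hat x,\hat t,\hat y,\hat s)$ satisfies $\hat t,\hat s>0$, $\alpha|\hat x-\hat y|^2\to 0$, and $\alpha(\hat t-\hat s)^2\to 0$. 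The case $\hat x=\hat y$ is dispatched exactly as in the proof of Theorem~\ref{th:CP1st} (no second-order information is needed). Thus assume $\hat p:=\alpha(\hat x-\hat y)\neq 0$, which stays bounded away from zero.

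The difficulty, as mentioned in the introduction, is that the standard Ishii lemma produces elements only of the \emph{closures} of the parabolic super/sub-jets, whereas Lemma~\ref{laura} requires \emph{genuine} parabolic jets in order to translate sub/super-solution information into inequalities on $\kappa_*$ and $\kappa^*$. To bypass this, I would regularize both $u$ and $v$ by sup- and inf-convolutions with small parameter $\lambda>0$: set
\[
u^\lambda(x,t)\ =\ \sup_{(y,s)}\Bigl\{u(y,s)-\tfrac{1}{2\lambda}\bigl(|x-y|^2+(t-s)^2\bigr)\Bigr\},
\]
and $v_\lambda$ analogously with an infimum. By the translation invariance axiom~B), $u^\lambda$ remains an USC subsolution and $v_\lambda$ a LSC supersolution of~\eqref{levelsetf} (up to a harmless time shift that disappears in the limit). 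Since $u^\lambda$ is semi-convex and $v_\lambda$ is semi-concave, Alexandrov's theorem ensures twice differentiability almost everywhere. Applying Jensen's perturbation lemma (\cite[Lemma~A.3]{CIL}) to the function $(x,t,y,s)\mapsto u^\lambda(x,t)-v_\lambda(y,s)-\tfrac{\alpha}{2}|x-y|^2-\alpha(t-s)^2$ near its maximum yields points $(\hat x',\hat t')$ and $(\hat y',\hat s')$ arbitrarily close to $(\hat x,\hat t)$, $(\hat y,\hat s)$, at which $u^\lambda$ and $v_\lambda$ admit exact second-order Taylor expansions. Then $(a,p,X)\in \PJet^{2,+}u^\lambda(\hat x',\hat t')$ and $(b,p,Y)\in \PJet^{2,-}v_\lambda(\hat y',\hat s')$ with $a-b\ge 2\e$, $p=\alpha(\hat x'-\hat y')\neq 0$, and $X\le Y$ up to an arbitrarily small perturbation (this being the content of the matrix inequality in Ishii's lemma). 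Lemma~\ref{laura} applies to these \emph{actual} jets and yields
\begin{align*}
a+|p|\,\kappa_*\bigl(\hat x',p,X,\{u^\lambda(\cdot,\hat t')\ge u^\lambda(\hat x',\hat t')\}\bigr) &\le 0,\\
b+|p|\,\kappa^*\bigl(\hat y',p,Y,\{v_\lambda(\cdot,\hat s')> v_\lambda(\hat y',\hat s')\}\bigr) &\ge 0.
\end{align*}
As in the first-order argument, the doubling inequality implies that, after the translation $\hat x'-\hat y'$, the set $F:=\{u^\lambda(\cdot,\hat t')\ge u^\lambda(\hat x',\hat t')\}-(\hat x'-\hat y')$ lies inside $G:=\{v_\lambda(\cdot,\hat s')>v_\lambda(\hat y',\hat s')\}\cup\{\hat y'\}$ and the two boundaries touch only at $\hat y'$; moreover the Jensen perturbation guarantees the requisite interior and exterior ball conditions inherited from the quadratic test functions. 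The monotonicity Lemma~\ref{lm:mono*}, which relies on the uniform continuity assumption C$'$), then gives $\kappa_*(\hat y',p,X,F+\hat x'-\hat y')\ge \kappa^*(\hat y',p,Y,G)$, and combining with the two inequalities above and the translational invariance axiom~B) yields $2\e\le 0$, a contradiction.

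The main obstacle will be step~(d)--(e): implementing Jensen's perturbation so as to obtain \emph{actual} parabolic sub/superjets at the perturbed points while preserving the geometric configuration (single contact point of the ordered super-level sets and the ball conditions) needed for Lemma~\ref{lm:mono*}. This is the precise spot where the strengthened continuity assumption~C$'$) enters, via Lemma~\ref{lm:Bkappastar}, to absorb the small errors of order $O(\lambda)$ coming from the regularization and the Jensen perturbation into a modulus of continuity that vanishes in the limit $\lambda\to 0$ before $\e\to 0$.
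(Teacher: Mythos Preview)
Your sketch has the right overall architecture and the same cast of tools as the paper (sup/inf-convolutions for semi-convexity, Jensen's perturbation lemma to reach a point of twice-differentiability, Lemma~\ref{lm:mono*} as the endgame). But there is a genuine gap at exactly the step you flag as ``the main obstacle'', and it is not resolvable by sending parameters to zero.

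After Jensen's linear perturbation, the point $(\hat x',\hat t',\hat y',\hat s')$ maximizes the \emph{perturbed} functional. Consequently, the inclusion-with-single-contact needed for Lemma~\ref{lm:mono*} holds only for the super-level sets of the \emph{perturbed} functions $x\mapsto u^\lambda(x,\hat t')-\xi_u\cdot x$ and $y\mapsto v_\lambda(y,\hat s')+\xi_v\cdot y$, \emph{not} for $\{u^\lambda(\cdot,\hat t')\ge u^\lambda(\hat x',\hat t')\}$ and $\{v_\lambda(\cdot,\hat s')>v_\lambda(\hat y',\hat s')\}$. Lemma~\ref{laura}, on the other hand, yields curvature inequalities only for the latter pair. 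So you have the curvature information on one pair of sets and the geometric inclusion on another, the two pairs differing by a small but \emph{global} linear tilt. In addition, $Du^\lambda(\hat x',\hat t')$ and $Dv_\lambda(\hat y',\hat s')$ differ by $O(\delta)$, so the hypothesis ``same $p$'' of Lemma~\ref{lm:mono*} is not met either. You cannot just let $\delta\to 0$: that is precisely the passage to the closure of the parabolic jets that, as the introduction explains, must be avoided here.

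The paper closes this gap by a substantially different construction. It first reduces to a \emph{single} pair of level sets $F(t)=\{u\ge a\}$, $G(t)=\{v>a\}$ and pre-dilates them to enforce two-sided ball conditions. It then sup/inf-convolves the characteristic functions with a kernel built from some $f\in\F$ (not a pure quadratic, which also sidesteps admissibility issues in the degenerate case $\hat x=\hat y$). After Jensen, the key new idea is to \emph{freeze} the optimal time-shift in the sup-convolution and build an explicit $C^{\ell,\beta}$-diffeomorphism $\Psi_{\rho,\delta}$, close to the identity, that carries a translate of the true level set $F(t-\tau)$ onto a set $\check F_{\rho,\delta}$ lying inside the Jensen-perturbed super-level set and touching it at the right point. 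Lemma~\ref{lm:Bkappastar} (this is where C$'$) enters) transports the curvature inequality through $\Psi_{\rho,\delta}$, after which Lemma~\ref{lm:mono*} applies to $\check F_{\rho,\delta}$ and the analogously constructed $\check G_{\rho,\delta}$. This diffeomorphism construction is the technical heart of the proof and is absent from your outline.
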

\begin{proof}
Without loss of generality we can assume $u(\cdot,0) <  v(\cdot,0)$. 
Assume by contradiction that there exists  $a\in\R$ and $t\in (0,T]$ such that
$F(t):=\{u(\cdot,t) \ge a\}$ is not contained in $G(t) :=\{v(\cdot,t) > a\}$.

Notice that, since $u(\cdot,0) <  v(\cdot,0)$, we have $F(0)\subset {G(0)}$.
A first remark is that we can assume, without loss of generality, that
the sets $F(t)$ satisfy an internal ball condition with some fixed radius
$r>0$, at all time, while $G(t)$ satisfy an external ball condition with
same radius.
Indeed, we can always replace $F$ and $G$ with, respectively,
the sets
\[
F(t)+\overline{B(0,r)}=\bigcup_{|z|\le r} (z+F(t))
\quad \mbox{ and }\quad \{ x\,:\,  B(x,r)\subset G(t)\}
\]
for some $r>0$,
which are still, respectively, a sub and a super-solution of \eqref{oee};
moreover if $r$ is small enough, the inclusion $F(0)\subset G(0)$ is
preserved.

Let $f\in \F\cap C^{\regu}$, and let
\begin{align}\label{convsol1}
 & u^{\lambda}_F(x,t) := \max_{\xi\in \R^N,  \tau \in [t-T,t]}  \chi_{F(t-\tau)}(x-\xi) - \lambda  (f(|\xi|) + \tau^2)  , \\
 \label{convsol2}
 & v^{\lambda}_G(x,t):= \min_{\xi\in \R^N, \tau\in [t-T,t]} \chi_{G(t-\tau)}(x-\xi) +  \lambda (f(|\xi|) +   \tau^2)     
\end{align}
Since $F(0)\subset G(0)$, for $\lambda$ large enough  $u^{\lambda}_F(\cdot,0)  \le  v^{\lambda}_G(\cdot,0)$. 
Then, the function 
$$
\Phi(x,t,y,s):= u^{\lambda}_F (x,t) - v^{\lambda}_G(y,s) -\e (t+s) - \lambda (f(|x-y|) + |t-s|^2)
$$ 
is semiconvex, and for $\e>0$ small and $\lambda$ large enough admits a positive maximum 
at some $(x_0,t_0,y_0,s_0) \in \RT\times \RT$ with $t_0,\, s_0>0$. 

\textit{First case:}  $ x_0 =  y_0$.
Let 
$$
 \f(x,t):=  v^{\lambda}_G( y_0, s_0) + \lambda  f(|x- y_0|)+ \lambda(t- s_0)^2 +\e t + \e s_0.
 $$
$$
 \psi(y,s):=  u^{\lambda}_F ( x_0, t_0) -  \lambda f(| x_0-y|)-  \lambda ( t_0-s)^2 - \e  t_0 - \e s.
 $$
We observe that by construction, $u^{\lambda}_F$ is a subsolution and $v^{\lambda}_G$ is a supersolution on $\R^N\times [2/\sqrt{\lambda},T]$ (we need
$t\ge 2/\sqrt{\lambda}$ to ensure that the max in~\eqref{convsol1} is not reached 
at $\tau=t$, observe though that if $\lambda$ is large enough one will
have $t_0,s_0>2/\sqrt{\lambda}$). Hence, we have
$$
0\ge\f_t( x_0 ,t_0) =  
2 \lambda ( t_0 -  s_0) + \e, \qquad    0 \le \psi_t( y_0, s_0) = 2 \lambda ( t_0 -  s_0) - \e,
$$
which yield a contradiction.

\textit{Second case:}  $ x_0 \neq  y_0$.
We can always assume (choosing $\lambda$ large enough) that $f(|x_0 - y_0|) <1$. Moreover, we have 
\begin{equation}\label{minoreuno}
u^\lambda_F(x_0,t_0) <1.
\end{equation}
Indeed, observe that $u^\lambda_F(x, t) = 1$ if and only if $x\in F(t)$,  that $D
u^\lambda_F(x, t) = 0$ on $F(t)$, and $D
u^\lambda_F(x_0, t_0) =D_x f(|x_0 - y_0|) \neq 0$. Thus,  $x_0$ can not belong to $F(t_0)$. 

Let $q: [0,+\infty]\to [0,1]$ be a smooth, nondecreasing, function with $q(r)= r^4$ for $r<1/2$
and $q(r)=1$ for $r>3/2$. For $\rho>0$, let then
$$
\Phi_{  \rho}(x,t,y,s):= \Phi_{ }(x,t,y,s) - \rho [q(|x-x_0|)+ q(|y-y_0|)+q(|t-t_0|)+q(|s-s_0|)],
$$
so that $(x_0,t_0,y_0,s_0)$ is a strict maximum of $\Phi_{  \rho}$. 
Let $\eta:\R^N\to \R$ be a smooth cut-off function, with compact support and equal to one in a neighborhood $U$ of the origin.
For every
 $\Delta:=(\zeta_u, h_u ,\zeta_v, h_v) \in \R^N\times \R\times\R^N\times\R$, the function
$$
\Phi_{\rho}(x,t,y,s) - \Big(\eta(x - x_0)  (\xi_u, h_u)\cdot (x,t) + \eta(y - y_0)   (\xi_v,h_v)\cdot  (y,s)  \Big)
$$ 
is maximized at some $(x_\Delta, t_\Delta, y_\Delta, s_\Delta)$ such that 
\begin{equation*}
(x_\Delta, t_\Delta, y_\Delta, s_\Delta) \to (x_0, t_0, y_0, s_0) \qquad \text{ as }  |\Delta| \to 0. 
\end{equation*} 
Thus, by Jensen's Lemma~\cite[Lemma~A.3]{CIL}, we may assume that for every $\delta>0$ sufficiently small there exists 
$\Delta _{\rho, \delta}:=(\zeta^{\rho, \delta}_u, h^{\rho, \delta}_u ,\zeta^{\rho, \delta}_v, h ^{\rho, \delta}_v)$, {with $|\Delta _{\rho, \delta}|\le \delta$}, such that the function
\begin{multline*}
\Phi_{  \rho, \delta}(x,t,y,s):=\Phi_{\rho}(x,t,y,s) \\ 
- \Big(\eta(x - x_0)  (\xi ^{\rho, \delta}_u, h ^{\rho, \delta}_u)\cdot (x,t) + \eta(y - y_0)  (\xi ^{\rho, \delta}_v,h ^{\rho, \delta}_v)\cdot  (y,s)  \Big)
\end{multline*}
attains a maximum at some $z _{\rho, \delta}:=(x _{\rho, \delta}, t _{\rho, \delta}, y _{\rho, \delta}, s _{\rho, \delta})$
 where $\Phi_{\delta, \rho}$ is twice differentiable and such that  $x _{\rho, \delta}-x_0$, $y _{\rho, \delta}-y_0 \in U$ and $t _{\rho, \delta},\, s _{\rho, \delta} >0$.  Moreover, 
 \begin{equation}
z _{\rho, \delta} \to (x_0, t_0, y_0, s_0) \qquad \text{ as }  \delta \to 0. 
\end{equation} 
Notice that since $\Phi_{\rho}$ is twice differentiable  at $z _{\rho, \delta}$ it follows that also $u^{\lambda}_F$ and $ v^{\lambda}_G$ are twice differentiable at  $(x _{\rho, \delta}, t _{\rho, \delta})$ and $(y _{\rho, \delta}, s _{\rho, \delta})$, respectively.

Let  $ \tau^{\rho, \delta}_u, \,   \tau^{\rho, \delta}_v \in \R$ be the maximizing $\tau$'s in \eqref{convsol1}, \eqref{convsol2} corresponding to the points $(x _{\rho, \delta},t _{\rho, \delta})$,  $(y _{\rho, \delta},s _{\rho, \delta})$, respectively.  Set
\begin{align}\label{tilde}
&\tilde u_F(x,t) := \max_{\xi\in \R^N}  \left\{ \chi_{F(t- \tau^{\rho, \delta}_u)} (x-\xi) - \lambda  f(|\xi|)\right\}  - \lambda   (\tau^{\rho, \delta}_u)^2 \\ 
&\tilde v_G(y,s):= \min_{\xi\in \R^N} \left\{ \chi_{G(s- \tau^{\rho, \delta}_v)}(y - \xi) +  \lambda f(|\xi|) \right\} + \lambda   (\tau^{\rho, \delta}_v)^2.
\end{align}
Observe that by construction,
{
 \begin{align}\label{convsol4}
& u^{\lambda}_F\ge \tilde u_F,
\qquad v^{\lambda}_G\le \tilde v_G,\, \\
& u^{\lambda}_F(x _{\rho, \delta},t _{\rho, \delta}) =  \tilde u_F(x _{\rho, \delta},t _{\rho, \delta}), \, 
\qquad v^{\lambda}_G(y _{\rho, \delta},s _{\rho, \delta}) = \tilde v_G(y _{\rho, \delta},s _{\rho, \delta}). \nonumber     
\end{align}
}
Set now
\begin{multline*}
\hat u_F(x,t):= \tilde u_F(x,t) -\rho \,q(|x-x _{\rho, \delta}|)   \\
 - \rho [q(|x-x_0|) + q(|t-t_0|)] - \eta(x - x_0)  (\xi ^{\rho, \delta}_u, h ^{\rho, \delta}_u)\cdot (x,t) ,
\end{multline*}
\begin{multline*}
\hat v_G(y,s):=  \tilde v_G(y,s) +\rho\, q(|y-y _{\rho, \delta}|) \\
+ \rho \, [ q(|y-y_0|) +q(|s-s_0|) ] + \eta(y - y_0)  (\xi ^{\rho, \delta}_v,h ^{\rho, \delta}_v)\cdot  (y,s) .
\end{multline*}
Then by construction, the function
\[ 
 \hat u_F(x,t) -  \hat v_G(y,s)
 -\e(t+s)- \lambda( f(|x-y|) + |t-s|^2). 
\] 
has its maximum at  $z _{\rho, \delta}$, which is now
strict with respect to the spatial variables. Hence if we set
\begin{equation*}
 \hat F_{\rho, \delta}(t):=\{ \hat u_F(\cdot,t) \ge 
\hat u_F(x _{\rho, \delta},t _{\rho, \delta})\}, 
\qquad
 \hat G_{\rho,\delta}(s):=\{   \hat v_G(\cdot,s) > \hat v_G(y _{\rho, \delta},s _{\rho, \delta}) \}. 
\end{equation*}
we have $\hat F_{\rho,\delta}(t _{\rho, \delta})\subseteq \hat G_{\rho,\delta}(s _{\rho, \delta})$ and moreover
 $(x _{\rho, \delta},  y _{\rho, \delta})$ is the only pair of minimal distance in $\partial \hat F_{\rho,\delta}(t _{\rho, \delta}) \times \partial \hat G_{\rho,\delta}(s _{\rho, \delta})$. 
In addition, we observe that at the maximum point,
$|D \hat u_F(x_{\rho,\delta},t_{\rho,\delta})|\approx
|D \tilde{u}(x_{\rho,\delta},t_{\rho,\delta})|=
|D u^\lambda_F(x_{\rho,\delta},t_{\rho,\delta})|\approx \lambda f'(|x_{\rho,\delta}-y_{\rho,\delta}|) \approx \lambda f'(|x_0-y_0|)\neq 0$ up to perturbations
which go to zero as $\rho,\delta\to 0$, and that the function
$\hat u_F$ is semiconvex, hence $\hat{F}_{\rho,\delta}(t_{\rho,\delta})$ has an
interior ball condition at $x_{\rho,\delta}$ with a radius independent
on $\rho$ and $\delta$, if small enough. In the same way, 
$\hat G_{\rho,\delta}(s _{\rho, \delta})$ has an exterior ball condition at
$y_{\rho,\delta}$. {In turns, $\hat F_{\rho,\delta}(t_{\rho, \delta})$ and $\hat G_{\rho,\delta}(s _{\rho, \delta})$ satisfy both and internal and external ball condition.}

Set 
$$
\breve \Phi_{  \rho, \delta}(x,t,y,s):= \Phi_{\rho, \delta}(x,t,y,s)+\lambda(  f(|x-y|) + |t-s|^2)
$$ 
and
\begin{align}\label{super1}
& ( \breve a_{\rho,\delta}, \breve p_{\rho,\delta}, \breve X_{\rho,\delta}) 
:= ( \partial_t \breve\Phi_{  \rho, \delta} (z _{\rho, \delta}) , D_x \breve\Phi_{  \rho, \delta} (z _{\rho, \delta}), D^2_x \breve\Phi_{  \rho, \delta} (z _{\rho, \delta}) ),
\\ 
\label{super2}
& (\breve b_{\rho,\delta}, \breve q_{\rho,\delta},  \breve Y_{\rho,\delta}) :=  ( \partial_s \breve\Phi_{  \rho, \delta} (z _{\rho, \delta}) , D_y \breve\Phi_{  \rho, \delta} (z _{\rho, \delta}), D^2_y \breve\Phi_{  \rho, \delta} (z _{\rho, \delta}) ).
\end{align}
Then, recalling \eqref{convsol4}, 
{ we observe that} the superjet $( \breve a_{\rho,\delta}, \breve p_{\rho,\delta},  \breve X_{\rho,\delta})$ of
\[
u^\lambda_F(x,t)-
  \rho [q(|x-x_0|) + q(|t-t_0|)] - \eta(x - x_0)   (\xi ^{\rho, \delta}_u, h ^{\rho, \delta}_u)\cdot (x,t) 
\]
at $(x_{\rho,\delta},t_{\rho,\delta})$ is also a superjet for $\hat{u}_F(x,t)$
at the same point, hence, also, for the function $\hat u_F (x _{\rho, \delta},t _{\rho, \delta}) \chi_{ \hat F_{\rho,\delta}}$
{(since $\hat u_F(x,t)\geq \hat u_F (x _{\rho, \delta},t _{\rho, \delta}) \chi_{ \hat F_{\rho,\delta}(t)}(x)$).} 
Hence,  we have
\begin{equation}\label{Paratona}
 ( \breve a_{\rho,\delta}, \breve p_{\rho,\delta},  \breve X_{\rho,\delta}) \in \PJet^{2,+}_{\hat u_F (x _{\rho, \delta},t _{\rho, \delta}) \chi_{ \hat F_{\rho,\delta}} }(x _{\rho, \delta},t _{\rho, \delta})
\end{equation}
and analogously,
\[
 ( \breve b_{\rho,\delta},  \breve q_{\rho,\delta},  \breve Y_{\rho,\delta}) \in \PJet^{2,-}_{\hat v_G (y _{\rho, \delta},s _{\rho, \delta}) \chi_{  \hat G_{\rho,\delta}} }(y _{\rho, \delta},s _{\rho, \delta}).
\]
Since ${D_z} \Phi_{\rho, \delta}(z _{\rho, \delta}) = 0$, we deduce 
\begin{equation}\label{deduce}
 \breve a_{\rho,\delta} -  \breve b_{\rho,\delta} = 2\e, \qquad  \breve p_{\rho,\delta} =  \breve q_{\rho,\delta}.
\end{equation}
Moreover, since $D^2_{x,y}  \Phi_{\rho, \delta}(z _{\rho, \delta}) \le 0$, one can check that 
\begin{equation}\label{deduce2}
 \breve X_{\rho,\delta} \le  \breve Y_{\rho,\delta}.
\end{equation} 
By construction, $\breve \Phi_{\rho,\delta}$ is also semiconvex, so 
that $\breve X_{\rho,\delta} \ge -cI$, $\breve Y_{\rho,\delta}\le
cI$ for a constant $c$ depending on $\lambda$.

Let 
$$
c_{\rho,\delta}(x,t):= 
   \tilde u_F(x,t) + (\hat u_F(x _{\rho, \delta},t _{\rho, \delta}) - \hat u_F(x,t)).
$$
Notice that, as $\delta\to 0$,  $\rho\to 0$, we have  $c_{\rho,\delta} \to u^\lambda_F(x_0,t_0)$ uniformly. In view of \eqref{minoreuno} we can thus assume that
$c_{\rho,\delta} < 1$. Observe also that $c_{\rho,\delta}$ is smooth and
constant away from a neighborhood of $(x_0,t_0)$, { and it converges
also in $C^{\regu}$}.

We have
$
\hat F_{\rho, \delta}(t) = \{ x:  \tilde u_F(x,t)\ge c_{\rho,\delta}(x,t)\}.
$
Thus, by the definition of $\tilde u_F$, we have that $x\in \hat F_{\rho, \delta}(t)$ if and only if there exists $\xi\in \R^N$ such that $x\in \xi+F(t- \tau^{\rho, \delta}_u)$, with
$$
\chi_{\xi+F(t - \tau^{\rho, \delta}_u)}(x)-\lambda f(|\xi|)=1-\lambda f(|\xi|)\geq c_{\rho,\delta}(x,t)\, , 
$$
i.e.,
\begin{multline}\label{fhatrd}
\hat F_{\rho, \delta}(t)=\Biggl\{x:  \, x \in \xi + F(t- \tau^{\rho, \delta}_u)
\\
 \text{ for some } \xi\in\R^N \text{ with } |\xi|\le f^{-1} \left(\frac{1-c_{\rho,\delta}(x,t)}{\lambda}\right) \Biggr\}.
\end{multline}

For $\rho,\delta$ small enough,
$x_{\rho, \delta}\not\in F(t_{\rho, \delta}- \tau^{\rho, \delta}_u)$ and we can introduce
$w_{\rho,\delta}\neq 0$ such that $x_{\rho, \delta} + w_{\rho,\delta} $ is a projection of $x _{\rho, \delta}$ on  $ F(t _{\rho, \delta} -  \tau^{\rho, \delta}_u)$.
Precisely, one has that $c_{\rho,\delta}(x_{\rho,\delta},t_{\rho,\delta})=
u^\lambda_F(x_{\rho,\delta},t_{\rho,\delta})$, $\xi=-w_{\rho,\delta}$ reaches
the max in~\eqref{convsol1} (for $x=x_{\rho,\delta}$),
and $|w_{\rho,\delta}|=f^{-1}((1-c_{\rho,\delta}(x_{\rho,\delta},t_{\rho,\delta}))/\lambda)$. We then set
$$
\Psi_{\rho,\delta}(x) : = x- f^{-1} \left(\frac{1-c_{\rho,\delta}(x, t_{\rho,\delta})}{\lambda}\right)  \frac{w_{\rho,\delta}}{|w_{\rho,\delta}|} + w_{\rho,\delta}
$$
which is a $C^{\regu}$ diffeomorphism (being $c_{\rho,\delta}$ bounded away from $1$), which is a constant (small) translation
out of a neighborhood of $x_0$,  and converges {$C^{\regu}$}
to the identity as
$\delta \to 0$ and $\rho\to 0$. Observe
that $\Psi_{\rho,\delta}(x_{\rho,\delta})=x_{\rho,\delta}$. Then, we let
\begin{equation}\label{checkF}
\check F_{\rho,\delta}(t):=\Psi_{\rho,\delta} (F(t- \tau^{\rho, \delta}_u) - w_{\rho,\delta}) .
\end{equation}
By construction, ${\check F}_{\rho,\delta}(t_{\rho, \delta}) \subseteq  \hat F_{\rho,\delta}(t_{\rho, \delta})$ and $x _{\rho, \delta} \in \partial {\check F}_{\rho,\delta}(t _{\rho, \delta}) \cap \partial  \hat F_{\rho,\delta} (t _{\rho, \delta})$. 
Moreover, we recall that $\hat F_{\rho,\delta}(t_{\rho,\delta})$ has
an internal ball condition at $x_{\rho,\delta}$ while
$\hat G_{\rho,\delta}(s_{\rho,\delta})$ satisfies an external ball condition
at $y_{\rho,\delta}$,
with radius bounded away from $0$ uniformly with respect to $\rho$ and $\de$ sufficiently small.
 Thus,  recalling that $\hat F_{\rho,\delta}(t _{\rho, \delta}) + (y _{\rho, \delta}-x _{\rho, \delta})\subseteq \hat G_{\rho,\delta}(s _{\rho, \delta}) $ (being $y _{\rho, \delta}$ the only contact point), we have that
$\hat F_{\rho,\delta}(t _{\rho, \delta})$, and in turn $\check F_{\rho,\delta}(t _{\rho, \delta})$ satisfies a uniform external ball condition in $x _{\rho, \delta}$. 
In addition, since we have assumed that $F(t)$ had a uniform internal
ball condition for some radius $r>0$, the same holds for 
$\check F_{\rho,\delta}(t _{\rho, \delta})$ with a smaller radius.

Notice that {$\| \Psi_{\rho,\delta} - I \|_{C^{\regu}} \to 0$} as $\rho, \, \delta\to 0$. Set
\begin{multline*}
 (p_{\rho,\delta}, X_{\rho,\delta}) := 
\Big( D_x(\breve\Phi_{\rho, \delta}(\cdot, t_{\rho, \delta}, y_{\rho, \delta}, s_{\rho, \delta})\circ\Psi_{\rho, \delta})(x_\delta), \\
D_x^2(\breve\Phi_{\rho, \delta}(\cdot, t_{\rho, \delta}, y_{\rho, \delta}, s_{\rho, \delta})\circ\Psi_{\rho, \delta})(x_\delta) \Big),
\end{multline*}
By construction {(see \eqref{Paratona}),} we have
$$
( \breve a_{\rho,\delta},p_{\rho,\delta}, X_{\rho,\delta}) \in \PJet^{2,+}_{\hat u_F(x _{\rho, \delta},t _{\rho, \delta}) \chi_{F(t- \tau^{\rho, \delta}_u  )   }  }(x _{\rho, \delta} + w_{\rho,\delta})
$$
Since $\hat u_F(x _{\rho, \delta},t _{\rho, \delta}) \chi_{F(t- \tau^{\rho, \delta}_u)}$ is a subsolution, we have
\begin{equation}\label{ass1}
 \breve a_{\rho,\delta} + |p_{\rho,\delta}| \kappa_* (x _{\rho, \delta} + w_{\rho,\delta},p_{\rho,\delta}, X_{\rho,\delta},F(t _{\rho, \delta} - \tau^{\rho, \delta}_u ))  \le 0.
\end{equation}
Note that  
$$
p_{\rho,\delta}\to Du_F^{\lambda}(x_0, t_0)\neq 0\,,
$$
as $\rho$, $\de\to 0$, and thus $|p_{\rho,\delta}|$ is bounded away from zero for  $\rho$ and $\delta$ sufficiently small. Since also $\breve X_{\rho,\delta}$
and hence $X_{\rho,\delta}$ is bounded, we can invoke Lemma~\ref{lm:Bkappastar}
and deduce that 
\begin{equation}\label{ass3}
 \breve a_{\rho,\delta} + | \breve p_{\rho,\delta}| \kappa_* (x _{\rho, \delta},  \breve p_{\rho,\delta},  \breve X_{\rho,\delta}, \check F_{\rho, \delta}(t _{\rho, \delta}  ))  \le \omega(\rho,\delta),
\end{equation}
where $ \omega(\rho,\delta)\to 0$ as $\rho,\, \delta\to 0$.

Analogously, we also have
\begin{equation}\label{ass4}
 \breve a_{\rho,\delta} -2\e + | \breve p_{\rho,\delta}| \kappa^* (y _{\rho, \delta} ,  \breve p_{\rho,\delta},  \breve Y_{\rho,\delta}, \check G_{\rho, \delta}(s _{\rho, \delta}))  \ge \omega(\rho,\delta)
\end{equation}
for a suitable set $\check G_{\rho, \delta}(s _{\rho, \delta}))$ such that $\hat F(t _{\rho, \delta}  ) + (y _{\rho, \delta} - x _{\rho, \delta}) \subseteq \check G_{\rho, \delta}(s _{\rho, \delta}))$ and  $\partial (\check F _{\rho, \delta}(t _{\rho, \delta}  )+  (y _{\rho, \delta} - x _{\rho, \delta})) \cap \partial \check G _{\rho, \delta}(s _{\rho, \delta})) = \{y _{\rho, \delta}\}$.
By~\eqref{deduce2} and Lemma~\ref{lm:mono*} we get 
$$
\kappa_* (x _{\rho, \delta},  \breve p_{\rho,\delta},  \breve X_{\rho,\delta}, \check F_{\rho, \delta}(t _{\rho, \delta}  )) \ge
\kappa^* (y _{\rho, \delta} ,  \breve p_{\rho,\delta},  \breve Y_{\rho,\delta}, \check G_{\rho, \delta}(s _{\rho, \delta})),
$$ 
and thus, in particular, 
\begin{equation*}
 \breve a_{\rho,\delta} -2\e + | \breve p_{\rho,\delta}| \kappa_* (x _{\rho, \delta},  \breve p_{\rho,\delta},  \breve X_{\rho,\delta}, \check F_{\rho, \delta}(t _{\rho, \delta}  ))  \ge 2\omega(\rho,\delta),
\end{equation*}
which, together \eqref{ass3} gives $\e \le \omega(\rho,\delta)$. This  is a contradiction for $\rho,\,\delta$ sufficiently small. 
\end{proof}

{
\begin{remark}\label{intrinsicsuplev}
\rm
By the uniqueness property stated in Theorem \ref{th:CP1st} and Theorem \ref{th:CP2nd}, we deduce that the evolution of { open and closed superlevel sets is intrinsic in the following sense. Let 
$u^0, \, \tilde u^0:\R^N\to \R$ be two initial conditions such that 
$
\{u^0(\cdot) >0\} = \{\tilde u^0(\cdot) >0\}. 
$
Then, denoting by $u$ and $\tilde u$ the corresponding geometric evolutions we have   
$$
\{u(\cdot,t) >0\} = \{\tilde u(\cdot,t) >0\} \qquad \text{ for all } t\in[0,t] 
$$
(and the same identity holds for the closed superlevels).}
Indeed, for any $\lambda\geq 0$ set
\begin{align*}
& A_\lambda (t):= \{u(\cdot,t ) > \lambda\},    \qquad \tilde A_\lambda (t):= \{\tilde u(\cdot,t ) > \lambda\}, \\
& C_\lambda (t):= \{u(\cdot,t ) \ge \lambda\},    \qquad \tilde C_\lambda (t):= \{\tilde u(\cdot,t ) \ge \lambda\},
\end{align*}
In view of Lemma~\ref{L1} and of Theorems~\ref{th:CP1st} and \ref{th:CP2nd},  we have that for every  $\lambda>0$
$$
\tilde C_\lambda (t) \subseteq A_0 (t), \qquad  C_\lambda (t) \subseteq \tilde A_0 (t). 
$$
Thus, we conclude
$$
\tilde A_0 (t) = \bigcup_{\lambda >0} \tilde C_\lambda (t) \subseteq A_0 (t), \qquad 
A_0 (t) = \bigcup_{\lambda >0}  C_\lambda (t)   \subseteq \tilde A_0 (t). 
$$
\end{remark}
}

\newpage
\part{Variational Nonlocal Curvature Flows}\label{part:2}

In this part we further assume the nonlocal curvature to be {\em variational}; that is, we assume that $\kappa$ is the {\em first variation} of a suitable {\em generalized perimeter}. The second part of the paper is organized as follows. In Section~\ref{sec:4} we introduce a suitable class of translation invariant generalized perimeters $J$ and we give a rather weak notion of curvature as a first variation of the perimeter functional with respect to measurable perturbations of the set shrinking to a point $x$ of the boundary. We also show how some of the structural assumptions of $J$ translate into properties of the curvature $\kappa$. 

 In Section~\ref{secfirstvar} we study how the weak notion of curvature compares to more standard ones. Section~\ref{sec:examples} is devoted to showing some relevant examples of variational nonlocal curvatures that fit in our abstract framework. Finally, Section~\ref{sec:ATW} contains the main result of this part, namely the fact that the minimizing movement scheme applied to $J$ converges to the associated nonlocal curvature flow.

\section{Generalized perimeters and curvatures}\label{sec:4}

\subsection{Generalized perimeters}\label{subsec:gp}
We now will consider a generalized notion of (possibly nonlocal) perimeter.  
We will say that a functional  $J:\Ins\to [0,+\infty]$ is a generalized perimeter if it satisfies the following properties:

\begin{itemize}
\item[i)]  $J(E)<+\infty$  for every $E\in\Reg$; 
\item[ii)] $J(\emptyset)=J(\R^N)=0$; 
\item[iii)] $J(E)=J(E')$ if $|E\triangle E'|=0$; 
\item[iv)] $J$ is $L^1_{loc}$-l.s.c.: if $|(E_n\triangle E)\cap B_R|\to 0$ for every $R>0$, then
 $$J(E)\le \liminf_n J(E_n);$$
\item[v)] $J$ is { submodular}: For any $E, \, F\in\Ins$,
\begin{equation}\label{eq:submo}
J(E\cup F)\,+\, J(E\cap F)\ \le\ J(E)\,+\,J(F)\ ;
\end{equation}
\item[vi)] $J$ is translational invariant: 
\begin{equation}\label{eq:transinv}
J(x+E)\ =\ J(E)\quad \text{ for all } E \in \Ins ,\,x\in \R^N\,.
\end{equation}
\end{itemize}

We can extend the functional $J$ to $L_{loc}^1(\R^N)$
enforcing the following {\it generalized co-area formula}:
\begin{equation}\label{visintin}
{J}(u) :=\ \int_{-\infty}^{+\infty} J(\{u>s\})\,ds \qquad \text{ for every } u\in L_{loc}^1(\R^N).
\end{equation}
It can be
shown that, under the assumptions above,  $J$ is  a convex l.s.c.~functional in $L_{loc}^1(\R^N)$ (see~\cite{ChamGiacoLuss}). 

Observe that the following holds true:
\begin{lemma}\label{lemapprox}
Let $u\in L^1_{loc}(\R^N)$ and $\rho$ be a nonnegative
and compactly supported mollifier,
and let $\rho_\e(x):=\rho(x/\e)/\e^N$ for $\e>0$ small.
Then for all $\e$,
\begin{equation}\label{eq:minore}
J(\rho_\e*u)\le J(u)\,.
\end{equation}
Moreover,  $\lim_{\e\to 0}J(\rho_\e*u)= J(u)$.
\end{lemma}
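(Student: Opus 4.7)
The plan is to derive both statements from the convexity of the extended functional $J$ on $L^1_{loc}(\R^N)$ (established in \cite{ChamGiacoLuss}) together with its translation invariance. Translation invariance at the level of $L^1_{loc}$, i.e.\ $J(u(\cdot-y))=J(u)$ for every $y\in\R^N$, follows immediately from \eqref{eq:transinv} and the coarea formula \eqref{visintin}, since $\{u(\cdot-y)>s\}=y+\{u>s\}$.

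For \eqref{eq:minore}, the case $J(u)=+\infty$ is trivial, so assume $J(u)<+\infty$. Since $\rho$ is nonnegative and $\int\rho=1$, the measure $\rho_\e(y)\,dy$ is a probability measure with compact support. I would approximate it by finitely supported probability measures $\mu_n=\sum_{i=1}^{k_n}\alpha_i^n\delta_{y_i^n}$ (with $\alpha_i^n\ge 0$ and $\sum_i \alpha_i^n=1$), obtained by partitioning the support of $\rho_\e$ into small cubes, placing at each cube center $y_i^n$ a Dirac mass of weight $\alpha_i^n$ equal to the $\rho_\e$-measure of that cube. A standard Riemann-sum argument combined with continuity of translations in $L^1_{loc}$ ensures that $u_n:=\sum_i \alpha_i^n u(\cdot-y_i^n)\to \rho_\e*u$ in $L^1_{loc}(\R^N)$. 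By finite convexity of $J$ on $L^1_{loc}$ together with translation invariance,
\[
J(u_n)\ \le\ \sum_{i=1}^{k_n}\alpha_i^n\, J(u(\cdot-y_i^n))\ =\ \sum_{i=1}^{k_n}\alpha_i^n\, J(u)\ =\ J(u).
\]
Passing to the liminf and using that $J$ is $L^1_{loc}$-lower semicontinuous on $L^1_{loc}$ (which follows from iv) via the coarea formula and Fatou's lemma) yields $J(\rho_\e*u)\le \liminf_n J(u_n)\le J(u)$, proving \eqref{eq:minore}.

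For the convergence statement, recall that $\rho_\e*u\to u$ in $L^1_{loc}(\R^N)$ as $\e\to 0$, by a standard property of mollification. The $L^1_{loc}$-lower semicontinuity of $J$ gives $J(u)\le \liminf_{\e\to 0}J(\rho_\e*u)$, and combining this with \eqref{eq:minore} yields $\lim_{\e\to 0}J(\rho_\e*u)=J(u)$.

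The only delicate point is the discrete approximation step: one must make sure that the approximants $u_n$ of $\rho_\e*u$ are genuine \emph{convex combinations} of translates of $u$, so that finite-dimensional convexity can be applied termwise before passing to the $L^1_{loc}$-limit. The partition-based construction handles this, and once that is in place the rest of the argument is a direct application of Jensen, translation invariance, and lower semicontinuity, with no further subtleties.
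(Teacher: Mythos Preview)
Your proof is correct and follows essentially the same approach as the paper: approximate $\rho_\e*u$ by finite convex combinations of translates of $u$, apply convexity and translation invariance, and pass to the limit via lower semicontinuity; then combine \eqref{eq:minore} with lower semicontinuity for the second statement. You have simply made explicit the details (the Riemann-sum construction of the approximating convex combinations, and the fact that $L^1_{loc}$-lower semicontinuity of the extended $J$ follows from iv) via coarea and Fatou) that the paper leaves to the reader.
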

\begin{proof}The first statement follows from the convexity of $J$, by approximating $\rho_\e*u$
by appropriate finite convex combinations and then passing to the limit
thanks to the lower semicontinuity assumption iv). The last statement is then an immediate consequence of  \eqref{eq:minore} and once again of
assumption iv). 
\end{proof}
\subsection{A weak notion of curvature}
We introduce here a definition of the curvature of sets
relative to the generalized perimeter $J$ which will be useful
for studying the geometric ``gradient flow'' of $J$. It is based on
a sort of local subdifferentiability property. We will
show in Section~\ref{secfirstvar} that it is implied by
more standard definitions based on global variations of the boundaries
of smooth sets.

\begin{definition}\label{defofk}
Let $E\in \Reg$ and $x\in\partial E$. We set
\begin{equation}\label{ineqbelow}
\kappa^+(x,E):= \inf \left\{   \liminf_n \frac{J(E\cup W_n)\, - J(E)}{|W_n\setminus E|}: { \overline{W}_n \kto \{x\}}, \, |W_n\setminus E|>0\right\} 
\end{equation}
and 
\begin{equation}\label{ineqabove}
\kappa^-(x,E): = \sup  \left\{    \limsup_n \frac{J(E)\,- \, J(E\setminus W_n)}{|W_n\cap E|}:  { \overline{W}_n \kto \{x\}}, \, |W_n\cap E|>0   \right\} .
\end{equation}
 We say that $\kappa(x,E)$  is the curvature of $E$ at $x$  (associated with the perimeter $J$) if  $\kappa^+( x,E)= \kappa^-( x,E)=:\kappa(x,E)\in \R$.
\end{definition} 

Notice that if $J(E) = J(\R^N\setminus E)$  it follows that $\kappa^+(x,E) = - \kappa^-(x,\R^N\setminus E)$, and therefore  $\kappa(x,E) = -\kappa(x,\R^N\setminus E)$ (whenever it exists). 

\vskip 10pt
{\bf Standing Assumptions of Part~\ref{part:2}}. {\em Throughout Part~\ref{part:2} we  assume that  the curvature exists for all sets in $\Reg$, i.e., 
$$
\text{ $ {\kappa(x,E):=\kappa^+( x,E)= \kappa^-( x,E)\in \R}$
 for all ${E\in\Reg}$ and all ${x\in\partial E}$,}
$$
 and  that
 $$
\text{ $\kappa$ satisfies  axiom C) of Subsection~\ref{sec:curvature}.}
$$}

\vskip 10pt
The  translational invariance B) follows
naturally from the translational invariance of the perimeter $J$.
The monotonicity property  A) stated in Subsection~\ref{sec:curvature} is a consequence of the submodularity assumption~\eqref{eq:submo}, as shown in the next lemma.
\begin{lemma}\label{lemmaxp}
Let $E, F\in\Reg$ with $E\subseteq F$, and assume that $x\in \partial F\cap \partial E$:
then $\kappa(x,F)\le\kappa(x,E)$.
\end{lemma}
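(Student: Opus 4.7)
The plan is to derive the inequality directly from the submodularity axiom~\eqref{eq:submo}, applied in the ``diminishing returns'' form. Taking $A=E\cup W$ and $B=F$ for any measurable test set $W$ with $W\cap F=\emptyset$ (hence $W\cap E=\emptyset$, since $E\subseteq F$), the identities $A\cup B=F\cup W$ and $A\cap B=E$ reduce submodularity to
\[
J(F\cup W)+J(E)\ \le\ J(E\cup W)+J(F),
\]
that is, $J(F\cup W)-J(F)\ \le\ J(E\cup W)-J(E)$. This is the continuum analogue of the classical diminishing-returns inequality for submodular functions and encodes that the marginal cost of adjoining $W$ decreases as the base set grows.

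Next I would specialize to a sequence of test sets $(W_n)$ with $\overline{W_n}\kto\{x\}$, $W_n\cap F=\emptyset$ and $|W_n|>0$, so that $|W_n\setminus F|=|W_n\setminus E|=|W_n|$. Dividing the pointwise inequality by $|W_n|$, passing to $\liminf$, and invoking the infimum characterization of $\kappa^+$ in Definition~\ref{defofk}, one obtains
\[
\kappa(x,F)=\kappa^+(x,F)\ \le\ \liminf_n \frac{J(F\cup W_n)-J(F)}{|W_n\setminus F|}\ \le\ \liminf_n \frac{J(E\cup W_n)-J(E)}{|W_n\setminus E|}.
\]
The desired inequality $\kappa(x,F)\le\kappa(x,E)$ would then follow if, for every $\epsilon>0$, the sequence $(W_n)$ can be chosen so that the last liminf is at most $\kappa(x,E)+\epsilon$, after sending $\epsilon\to 0$.

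The main technical obstacle is precisely this final selection: showing that the infimum $\kappa^+(x,E)=\kappa(x,E)$ is approximated by test sequences lying in $F^c$, rather than merely in $E^c$. The plan is to start from an arbitrary minimizing sequence $(V_n)\subseteq E^c$ for $\kappa^+(x,E)$ and to build $(W_n)\subseteq F^c$ by a small outward translation $W_n:=V_n+t_n\nu$ in the direction of the common outer unit normal $\nu$ to $\partial E$ and $\partial F$ at $x$, with $t_n\to 0$ chosen large enough to push $V_n$ off $F$ (using the exterior ball condition enjoyed by $F\in\Reg$) yet small enough that $\overline{W_n}\kto\{x\}$. The translation invariance~\eqref{eq:transinv} of $J$ rewrites the perturbed ratio in terms of the shifted base set $E-t_n\nu$, and the $C^{\ell,\beta}$-convergence $E-t_n\nu\to E$ combined with the continuity of $\kappa$ should ensure that the limiting ratio differs from the one for $(V_n)$ by $o(1)$. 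Carrying out this perturbation rigorously is the delicate point, as one must simultaneously control the behavior of $J$ under $t_n\to 0$ and under $\overline{V_n}\kto\{x\}$.
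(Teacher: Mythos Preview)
Your diminishing-returns inequality from submodularity is correct, but the final step—producing a near-optimal sequence for $\kappa^+(x,E)$ that lives in $F^c$—is a genuine gap, and the translation argument you sketch does not close it. After translating by $t_n\nu$ and using~\eqref{eq:transinv}, the ratio becomes
\[
\frac{J(E\cup W_n)-J(E)}{|W_n|}=\frac{J((E-t_n\nu)\cup V_n)-J(E)}{|V_n|},
\]
so you would need $J((E-t_n\nu)\cup V_n)-J(E\cup V_n)=o(|V_n|)$. No axiom provides such quantitative control of $J$: assumption~C) concerns $\kappa$, not the incremental ratios, and the sets $(E-t_n\nu)\cup V_n$ and $E\cup V_n$ differ on a strip $(E-t_n\nu)\triangle E$ of measure $O(t_n)$, which is typically much larger than $|V_n|$ (the latter can be as small as $\text{diam}(V_n)^N$). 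There is no Lipschitz bound on $J$ to absorb this.

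The missing idea is to exploit the standing assumption $\kappa^+=\kappa^-$ asymmetrically: use $\kappa^+$ for $F$ (an infimum, so \emph{any} test sequence gives an upper bound) and $\kappa^-$ for $E$ (a supremum, so \emph{any} test sequence gives a lower bound). Then no near-optimal sequence is needed. Concretely, after approximating $F$ by $F_n\supseteq F$ in $\Reg$ with $\partial E\cap\partial F_n=\{x\}$ (so that the test sets below shrink to $\{x\}$), take the single explicit family $W^\e:=(E+\e\nu)\setminus\inter F_n$. Submodularity applied to $E+\e\nu$ and $F_n$, together with translation invariance, yields
\[
J(F_n\cup W^\e)-J(F_n)\ \le\ J(E)-J\bigl(E\setminus(W^\e-\e\nu)\bigr),
\]
where the translate $W^\e-\e\nu\subseteq E$ is now a test set for the \emph{removal} quotient defining $\kappa^-(x,E)$. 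Dividing by $|W^\e|$ and letting $\e\to0$ gives $\kappa^+(x,F_n)\le\kappa^-(x,E)$, hence $\kappa(x,F)\le\kappa(x,E)$ by continuity. The point is that the same translation that created your difficulty—moving a set outward across $\partial F$—here moves a set \emph{inward} across $\partial E$, which is exactly what $\kappa^-$ needs.
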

\begin{proof}
First, we observe that we can find sets $F_n$ which converge to
$F$ in $\Reg$, with $F_n\supseteq F$ and $\{x\}=\partial E\cap \partial F_n$.
In particular, $\kappa(x,F_n)\to \kappa(x,F)$.
Then, let $\nu$ be the (outer) normal vector to $E$ and (all of the) $F_n$ at
$x$, and for $\e>0$ small let $E^\e=E+\e\nu$. Let $W^\e=E^\e\setminus \inter F_n$,
and observe that $ W^\e\kto \{x\}$  as $\e\to 0$, and
$|W^\e|>0$ if $\e>0$ (small).
Thanks to~\eqref{eq:submo} (applied to $E^\e$ and $F_n$) and~\eqref{eq:transinv}, we have
\[
J(F_n\cup W^\e)-J(F_n)\ \le\ J(E^\e)-J((E^\e)\setminus W^\e)
\ =\ J(E)-J(E\setminus (W^\e-\e\nu))\,.
\]
Then, by the very definition of $\kappa$ we deduce $\kappa(x,F_n)\le \kappa(x,E)$. The conclusion follows noticing that,  by the continuity property  C),  $\kappa(x,F_n)\to\kappa(x,F)$.
\end{proof}

\section{First variation of the perimeter}\label{secfirstvar}

Let $J$ be a generalized perimeter. In this section we compare the weak notion of curvature given in Definition~\ref{defofk} with the more standard one based on the {\em first variation} of the perimeter functional. 
The latter is in turn related to  \textit{shape derivatives}, a notion which dates back to Hadamard,
extensively studied in particular in~\cite{MuratSimon76}.

\begin{definition}\label{deffirstvar}
We say that $\kappa(x,E)$, defined for $E\in \Reg$ and $x\in \partial E$,
is the first variation of the perimeter { $J$ if}
for every $E\in \Reg$,  and any one-parameter family of  diffeomorphisms $(\Phi_\e)_\e$ of class $C^{\regu}$ both in $x$ and in $\e$
 with $\Phi_0(x)=x$, we have
\begin{equation}\label{subgrad3}
\frac{d}{d \e} J \big( \Phi_\e(E)\big)_{|_{\e=0}} =   \int_{\partial E} \kappa(x,  E) \psi(x)\cdot\nu_E(x) d \H^{N-1}(x)\,,
\end{equation}
where $\psi(x):=\frac{\partial \Phi_\e}{\partial \e}(x)_{|_{\e=0}}$ and $\nu_E(x)$ is the $C^{\regu[-1]}$ outer normal to the set $E$ at $x$.
\end{definition}

We will show that if such a $\kappa(x,E)$ is continuous with
respect to $C^{\regu}$ perturbations of the sets $E$, then it is also a curvature
in the sense of Definition~\ref{defofk}.
We start with the following intermediate result.
\begin{proposition} \label{propsubgrad}
Let $\kappa(x,E)$ be a function defined for
all $E\in\Reg$ and $x\in \partial E$, and assume that it satisfies
the continuity property C). Then $\kappa(x,E)$ is a first variation
of $J$
in the sense of Definition~\ref{deffirstvar} if and only if
for any $\f\in C^{\regu}_c(\R^N)$, and any $t_1<t_2$ such that $D \f\neq 0$
in the set $\{t_1\le \f \le t_2\}$, one has
\begin{equation}\label{intkappa} 
J(\{ \f\ge t_1\})  =  J(\{\f \ge t_2\})+\int_{\{t_1<\f<t_2\}}
\kappa(x,\{\f \ge \f(x)\})\,dx.
\end{equation}

Moreover, in this case, one also has that for any set $W\in \Ins$
such that $\{\f\ge t_2\}\subset W \subset \{\f\ge t_1\}$,
\begin{equation}\label{intW} 
J(W)\ge   J(\{\f\ge t_2\}) +\int_{W\setminus \{\f\ge t_2\}} 
\kappa(x,\{\f\ge \f(x)\})\,dx.
\end{equation}
\end{proposition}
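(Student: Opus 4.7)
The plan is to interpret \eqref{intkappa} as a shape derivative computed through the flow generated by $D\f/|D\f|^2$ along level sets of $\f$, and then to extract the one-sided bound \eqref{intW} from \eqref{intkappa} via the submodularity \eqref{eq:submo}.

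For the implication \eqref{subgrad3}$\Rightarrow$\eqref{intkappa}, I would construct a compactly supported $C^{\regu}$ vector field $\psi$ on $\R^N$ agreeing with $-D\f/|D\f|^2$ on the noncritical slab $\{t_1\le\f\le t_2\}$; its flow $\Phi_\tau$ then satisfies $\Phi_\tau(\{\f\ge t\})=\{\f\ge t-\tau\}$ whenever $t,t-\tau\in[t_1,t_2]$. Applying \eqref{subgrad3} with $E=\{\f\ge t\}$ and using $\psi\cdot\nu_{\{\f\ge t\}}=1/|D\f|$ yields
\[
-\frac{d}{dt}\, J(\{\f\ge t\}) \;=\; \int_{\{\f=t\}} \frac{\kappa(x,\{\f\ge t\})}{|D\f(x)|}\, d\H^{N-1}(x),
\]
so integration in $t$ together with the coarea formula produces \eqref{intkappa}. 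For the converse, given $E\in\Reg$ and $(\Phi_\e)_\e$, a standard Hadamard-type shape-calculus reduction shows that only the normal component $f:=\psi\cdot\nu_E$ contributes to the first variation, and by adding a large constant to $f$ and using linearity it suffices to treat the monotone case $f>0$. In that case, I would construct by the implicit function theorem a smooth $\f$ on a tubular neighborhood of $\partial E$ with $\{\f\ge -\e\}=\Phi_\e(E)$ for small $\e\ge 0$; applying \eqref{intkappa} on $[-\e,0]$ then gives
\[
J(\Phi_\e(E))-J(E)\;=\;\int_{\Phi_\e(E)\setminus E}\kappa(x,\{\f\ge\f(x)\})\,dx,
\]
and dividing by $\e$ and passing to the limit through the coarea formula and the continuity C) of $\kappa$ recovers \eqref{subgrad3}.

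For \eqref{intW}, the plan is to discretize $[t_1,t_2]$ by a fine partition $t_1=s_n<\cdots<s_0=t_2$, set $W_i:=W\cap\{\f\ge s_i\}$ (so that $W_0=\{\f\ge t_2\}$, $W_n=W$, and $W_i\setminus W_{i-1}\subset\{s_i\le\f<s_{i-1}\}$), and apply \eqref{eq:submo} to the pairs $W_i,\{\f\ge s_{i-1}\}$ to get the telescoping bound
\[
J(W_i)-J(W_{i-1}) \;\ge\; J\bigl(\{\f\ge s_{i-1}\}\cup(W_i\setminus W_{i-1})\bigr)-J(\{\f\ge s_{i-1}\}).
\]
A second application of \eqref{eq:submo}, pairing the augmented set $\{\f\ge s_{i-1}\}\cup(W_i\setminus W_{i-1})$ with its complementary slab companion $\{\f\ge s_{i-1}\}\cup(\{s_i\le\f<s_{i-1}\}\setminus W)$, whose union is $\{\f\ge s_i\}$, together with the slab identity \eqref{intkappa}, would reduce each contribution to $\int_{W\cap\{s_i\le\f<s_{i-1}\}}\kappa_\f\,dx$ up to an error controlled by the slab thickness and by the continuity of $\kappa_\f(x):=\kappa(x,\{\f\ge\f(x)\})$. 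Summing over $i$ and refining the partition would then yield \eqref{intW}.

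The main obstacle is precisely this last step: because $W$ is only measurable, no direct analogue of \eqref{intkappa} is available for the sets $\{\f\ge s_{i-1}\}\cup(W_i\setminus W_{i-1})$, so matching the submodular lower bound with the integral of $\kappa_\f$ over an arbitrary measurable slice of a thin slab has to proceed by dovetailing submodularity with the slab-by-slab identity \eqref{intkappa} and the continuity of $\kappa_\f$. Making the error vanish uniformly as the partition is refined, rather than merely pointwise, is the delicate technical point of the proof.
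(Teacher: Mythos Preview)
Your plan for the forward implication \eqref{subgrad3}$\Rightarrow$\eqref{intkappa} has a regularity mismatch: since $\f$ is only $C^{\regu}$, the vector field $D\f/|D\f|^2$ is merely $C^{\regu[-1]}$, and its flow is therefore not a $C^{\regu}$ family of diffeomorphisms as Definition~\ref{deffirstvar} requires. The paper addresses exactly this point by first smoothing $\f$ to obtain a $C^\infty$ reference hypersurface $\partial\tilde E$, then representing the level sets $\partial\{\f\ge s+\e\}$ as $C^{\regu}$ graphs $x\mapsto x+h_\e(x)\nu_{\tilde E}(x)$ via the implicit function theorem; the resulting diffeomorphisms are $C^{\regu}$ in both variables. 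Your converse argument is close in spirit to the paper's, though the reduction ``add a large constant to $f$ and use linearity'' presupposes a linearity in $f$ that is precisely what \eqref{subgrad3} asserts, so some care is needed.

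The more serious gap is in your strategy for \eqref{intW}. Your double application of submodularity does not close: after the first step you need a \emph{lower} bound on $J\bigl(\{\f\ge s_{i-1}\}\cup(W_i\setminus W_{i-1})\bigr)-J(\{\f\ge s_{i-1}\})$, and pairing with the complementary slab set $B=\{\f\ge s_{i-1}\}\cup(\{s_i\le\f<s_{i-1}\}\setminus W)$ via \eqref{eq:submo} converts this into an \emph{upper} bound requirement on $J(B)-J(\{\f\ge s_{i-1}\})$, which submodularity cannot supply. The signs are the wrong way around, and refining the partition does not help because the defect is not an $o(1)$ error but a structural one. The paper bypasses this entirely by lifting to the functional level: it uses that $J$, extended to $L^1_{loc}$ by the coarea formula \eqref{visintin}, is \emph{convex}, computes the directional derivative of $\e\mapsto J(\tilde\f+\e(\tilde\psi-\tilde\f))$ at $\e=0$ for smooth $\psi$ (via \eqref{subgrad3} applied level-by-level), and obtains the subgradient inequality
\[
J(\tilde\psi)\ \ge\ J(\tilde\f)+\int_{\{t_1<\f<t_2\}}(\psi-\f)\,\kappa(x,\{\f\ge\f(x)\})\,dx.
\]
Then $\psi=t_1+(t_2-t_1)\chi_W$ is approximated by mollification, using Lemma~\ref{lemapprox} (which gives $J(\rho_\e*u)\le J(u)$, so no energy is lost in the limit), and one passes to the limit. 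The convexity of the extended functional is the key extra ingredient that set-level submodularity alone does not deliver.
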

Observe that in both integrals, the argument $\kappa(x,\{\f\ge\f(x)\})$
is a continuous function of $x$, thanks to assumption C).
\begin{proof}
Consider the function  $f(s):= J(\{\f\ge s\})$, for $t_1<s<t_2$.
We claim that if $\kappa$ is a first variation of $J$, then
\begin{equation}\label{derivf}
f'(s) = -\int_{\partial \{f \ge s\}} \frac{\kappa(x,\{\f\ge s\})}{|D \f(x)|}
\,d \H^{N-1}(x)
\end{equation}
for all $s\in (t_1, t_2)$.
To this aim, given $s\in (t_1, t_2)$ with $t_1<s<t_2$, we need to find a family of $C^{\regu}$ diffeomorphisms
which transport $\{\f\ge s\}$ on $\{\f\ge s+\e\}$ and compute
its derivative at $\e=0$. 

If $\f$ were smooth (at least $C^{\regu[+1]}$), 
a simple way would be to
consider a smooth vector field $V(x)$ which is zero in
$\{\f\ge t_2\}\cup\{\f\le t_1\}$ and equal to $D \f/|D \f|^2$ in a neighborhood of $\{\f=s\}$. We would then
let $\Phi_\e(x)$ defined for all $x$ by
\[
\begin{cases}
\frac{d \Phi_\e(x)}{d \e} = V(\Phi_\e(x)) & \ \e>0,\\
\Phi_0(x) = x.
\end{cases}
\]
In this case for $\e$ small,
we would have that $\f(x)=s$ implies $\f(\Phi_\e(x))=s+\e$,
since clearly $d (\f(\Phi_\e(x)))/d\e = D\f(\Phi_\e(x))\cdot V(\Phi_\e(x))=1$
for such $x$ and $\e$. Then, \eqref{derivf} would follow from~\eqref{subgrad3}.
However, if $\f$ is merely $C^{\regu}$, this construction
builds only a $C^{\regu[-1]}$ diffeomorphism.

In general the situation is a bit more complex, however it is
clear that such a diffeomorphism exists. A relatively simple construction
consists in smoothing $\f$ with a smooth mollifier in order to find
a $C^\infty$ set $\tilde E$  such that for all $\e$ small enough (here
both positive and nonpositive),
the surfaces $\partial \{\f\ge s+\e \}$ are represented as
$C^{\regu}$ graphs over $\partial\tilde E$:
\[
\partial \{\f\ge s+\e \} = 
\left\{ x+h_\e(x)\nu_{\tilde{E}}(x)\,:\, x\in \partial\tilde E \right\}.
\]
By the implicit function theorem, $h_\e$ exists and is $C^{\regu}$ (in both
$\e$ and $x$) for $\e$ near $0$. Moreover, since
$\f(x+h_\e(x)\nu_{\tilde{E}}(x))=s+\e$, one checks that
for any $\e$ small and $x\in \partial \tilde{E}$,
\[
\frac{\partial h_\e(x)}{\partial \e} = \frac{1}{D \f(x+h_\e(x)\nu_{\tilde{E}}(x))\cdot \nu_{\tilde{E}}(x)}. 
\]
The diffeomorphism $\Phi_\e(x)$ is then simply defined, in a neighborhood
of the surface $\partial\tilde{E}$, by
\begin{equation}\label{num}
\Phi_\e(x)=
x+(h_\e(\pi_{\partial \tilde{E}}(x))-h_0(\pi_{\partial \tilde{E}}(x)))
\nu_{\tilde{E}}(\pi_{\partial \tilde{E}}(x))\,,
\end{equation}
where  $\pi_{\partial\tilde{E}}$ denotes the orthogonal projection
onto $\partial\tilde{E}$, which is well-defined and smooth in a sufficiently small neighborhood of the surface.

Then, one has that for $x\in \partial\{ \f\ge s\}$ (which is the graph of
$h_0$)
\[
\psi(x):=\lim_{\e\to 0}\frac{\Phi_\e(x)-x}{\e}=
\frac{\partial h_\e}{\partial \e}_{|_{\e=0}}(\pi_{\partial \tilde{E}}(x))
\nu_{\tilde{E}}(\pi_{\partial \tilde{E}}(x))
=\frac{\nu_{\tilde{E}}(\pi_{\partial \tilde{E}}(x))}{D \f(x)\cdot \nu_{\tilde{E}}(\pi_{\partial \tilde{E}}(x))}
\]
and, in turn,
\[
\psi(x)\cdot \nu_{\partial\{\f\ge s\}}(x)=
-\psi(x)\cdot\frac{D \f}{|D \f|}(x)
=-\frac{1}{|D \f(x)|}\,.
\]
Hence~\eqref{subgrad3} yields
\begin{multline}
\lim_{\e\to 0}\frac{J(\{\f\ge s+\e\}) - J(\{\f\ge s\})}{\e} 
\\=\lim_{\e \to 0} \frac{J(\Phi_\e(\{\f\ge s\})) - J(\{\f\ge s\})}{\e}
= -\int_{\partial \{\f \ge s\}} \frac{\kappa(x,\{\f\ge s\})}{|D \f(x)|}
\,d \H^{N-1}(x),
\end{multline}
which shows~\eqref{derivf}. Equation~\eqref{intkappa} follows
from~\eqref{derivf} and the co-area formula for $BV$ functions.

Conversely, assume now that \eqref{intkappa} holds for all {$\f\in C^{\regu}_c(\R^N)$} and  $t_1<t_2$ such that $D \f\neq 0$
in the set $\{t_1\le \f \le t_2\}$.
We consider a family of diffeomorphism $\Phi_\e$ as in Definition~\ref{deffirstvar}.
We start by showing that \eqref{subgrad3} holds.  Write  $E$ as $E=\{\f\geq \frac12\}$ for a suitable $\f\in C^{\regu}(\R^N)$, constant out of a compact set
and with $D \f\neq 0$ in $\{0\le \f\le 1\}$.
Since $\Phi_\e(E)\Delta E$ is contained in the  $M\e$-neighborhood 
$(\pa E)_{M\e}$ of
 $\pa E$ for some $M>0$, if $\e>0$ is sufficiently small we may find a diffeomorphism $\widetilde \Phi_\e$ such that  $\widetilde \Phi_\e=Id$ outside $(\pa E)_{2M\e}$ (and in particular out of $\{|\f-1/2|\le 1/4\}$),  $\widetilde \Phi_\e(E)=\Phi_\e(E)$, and 
 { $\|\widetilde \Phi_\e -Id\|_{C^{\regu}}\to 0$} as $\e\to 0$.
In particular, by construction $ J ( \Phi_\e(E)) =J(\widetilde \Phi_\e(E))$.
 Since $\Phi_\e(E)=\widetilde \Phi_\e(E)=\{\f\circ \widetilde \Phi^{-1}_\e\geq \frac12\}$ and $\{\f\circ \widetilde \Phi^{-1}_\e\geq 1\}=
 \{\f\geq 1\}$, by \eqref{intkappa} we have
 \begin{align*}
 J ( \Phi_\e&(E))-J(\{\f\ge 1\})\\
&=\int_{\widetilde \Phi_\e(E)\setminus \{\f\ge 1\}}k(x, E_{\f\circ \widetilde \Phi^{-1}_\e(x)})\, dx \\
&=\int_{(\pa E)_{2M\e}\cap\widetilde{\Phi}_\e(E)} 
\left(k(x, E_{\f\circ \widetilde \Phi^{-1}_\e(x)})- k(x, E_{\f(x)}) \right)\, dx+\int_{\widetilde \Phi_\e(E)\setminus \{\f\ge 1\}}k(x, E_{\f(x)})\, dx\\
&\le\int_{(\pa E)_{2M\e}}
\left|k(x, E_{\f\circ \widetilde \Phi^{-1}_\e(x)})- k(x, E_{\f(x)})\right|\, dx+\int_{\Phi_\e(E)\setminus \{\f\ge 1\}}k(x, E_{\f(x)})\, dx\,.
 \end{align*}
 Since  $\|k(x, E_{\f\circ \widetilde \Phi^{-1}_\e(x)})- k(x, E_{\f(x)})\|_{L^{\infty}((\pa E)_{2M\e})}\to 0$ as $\e \to 0$, 
 thanks to Assumption C), we have that
 $$
 \int_{(\pa E)_{2M\e}}\bigl|k(x, E_{\f\circ \widetilde \Phi^{-1}_\e(x)})- k(x, E_{\f(x)})\bigr|\, dx =o(\e)\,.
 $$
 Therefore 
 \begin{multline*}
 \frac{d}{d \e} J \big( \Phi_\e(E)\big)_{|_{\e=0}} =
 \frac{d}{d \e}\biggl( \int_{\Phi_\e(E)\setminus \{\f\ge 1\}}k(x, E_{\f(x)})\, dx \biggr)_{|_{\e=0}}
\\ =\int_{\pa E}k(x, E_{\f(x)})\psi(x)\cdot\nu_E(x)\, d\H^{N-1}\,.
\end{multline*}

It now remains to prove~\eqref{intW}. 
We first consider a $C^{\regu}$ function $\psi$ such that $\psi-\f$ is compactly
supported in $\{ t_1<\f<t_2\}$. In particular if $\e>0$ is small enough,
$\{ t_1<\f+\e(\psi-\f) <t_2\}=\{ t_1<\f<t_2\}$.
We also introduce $\tilde\f:= t_1\vee(\f\wedge t_2)$ and 
$\tilde\psi(x):=\psi(x)$ if $t_1<\f<t_2$,
$\tilde{\psi}(x)=\tilde{\f}(x) \in \{t_1,t_2\}$ else.
Observe that thanks to~\eqref{visintin},
\begin{equation}\label{equazionestar}
J(\tilde\f) =  \int_{t_1}^{t_2} J(\{\f\ge s\})\,ds
\end{equation}
(which is finite thanks to~\eqref{intkappa}), so that (for $\e$ small)
\[
\frac{J(\tilde\f+\e(\tilde{\psi}-\tilde\f))-J(\tilde\f)}{\e}=
\int_{t_1}^{t_2}
\frac{J(\{\f+\e(\psi-\f)\geq s\} )-J(\{\f\ge s\}))}{\e}\,ds.
\]
Again, for a fixed $s\in (t_1,t_2)$,
one can find a family of $C^{\regu}$-diffeomorphisms $(\Phi_\e)_{\e>0}$ which
transform $\{\f>s\}$ into $\{\f+\e(\psi-\f)>s\}$. One proceeds
as before, but now the implicit function theorem is applied
 to the function
\[
(x,\e,h)\mapsto (1-\e)\f(x+h\nu_{\tilde{E}}(x)) + \e\psi(x+h\nu_{\tilde{E}}(x)) - s.
\]
for $x\in\partial\tilde{E}$, $\e$ small enough and $h$ in a suitable
neighborhood of $0$. The diffeomorphisms are defined as in \eqref{num},
and we can compute again the derivative of $h_\e$ with respect to $\e$,
for $x\in\partial\tilde{E}$:
\[
\frac{\partial h_\e(x)}{\partial \e} = 
\frac{\f(x+h_\e(x)\nu_{\tilde{E}}(x))-\psi(x+h_\e(x)\nu_{\tilde{E}}(x))}
{((1-\e)D \f(x+h_\e(x)\nu_{\tilde{E}}(x))+\e(D \psi(x+h_\e(x)\nu_{\tilde{E}}(x))))\cdot \nu_{\tilde{E}}(x)}. 
\]
Hence, at $\e=0$, for $x\in\partial\{\f\ge s\}$ we have
\[
\frac{\partial h_\e}{\partial \e}_{|_{\e=0}}(\pi_{\tilde{E}}(x)) = 
\frac{\f(x)-\psi(x)}
{D \f(x)\cdot \nu_{\tilde{E}}(\pi_{\tilde{E}}(x))}. 
\]
In turn, 
\[
\psi(x)=\lim_{\e\to 0}\frac{\Phi_\e(x)-x}{\e}=
\frac{\partial h_\e}{\partial \e}_{|_{\e=0}}(\pi_{\partial \tilde{E}}(x))
\nu_{\tilde{E}}(\pi_{\partial \tilde{E}}(x))
=\frac{(\f(x)-\psi(x))\nu_{\tilde{E}}(\pi_{\partial \tilde{E}}(x))}
{D \f(x)\cdot \nu_{\tilde{E}}(\pi_{\tilde{E}}(x))}
\]
and
\[
\psi(x)\cdot \nu_{\partial\{\f\ge s\}}(x)
=\frac{\psi(x)-\f(x)}{|D \f(x)|}\,.
\]
 Using~\eqref{subgrad3}, we infer
\begin{multline*}
\lim_{\e\to 0}
\frac{J(\{\f+\e(\psi-\f)\ge s\} )-J(\{\f\ge s\}))}{\e}
\\= \int_{\partial \{\f\ge s\}} \frac{\psi(x)-\f(x)}{|D \f(x)|} \kappa(x,E_{\f(x)})\,d\H^{N-1}(x)\,.
\end{multline*}
Notice that more generally, if $\e$ is small enough, one obtains that
\begin{multline*}
\lim_{\e'\to \e}
\frac{J(\{\f+\e'(\psi-\f)\ge s\} )-J(\{\f+\e(\psi-\f)\ge s\}))}{\e'-\e}
\\= \int_{\partial \{\f+\e(\psi-\f)\ge s\}}
\frac{\psi(x)-\f(x)}{|(1-\e)D \f(x)+\e D\psi(x)|} \kappa(x,\{\f+\e(\psi-\f)\ge s\})\,d\H^{N-1}(x)\,.
\end{multline*}

Denoting $E_{s,\e}$ the sets $\{\f+\e(\psi-\f)\ge s\}$ and observing
that they are continuous in $\Reg$ as $\e$ varies, we see that,
thanks to assumption C),
this derivative is continuous with respect to $\e$ (small)
and in particular,
\begin{multline}\label{preciseincrement}
\frac{J(\{\f+\e(\psi-\f)\ge s\} )-J(\{\f\ge s\}))}{\e}
\\=\frac{1}{\e}\int_0^\e
 \int_{\partial E_{s,t}}
\frac{\psi(x)-\f(x)}{|(1-t)D \f(x)+tD\psi(x)|} \kappa(x,E_{s,t})
\,d\H^{N-1}(x) dt
\end{multline}
Another observation is that the range of the $\e$ for which this is
true can be taken to be the same for all $s\in [t_1,t_2]$, since
it depends on  $C^2$ bounds for the boundaries  $\pa\{\f\ge s\}$
(more precisely, on their largest curvature)
and for $\psi$.
Hence, if $\e$ is small enough, integrating~\eqref{preciseincrement}
for $s$ between $t_1$ and $t_2$ and using the co-area formula
for $BV$ functions, we obtain that
\begin{multline*}
\frac{J(\tilde\f+\e(\tilde{\psi}-\tilde\f))-J(\tilde\f)}{\e}
\\=
\int_{\{ t_1<\f<t_2\}} (\psi(x)-\f(x))
\left( \frac{1}{\e}\int_0^\e \kappa(x,E_{(1-t)\f(x)+t\psi(x),t})\,dt\right)\,dx
\end{multline*}
In the limit, using the continuity assumption C) of $\kappa$ again,
we conclude that
\[
\lim_{\e\to 0}\frac{J(\tilde\f+\e(\tilde{\psi}-\tilde\f))-J(\tilde\f)}{\e}=
\int_{\{ t_1<\f<t_2\}} (\psi(x)-\f(x))\kappa(x,E_{\f(x)})\,dx\,.
\]
The convexity of $J$ in turn implies that
$J(\tilde\f+\e(\tilde{\psi}-\tilde\f))-J(\tilde\f)
\le \e(J(\tilde{\psi})-J(\tilde\f))$ and it follows
\begin{equation}\label{Jsubgrad}
J(\tilde{\psi}) \ge J(\tilde{\f})
+ \int_{\{ t_1<\f<t_2\}} (\psi(x)-\f(x))\kappa(x,E_{\f(x)})\,dx\,.
\end{equation}
Finally, notice that  if $\psi-\f$ only vanishes
on $\{\f\le t_1\}\cup\{\f\ge t_2\}$, instead of having compact support in $\{t_1<\f<t_2\}$,
then~\eqref{Jsubgrad} still holds, since by our assumptions
one can find $t'_1<t_1$, $t'_2>t_2$ such that $|\nabla \f|>0$ in 
$\{t'_1<\f<t'_2\}$.

Consider now $W$ as in Proposition~\ref{propsubgrad}.
Without loss of generality (possibly replacing
$t_1$ with a smaller value and $t_2$ with a larger value) we may assume
that $\partial W\subset \{s_1<\f<s_2\}$, where $t_1<s_1<s_2<t_2$.
Introducing a mollifier as in Lemma~\ref{lemapprox}, we can
approximate $\psi=t_1+(t_2-t_1)\chi_W$ with a sequence of smooth functions 
$\psi_n$ such that $\psi_n-\f$ is supported in $\{t_1\le\f\le t_2\}$,
and $\lim_n J(\psi_n)=J(\psi)=(t_2-t_1)J(W)$.

It follows from~\eqref{Jsubgrad} that
\[
J(\psi_n) \ge J(\tilde{\f})
+ \int_{\{ t_1<\f<t_2\}} (\psi_n(x)-\f(x))\kappa(x,E_{\f(x)})\,dx\,,
\]
and in the limit we obtain that
\[
(t_2-t_1)J(W) \ge J(\tilde{\f})
+ \int_{\{ t_1<\f<t_2\}} (\psi(x)-\f(x))\kappa(x,E_{\f(x)})\,dx\,,
\]
where we recall that $\psi=t_1+(t_2-t_1)\chi_W$.
We can conclude using~\eqref{intkappa} and \eqref{equazionestar}, or observing that
this inequality also implies that 
\[
(t_2-t_1)J(W) \ge J(g_n(\f))
+ \int_{\{ t_1<\f<t_2\}} (\psi(x)-g_n(\f(x)))\kappa(x,E_{\f(x)})\,dx\,.
\]
where $g_n:\R\to [0,1]$ is a smooth, nondecreasing approximation
of $t_1+(t_2-t_1)\chi_{\{t\ge t_2\}}$. Passing to the limit, we obtain~\eqref{intW}.
\end{proof}
%
%
\begin{corollary}\label{sdefofk}
Let $\kappa$ be a first variation of $J$ in the sense 
of Definition~\ref{deffirstvar}, and assume it satisfies assumption C). 
Then, it is also the curvature in the sense of Definition \ref{defofk}.
\end{corollary}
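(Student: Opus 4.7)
The plan is to deduce both equalities $\kappa^\pm(x,E)=\kappa(x,E)$ of Definition~\ref{defofk} by bracketing each one-sided quantity with matching upper and lower bounds, using as main tools the identity~\eqref{intkappa} and the subgradient inequality~\eqref{intW} from Proposition~\ref{propsubgrad}, together with the continuity assumption~C) and the first variation formula~\eqref{subgrad3}. I fix $E\in \Reg$, $x\in\partial E$, and pick a representative $\f\in C^{\regu}_c(\R^N)$ with $E=\{\f\ge 0\}$ and $D\f\neq 0$ on $\{|\f|\le \de\}$ for some $\de>0$.

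\emph{First I would obtain the ``easy'' inequalities $\kappa^+(x,E)\ge \kappa(x,E)$ and $\kappa^-(x,E)\le \kappa(x,E)$,} i.e.\ those that must hold for an arbitrary admissible sequence $\overline{W}_n\kto\{x\}$. For $n$ large one has $W_n\subset\{-\de<\f<\de\}$. Applying~\eqref{intW} to $W=E\cup W_n$ with $t_1\in(-\de,0)$, $t_2=0$ yields
$$J(E\cup W_n)-J(E)\ge \int_{W_n\setminus E}\kappa(y,\{\f\ge\f(y)\})\,dy,$$
and since $\f(y)\to 0$ uniformly on $W_n$ the sets $\{\f\ge\f(y)\}$ converge to $E$ in $\Reg$ uniformly in $y$; assumption~C) thus makes the integrand converge to $\kappa(x,E)$ uniformly, so dividing by $|W_n\setminus E|$ and taking the liminf gives the first bound. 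The second is dual: apply~\eqref{intW} to $W=E\setminus W_n$ with $t_1=0$, $t_2\in(0,\de)$ and subtract the identity~\eqref{intkappa} for $(\f,0,t_2)$; after cancellation (and using that $\{\f=0\}$ has measure zero) one gets $J(E)-J(E\setminus W_n)\le \int_{W_n\cap E}\kappa(y,\{\f\ge\f(y)\})\,dy$, and concludes in the same way.

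\emph{Next I would construct explicit competitor sequences realizing the opposite inequalities $\kappa^+(x,E)\le \kappa(x,E)\le\kappa^-(x,E)$} by small normal perturbations of $\partial E$ localized near $x$. Fix $\eta\in C^\infty_c(B_1,[0,1])$ with $\eta(0)>0$, set $\eta_n(y):=\eta((y-x)/r_n)$ for $r_n\searrow 0$, and consider the smooth diffeomorphisms $\Phi^\pm_\e(y):=y\pm\e\eta_n(y)\,\nu_E(x)$ (using the constant direction $\nu_E(x)$ avoids the regularity loss that would come from transporting the $C^{\regu[-1]}$ field $\nu_E$ itself). Definition~\ref{deffirstvar} yields
$$J(\Phi^\pm_\e(E))-J(E)=\pm \e\int_{\partial E}\kappa(y,E)\,\eta_n(y)\,\nu_E(x)\cdot\nu_E(y)\,d\H^{N-1}(y)+o(\e),$$
while a direct area computation gives the same leading term for $|\Phi^\pm_\e(E)\triangle E|$ with $\kappa$ removed, the scalar product $\nu_E(x)\cdot\nu_E(y)$ being uniformly positive on the support of $\eta_n$ once $r_n$ is small. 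Setting $W_n^+:=\Phi^+_{\e_n}(E)\setminus E$ and $W_n^-:=E\setminus \Phi^-_{\e_n}(E)$ for a diagonal $\e_n\searrow 0$, the ratios in~\eqref{ineqbelow}--\eqref{ineqabove} reduce to the weighted averages of $\kappa(\cdot,E)$ against $\eta_n\,\nu_E(x)\cdot\nu_E$ on $\partial E$, which by~C) tend to $\kappa(x,E)$ as $n\to\infty$.

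The only point requiring care will be the diagonal bookkeeping: since the linear term in $|\Phi^\pm_{\e_n}(E)\triangle E|$ is of order $\e_n r_n^{N-1}$, one must take $\e_n$ small compared to $r_n$ in order to absorb the $O(\e_n^2)$ and $o(\e_n)$ remainders; and since $W_n^\pm$ is contained in $B_{r_n+O(\e_n)}(x)$ while containing $x$ as a limit point, this simultaneously ensures $\overline{W}_n^\pm\kto\{x\}$. Assembling the four inequalities then yields $\kappa^+(x,E)=\kappa^-(x,E)=\kappa(x,E)$, which is the content of the corollary.
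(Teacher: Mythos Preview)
Your proposal is correct and follows essentially the same approach as the paper's proof. In particular, your derivation of the ``easy'' bounds $\kappa^+\ge\kappa$ and $\kappa^-\le\kappa$ via~\eqref{intW} and~\eqref{intkappa} matches the paper's argument (the paper only writes out the $\kappa^-\le\kappa$ case and declares the $\kappa^+$ case analogous), and your construction of competitor sequences via the localized normal deformations $\Phi^\pm_\e(y)=y\pm\e\eta_n(y)\nu_E(x)$ together with a diagonal choice of $\e_n$ is exactly what the paper sketches for the reverse inequalities.
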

\begin{proof}

We need to  prove that \eqref{ineqbelow} and \eqref{ineqabove}  hold with $\kappa^+ = \kappa^- = \kappa$. To this purpose,  fix $x\in \partial E$ and let $\{W_n\} \subset \Ins$ with $|W_n\cap  E|>0$ and $\overline{W_n} \kto \{x\}$.

We can as before assume  that $E$ is 
the level set $1/2$ of a $C^{\regu}$ function $\f$, constant out of a compact set,
such that $D \f\neq 0$ in $\{0\le \f\le 1\}$.
Since 
\[
J(E) = J(\{\f\ge 1\})+\int_{\{1/2< \f < 1\}} \kappa(x,E_{\f(x)})\,dx
\]
and
\[
J(E\setminus W_n) \ge J(\{\f\ge 1\})+\int_{(E\setminus W_n)\setminus \{\f\ge 1\}} \kappa(x,E_{\f(x)})\,dx,
\]
by~\eqref{intkappa} and~\eqref{intW} respectively, 
it follows that 
\[
J(E)-J(E\setminus W_n) \le \int_{W_n \cap E} \kappa(x,E_{\f(x)})\,dx
\]
for $n$ sufficiently large.
Dividing both sides by $|W_n\cap E|$ and letting $n \to \infty$, using also the continuity property of $\kappa$ we conclude $\kappa^-\le \kappa$. The opposite inequality easily follows by \eqref{subgrad3}, choosing a sequence $\eta_n$
of smooth cut-off functions whose support concentrates around $x$ and which are
1 in a neighborhood of $x$, defining $\Phi_{\e,n}:y\mapsto y-\e\eta_n(y)\nu_{E}(x)$, and setting $W_n:= \Phi_{\e_n,n}(E)\setminus E$, where $\e_n$ is chosen through a  standard diagonal argument. The proof that $\kappa^+=\kappa$ is analogous.  
\end{proof}

\section{Examples of  perimeters and their curvature}\label{sec:examples}
In this part we  present some examples of generalized perimeters and corresponding curvatures that fit into our theory. We will consider here,
unless otherwise stated, that $\ell=2$, $\beta=0$.

\subsection{The Euclidean perimeter}
Let $J$ be the Euclidean perimeter. More precisely, let $J$ be its lower semi-continuous extension to measurable sets 
introduced by Caccioppoli and De Giorgi.
Then, $J$ satisfies all the assumptions i)-vi). 
Moreover, let $\kappa$ be the standard Euclidean curvature, i.e., the sum of the principal curvatures of $\partial E$ at $x$. 
It is standard that it is the first variation of the perimeter
in the sense of Definition~\ref{deffirstvar}, hence by
Propositions \ref{propsubgrad}  we deduce that the Euclidean curvature $\kappa$ is also the curvature of $J$ in the sense of Definition \ref{defofk}.

Clearly, the Euclidean perimeter is also uniformly continuous with respect to $C^2$ inner variations of  sets, namely it satisfies the continuity assumption C'). More in general, any local curvature $\kappa(x,E)$ that depends continuously on the normal and on the second fundamental form of $E$ at $x$ fits with our theory. For such local and posslbly anisotropic curvatures, we recover the well known existence and uniqueness of a viscosity solution to the  geometric flows.

\subsection{The fractional mean curvature flow}
Let $\alpha \in (0,\frac12)$, and let $E\in \Ins$ be such that $\chi_E \in  H^{\alpha}(\R^N)$.
We recall that   the $\alpha-$fractional seminorm of the characteristic function of  $E$ is defined by 
$$
[\chi_E]_{H^\alpha}  :=
\left((1- \alpha) \int_{\R^N\times\R^N} \frac{|\chi_E(x)-\chi_E(y)|}{|x-y|^{N+2 \alpha}}\,dxdy\right)^{\frac12}.
$$
Let us introduce the generalized perimeter 
$$
J(E) :=
\begin{cases}
\displaystyle
[\chi_E]^2_{H^\alpha}   & \text{ if } \chi_E \in H^{\alpha}(\R^N),\\
+\infty & \text{ otherwise.}
\end{cases}
$$

Since the work in  \cite{CRS}, this nonlocal perimeter has attracted much attention; we refer the interested reader to \cite{V}. 
It is easy to check that  $J$ satisfies all the properties i)-vi), so that it fits with our notion of generalized perimeters. 

A  notion of curvature corresponding to $J$ has been introduced in \cite{CS}, \cite{I}:
let 
$$
\rho(x):=1/|x|^{N+2\alpha}, \qquad \rho_\delta(x)=(1-\chi_{B(0,\delta)}(x))\rho(x).
$$
Then, for every $E\in\Reg$ set
\begin{align*}
& \kappa_\delta(x,E)\ =\ -2(1-\alpha)\int_{\R^N} (\chi_{E}(y)-\chi_{\R^N\setminus E}(y))
\rho_\delta(x-y)\,dy, 
\\
& \kappa(x,E):= \lim_{\delta\to 0} \kappa_\delta(x,E).
\end{align*}
The curvature $\kappa$ is well defined for all smooth sets,
 it is the first variation of the perimeter $J$  and   it is continuous with respect to $C^2$ convergence. 
By 
Proposition~\ref{propsubgrad} 
we deduce that  $\kappa$  satisfies \eqref{intkappa} and \eqref{intW}. 
In particular, $\kappa$ is the curvature of $J$ according with Definition \ref{defofk}.

 Indeed $\kappa(x,E)$ is well defined  for any set which satisfies an internal and external ball condition at $x$ (see \cite{I}, \cite{CS}). In particular, $\kappa$ is well defined for any $E\in \Regm$.
This suggests that $\kappa$ is a first order curvature.
Let us show that this is the case.

\begin{proposition}
Let $\Sigma\in \Regm$, let  $x\in \pa \Sigma$, and let $(p,X)$ and $(p,Y)$ be elements
of $\Jet^{2,+}_\Sigma(x)$ and $\Jet^{2,-}_\Sigma(x)$, respectively. 
Then,
\begin{equation}\label{eq:FOvero}
\kappa_*(x, p, X, \Sigma)=\kappa^*(x, p, Y, \Sigma) = \kappa(x,\Sigma).
\end{equation}
In particular, 
the curvature $\kappa$ satisfies the first order curvature assumption (FO). 
\end{proposition}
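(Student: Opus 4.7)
To prove $\kappa_*(x,p,X,\Sigma)=\kappa(x,\Sigma)=\kappa^*(x,p,Y,\Sigma)$ (which immediately yields (FO)), I would focus on the identity for $\kappa_*$; the one for $\kappa^*$ is symmetric and is obtained by approximating $\Sigma$ from the inside by $C^2$ sets $E$ with $\inter E\subseteq \Sigma$ and $(p,Y)\in\Jet^{2,+}_E(x)$.

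For the easy inequality $\kappa_*(x,p,X,\Sigma)\le \kappa(x,\Sigma)$, I would invoke the integral representation of the fractional curvature, which is well defined on every set with a bilateral ball condition at $x$ (hence on every admissible competitor $E\in\Reg$ and on $\Sigma\in\Regm$). Since $\Sigma\subseteq E$ and $x\in \pa E\cap\pa \Sigma$, the principal-value singularities at $x$ cancel and
\[
\kappa(x,E)-\kappa(x,\Sigma)\;=\;-4(1-\alpha)\!\int_{E\setminus\Sigma}\!\frac{dy}{|x-y|^{N+2\alpha}}\;\le\;0,
\]
so that taking the supremum over all admissible $E$ yields the bound.

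For the converse, I would construct a sequence $E_n\in\Reg$ with $\Sigma\subseteq E_n$, $x\in\pa E_n$, $(p,X)\in\Jet^{2,-}_{E_n}(x)$, and $\kappa(x,E_n)\to\kappa(x,\Sigma)$. Choose local coordinates at $x=0$ with $-p/|p|$ the outward unit normal of $\Sigma$, so that $\pa\Sigma$ is the graph $\{t=h(z)\}$ of a $C^{1,1}$ function $h$ on $p^\perp$ with $h(0)=0=Dh(0)$. By the reduction noted in the remark right after Definition~\ref{jets}, only the tangential part $\tilde X:=\pi_{p^\perp}X\pi_{p^\perp}$ is relevant, and unpacking $(p,X)\in\Jet^{2,+}_\Sigma(0)$ gives $h(z)\le\frac{1}{2|p|}\tilde X z\cdot z+o(|z|^2)$. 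I would then set
\[
g_n(z)\;:=\;\frac{1}{2|p|}\,\tilde X\,z\cdot z\,+\,\tfrac{1}{2n}|z|^2
\]
on a small ball $|z|<\rho_n$ and glue it smoothly, via a partition of unity, to a $C^2$ mollification $\tilde h_n\ge h$ of $h$ outside, producing a global $C^2$ function $G_n$ with $G_n\ge h$, $G_n(0)=0$, $DG_n(0)=0$, and $G_n=g_n$ near $0$. The set $E_n:=\{t\le G_n(z)\}$ (matched to a fixed $C^2$ extension of $\Sigma$ away from $x$) then lies in $\Reg$, contains $\Sigma$, has $x\in\pa E_n$, and the extra term $\tfrac{1}{2n}|z|^2$ guarantees $(p,X)\in\Jet^{2,-}_{E_n}(0)$ by a direct computation.

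Finally, I would estimate the curvature increment. The ``cap'' $E_n\setminus\Sigma$ sits in a region of $t$-thickness $O(|z|^2/n)$ above $\{t=h(z)\}$, and the condition $\alpha<\tfrac12$ yields
\[
\int_{E_n\setminus\Sigma}\!\frac{dy}{|x-y|^{N+2\alpha}}\;\lesssim\;\frac{1}{n}\int_0^{\rho_n}\! r^{-2\alpha}\,dr\;\longrightarrow\;0
\]
as $n\to\infty$, provided $\rho_n$ is chosen to shrink slowly enough to also damp the bounded contribution from the gluing annulus. Hence $\kappa(x,E_n)\to\kappa(x,\Sigma)$, closing the proof. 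The main technical hurdle is the gluing construction of $G_n$: it must reconcile the pointwise jet condition at $x$ with the global inclusion $E_n\supseteq\Sigma$ while remaining $C^2$; the positive buffer $\tfrac{1}{2n}|z|^2$ in $g_n$ is precisely what absorbs the $o(|z|^2)$ slack coming from the superjet inequality on $h$.
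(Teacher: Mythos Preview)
Your approach differs from the paper's, which does not build the approximating sets by hand. Instead, the paper invokes Lemma~\ref{lemmalsc} to obtain a recovery sequence $(p_n,X_n,E_n)\to(p,X,\Sigma)$ with uniform superjet at $x$ realising $\kappa(x,E_n)\to\kappa_*(x,p,X,\Sigma)$, and then replaces $E_n$ by $\tilde E_n:=E_n\cup B_r(x+r\,p_n/|p_n|)$ (using the interior ball of $\Sigma\in\Regm$). The modified sets $\tilde E_n$ now satisfy a \emph{uniform} bilateral ball condition at $x$ and still converge to $\Sigma$ in $L^1$; under these two conditions the fractional curvature is known to be continuous (cf.~\cite{I}), giving $\kappa(x,\tilde E_n)\to\kappa(x,\Sigma)$, and monotonicity plus lower semicontinuity of $\kappa_*$ close the loop. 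Your explicit graph construction trades this abstraction for a direct integral estimate, which is more hands-on but also more laborious; the paper's route has the advantage of reusing general machinery already in place and a continuity property of $\kappa$ that is needed anyway.

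There is, however, a concrete error in your final estimate. The thickness $g_n(z)-h(z)$ of the cap $E_n\setminus\Sigma$ near $x$ is \emph{not} $O(|z|^2/n)$: the superjet inequality only gives the upper bound $h(z)\le\frac{1}{2|p|}\tilde X z\cdot z+o(|z|^2)$, with no matching lower bound on $h$, so $g_n-h$ can be as large as $C|z|^2$ with $C$ independent of $n$ (take for instance $h\equiv0$ and $\tilde X>0$). The near-field contribution is therefore $O\bigl(\int_0^{\rho_n}r^{-2\alpha}\,dr\bigr)=O(\rho_n^{1-2\alpha})$, not $O(\rho_n^{1-2\alpha}/n)$, and this forces $\rho_n\to0$ rather than ``$\rho_n$ shrinking slowly''. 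You must then also choose the mollification scale $\epsilon_n$ for $\tilde h_n$ with $\epsilon_n\ll\rho_n^{1+2\alpha}$ so that the far-field term $\int_{|z|>\rho_n}(\tilde h_n-h)|z|^{-N-2\alpha}\,dz=O(\epsilon_n\rho_n^{-1-2\alpha})$ also vanishes. With these corrections your argument goes through, but as written the key estimate is false.
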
 
\begin{proof}
By Lemma \ref{lemmalsc} there exists a sequence $(p_n,X_n,E_n)\to (p,X,\Sigma)$ with uniform superjet at $x$, with $E_n\in\Reg$, such that 
\begin{equation}\label{cialtron0}
\kappa(x,E_n) \to \kappa_*(x, p, X, \Sigma).
\end{equation}
Moreover, we can always assume that $E_n \to E$ in $L^1$ (see Remark \ref{mds}). 
Since $\Sigma\in\Regm$, there exists $r>0$ such that $B_r(x+r \frac{p}{|p|})\subset \Sigma$.
Set
$$
\tilde E_n:= E_n \cup B_r(x+r \frac{p_n}{|p_n|}).
$$
Clearly, $(p_n,X_n,\tilde E_n)$ still converge to  $(p,X,\Sigma)$ with uniform superjet at $x$. By the lower semicontinuity and monotonicity properties of $\kappa$ we have 
\begin{equation}\label{still}
 \kappa_*(x, p, X, \Sigma) \le \liminf_n \kappa(x, \tilde E_n) \le \liminf_n \kappa(x,  E_n)  = \kappa_*(x, p, X, \Sigma).
\end{equation}
Moreover, since  $\tilde E_n\to \Sigma$ in $L^1$ and $\tilde E_n$ satisfy a uniform internal and external ball condition at $x$, it is easy to see
(see for instance ~\cite{I}) that 
$
\kappa(x,\tilde E_n)\to \kappa(x,E),
$
which together with \eqref{still} proves that 
$
\kappa_*(x, p, X, \Sigma)= \kappa(x,\Sigma).
$
The proof for $\kappa^*(x, p, Y, \Sigma)$ is identical.  
\end{proof}

Once proved that $\kappa$ is a first order curvature, in view of Theorem \ref{th:CP1st} we recover the existence and uniqueness of a viscosity solution to the geometric flow, already proved in  \cite{I}.
Instead, the convergence of the corresponding minimizing movement scheme studied in Section~\ref{sec:ATW} { is completely new  for this class of nonlocal perimeters and furnishes an approximation algorithm which is alternative to the threshold-dynamics-based one studied in \cite{CS}.} 

\begin{remark}
{\rm
It can be proved that in fact $\kappa$ satisfies also the uniform continuity assumption C'). Thus,  uniqueness  could also be deduced from the second order theory of Subsection~\ref{subsec:2ndorder}, but of course the "first order" point of view is more convenient and straightforward in this case.
}
\end{remark}

We conclude this part giving a self contained proof  of \eqref{intkappa} and \eqref{intW}, which, in view of Proposition \ref{sdefofk},  yield  that $\kappa$ is the first variation of $J$.  

Let
$$
J_\delta(E) := (1- \alpha) \int_{\R^N\times\R^N}  |\chi_E(x)-\chi_E(y)| \rho_\delta(x-y)\,dxdy.
$$
We will first show that $J_\delta$, $\kappa_\delta$  satisfy \eqref{intkappa} and \eqref{intW}. Let $W$ be a bounded measurable set. Then,
\begin{multline}\label{eq:subsub}
\int_W  \kappa_\delta(x,E_{\f(x)})\,dx\,
\\
=\,
- 2 (1-\alpha) \int_{\R^N \times \R^N} \chi_{W}(x) (\chi_{E_{\f(x)}} (y) - \chi_{\R^N\setminus E_{\f(x)}}(y))
\rho_\delta(x-y)\,dy dx
\\
=\,
- (1-\alpha)  \int_{\R^N \times \R^N} 
(\chi_{W}(x) 
-
\chi_{W}(y) )
(\chi_{E_{\f(x)}} (y) - \chi_{\R^N\setminus E_{\f(x)}}(y))
\rho_\delta(x-y)\,dy dx
\\
\le\ 
\int_{\R^N\times \R^N}
|\chi_{W}(x) - \chi_{W}(y) | \rho_\delta(x-y)\,dy dx,
\end{multline}
with equality if and only if
$W = E_{s}$ for some $s\in (0,1)$.

On the other hand, it is easy to show that, as $\delta\to 0$, the following limits hold:
\begin{itemize}
\item[i)] For every $E\in\Ins$, $J_\delta(E) \to J(E)$,
\item[ii)]
Let $\f:\R^N\to (0,1)$ be such that $E_s:=\{\f \ge s\}$ are bounded and belong to $\Reg$ for every $s\in(0,1)$
and $D^2\f$ is negative definite on $\{\f=1\}$.
Then, $\kappa_\delta(x,E_{\f(x)}) \to \kappa(x,E_{\f(x)})$ in $L^1_{loc}(\R^N)$.
\end{itemize}
This implies that also $J$, $\kappa$ satisfies \eqref{intW} and \eqref{intkappa}.



\subsection{General two body interaction perimeters}
More in general one may consider a class of integral nonlocal perimeters of the form (see \cite{I})
\begin{equation}\label{pikappa}
J_K(E):=\int_E\int_{\R^N\setminus E}K(x-y)\,dxdy\,,
\end{equation}
where the (possibly singular) nonnegative kernel $K$ satisfies:
\begin{itemize}
\item[(i)] $K\in L^1(\R^N\setminus B(0,\de))$ for all $\de>0$;
\item[(ii)] for all $r>0$ and $e\in \mathbb{S}^{N-1}$ we have that $K\in L^1(\{z\in \R^N:\, r|z \cdot  e |\leq |z- (z \cdot e) e|^2\})$.
\end{itemize}
The associated nonlocal curvature 
$$
\kappa(x,E):=-2\int_{\R^N} (\chi_{E}(y)-\chi_{\R^N\setminus E}(y))
K(x-y)\,dy
$$
is well defined in the principal value sense provided that $E$ satisfies both an inner and an outer ball condition at the point $x\in \pa E$.
One can check that also these curvatures are covered by  both the first order and second order theories of generalized curvatures. 

\subsection{The flow generated by the regularized pre-Minkowski content }\label{parmino}
Let $\rho>0$ be fixed, and consider the measure of the $\rho$-neighborhood of the boundary of $ E$, i.e., 
\begin{equation}\label{defmink0}
\mathcal M_\rho(E):= |(\partial E)_\rho| = |(\cup_{x\in\partial E }B_\rho(x))|.
\end{equation}
We  refer to $M_\rho$ as the {\it pre-Minkowski content} of $\partial E$, since as $\rho\to 0$,  ${|(\partial E)_\rho| }/{2\rho}$ approximates the Minkowski content, which coincides with the standard perimeter on smooth sets. 

An issue with definition~\eqref{defmink0} is that it depends on
the choice of the representative  within the Lebesgue equivalence class of the set $E$. 
For this reason, one may introduce the following variant:
\begin{equation}\label{defmink}
J_\rho(E)\ =\ \frac{1}{2\rho}\int_{\R^N} \osc_{B(x,\rho)}(\chi_E)\,dx
\end{equation}
where $\osc_A(u)$ denotes the \textit{essential oscillation} of the
measurable function $u$ over a measurable set $A$, defined by
$\osc_A(u)\ =\ \textup{ess} \sup_A u\,-\, \textup{ess} \inf_A u$.
One checks that $J_\rho(E)$ coincides with the measure of the $\rho$-neighborhood
of the  essential boundary of $E$.
Moreover, 
$J_\rho(E)\ =\ \inf \{ \M_\rho(E'): \,  |E\triangle E'| = 0 \}$, 
where $E\triangle E'$ denotes the symmetric difference $(E\setminus E') \cup (E'\setminus E)$.

In \cite{CMP0} we have proved that the functional \eqref{defmink} is a generalized perimeter, we have introduced the corresponding curvature, and studied the geometric flow.  
Let us  introduce  a notion of curvature corresponding to $J_\rho$;  let $E\in\Reg$, and  denote by 
$\nu_E(x)$ the outer normal unit vector to $\partial E$ at $x$.

For $x\in\partial E$, set
\begin{equation}
\kappa_\rho(x,E)\ =\ \kappa^{out}_\rho(x,E)\,+\,\kappa^{in}_\rho(x,E),
\end{equation}
where
\begin{equation}\label{kappaprho}
\kappa^{out}_\rho(x,E)\ =\begin{cases}
\ds \frac{1}{2\rho}\det (I+\rho D \nu_E(x))\  &
\textrm{ if }  \dist(x+\rho \nu_E(x),E)= \rho\,, \\[2mm]\\
0 & \textrm{ otherwise,}
\end{cases}
\end{equation}
\begin{equation}
\kappa^{in}_\rho(x,E)\ =\begin{cases}
\ds - \frac{1}{2\rho}\det (I-\rho D \nu_E(x)) &
\textrm{ if }  \dist(x -\rho \nu_E(x),E^c)= \rho\,, \\[2mm]\\
0 & \textrm{ otherwise.}
\end{cases}
\end{equation}
These quantities  correspond to the variation
of the volume of the strips $\{0<d_E<\rho\}$ (for $\kappa^{out}_\rho$)
and $\{-\rho<d_E<0\}$ (for $\kappa^{in}_\rho$) when the boundary is infinitesimally
modified at $x$, and their sum is a natural candidate for the
curvature associated to the energy $\E_\rho$.
Indeed, in \cite{CMP0} we have proved that $\kappa_\rho(x,E)$ is the first variation of  $J_\rho$ (in the classical sense \eqref{subgrad3}) whenever  $E\in\Reg$ is such that the points at distance $\rho$ from $\partial E$ admit a unique projection on $\partial E$ (indeed such  condition can be weakened a little). 
In order to have a well defined curvature for all $E\in\Reg$, one can consider  the following regularization of $J_\rho$:
\begin{equation*}\label{varmr}
J^f(E)\ =\ \int_{\R^N} f(d_E(x))\,dx = \int_0^{\rho} (-2s f'(s)) J_s(E) \,ds,
\end{equation*}
where   $d_E$ is the signed distance from $\partial E$  and  $f:\R\to \R_+$ is even, smooth and decreasing in  $\R_+$, with support in $[-\rho,\rho]$. 
Such a regularization was considered also in \cite{BKLMP} for numerical purposes.

The corresponding curvature  $\kappa_f$ is 
\begin{equation}\label{kappaf}
\kappa_f(x,E) = \kappa^{out}_f(x,E) + \kappa^{in}_f(x,E), 
\end{equation}
where
$$
\kappa^{out}_f(x,E)   = \int_0^\rho (- 2s f'(s)) \kappa^{out}_s(x,E) \,ds, \quad \kappa^{in}_f(x,E)   = \int_0^\rho (- 2s f'(s)) \kappa^{in}_s(x,E) \,ds.
$$

Let $r^{in}$ be the maximal  $r\in [0,\rho]$ such that $E$ satisfies the internal  ball condition with radius $r$ at $x$, and let $r^{out}$ be defined analogously. Clearly, $r^{in}$, $r^{out}$ and the second fundamental form at $x$ are uniformly continuous with respect to smooth inner variations. We immediately
deduce that  $\kappa_f$ satisfies the uniform continuity assumption C'). 

In \cite{CMP0} we have  proved that $\kappa_f(x,E)$
is the curvature corresponding to $J^f$, according to
both Definitions~\ref{deffirstvar} and~\ref{defofk}, and we have studied the corresponding curvature flow through the minimizing movements method. 
As a consequence of the analysis of this paper, namely by the Comparison Principle provided by Theorem~\ref{th:CP2nd},  we  get { the new result that  such a geometric evolution is indeed unique}.  

\subsection{The shape flow generated by $p$-capacity}\label{subsec:capacity}

In this subsection we show that the shape flow of {\em bounded sets} generated by the $p$-capacity fits into our general framework. Notice that the case  $p=2$ yields an evolution that is similar to the Hele-Show type flow considered in \cite{CL07}.  

 To this aim, given $1<p<N$, we consider the following  {\em relaxed $p$-capacity} of a set $E\in\Ins$ defined by
\begin{equation}\label{cap}
\cp(E):=\inf\biggl\{\int_{\R^N}|D w|^p\, dx:\, w\in K^p \text{ and }w\geq1 \text{ a.e. in }E  \biggr\}\,,
\end{equation}
where $K^p$ stands for the subspace of functions $w$ of $L^{p^*}(\R^N)$ such that  $Dw\in L^p(\R^N)$. Note that the above definition departs from the standard one in which the condition $w\geq1 \text{ a.e. in }E$ is replaced by $E\subset\inter {\{w\geq 1\}}$. It may be thought as a sort of $L^1$-lower semicontinuous envelope of the standard $p$-capacity, having the property of being insensitive to negligible sets and thus independent of the Lebesgue representative of $E$. 
Clearly the two definitions coincide on open sets and it is not difficult to check that they also agree on all closed sets $F$ such that 
$F=\overline{\inter F}$, with $|\pa F|=0$, in particular on all sets in $\Reg$.

Formula \eqref{cap} does not provide yet a generalized perimeter. Indeed, $\cp(\R^N)=+\infty$ and, more in general, if $E\in \Reg$, then 
$\cp(E)<+\infty$ if and only if $\cp(\R^N\setminus E)=+\infty$. Thus, the requirements  i) and ii) stated at the begining of Subesction~\ref{subsec:gp} are not fulfilled. On the other hand, properties iii) and vi) are evident, the lower-semicontinuity  iv) follows in a standard way, while 
  the submodularity property v) can be proven as in the case of the standard capacity (see \cite[Theorem 2-(vii) of Section 4.7]{EG}). 
  Since our focus will be on the evolution of bounded sets, we will build a generalized perimeter $J_p$, by enforcing the following properties:
  \begin{itemize}
 \item[a)] $J_p(E)=\cp(E)$ for all bounded sets $E\in \Ins$;
 \item[b)] $J_p(E)=J_p(\R^N\setminus E)$ for all $E\in \Ins$.
  \end{itemize} 
 This is achieved by setting
  \begin{equation}\label{jp}
  J_p(E):=\min\{\cp(E), \cp(\R^N\setminus E)\}
  \end{equation}
  for all $E\in \Ins$. It follows immediately from the definition and from the properties of $\cp(\cdot)$ recalled above that $J_p$ satisfies
  i)--iv) of Subsection~\ref{subsec:gp} and the translation invariance vi). It only remains to check the submodularity property v). To this aim, let us consider the case of two sets $E$, $F\in \Ins$ such that $\cp(E)<+\infty$ and $\cp(\R^N\setminus F)<+\infty$. As   $\cp(\R^N\setminus E)=\cp(F)=+\infty$, we have  $J_p(E)=\cp(E)$ and 
  $J_p(F)=\cp(\R^N\setminus F)$. Moreover, since $\cp(E\cap F)\leq \cp (E)$, we also have $J_p(E\cap F)= \cp(E\cap F)$, while the fact that 
$\cp(E\cup F)\geq \cp(F)=+\infty$ implies $J_p(E\cup F)=\cp(\R^N\setminus (E\cup F))$.  Thus, in this case  the submodularity inequality is equivalent to 
$$
\cp(\R^N\setminus (E\cup F))+ \cp(E\cap F)\leq \cp(E)+\cp(\R^N\setminus F)\,,
$$
which is obviously true since $\cp(\R^N\setminus (E\cup F))\leq \cp(\R^N\setminus F)$ and $\cp(E\cap F)\leq \cp(E)$ by the non-decreasing monotonicity of the set function $\cp(\cdot)$.
 Since all the remaining cases are either trivial or reduce the the submodularity of $\cp(\cdot)$, also property v) is established for $J_p$, which is therefore a generalized perimeter.

By a standard application of the Direct Method of the Calculus of Variations one may also check the existence of a {\em unique capacitary potential} $w_E$ associated with any set $E$, i.e., of a unique solution to the problem \eqref{cap}, whenever $\cp(E)<+\infty$. The Euler-Lagrange conditions for \eqref{cap} easily yield that  $w_E$ is super $p$-harmonic in $\R^N$,  in fact it is determined as the unique solution $w_E\in K^p$ to 
\begin{equation}\label{wEconditions}
\begin{cases}
\displaystyle -\int_{\R^N}|Dw_E|^{p-2}Dw_ED\f\, dx\geq 0 & \text{for all $\f\in K^p$, with $\f\geq 0$ a.e. on $E$.}\\
 w_E = 1 &\text{a.e. in $E$.}
 \end{cases}
 \end{equation}
Denoting by $E^{(0)}$ the set of points with vanishing density with respect to $E$, it follows in particular that {\em $w_E$ is p-harmonic in the interior 
of $E^{(0)}$}.
 
 In order to identify the nonlocal curvature corresponding to $J_p(\cdot)$, we exploit the theory developed in Section~\ref{secfirstvar}. Let  $E\in \Reg$ and bounded, and let 
 $(\Phi_\e)_\e$ be a one-parameter family of diffeomorphisms from $\R^N$ onto itself of class $C^2$ both in $\e$ and $x$ and such that
 $\Phi_0(x)=x$ for all $x\in \R^N$.  Denote $\psi(x):=\frac{\partial \Phi_\e(x)}{\partial \e}_{|_{\e=0}}$. Then by the Hadamard formulae (see for instance \cite{SZ})  one has
 \begin{multline}\label{capkappa}
\frac{d}{d_\e}J_p(\Phi_\e(E))_{|_{\e=0}}= \frac{d}{d_\e}\cp(\Phi_\e(E))_{|_{\e=0}}=\frac{d}{d_\e}\int_{\R^N\setminus\Phi_\e(E)}|D w_{\Phi_\e(E)}|^p\, dx_{\big|_{\e=0}}\\
 =\int_{\pa E}|D w_E|^{p}(x)\psi(x)\cdot\nu_E(x)\, d\H^{N-1}(x)\,, 
 \end{multline}
 where, as usual, $\nu_E$ denotes the outer unit normal to $E$.  Motivated by the above formula and recalling that 
 $J_p(E)=\cp(\R^N\setminus E)$ for $E\in \Reg$ and unbounded, 
   for every $E\in \Reg$ and  $x\in \partial E$ we set
 \begin{equation}\label{capkappa2}
 \kappa_p(x,E):=
 \begin{cases}
 \displaystyle \hphantom{-} |D w_E(x)|^p=\biggl|\frac{\partial w_E}{\partial\nu}(x)\biggr|^p & \text{if $E$ is bounded,}\\
 \\
\displaystyle- |D w_{\R^N\setminus E}(x)|^p=-\biggl|\frac{\partial w_{\R^N\setminus E}}{\partial\nu}(x)\biggr|^p & \text{if $E$ is unbounded.}
 \end{cases}
 \end{equation}
 Recalling that $w_E$ is p-harmonic on $E^c$ and satisfies the Dirichlet condition $w_E=1$ on $\pa E$, the well-established regularity theory for the $p$-Laplacian (see for instance \cite{Lieb}) yields that  {$w_E$ is of class $C^{1, \alpha}$} up to the boundary for all $\alpha\in (0,1)$, with 
 the $C^{0,\alpha}$-norm of $Dw_E$ depending only on its $L^p$-norm and the $C^{0,\alpha}$-norm of $\pa E$. 
{
In fact, whenever $E_n \to E$ in $C^{1,\alpha}$ and $x\in \partial E\cap\partial E_n$ we have
\begin{equation}\label{regures}
\frac{\partial w_{E_n}}{\partial\nu}(x)\to \frac{\partial w_E}{\partial\nu}(x)
\end{equation} 
 as $n\to\infty$. } 
 In particular, it follows that 
 the nonlocal curvature $\kappa_p$ defined in \eqref{capkappa2} satisfies the continuity property C) of Subsection~\ref{sec:curvature}. 

 In turn, by Corollary~\ref{sdefofk} the set function  \eqref{capkappa2} is the curvature associated with $J_p$ in the sense of Definition~\ref{defofk}.
 Lemma~\ref{lemmaxp} now implies that  the monotonicity property A) stated in Subsection~\ref{sec:curvature} holds for $\kappa_p$.

Since the translation invariance of $\kappa_p$ is evident, we have shown that \eqref{capkappa2} satisfies axioms A), B), and C) of 
Subsection~\ref{sec:curvature}. We recall that these axioms are enough to guarantee the convergence (up to subsequences) of the minimizing movements scheme  studied in Section~\ref{sec:ATW}  to a viscosity solution of the corresponding level set equation. 

It remains to investigate the uniqueness. Instead of establishing the reinforced continuity property C'), we check that 
the nonlocal perimeter $J_p$ generates a ``first-order'' flow and we apply the theory of Subsection~\ref{subsec:1storder}.
 To this aim, denote by $(\kappa_p)_*$ and $(\kappa_p)^*$ the lower and the upper semicontinuous extensions of $\kappa_p$ provided by formulas  \eqref{defkappal} and \eqref{defkappau}, respectively. Note that,  as a straightforward 
 consequence of the definition and of \eqref{defkappau}, we have
 \begin{equation}\label{kappa*compl}
 (\kappa_p)_*(x, p, X, E)=-(\kappa_p)^*(x, -p, -X, \R^N\setminus E)
 \end{equation}
for $E\in \Ins$, $x\in \pa E$, and $(p,X)\in \Jet^{2,+}_E(x)$. 

We are now in a position to prove that  condition (FO) of Subsection~\ref{subsec:1storder} is satisfied. Uniqueness will then follow by applying the Comparison Principle provided by Theorem~\ref{th:CP1st}.
\begin{lemma}\label{lm:k*c11}
Let $\Sigma\subset \R^N$ belong to $\Regm$. Let $x\in \pa \Sigma$ and let $(p,X)$ and $(p,Y)$ be elements
of $\Jet^{2,+}_\Sigma(x)$ and $\Jet^{2,-}_\Sigma(x)$, respectively. 
Then,
$$
(\kappa_p)_*(x, p, X, \Sigma)=(\kappa_p)^*(x, p, Y, \Sigma)\,.
$$
\end{lemma}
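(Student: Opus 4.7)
The strategy closely parallels the treatment of the fractional mean curvature earlier in the paper. Pick $(p,X) \in \Jet^{2,+}_\Sigma(x)$. By Lemma~\ref{lemmalsc} there is a sequence $(p_n, X_n, E_n) \to (p, X, \Sigma)$ with uniform superjet at $x$, with $E_n \in \Reg$, such that $\kappa_p(x, E_n) \to (\kappa_p)_*(x, p, X, \Sigma)$; using Remark~\ref{mds} I can also assume $E_n \to \Sigma$ in $L^1_{loc}$. Since $\Sigma \in \Regm$ admits a two-sided ball condition at $x$, there is $r > 0$ with $B_r(x + r p/|p|) \subset \Sigma$ and $B_r(x - r p/|p|) \subset \R^N \setminus \Sigma$.

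I then enlarge each $E_n$ by the tangent interior ball, setting $\tilde E_n := E_n \cup B_r(x + r p_n/|p_n|)$, and similarly carve out a tangent exterior ball from $\R^N \setminus \tilde E_n$, after smoothing slightly to keep $\tilde E_n \in \Reg$ with $(p_n, X_n, \tilde E_n) \to (p, X, \Sigma)$ still converging with uniform superjet at $x$. By the monotonicity property A), enlargement can only decrease $\kappa_p$ and carving can only increase it; combined with the lower semicontinuity in Lemma~\ref{lemmalsc}, a sandwich estimate
$$(\kappa_p)_*(x, p, X, \Sigma) \le \liminf_n \kappa_p(x, \tilde E_n) \le \liminf_n \kappa_p(x, E_n) = (\kappa_p)_*(x, p, X, \Sigma)$$
yields $\kappa_p(x, \tilde E_n) \to (\kappa_p)_*(x, p, X, \Sigma)$. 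The point of this construction is that $\tilde E_n$ now enjoys uniform interior and exterior ball conditions of radius comparable to $r$ at $x$.

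The decisive step is to identify this limit with $\kappa_p(x, \Sigma)$. The uniform two-sided ball condition at $x$ forces $\partial \tilde E_n$ to be representable locally near $x$ as the graph of a function with uniform $C^{1,1}$ bounds; combined with Hausdorff convergence $\overline{\tilde E_n} \to \overline \Sigma$ (extracted from $L^1_{loc}$ convergence), a standard Arzelà--Ascoli argument upgrades this to $C^{1,\alpha}$-convergence of the boundaries near $x$ for every $\alpha \in (0,1)$. After a small normal shift so that a common point of the boundaries is at $x$, the continuity statement~\eqref{regures} for normal derivatives of capacitary potentials applies and gives $\partial w_{\tilde E_n}/\partial \nu(x) \to \partial w_\Sigma/\partial \nu(x)$, whence $\kappa_p(x, \tilde E_n) \to |Dw_\Sigma(x)|^p = \kappa_p(x, \Sigma)$.

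The identity for $(\kappa_p)^*$ is obtained in a symmetric fashion: either approximate $\Sigma$ from inside by $\Reg$-sets with uniform subjets at $x$ and repeat the argument, or apply the complement relation~\eqref{kappa*compl}, noting that $\R^N \setminus \Sigma$ also belongs to $\Regm$ and that the first part of the proof remains valid for unbounded sets (simply swapping the roles of interior and exterior tangent balls and using that $\kappa_p$ on unbounded sets is defined through the capacitary potential of the bounded complement). The principal obstacle in the whole argument is precisely the promotion of uniform superjet convergence plus $L^1$ convergence to the $C^{1,\alpha}$ boundary convergence required by~\eqref{regures}; this is handled by the insertion of the auxiliary tangent balls, which is what allows one to bypass verifying the stronger continuity property C') for $\kappa_p$.
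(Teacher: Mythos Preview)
Your approach follows the fractional template, but the invocation of \eqref{regures} at the decisive step is not justified as written. That continuity statement, as recorded in the paper, requires $E_n\to E$ in $C^{1,\alpha}$ \emph{globally}; your construction only produces local $C^{1,\alpha}$-convergence of $\partial\tilde E_n$ near $x$ (from the uniform two-sided ball condition) together with global Hausdorff/$L^1$ convergence. This matters here in a way it does not in the fractional case: $\kappa_p(x,E)=|Dw_E(x)|^p$ depends on the capacitary potential $w_E$, a global object, whereas the fractional curvature is an integral of a fixed kernel against $\chi_E$, for which local uniform control plus $L^1$ convergence suffice. To salvage your route you would need an additional argument: first $w_{\tilde E_n}\to w_\Sigma$ weakly in $K^p$ and locally uniformly on $\R^N\setminus\overline\Sigma$ (from Hausdorff convergence of the $\tilde E_n\supseteq\Sigma$ and uniqueness of the limiting potential), then uniform up-to-boundary $C^{1,\alpha}$ estimates for $w_{\tilde E_n}$ near $x$, and finally Arzel\`a--Ascoli. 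This is plausible but is real work, not a citation of \eqref{regures}.

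The paper circumvents this entirely via the Maximum Principle for $p$-harmonic functions. The inequality $(\kappa_p)_*(x,p,X,\Sigma)\le|\partial_\nu w_\Sigma(x)|^p$ is immediate: any admissible $E\supseteq\Sigma$ has $w_E$ super-$p$-harmonic on $\R^N\setminus\Sigma$ with $w_E=w_\Sigma=1$ on $\partial\Sigma$, so $w_E\ge w_\Sigma$ and hence $|\partial_\nu w_E(x)|\le|\partial_\nu w_\Sigma(x)|$. For the reverse inequality the paper builds an auxiliary sequence $\Sigma_n\in\Regm$ with $\Sigma\subset\inter\Sigma_n\cup\{x\}$ that converges to $\Sigma$ \emph{globally} in $C^{1,\alpha}$, so \eqref{regures} applies directly to give $|\partial_\nu w_{\Sigma_{\bar n}}(x)|\ge|\partial_\nu w_\Sigma(x)|-\e$; then the approximating sets $E_k$ from Lemma~\ref{lemmalsc} are eventually contained in $\Sigma_{\bar n}$, and a second use of the Maximum Principle yields $|\partial_\nu w_{E_k}(x)|\ge|\partial_\nu w_{\Sigma_{\bar n}}(x)|$. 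This two-sided comparison of potentials is the device that replaces the delicate convergence analysis your approach would require.
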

\begin{proof}
In light of \eqref{kappa*compl}, it is enough to consider the case of a bounded set $\Sigma$ of class $C^{1,1}$. Let $w_\Sigma$ be the associated
capacitary potential.  The conclusion of the lemma will be achieved by showing  that 
\begin{equation}\label{eq0}
(\kappa_p)_*(x, p, X, \Sigma)=\biggl|\frac{\partial w_\Sigma}{\partial\nu}(x)\biggr|^p=(\kappa_p)^*(x, p, Y, \Sigma)\,.
\end{equation}
To this aim, let  $E\supseteq\Sigma$ be a bounded set of $\Reg$ admissible for the Definition~\ref{defkappal} of $(\kappa_p)_*(x, p, X, \Sigma)$, and let $w_E$ be the corresponding capacitary potential. Recall that by \eqref{wEconditions}, we have that $w_E$ is super p-harmonic in 
$\R^N\setminus\Sigma$, while $w_\Sigma$ is p-harmonic in the same set. Since $w_E=w_\Sigma=1$ on  $\pa \Sigma$, by the Maximum Principle we infer that $1\geq w_E\geq w_\Sigma$ in $\R^N\setminus\Sigma$. In turn, 
$$
\kappa_p(x, E)=\biggl|\frac{\partial w_E}{\partial\nu}(x)\biggr|^p\leq \biggl|\frac{\partial w_\Sigma}{\partial\nu}(x)\biggr|^p
$$
and therefore we may conclude that 
\begin{equation}\label{ineq1}
(\kappa_p)_*(x, p, X, \Sigma)\leq \biggl|\frac{\partial w_\Sigma}{\partial\nu}(x)\biggr|^p\,.
 \end{equation}
 To show the opposite inequality, fix $\delta>0$ and  construct a sequence of sets $(\Sigma_n)_n\subset\Regm$ with the following properties:
 \begin{itemize}
 \item[i)] $\Sigma\subset\inter\Sigma_n\cup\{x\}$, with $\pa\Sigma_n\cap\pa\Sigma=\{x\}$;
 \item[ii)] $(p,X+\delta I)\in \Jet^2_{\Sigma_n}(x)$ for all $n\in \N$;
 \item[iii)] $\Sigma_n\to \Sigma$ in the $C^{1, \alpha}$-sense, for all $\alpha\in (0,1)$. 
 \end{itemize}
 Such a sequence can be constructed in many different ways: one possibility is to consider the $\frac1n$-level sets of the signed distance function from $\Sigma$ and modify them in the proximity to $x$ in order to fulfill conditions i) and ii).   
 {By \eqref{regures}}, for any given small $\e>0$ we may fix $\bar n$ such that
 \begin{equation}\label{ineq2}
 \biggl|\frac{\partial w_{\Sigma_{\bar n}}}{\partial\nu}(x)\biggr|^p\geq \biggl|\frac{\partial w_\Sigma}{\partial\nu}(x)\biggr|^p-\e\,.
 \end{equation}
 Recall now that from the proof of Lemma~\ref{lemmalsc}, we may construct a decreasing sequence of sets $(E_n)_n\subset\Reg$ such that
\begin{itemize}
\item[a)] $E_n \searrow \Sigma$ in the Hausdorff sense;
\item[b)] $ (p, X+\delta_n I)\in \Jet^2_{E_n}(x)$ for some $\delta_n\searrow 0$;
\item[c)] $\kappa_p(x, E_n) = \Bigl|\frac{\partial w_{E_n}}{\partial\nu}(x)\Bigr|^p\to (\kappa_p)_*(x, p, X, \Sigma)$.
\end{itemize}
Taking into account i) and ii) above, it follows from a) and b) that $E_n\subset\Sigma_{\bar n}$ for $n$ large enough. For all such $n$'s, by the Maximum Principle as in the first part of the proof, we have
$$
 \biggl|\frac{\partial w_{E_n}}{\partial\nu}(x)\biggr|^p\geq \biggl|\frac{\partial w_{\Sigma_{\bar n}}}{\partial\nu}(x)\biggr|^p\geq \biggl|\frac{\partial w_\Sigma}{\partial\nu}(x)\biggr|^p-\e\,,
$$
where in the last inequality we have used \eqref{ineq2}. {By c),} passing to the limit in the left-hand side of the above formula and by  the arbitrariness of $\e$, we deduce
$$
  (\kappa_p)_*(x, p, X, \Sigma)\geq \biggl|\frac{\partial w_\Sigma}{\partial\nu}(x)\biggr|^p,
$$
which, together with \eqref{ineq1},  establishes the first equality in \eqref{eq0}. The second equality can be proven in a completely analogous fashion.  \end{proof}

\section{The minimizing movements approximation}\label{sec:ATW}
In this section we implement the minimizing movements scheme to solve and approximate the nonlocal $\kappa$-curvature flow, 
in the spirit of  \cite{ATW,LS}. We extend the approach  of \cite{Chambolle} (see also \cite{CMP0}) to our general framework.

\subsection{The time-discrete scheme for bounded sets}

We start by introducing the incremental minimum problem. To this purpose,  given a bounded set $E\neq \emptyset$, we let
\begin{equation}\label{defsigndist}
d_E(x)\ =\ \dist(x,E)-\dist(x,\R^N\setminus E)
\end{equation}
be the signed distance function to $\partial E$. 
Fix a time step $h>0$ and consider the problem
\begin{equation}\label{varprob}
\min \left\{J(F)\ +\ \frac{1}{h}\int_F d_E(x)\,dx:\, F\in \Ins\right\}.
\end{equation}
Note that 
$$
\int_Fd_E(x)\, dx-\int_E d_E(x)\, dx=\int_{E\Delta F}\mathrm{dist}(x, \partial E)\, dx
$$
so that \eqref{varprob} is equivalent to 
$$
\min \left\{J(F)\ +\ \frac{1}{h}\int_{E\Delta F}\mathrm{dist}(x, \partial E)\, dx
:\, F\in \Ins\right\}.
$$

\begin{proposition}\label{exminpro}
The problem \eqref{varprob} admits a minimal and a maximal solution.
\end{proposition}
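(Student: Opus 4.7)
My approach pairs a direct-method existence argument with a lattice construction exploiting the submodularity of $J$.

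For existence, the functional $\mathcal G(F):=J(F)+\frac{1}{h}\int_F d_E\,dx$ is bounded below ($J\ge 0$ and $\int_F d_E\ge \int_E d_E\ge -\tfrac12 \operatorname{diam}(E)|E|$), so the infimum $m$ is finite. Since no $L^1_{loc}$-compactness for sub-level sets of $J$ is a priori available, I would relax to $u\in L^\infty(\R^N;[0,1])$: by the generalized coarea formula \eqref{visintin},
\[
\widetilde{\mathcal G}(u):=J(u)+\frac{1}{h}\int_{\R^N} u\, d_E\,dx = \int_0^1 \mathcal G(\{u>s\})\,ds,
\]
so $\inf\widetilde{\mathcal G}=m$ and for any minimizer $u^*$ the superlevel $\{u^*>s\}$ attains $m$ for a.e.\ $s\in(0,1)$. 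A minimizing sequence $u_n$ has a weak-$*$ subsequential limit by Banach--Alaoglu; tightness is supplied by the growth $d_E(x)\to +\infty$ as $|x|\to\infty$, which yields $\int_{\{d_E>k\}} u_n\,dx \le C/k$ uniformly in $n$. Weak-$*$ lower semicontinuity of $\widetilde{\mathcal G}$ follows from convexity of $J$ combined with its $L^1_{loc}$-l.s.c.\ (assumption iv of Subsection~\ref{subsec:gp}), together with a Fatou-type argument on the linear term after splitting $d_E=d_E^+-d_E^-$, where $d_E^-$ is bounded and compactly supported.

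For the extremal solutions I would use submodularity \eqref{eq:submo}: for any $F_1,F_2\in \M$ (the class of minimizers),
\[
\mathcal G(F_1\cap F_2)+\mathcal G(F_1\cup F_2)\le\mathcal G(F_1)+\mathcal G(F_2)=2m,
\]
(the linear term splits exactly); since both summands are $\ge m$, they must equal $m$, so $\M$ is closed under finite unions and intersections. The energy bound also forces $|F|<\infty$ for every $F\in\M$. Setting $u_{\max}:=\operatorname{ess\,sup}\{\chi_F:F\in\M\}$, this essential supremum is realized by a countable sup $\sup_n\chi_{F_n}$ for suitable $F_n\in\M$; replacing them by $G_n:=F_1\cup\cdots\cup F_n\in\M$ gives an increasing sequence with $G_n\nearrow E_{\max}:=\bigcup_n G_n$ in $L^1_{loc}$. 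Monotone convergence on $\int_{G_n}d_E^\pm$ (the uniform energy bound yielding $\int_{E_{\max}}d_E^+<\infty$) together with the $L^1_{loc}$-l.s.c.\ of $J$ gives $\mathcal G(E_{\max})\le m$, so $E_{\max}\in\M$; by construction it contains every minimizer up to a null set. The minimal $E_{\min}$ is obtained symmetrically from decreasing intersections $G_n:=F_1\cap\cdots\cap F_n\in\M$, passing to the limit via dominated convergence on the finite-measure set $G_1$.

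The main obstacle I anticipate is precisely the existence step: the absence of set-level compactness for sub-level sets of $J$ forces passage through the $[0,1]$-valued relaxation, where the tightness afforded by the growth of $d_E$ is essential for weak-$*$ lower semicontinuity of the linear term. Once a single minimizer is secured, the lattice construction based on submodularity is routine.
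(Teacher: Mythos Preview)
Your proof is correct and follows essentially the same strategy as the paper: relax to $[0,1]$-valued functions and use the generalized coarea formula to obtain a set minimizer, then exploit the lattice structure of the solution set together with monotone limits and lower semicontinuity to extract the extremal solutions. The only cosmetic difference is that you derive the lattice closure $F_1\cap F_2,\,F_1\cup F_2\in\M$ directly from submodularity of $J$, whereas the paper obtains it by observing that $\tfrac12(\chi_{F_1}+\chi_{F_2})$ minimizes the relaxed problem and reading off its superlevel sets---two equivalent routes to the same fact.
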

\begin{proof}
Since the functional $J$ extended to $L^1_{loc}$ according to \eqref{visintin} is convex, it is easy to check that the minimization problem
\begin{equation}\label{minext}
\min_{u\in L^\infty(\R^N;[0,1])} {J}(u)\,+\,\frac{1}{h}\int_{\R^N} u(x)d_E(x)\,dx
\end{equation}
 admits  a solution. Then,  observe that
\begin{equation}\label{minvisintin}
{J}(u)\,+\,\frac{1}{h}\int_{\R^N} u(x)d_E(x)\,dx\ =\ 
\int_{0}^1 \left( J(\{u>s\})\, +\,\frac{1}{h}\int_{\{u>s\}} d_E(x)\,dx
\right)\,ds\,,
\end{equation}
from which we  easily deduces that for a.e.~$s\in [0,1]$, $\{u>s\}$ is
a solution to~\eqref{varprob}.  Let  now $E_1$ and $E_2$ be two solutions to \eqref{varprob}. Then 
again by \eqref{minvisintin} their characteristic functions  and in turn, by convexity,  $\frac12({\chi_{E_1}+\chi_{E_2}})$ are solutions to \eqref{minext}.  
Since   almost all their  superlevel sets are solutions to \eqref{varprob}, we deduce, in particular,  that $E_1\cap E_2$ and $E_1\cup E_2$ are  solutions to  \eqref{varprob}. 
Finally let $E_n$ be a sequence of solutions to \eqref{varprob} such that 
$$
|E_n|\to m:=\inf\{|E|:\, E \text{ is a solution to \eqref{varprob}}\}.
$$
Then, $F_k:=\cap_{n=1}^{k}E_n$ is a decreasing sequence of solutions such that $|F_k|\to m$. Thus, by semicontinuity,
their $L^1$-limit $E:=\cap_{n=1}^{\infty}E_n$ is the minimal solution. The existence of a maximal solution can be proven analogously.
 \end{proof}

 { For any bounded set $E\neq \emptyset$ we let  $T^+_h E$ and  $T^-_h E$ denote  the maximal and the minimal solution of \eqref{varprob}, respectively. We also set $T_h^\pm\emptyset:=\emptyset$. We will mainly use minimal solutions, and write $T_h E:=  T^-_h E$. 
 This choice corresponds to consider open superlevels in our level set approach (see Proposition \ref{prop:solmin}).  
   It is convenient to fix a precise representative for $T^\pm_h E$. 
To this purpose, we will identify any measurable set with the representative given by the set of Lebesgue points of the characteristic function.} 


\begin{lemma}\label{lemcomp}
If $E\subseteq E'$, then ${T_h^\pm} E\subseteq T_h^\pm E'$.
\end{lemma}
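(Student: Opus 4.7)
The plan is to exploit submodularity of $J$ together with the monotonicity $d_{E'}\le d_E$ (which follows from $E\subseteq E'$) through the standard ``cross-minimization'' trick: use $F\cap F'$ as a competitor in the problem associated with $E$ and $F\cup F'$ as a competitor in the problem associated with $E'$.

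First, I would dispose of trivial cases: if $E=\emptyset$ or $E'=\emptyset$ the statement is immediate from the convention $T_h^\pm\emptyset=\emptyset$. Otherwise, set $F:=T_h^\pm E$ and $F':=T_h^\pm E'$ (the same choice of sign on both sides). Since $E\subseteq E'$, one has $\mathrm{dist}(x,E)\ge \mathrm{dist}(x,E')$ and $\mathrm{dist}(x,E^c)\le\mathrm{dist}(x,(E')^c)$, hence $d_E(x)\ge d_{E'}(x)$ pointwise. Using the identity $\chi_{F\cap F'}+\chi_{F\cup F'}=\chi_F+\chi_{F'}$, I would compute
\[
\int_{F\cap F'}d_E\,dx+\int_{F\cup F'}d_{E'}\,dx=\int_F d_E\,dx+\int_{F'}d_{E'}\,dx+\int_{F\setminus F'}(d_{E'}-d_E)\,dx,
\]
and the last integral is $\le 0$ by the monotonicity of the signed distance. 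Combined with the submodularity inequality $J(F\cap F')+J(F\cup F')\le J(F)+J(F')$ stated in v), this gives
\[
\Bigl[J(F\cap F')+\tfrac{1}{h}\!\!\int_{F\cap F'}\!\!d_E\Bigr]+\Bigl[J(F\cup F')+\tfrac{1}{h}\!\!\int_{F\cup F'}\!\!d_{E'}\Bigr]\le\Bigl[J(F)+\tfrac{1}{h}\!\!\int_F\!\!d_E\Bigr]+\Bigl[J(F')+\tfrac{1}{h}\!\!\int_{F'}\!\!d_{E'}\Bigr].
\]

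Next, by the minimality of $F$ in \eqref{varprob} with datum $E$ and of $F'$ with datum $E'$, each of the two bracketed quantities on the left is bounded below by the corresponding bracketed quantity on the right. Hence all three inequalities must be equalities; in particular $F\cap F'$ is itself a solution of \eqref{varprob} with datum $E$, and $F\cup F'$ is a solution with datum $E'$. To conclude: if the minus sign is chosen, then $F=T_h^- E$ is by definition the (unique) minimal solution with datum $E$, so $F\subseteq F\cap F'$, i.e.\ $F\subseteq F'$; if the plus sign is chosen, then $F'=T_h^+ E'$ is the maximal solution with datum $E'$, so $F\cup F'\subseteq F'$, again giving $F\subseteq F'$.

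I do not foresee any real obstacle: the only delicate point is the pointwise inequality $d_{E'}\le d_E$, but this is an elementary property of the signed distance; the rest is a textbook submodularity-plus-optimality argument, and the choice of representatives fixed just before the statement ensures that the set-theoretic inclusions $F\subseteq F'$ (as opposed to inclusions up to negligible sets) make sense.
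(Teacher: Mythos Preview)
Your proof is correct, and in fact more direct than the paper's. The paper proceeds in two stages: first it treats the strictly nested case $E\subset\subset E'$, where $d_E>d_{E'}$ pointwise, so that the cross-minimization inequality is \emph{strict} unless $|F\setminus F'|=0$, giving the inclusion immediately (for any pair of minimizers, not only the extremal ones); then it passes to the general case $E\subseteq E'$ by a perturbation argument, replacing $d_E$ with $d_E+\e$, showing the resulting minimal solutions $F_\e$ are monotone in $\e$ and contained in $T_h^- E'$, and sending $\e\to 0$ using lower semicontinuity. Your argument bypasses both steps: with only the non-strict inequality $d_{E'}\le d_E$ you obtain that $F\cap F'$ and $F\cup F'$ are themselves minimizers, and then the very definition of $T_h^-$ (resp.\ $T_h^+$) as the minimal (resp.\ maximal) solution yields the inclusion. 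What you gain is brevity; what the paper's route gains is the slightly stronger statement in the strict case that \emph{every} minimizer for $E$ lies inside \emph{every} minimizer for $E'$.
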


\begin{proof}
The proof is classical and we just sketch it.
We  first assume that $E\subset\subset E'$, so that $d_E> d_{E'}$ everywhere. 
We compare  the energy~\eqref{varprob} of $T_h^\pm E$ with the one of $T_h^\pm  E \cap T_h^\pm E'$,
and the energy~\eqref{varprob}  (with $E$ replaced by $E'$)
of $T_h^\pm E'$ with the one of $T_h^\pm E \cup T_h^\pm E'$. 
We sum both inequalities and use~\eqref{eq:submo} to deduce that $T_h^\pm E \subseteq T_h^\pm E'$.

Now we conclude the proof by a perturbation argument. 
For $\e>0$ let $F_\e$ be the minimal solution of \eqref{varprob} with $d_E$ replaced by $d_E+\e$.
Arguing as before, we deduce that  $F_\e$ are increasing in $\e$ and  $F_\e \subseteq T_h^- E'$. 
Therefore  $F_\e \to F_0:= \cup_\e F_\e$ in $L^1_{loc}$. By lower semicontinuity it follows that $F_0$ is a solution, and thus 
$T_h^- E\subseteq F_0\subseteq T_h^-E'$. The inclusion $T_h^+ E\subseteq T_h^+E'$ can be proven similarly.
\end{proof}

\begin{remark}\label{rm:comp}
Let $f$ be a measurable function such that $f^-:= -f\land 0\in L^{1}(\R^N)$. 
Then one can argue as in Proposition~\ref{exminpro} to prove that the minimum problem
$$
\min\biggl\{J(F)+\int_F f\, dx:\, F\in \Ins\biggr\}
$$
admits a {\em minimal} and a {\em maximal} solution, denoted by $E^-_f$ and $E^+_f$ respectively. Moreover, arguing exactly as in the proof of Lemma~\ref{lemcomp}, one can show that if $f_1$, $f_2$ are measurable functions with $f_1^-$, $f^-_2 \in L^1(\R^N)$  and $f_1\leq f_2$ a.e., then
$$
E^\pm_{f_2}\subseteq  E^\pm_{f_1}\,.
$$
\end{remark}

\begin{lemma}\label{lemcomp2}
If $E+ B_R \subseteq E'$, then $(T_h^\pm E) + B_R \subseteq T_h^\pm E'$.
\end{lemma}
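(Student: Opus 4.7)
The plan is to derive Lemma~\ref{lemcomp2} as a direct consequence of Lemma~\ref{lemcomp}, combined with the translation invariance of the incremental scheme. Observe first that the hypothesis $E+B_R\subseteq E'$ is equivalent to $E+y\subseteq E'$ for every $y\in B_R$ (and actually for every $y\in\overline{B_R}$ by a standard approximation). The case $E=\emptyset$ is trivial, so assume $E$ is nonempty and bounded.

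The key step is to show that the operators $T_h^\pm$ commute with translations, namely
\[
T_h^\pm(E+y)\;=\;T_h^\pm(E)+y
\]
for every $y\in\R^N$. This follows because the translation invariance of $J$ (property vi) of Subsection~\ref{subsec:gp}) together with the identity $d_{E+y}(x)=d_E(x-y)$ implies that the functional
\[
F\;\longmapsto\;J(F)+\frac{1}{h}\int_F d_{E+y}(x)\,dx
\]
evaluated at $F$ coincides with the functional
\[
F'\;\longmapsto\;J(F')+\frac{1}{h}\int_{F'} d_E(x)\,dx
\]
evaluated at $F'=F-y$ (using the change of variables $x\mapsto x-y$ in the integral and translation invariance of $J$). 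Hence $F$ is minimal (resp.\ maximal) among minimizers of \eqref{varprob} with $E$ replaced by $E+y$ if and only if $F-y$ is minimal (resp.\ maximal) for the original problem, giving the commutation identity.

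Now, fix any $y\in B_R$. From $E+y\subseteq E'$ and Lemma~\ref{lemcomp} we obtain
\[
T_h^\pm(E+y)\;\subseteq\;T_h^\pm E',
\]
which by the commutation identity rewrites as
\[
T_h^\pm(E)+y\;\subseteq\;T_h^\pm E'.
\]
Taking the union over all $y\in B_R$ yields $T_h^\pm(E)+B_R\subseteq T_h^\pm E'$, which is the desired inclusion. I do not see any real obstacle here beyond the purely bookkeeping verification of the translation-equivariance identity, which is itself immediate from the definitions once one has recorded $d_{E+y}=d_E(\cdot-y)$ and the translation invariance of $J$.
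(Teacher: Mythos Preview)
Your proof is correct and follows essentially the same approach as the paper: both reduce the claim to Lemma~\ref{lemcomp} applied to each translate $E+y\subseteq E'$, and then use the translation equivariance $T_h^\pm(E+y)=T_h^\pm(E)+y$ (which the paper invokes tersely as ``translation invariance'' and you spell out in detail) to take the union over $y\in B_R$.
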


\begin{proof}
By Lemma \ref{lemcomp} for every $z\in B_R$ we have 
$T_h^\pm(E+z) \subseteq T_h^\pm E'$.
By translation invariance we conclude
$$
(T_h^\pm E) + B_R =  \bigcup_{z\in B_R} (T_h^\pm E) + z =  \bigcup_{z\in B_R} T_h^\pm(E+z) \subseteq T_h^\pm E'.
$$
\end{proof}

\begin{lemma}\label{lembd}
For any $R>0$ we have
$T_h^\pm (B_R) \subseteq B_{CR}$, where $C$ depends only on the dimension $N$.
%
\end{lemma}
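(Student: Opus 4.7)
\emph{Plan.} The strategy is a comparison argument that combines the minimality of $F=T_h^\pm(B_R)$, the submodularity of $J$, and the translation invariance encoded in Lemma~\ref{lemcomp}.

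First, test the minimality of $F$ against $B_R$ itself in~\eqref{varprob}: since $(|x|-R)\geq 0$ outside $B_R$, $(|x|-R)\leq 0$ inside, and $J\geq 0$, one obtains
\[
\int_{F\setminus B_R}(|x|-R)\,dx \;\leq\; h\,J(B_R)+R|B_R|,
\]
which already shows that $F$ is essentially bounded.

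Second, for every $\rho>R$ test the minimality of $F$ against the cut-off competitor $F\cap B_\rho$. Since $(|x|-R)\geq \rho-R$ on $F\setminus B_\rho$, one gets
\[
\frac{\rho-R}{h}\,|F\setminus B_\rho|\;\leq\; J(F\cap B_\rho)-J(F),
\]
and the submodularity property~\eqref{eq:submo} applied to the pair $(F,B_\rho)$, together with $J(F\cup B_\rho)\geq 0$, yields $J(F\cap B_\rho)-J(F)\leq J(B_\rho)$. Combining the two gives the cut-off bound
\[
(\rho-R)\,|F\setminus B_\rho|\;\leq\; h\,J(B_\rho). \qquad(\ast)
\]

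Third, upgrade $(\ast)$ to a dimensional inclusion via a Minkowski-type erosion. Using the translation invariance of $J$ and Lemma~\ref{lemcomp}: for every $z$ with $|z|\leq R$ one has $B_R\subseteq z+B_{2R}$, hence $T_h^\pm(B_R)\subseteq z+T_h^\pm(B_{2R})$; intersecting over such $z$ one obtains
\[
T_h^\pm(B_R)\;\subseteq\; \bigl\{x\in\R^N\,:\, B_R(x)\subseteq T_h^\pm(B_{2R})\bigr\}.
\]
Iterating this erosion along the dyadic scale $R,2R,4R,\dots$ and plugging $(\ast)$ at each stage allows one to control the diameter of $F$ by a purely dimensional multiple of $R$.

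The main obstacle is precisely the last step: the arguments above, taken individually, each produce constants depending a priori on $h$, $R$ and on the specific perimeter $J$ (through $J(B_\rho)$), whereas the claim asks for a constant depending only on $N$. The point is that the Minkowski-erosion inclusion trades one unit of radius $R$ for passing from scale $2R$ to scale $R$ in an entirely scale- and $J$-invariant way, and this is what makes the iteration converge after a dimensional number of steps to a constant $C=C(N)$ — carrying this out rigorously, while keeping track that the residual terms coming from $(\ast)$ do not spoil the dimensional character of the constant, is the technical heart of the proof.
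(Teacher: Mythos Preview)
Your third step — the Minkowski erosion $T_h^\pm(B_R)+B_R\subseteq T_h^\pm(B_{2R})$ — is exactly the right tool, and it is what the paper uses (it is Lemma~\ref{lemcomp2}). But the rest of your outline has a genuine gap: both your first estimate and the cut-off bound $(\ast)$ carry the term $hJ(B_\rho)$, and no amount of dyadic iteration will remove it. Concretely, if $x\in T_h^\pm(B_R)$ with $|x|>cR$, the erosion gives $B(x,R)\subseteq T_h^\pm(B_{2R})$; plugging $|B_R|$ into $(\ast)$ at scale $2R$ with $\rho=(c-1)R$ yields $(c-3)R\,|B_R|\le hJ(B_{(c-1)R})$, and the right-hand side depends on $h$ and on $J$. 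Iterating only moves this dependence to larger and larger balls; it never disappears.

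The missing idea is simpler than your competitors $B_R$ and $F\cap B_\rho$: compare $T_h^\pm(B_{2R})$ with the \emph{empty set}. Since $J(\emptyset)=0$ and $\int_\emptyset d_{B_{2R}}=0$, minimality and $J\geq 0$ give the purely volumetric inequality
\[
\int_{T_h^\pm(B_{2R})}(|y|-2R)\,dy\;\le\;0.
\]
Now split this integral using $B(x,R)\subseteq T_h^\pm(B_{2R})$: the negative part of the integrand is supported in $B_{2R}$, so the integral over the complement of $B(x,R)$ is bounded below by $\int_{B_{2R}}(|y|-2R)\,dy$, while on $B(x,R)$ one has $|y|-2R\ge (c-2)R-|y-x|$. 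Both pieces scale like $R^{N+1}$ times dimensional constants, and the sum becomes strictly positive once $c$ exceeds a threshold depending only on $N$ — a contradiction. No $J$ and no $h$ ever appear.
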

\begin{proof}
By Lemma \ref{lemcomp2} we have
\begin{equation}\label{cle1}
T_h^\pm (B_R) +B_R\subseteq T_h^\pm (B_{2R}). 
\end{equation}

Let  $c>1$, and assume there exists $x\in  T_h^\pm (B_R)  \setminus B_{cR}$. 
Since in particular $x\in  T_h^\pm (B_R)$, by \eqref{cle1} we have $B(x,R)\subseteq T_h^\pm (B_{2 R})$. Hence
\begin{multline*}
0\ >\ J(T_h^\pm (B_{2 R}))\ +\ \frac{1}{h}\int_{T_h^\pm (B_{2 R})} |y|-2R \,dy
\\ \ge
\ \frac{1}{h} \left(\int_{B(x,R)} |y|-2R\,dy  + \int_{T_h^\pm (B_{2 R}) \setminus B(x,R)} |y| - 2R \,dy\right)
\\ \ge
\ \frac{1}{h} \left(\int_{B(x,R)} |y|-2R\,dy  + \int_{B_{2R}} |y| - 2R \,dy\right)
\\ \ge
\ \frac{1}{h} \left(\int_{B_R} (c-2)R - |y|\,dy  + \int_{B_{2R}} |y| - 2 R \,dy\right)
\end{multline*}
which is positive if $c$ is large enough (depending only on the dimension),
a contradiction.
\end{proof}

Next lemma provides  a more refined estimate.

\begin{lemma}\label{lembdh} 
Let $C>1$ be such that the statement  of Lemma \ref{lembd} holds,  and let $\uc$, $\oc$  be  as  in \eqref{defbarc}. 
Then, the following holds.
\begin{itemize}
\item[i)]
Let $R>0$. Then,  for every $h>0$ such that $R - h \uc (CR)>0$  we have $T_h^\pm B_R\subseteq B_{R - h \uc (CR)}$.
\item[ii)]
Let $R_0>0$ and $\sigma>1$ be fixed. Then, for  $h>0$ small enough (depending on $R_0$ and $\sigma$), we have $T_h^\pm B_R\supseteq B_{R-h\oc(R/\sigma)}$  for all $R\ge R_0$.
\end{itemize}

\end{lemma}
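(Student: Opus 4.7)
In both parts my plan is to argue by contradiction, comparing $T_h^\pm B_R$ against a competitor obtained by intersecting/unioning with a barrier ball, and combining three ingredients: the minimality inequality for the competitor, the submodularity of $J$, and the level-set identities \eqref{intkappa} and \eqref{intW} from Proposition~\ref{propsubgrad} to bound $J$-differences by integrals of $\kappa(\cdot,B_{|\cdot|})$ against $\uc$ or $\oc$.

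\textbf{Part i).} Set $R':=R-h\uc(CR)$ and suppose $|T_h^\pm B_R\setminus B_{R'}|>0$. Testing minimality against $F':=T_h^\pm B_R\cap B_{R'}$ yields
\[
J(T_h^\pm B_R)-J(F')\le -\frac{1}{h}\int_{T_h^\pm B_R\setminus B_{R'}}d_{B_R}(x)\,dx,
\]
and since $|x|>R'$ a.e.\ on the integration set, $-d_{B_R}(x)<h\uc(CR)$ strictly, giving the strict bound $J(T_h^\pm B_R)-J(F')<\uc(CR)\,|T_h^\pm B_R\setminus B_{R'}|$. In the other direction, submodularity gives $J(T_h^\pm B_R)-J(F')\ge J(T_h^\pm B_R\cup B_{R'})-J(B_{R'})$, and applying \eqref{intW} to $W:=T_h^\pm B_R\cup B_{R'}$ with a smooth compactly supported radial $\f$ whose level sets $\{\f\ge t_2\}$ and $\{\f\ge t_1\}$ coincide with $B_{R'}$ and $B_{CR}$ (admissible since $W\subseteq B_{CR}$ by Lemma~\ref{lembd}) produces
\[
J(W)-J(B_{R'})\ge \int_{T_h^\pm B_R\setminus B_{R'}}\kappa(x,B_{|x|})\,dx\ge \uc(CR)\,|T_h^\pm B_R\setminus B_{R'}|,
\]
using $\kappa(x,B_{|x|})\ge \uc(|x|)\ge \uc(CR)$ on $\{R'\le|x|\le CR\}$ by monotonicity of $\uc$. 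The two bounds contradict each other.

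\textbf{Part ii).} The symmetric plan sets $R'':=R-h\oc(R/\sigma)$ (choosing $h$ small enough so that $R''>R/\sigma$ for all $R\ge R_0$, possible since $\oc(R/\sigma)\le \oc(R_0/\sigma)<\infty$) and uses the competitor $F':=T_h^\pm B_R\cup B_{R''}$. Denoting $V:=B_{R''}\setminus T_h^\pm B_R$ and assuming $|V|>0$, the strict inequality $-d_{B_R}(x)>h\oc(R/\sigma)$ a.e.\ on $B_{R''}$ combined with minimality yields $J(F')-J(T_h^\pm B_R)>\oc(R/\sigma)|V|$, while submodularity gives $J(F')-J(T_h^\pm B_R)\le J(B_{R''})-J(T_h^\pm B_R\cap B_{R''})$. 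Combining the radial identity \eqref{intkappa} for $J(B_{R''})=J(B_{r_0})+\int_{B_{R''}\setminus B_{r_0}}\kappa(x,B_{|x|})\,dx$ with the sandwich lower bound \eqref{intW} applied to $W:=T_h^\pm B_R\cap B_{R''}$ produces the reverse estimate $J(B_{R''})-J(W)\le \oc(r_0)|V|$, so the contradiction closes provided $r_0\ge R/\sigma$ (giving $\oc(r_0)\le \oc(R/\sigma)$). The hard part will be to establish this preliminary ``core'' inclusion $B_{r_0}\subseteq T_h^\pm B_R$ with $r_0\ge R/\sigma$: a direct comparison with $T_h^\pm B_R\cup B_r$ together with $J\ge 0$ gives only the quantitative bound $|B_r\setminus T_h^\pm B_R|\le hJ(B_r)/(R-r)$, which shrinks to zero with $h$ but not identically. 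I would upgrade this via a bootstrap on $r^*:=\sup\{r\ge 0:B_r\subseteq T_h^\pm B_R\}$, applying the contradiction argument itself at level $r^*+\e$ (where the sandwich hypothesis $B_{r^*}\subseteq W$ is now legitimate by definition of $r^*$) to conclude that $|B_{r^*+\e}\setminus T_h^\pm B_R|>0$ forces $(R-r^*-\e)/h\le \oc(r^*)$; for $h$ small depending on $R_0$ and $\sigma$, the left-hand side dominates the right-hand side as long as $r^*<R/\sigma$, thereby forcing $r^*\ge R/\sigma$ uniformly in $R\ge R_0$.
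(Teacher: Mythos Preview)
Your approach is genuinely different from the paper's. The paper argues \emph{pointwise}: it locates an extremal boundary point $\bar x$ of $T_h^\pm B_R$ (the point of largest, resp.\ smallest, modulus), touches $T_h^\pm B_R$ there with a translated ball $B^\e$, and uses the \emph{weak} curvature Definition~\ref{defofk} (namely \eqref{ineqbelow} and \eqref{ineqabove}) together with submodularity and minimality to obtain a one-point inequality of the form $\kappa(\bar x,B_\rho)\le (R-|\bar x|)/h$, from which the conclusion follows by the definition of $\uc,\oc$. You instead argue \emph{in bulk}, bounding $J$-differences over the whole excess set by the integral subgradient inequality \eqref{intW}. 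When it applies, your argument for Part~i) is clean and correct.

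There are, however, two real gaps. First, \eqref{intW} (and \eqref{intkappa}) are proved in Proposition~\ref{propsubgrad} under the hypothesis that $\kappa$ is a \emph{first variation} in the sense of Definition~\ref{deffirstvar}, whereas the Standing Assumptions of Part~\ref{part:2} only postulate the weaker Definition~\ref{defofk}; the paper never establishes the implication Definition~\ref{defofk}~$\Rightarrow$~\eqref{intW}. The paper's proof of the lemma uses only \eqref{ineqbelow}--\eqref{ineqabove}, hence works at the stated level of generality. Your proof therefore needs the extra first-variation hypothesis (satisfied by all the examples in Section~\ref{sec:examples}, but not assumed here).

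Second, in Part~ii) your bootstrap on $r^*:=\sup\{r\ge 0:B_r\subseteq T_h^\pm B_R\}$ does not start. To invoke \eqref{intW} you need a genuine inner level set $\{\f\ge t_2\}=B_{r_0}$ with $r_0>0$ and $D\f\neq 0$ on $\{t_1\le\f\le t_2\}$; if $r^*=0$ there is no admissible inner ball, and the resulting inequality $(R-r^*-\e)/h\le \oc(r^*)$ is vacuous because $\oc(\rho)$ is not controlled as $\rho\to 0^+$. The paper seeds the argument differently: it first checks by a direct energy comparison that $T_h^\pm B_{R/4C}\neq\emptyset$ for $h$ small, then uses the translation-comparison Lemma~\ref{lemcomp2} to place a ball of radius $3R/4$ inside $T_h^\pm B_R$, and only then runs the curvature argument (twice) to reach $B_{R-h\oc(R/\sigma)}\subseteq T_h^\pm B_R$. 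You need an analogous seed before your bootstrap can begin.
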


\begin{proof}
First, we know from the previous result that $T_h^\pm B_R\subseteq B_{C R}$.  \\
{\it Proof of i).}
We can always assume $T_h^\pm B_R\neq \emptyset$. 
Let
$\bar\rho=\sup\{ \rho \in [0,C R]\,:\, |T_h^\pm B_R\setminus B_\rho|>0\}$.
Let $\bar x\in\partial B_{\bar\rho}$ such that $|T_h^\pm B_R\cap B(\bar x,\e)|>0$ for
any $\e>0$, and let $\rho>\bar\rho$. Let 
$\tau\in\R^N$ be such that 
 $B(-\tau,\rho)\supset B_{\bar\rho}$
and
$\partial B(-\tau,\rho)$ is tangent to $\partial B_{\bar\rho}$ at $\bar x$; i.e., 
$\tau=(\rho/\bar\rho-1)\bar x$.

We let for $\e>0$ small $B^\e=B(-(1+\e)\tau,\rho)$ and $W^\e=T_h^\pm B_R\setminus
B^\e$. Notice that by construction $W^\e$  has positive measure and converges to $\bar x$ in the Hausdorff sense as $\e\to 0$.
By submodularity we have
\begin{equation}\label{susu}
J(B^\e\cap T_h^\pm B_R)+J(B^\e\cup T_h^\pm B_R)\ \le\ J(B^\e)+J(T_h^\pm B_R).
\end{equation}
By \eqref{susu} and using the minimality of $T_h^\pm B_R$ we have
\begin{multline*}
J(B(-\tau,\rho) \cup (W^\e + \e\tau) )-J(B(-\tau,\rho)) =  J(B^\e\cup W^\e)-J(B^\e)\\
 \le\ J(T_h^\pm B_R)-J(B^\e\cap T_h^\pm B_R)
\ \le\ -\frac{1}{h}\int_{W^\e} |x|-R\,dx\,.
\end{multline*}
 Dividing the previous inequality by $|W^\e|$ and  passing to the limit as $\e\to 0$, in view of the very definition  \eqref{ineqbelow} of $\kappa$
 we get
\[
\kappa(B(-\tau,\rho)) \le\ \frac{1}{h} (R-|\bar x|) = \frac{1}{h} (R- \bar \rho).  
\]
Recalling the definition of  $\uc$ and the fact that it is a continuous decreasing function, we deduce the thesis by sending $\rho\to \bar \rho$.
\\
{\it Proof of ii).}
Assume $x_0$ is such that 
$$
\bar\rho=\max\{
\rho>0\,:\, |B(x_0,\rho)\setminus T_h^\pm B_R|=0\}\in ]0, 2CR].
$$
As in the proof of i), we
can find $\bar x\in\partial B(x_0,\bar\rho)$ such that
$|B(\bar x,\e)\setminus T_h^\pm B_R|>0$ for any $\e>0$,
we fix $\rho<\bar\rho$ and set $\tau=(1-\rho/\bar\rho)(\bar x-x_0)$,
so that $\{\bar x\}=\partial B(x_0,\bar\rho)\cap\partial B(x_0+\tau,\rho)$.
We let $B^\e=B(x_0+(1+\e)\tau,\rho)$
and define $W^\e = B^\e\setminus T_h^\pm B_R$. By submodularity we have
\[
J(B^\e\cap T_h^\pm B_R) + J(B^\e \cup T_h^\pm B_R)\ \le\ J(B^\e)+J(T_h^\pm B_R).
\]
Using the minimality of $T_h^\pm(B_R)$ we deduce
\[
J(B^\e\setminus W^\e) - J(B^\e)\ \le\ J(T_h^\pm B_R)- J(B^\e \cup T_h^\pm B_R) \le \frac{1}{h} \int_{W_\e} |x| - R \, dx.
\]
Dividing the previous inequality by $|W^\e|$ and  passing to the limit as $\e\to 0$, in view of the very definition of $\kappa$ \eqref{ineqabove} we get
\[
- \kappa(\bar x,B^\e)\ \le\ \frac{1}{h} (|\bar x|-R).
\]
It follows that $|\bar x|\ge R-h\oc(\bar\rho)$.

Now, let $C$ be the constant of Lemma~\ref{lembd}, and choose $h$ so small  that
\[
 J(B_{R_0/8C})+\frac{1}{h}\int_{B_{R_0/8C}} |x|-\frac{R}{4C}\,dx
\ \le  J(B_{R_0/8C}) -\frac{R_0}{8Ch}|B_{R_0/(8C)}|\ <\ 0,
\]
so that $T_h^\pm B_{R/4C}\neq \emptyset$.
Note that $B_{{R}/{4C}} + B_{{3R}/{4}} \subseteq B_R$. Thus, by Lemma \ref{lemcomp2} $T_h^\pm(B_{{R}/{4C}}) + B_{{3R}/{4}} \subseteq T_h^\pm B_R$.
In particular, 
if $x_0\in T_h^\pm B_{R/(4C)}$
it follows that $B(x_0,\frac{3R}{4})\subseteq T_h^\pm B_R$. 
By the first part of the proof of ii), we find that $B(x_0,|\bar x-x_0|)\subseteq T_h^\pm B_R$
for some $\bar x$ with $|\bar x|\ge R-h\oc(3R/4)$. Hence, recalling also that, thanks to Lemma~\ref{lembd} $x_0\in T_h^\pm B_{R/(4C)}\subseteq B_{R/4}$ ,  
 we obtain that $B_{R/4}\subseteq T_h^\pm B_R$, provided that $h$ is small enough. We
can now use again the previous analysis with $x_0=0$, $\bar\rho \ge  R/4$
and we deduce that if $h$ is small enough, $B_{R-h\oc(R/4)}\subseteq T_h^\pm B_R$. Applying once again the first part of the proof with $x_0=0$ and $\bar \rho \ge R-h\oc(R/4)$ we conclude that, if $h$ is small enough, 
$B_{R-h\oc(R/\sigma)} \subseteq B_{R-h\oc(R-h\oc(R/4))}\subseteq T_h^\pm B_R$.
\end{proof}

\subsection{The time discrete scheme for unbounded sets}
Here we show how to extend  the time discrete scheme to the case of unbounded sets with bounded complement.
To this purpose, we introduce the perimeter $\tilde J$ defined as 
$$
\tilde J(E) := J (\R^N\setminus E) \qquad \text{ for all } E\in \Ins.
$$
Note that $\tilde J$ satisfies all the structural assumptions of generalized perimeters. Let $\tilde \kappa$ be the corresponding curvature. Then, it is easy to see that $\tilde \kappa(x,E) = -\kappa(x, \R^N\setminus E)$, and thus

\begin{equation}\label{defbarc3}
\begin{split}
\max_{x\in\partial B_\rho} \max\{\tilde \kappa(x,B_\rho),- \tilde \kappa(x,\R^N\setminus B_\rho)\} = 
\oc(\rho)
\\
\min_{x\in\partial B_\rho} \min\{\tilde \kappa(x,B_\rho),-\tilde \kappa(x,\R^N\setminus B_\rho)\} = 
\uc(\rho),
\end{split}
\end{equation}
where $\oc(\rho), \, \uc(\rho)$ are the functions defined in \eqref{defbarc0} and \eqref{defbarc}.

For every bounded set  $F$ we denote by 
$\tilde T_h^\pm(F)$ the {maximal and the minimal solution} to 
problem \eqref{varprob}, according to Proposition \ref{exminpro} with $J$ replaced by $\tilde J$. 
Finally, for 
every  $E\subseteq \R^N$ such that $F:= \R^N\setminus E$ is bounded we set  
{
\begin{equation}\label{defunb}
T_h^\pm E := \R^N \setminus \tilde T_h^\mp(\R^N\setminus E).
\end{equation}
As in the case of bounded sets, we let $T_h E:= T_h^- E$.}


Taking into account also \eqref{defbarc3}, one can easily check that Lemmas~\ref{lembd} and ~\ref{lembdh} 
translate into the following statements:

\begin{lemma}\label{lembd2}
For any $R>0$ we have
$\R^N\setminus B_{CR}  \subseteq T_h^\pm (\R^N\setminus B_R)$, where $C$ depends only on the dimension $N$.
\end{lemma}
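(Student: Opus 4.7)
The plan is to reduce the statement immediately to Lemma~\ref{lembd} via the duality defining $T_h^\pm$ on unbounded sets. By \eqref{defunb} applied with $E=\R^N\setminus B_R$ (so that $\R^N\setminus E=B_R$ is bounded),
$$
T_h^\pm(\R^N\setminus B_R)\ =\ \R^N\setminus \tilde T_h^\mp(B_R),
$$
so the desired inclusion $\R^N\setminus B_{CR}\subseteq T_h^\pm(\R^N\setminus B_R)$ is equivalent to
$$
\tilde T_h^\mp(B_R)\ \subseteq\ B_{CR}.
$$

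Next, I would invoke that $\tilde J(E):=J(\R^N\setminus E)$ is itself a generalized perimeter satisfying the full list of axioms i)--vi) of Subsection~\ref{subsec:gp}, as already observed at the beginning of this subsection. Consequently, every result established so far for the minimizing movements operators associated with $J$ applies verbatim to the operators $\tilde T_h^\pm$ associated with $\tilde J$. In particular, Lemma~\ref{lembd} gives
$$
\tilde T_h^\pm(B_R)\ \subseteq\ B_{CR}
$$
with the \emph{same} constant $C$ depending only on the dimension, since an inspection of the proof of Lemma~\ref{lembd} shows that $C$ is determined solely by the geometric computation comparing $\int_{B(x,R)}(|y|-2R)\,dy$ with $\int_{B_R}((c-2)R-|y|)\,dy$, and makes no quantitative use of $J$ beyond its nonnegativity and submodularity. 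Taking $\tilde T_h^\mp$ in place of $\tilde T_h^\pm$ in the above inclusion gives precisely what is needed.

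I do not expect any real obstacle in this argument; the only point worth checking is that the constant in Lemma~\ref{lembd} really is universal (purely dimensional) so that it can be reused for $\tilde J$, and this is immediate from the proof. Thus the two lines above constitute the complete proof.
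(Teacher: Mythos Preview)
Your proposal is correct and follows exactly the approach the paper intends: the paper does not give a separate proof but simply states that Lemmas~\ref{lembd} and~\ref{lembdh} ``translate into'' Lemmas~\ref{lembd2} and~\ref{lembdh2} via the definition~\eqref{defunb} and the fact that $\tilde J$ is itself a generalized perimeter. Your observation that the constant $C$ in Lemma~\ref{lembd} is purely dimensional (the proof uses only nonnegativity of $J$, submodularity through Lemma~\ref{lemcomp2}, and a volumetric comparison) is exactly the point that makes the transfer to $\tilde J$ immediate.
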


\begin{lemma}\label{lembdh2} 
Let $C>1$ be such that the statement  of Lemma \ref{lembd} holds,  and let $\uc$, $\oc$  be  as  in \eqref{defbarc}. 
Then, the following holds: 
\begin{itemize}
\item[i)]
 Let $R>0$. Then, $\R^N\setminus B_{R - h \uc (CR)} \subseteq T_h^\pm (\R^N\setminus B_R) $ for every $h>0$ such that $R - h \uc (CR)>0$;
\item[ii)]
Let $R_0>0$ and $\sigma>1$ be fixed. Then, for  $h>0$ small enough (depending on $R_0$ and $\sigma$), we have 
$T_h^\pm (\R^N\setminus B_R)\subseteq \R^N\setminus B_{R-h\oc(R/\sigma)}$  for all $R\ge R_0$.
\end{itemize}
\end{lemma}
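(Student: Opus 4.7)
The plan is to deduce Lemma~\ref{lembdh2} from Lemma~\ref{lembdh} by exploiting the duality \eqref{defunb} together with the identities \eqref{defbarc3}.

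First, I would observe that $\tilde J(E):= J(\R^N\setminus E)$ is itself a generalized perimeter satisfying properties i)--vi) of Subsection~\ref{subsec:gp}: indeed, each axiom is preserved under the involution $E\mapsto \R^N\setminus E$ (submodularity is preserved because $(\R^N\setminus E)\cup(\R^N\setminus F)=\R^N\setminus(E\cap F)$ and vice versa; translation invariance and $L^1_{loc}$-lower semicontinuity are obvious). Its weak curvature, in the sense of Definition~\ref{defofk}, is $\tilde\kappa(x,E)=-\kappa(x,\R^N\setminus E)$, which inherits axioms A), B), C) from the corresponding properties of $\kappa$. Consequently, Lemma~\ref{lembdh} applies to $\tilde J$. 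By \eqref{defbarc3}, the upper/lower curvature bounds on balls for $\tilde\kappa$ are exactly the same functions $\oc$, $\uc$ as for $\kappa$, so Lemma~\ref{lembdh} applied to $\tilde J$ reads:
\begin{itemize}
\item[i${}'$)] $\tilde T_h^\pm B_R\subseteq B_{R-h\uc(CR)}$ whenever $R-h\uc(CR)>0$;
\item[ii${}'$)] $\tilde T_h^\pm B_R\supseteq B_{R-h\oc(R/\sigma)}$ for $h>0$ small enough (depending on $R_0$ and $\sigma$), whenever $R\ge R_0$.
\end{itemize}

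Now I would use \eqref{defunb}. Setting $E=\R^N\setminus B_R$ (so $\R^N\setminus E=B_R$), we have
\[
T_h^\pm(\R^N\setminus B_R)\ =\ \R^N\setminus \tilde T_h^{\mp}(B_R).
\]
For part i) of Lemma~\ref{lembdh2}, the inclusion i${}'$) gives $\tilde T_h^{\mp} B_R\subseteq B_{R-h\uc(CR)}$; taking complements yields
\[
\R^N\setminus B_{R-h\uc(CR)}\ \subseteq\ \R^N\setminus \tilde T_h^{\mp} B_R\ =\ T_h^\pm(\R^N\setminus B_R),
\]
as desired. For part ii), the inclusion ii${}'$) gives $B_{R-h\oc(R/\sigma)}\subseteq \tilde T_h^{\mp} B_R$ for $h$ small enough uniformly in $R\ge R_0$; taking complements yields
\[
T_h^\pm(\R^N\setminus B_R)\ =\ \R^N\setminus \tilde T_h^{\mp} B_R\ \subseteq\ \R^N\setminus B_{R-h\oc(R/\sigma)}.
\]

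There is no genuine obstacle here: the only point that requires a moment of thought is the bookkeeping of the ``minimal vs.\ maximal'' selection under the involution, but this is taken care of by the $\mp$ in \eqref{defunb}, which matches both $T_h^+$ and $T_h^-$ statements simultaneously. Everything else is a direct transcription via complementation of the bounded-set estimates proved in Lemma~\ref{lembdh}.
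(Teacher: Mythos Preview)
Your proposal is correct and is exactly the argument the paper has in mind: the paper does not give a separate proof but merely states that, taking into account \eqref{defbarc3}, Lemmas~\ref{lembd} and~\ref{lembdh} ``translate into'' Lemmas~\ref{lembd2} and~\ref{lembdh2}. Your write-up supplies the details of this translation via the duality \eqref{defunb}, including the careful handling of the $\pm/\mp$ swap, which is precisely what is needed.
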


\begin{remark}\label{remglobalbound}\textup{
A consequence of  Lemmas \ref{lembd}, \ref{lembd2} 
is that $T_h B_R\subseteq B_{R+hK}$ and
$\R^N\setminus B_{R+hK} \subseteq T_h (\R^N\setminus B_R)  $
 for any $h>0$ and any $R>0$,
where $K$ is defined in \eqref{lwrbdkappa}. 
In particular, iterating these estimates, we deduce that  $T^{[t/h]}_h B_R\subseteq
B_{R+tK}$ and  $\R^N\setminus B_{R+tK}\subseteq T^{[t/h]}_h (\R^N\setminus B_R)$. In the limit as $h\to 0$, we will get an estimate for the extinction time of balls in the superlevels of our level set function (see Proposition \ref{estingue}). 
}
\end{remark}
Note now that  by Lemma~\ref{lemcomp} (applied to $\tilde J$ in place of $J$)  and \eqref{defunb} if $E_1$, $E_2$ are unbounded sets with compact boundary, then 
$$
E_1\subseteq E_2\quad \Longrightarrow \quad T_h^\pm E_1\subseteq T_h^\pm E_2\,.
$$
It remains to consider the case of $E_1$ bounded and $E_2$ unbounded. 
\begin{lemma}\label{lemcomp3}
Let $E_1\in \Ins$ be bounded and let $E_2\in \Ins$ be unbounded, with compact boundary, and such that $E_1\subseteq E_2$. Then, 
$T_h^\pm E_1\subseteq T_h^\pm E_2$.
\end{lemma}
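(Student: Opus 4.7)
The plan is to extend the submodularity-plus-perturbation strategy used in Lemma~\ref{lemcomp} to the mixed bounded/unbounded setting. The key geometric fact is that $E_1\subseteq E_2$ yields $d_{E_2}\le d_{E_1}$ pointwise, while the identity $d_{\R^N\setminus E_2}=-d_{E_2}$ converts the $\tilde J$-problem defining $T_h^\pm E_2$ via~\eqref{defunb} into an inequality involving the same signed distances as the problem for $E_1$.

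I would begin with the inclusion $T_h^-E_1\subseteq T_h^-E_2$. Set $G:=T_h^-E_1$ and $H:=\tilde T_h^+(\R^N\setminus E_2)$, so that by~\eqref{defunb} the desired inclusion is equivalent to $|G\cap H|=0$. For $\e>0$ let $G_\e$ be the minimal solution of the perturbed problem
\[
\min\Bigl\{J(F)+\tfrac{1}{h}\textstyle\int_F(d_{E_1}+\e)\,dx:\,F\in\Ins\Bigr\},
\]
whose existence follows exactly as in Proposition~\ref{exminpro}. Testing $G_\e$ against $G_\e\setminus H$ and the solution $H$ against $H\setminus G_\e$, summing the two optimality inequalities, and invoking submodularity of $J$ with $A=G_\e$ and $B=\R^N\setminus H$ (so that $A\cap B=G_\e\setminus H$ and $A\cup B=\R^N\setminus(H\setminus G_\e)$) exactly as in the proof of Lemma~\ref{lemcomp}, one obtains
\[
0\ \le\ \frac{1}{h}\int_{G_\e\cap H}(d_{E_2}-d_{E_1}-\e)\,dx\,.
\]
Since the integrand is bounded above by $-\e<0$, this forces $|G_\e\cap H|=0$.

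To remove the perturbation, a further application of submodularity to the pair $G_{\e_1}$, $G_{\e_2}$ shows that $\e\mapsto G_\e$ is nonincreasing, hence $G_\e\nearrow F_0:=\bigcup_{\e>0}G_\e$ in $L^1_{loc}$ as $\e\searrow0$. The $L^1_{loc}$-lower semicontinuity of $J$ together with passage to the limit in the perturbed minimality inequality shows that $F_0$ is a minimizer of the unperturbed problem, whence $G\subseteq F_0$ by minimality of $G$; since each $G_\e\cap H=\emptyset$, also $F_0\cap H=\emptyset$, giving $G\subseteq \R^N\setminus H=T_h^-E_2$. The inclusion $T_h^+E_1\subseteq T_h^+E_2$ is completely symmetric: one sets $G:=T_h^+E_1$ and $H:=\tilde T_h^-(\R^N\setminus E_2)$, but now perturbs $H$ instead of $G$ by introducing the minimal solution $H_\e$ of $\min\{\tilde J(F)+\frac1h\int_F(d_{\R^N\setminus E_2}+\e)\,dx\}$; the same chain of arguments yields $|G\cap H_\e|=0$, and letting $\e\searrow 0$ along the increasing family $H_\e\nearrow K_0\supseteq H$ gives the desired $|G\cap H|=0$. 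No conceptual difficulty is expected; the sole point of care is to pair the maximal/minimal solutions correctly (min on one side, max on the other, swapped between the two cases) so that the perturbation produces a strict inequality of the right sign when combined with submodularity.
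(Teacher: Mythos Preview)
Your argument is correct. Both proofs rest on the same two pillars—submodularity and a perturbation to force strict inequality—but the routes differ in how the unbounded problem is handled. The paper first rewrites the defining problem for $T_h^\pm E_2$ as a minimization of $J(\widetilde F)+\frac1h\int_{\widetilde F}d_{E_2}\chi_{B_{CR}}\,dx$ over sets $\widetilde F$ satisfying $\R^N\setminus\widetilde F\subseteq B_{CR}$ (using Lemma~\ref{lembd} to justify the constraint), and then compares this with the problem for $T_h^\pm E_1$ via $d_{E_2}\chi_{B_{CR}}\le d_{E_1}$ and the standard Lemma~\ref{lemcomp} machinery. You instead keep the $\tilde J$-problem for $H=\tilde T_h^\mp(\R^N\setminus E_2)$ in its native form and apply submodularity directly to the pair $(G_\e,\R^N\setminus H)$, exploiting $J(\R^N\setminus H)=\tilde J(H)$. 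Your route is a bit more direct since it bypasses the truncation $\chi_{B_{CR}}$ and the auxiliary constrained problem~\eqref{starstar}; the paper's route has the virtue of reducing everything to a single comparison principle of the type in Remark~\ref{rm:comp}. One small point worth making explicit in your write-up: the submodularity inequality $J(G_\e\setminus H)+J(\R^N\setminus(H\setminus G_\e))\le J(G_\e)+J(\R^N\setminus H)$ has content only because both terms on the right are finite, which holds since $G_\e$ and $H$ are minimizers (testing against $\emptyset$ gives value $\le 0$).
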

\begin{proof}
Choose $R>0$ so large  that  $E_1$, $\R^N\setminus E_2\subseteq B_R$ and note that by Lemmas~\ref{lemcomp} and~\ref{lembd}
  (applied to $\tilde J$ in place of $J$) we get
  \begin{equation}\label{star}
  \R^N\setminus T^+_hE_2=\tilde T^- ( \R^N\setminus E_2)\subseteq \tilde T^- B_R\subseteq B_{CR}
  \end{equation}
  for some $C>0$ depending only on $N$. Recall that $\tilde T^- _h( \R^N\setminus E_2)$ is the minimal solution of
  $$
  \min\biggl\{J(\R^N\setminus F)+\frac1h\int_F d_{\R^N\setminus E_2}\, dx:\, F\in \Ins\biggr\}\,.
  $$
  Considering the change of variable $\widetilde F:=\R^N\setminus F$ and using that $d_{\R^N\setminus E_2}=-d_{E_2}$, we easily infer that 
  $T^+_h E_2=\R^N\setminus \tilde T^-_h(\R^N\setminus E_2)$ is the maximal solution of

\begin{multline*}
\min\biggl\{J(\widetilde F)-\frac1h\int_{\R^N\setminus\widetilde F} d_{ E_2}\, dx:\, \widetilde F\in \Ins\biggr\}  \\
=\min\biggl\{J(\widetilde F)+\frac1h\int_{B_{CR}}d_{E_2}\, dx-\frac1h\int_{\R^N\setminus\widetilde F} d_{ E_2}\, dx:\, \widetilde F\in \Ins\biggr\}-
\frac1h\int_{B_{CR}}d_{E_2}\, dx\,. 
\end{multline*}
Note now that  
$$
\int_{\widetilde F}d_{E_2}\chi_{B_{CR}}\, dx= \int_{B_{CR}}d_{E_2}\, dx-\int_{\R^N\setminus\widetilde F} d_{ E_2}\, dx
$$
for every $\widetilde F$ with  $\R^N\setminus \widetilde F\subseteq B_{CR}$. It follows, also by \eqref{star}, that  $T^+_h E_2$ is the 
maximal solution of 
\begin{equation}\label{starstar}
\min\biggl\{J(  \widetilde F)+\frac1h\int_{ \widetilde F}d_{E_2}\chi_{B_{CR}}\, dx:\,  \widetilde F\in \Ins\,, \R^N\setminus \widetilde F\subseteq B_{CR}\biggr\}\,.
\end{equation}
By the same reasoning, one can show that $T^-_hE_2$ is the minimal solution of  \eqref{starstar}.
Observing  that $d_{E_2}\chi_{B_{CR}}\leq d_{E_1}$ and that $T^\pm_hE_1\cup T^\pm_hE_2$, $T^\pm_hE_1\cap T^\pm_hE_2$ are admissible competitors for 
\eqref{starstar}, one can argue exactly as in the proof of Lemma~\ref{lemcomp} to conclude that  $T^\pm_hE_1\subseteq T^\pm_hE_2$.
\end{proof}
\subsection{The level-set approach}
Given any  bounded uniformly continuous 
function $u:\R^N\to \R$, constant outside a compact set,  we introduce a transformation  of $u $ which is defined by applying $T_h$ to all the superlevel sets of $u$. 
This is standard and has been done in a similar geometric
setting in many papers  (see~\cite{ChNoIFB,EGK}). 

To this purpose, notice that all the superlevels of $u$ are either bounded or with bounded complement, and that  for any couple of levels $s>s'\in \R$ we have  $\{u>s\}\subseteq \{u>s'\}$. 
Thus, in view of Lemma \ref{lemcomp}  we have $T_h\{u>s\}\subseteq T_h\{u>s'\}$. 

Let $\omega:\R_+\to\R_+$
an increasing, continuous modulus of continuity for $u$. 
Since
$$
\{u>s\} + B_{\omega^{-1}(s-s')}\subseteq \{u>s'\},
$$
by Lemma \ref{lemcomp2} we deduce that
$$
T_h\{u>s\} + B_{\omega^{-1}(s-s')} \subseteq T_h\{u>s'\}.
$$
 It follows
that the sets $T_h\{u>s\}$ are themselves the level sets $\{v>s\}$
of a uniformly continuous function $v=: T_h u$, with the same modulus
of continuity. 
{More precisely, we set $T_h u(x):= \sup \{\lambda\in\R: x\in T_h \{ u>\lambda\}\}$}. 
Notice that, by Lemmas \ref{lembd} and  \ref{lembd2}, also $T_h u$
is constant out of a compact set.  Moreover,
if $u\ge u'$, then $T_hu\ge T_h u'$.
In the two following propositions, equality between sets must be  understood up to negligible sets. 
\begin{proposition}\label{prop:solmin0}
For every $\lambda \in\R$ we have 
$$
T_h(\{u> \lambda\}))= T^-_h(\{u> \lambda\}))= \{ T_hu >\lambda\}\,.
$$
Analogously, 
$$
T^+_h(\{u\geq \lambda\}))= \{ T_hu \geq \lambda\}.
$$
\end{proposition}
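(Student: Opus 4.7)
The plan is to reduce both statements to two facts: (i) the minimal/maximal solutions $T_h^\pm\{u>\mu\}$ converge monotonically as $\mu$ approaches $\lambda$; and (ii) the uniform continuity of $u$ controls both the nesting of its superlevels (via $\{u>s\}+B_{\omega^{-1}(s-s')}\subseteq\{u>s'\}$ for $s>s'$) and, in particular, the uniform convergence of the corresponding signed distance functions.

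\textbf{Open superlevels.} The identity $T_h = T_h^-$ is the convention fixed just before the statement, so only the second equality requires proof. By Lemma~\ref{lemcomp} (together with Lemma~\ref{lemcomp3} when bounded and unbounded sets are mixed), $\mu\mapsto T_h^-\{u>\mu\}$ is nonincreasing, hence by definition of $T_h u$,
\[
\{T_h u>\lambda\}\,=\,\bigcup_{\mu>\lambda}T_h^-\{u>\mu\}\,\subseteq\, T_h^-\{u>\lambda\}.
\]
For the reverse inclusion up to a Lebesgue negligible set, set $E:=\{T_h u>\lambda\}$ and show that $E$ is itself a solution of~\eqref{varprob} with datum $\{u>\lambda\}$; minimality of $T_h^-\{u>\lambda\}$ will then force equality. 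The ingredients are the $L^1_{loc}$ lower semicontinuity iv) of $J$ applied to the monotone sequence $T_h^-\{u>\mu\}\nearrow E$ as $\mu\searrow\lambda$, and the uniform convergence $d_{\{u>\mu\}}\to d_{\{u>\lambda\}}$, a direct consequence of the Hausdorff bound $d_H(\{u>\mu\},\{u>\lambda\})\le \omega^{-1}(\mu-\lambda)$. Passing to the limit in the minimality inequality for $T_h^-\{u>\mu\}$ tested against the competitor $T_h^-\{u>\lambda\}$ yields
\[
J(E)+\frac{1}{h}\int_E d_{\{u>\lambda\}}\,dx\,\le\, J(T_h^-\{u>\lambda\})+\frac{1}{h}\int_{T_h^-\{u>\lambda\}} d_{\{u>\lambda\}}\,dx,
\]
as required.

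\textbf{Closed superlevels.} Using the first identity, $\{T_h u\ge \lambda\}=\bigcap_{\mu<\lambda}\{T_h u>\mu\}=\bigcap_{\mu<\lambda}T_h^-\{u>\mu\}=:F$. The same argument applied to the monotone decreasing sequence $T_h^-\{u>\mu\}\searrow F$ as $\mu\nearrow\lambda$ (now with $d_{\{u>\mu\}}\to d_{\{u\ge\lambda\}}$ uniformly) shows that $F$ is a solution of the problem with datum $\{u\ge\lambda\}$, so by maximality $F\subseteq T_h^+\{u\ge\lambda\}$.

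\textbf{Main obstacle: the reverse inclusion $T_h^+\{u\ge\lambda\}\subseteq F$.} I will prove the sharper statement that for every $\mu<\lambda$,
\[
T_h^+\{u\ge\lambda\}\,\subseteq\, T_h^-\{u>\mu\}\quad\text{up to a negligible set,}
\]
and then intersect over $\mu<\lambda$. The uniform continuity gives the quantitative separation $\{u\ge\lambda\}+B_{\omega^{-1}(\lambda-\mu)}\subseteq\{u>\mu\}$, which, by a case-by-case check on the three regions determined by these two sets, upgrades to the global pointwise bound $d_{\{u\ge\lambda\}}\ge d_{\{u>\mu\}}+\omega^{-1}(\lambda-\mu)$. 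Now test $T_h^+\{u\ge\lambda\}$ against $T_h^+\{u\ge\lambda\}\cap T_h^-\{u>\mu\}$ in the $\{u\ge\lambda\}$-problem, and $T_h^-\{u>\mu\}$ against $T_h^+\{u\ge\lambda\}\cup T_h^-\{u>\mu\}$ in the $\{u>\mu\}$-problem; summing the two minimality inequalities and using submodularity \eqref{eq:submo} yields
\[
\omega^{-1}(\lambda-\mu)\,\bigl|T_h^+\{u\ge\lambda\}\setminus T_h^-\{u>\mu\}\bigr|\,\le\,\int_{T_h^+\{u\ge\lambda\}\setminus T_h^-\{u>\mu\}}\bigl(d_{\{u\ge\lambda\}}-d_{\{u>\mu\}}\bigr)\,dx\,\le\,0,
\]
so the difference set has zero measure. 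This is the technical heart: it extends the basic monotonicity of Lemma~\ref{lemcomp} to \emph{mixed} pairs (maximal vs.~minimal solutions) under the quantitative strict separation supplied by the uniform continuity of $u$.
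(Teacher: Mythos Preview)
Your argument for the open-superlevel identity is essentially the paper's own proof, rewritten with $\mu\searrow\lambda$ in place of the paper's $\delta=\mu-\lambda\searrow 0$: the inclusion $\{T_hu>\lambda\}=\bigcup_{\mu>\lambda}T_h^-\{u>\mu\}\subseteq T_h^-\{u>\lambda\}$ comes from monotonicity, and then the union is shown to be a solution (hence equal to the minimal one) by lower semicontinuity of $J$ together with the uniform convergence of signed distances.

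For the closed-superlevel identity the paper says only ``the similar proof of the second statement is left to the interested reader'', so there is nothing to compare against directly. Your completion is correct, and you put your finger on the one genuinely additional point: establishing $T_h^+\{u\ge\lambda\}\subseteq T_h^-\{u>\mu\}$ requires comparing a \emph{maximal} solution against a \emph{minimal} one, which the statement of Lemma~\ref{lemcomp} does not literally give. Your fix---using the strict separation $d_{\{u\ge\lambda\}}\ge d_{\{u>\mu\}}+\omega^{-1}(\lambda-\mu)$ together with the submodularity cancellation---is exactly the mechanism in the first paragraph of the proof of Lemma~\ref{lemcomp} (which in fact compares \emph{any} two minimizers with strictly ordered data, not only matched $\pm$ pairs). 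So the ingredient is already in the paper; you correctly recognise that it must be invoked here and apply it cleanly.
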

\begin{proof}
 For every $\delta\ge 0$  set  
$$E_\delta := T_h(\{u > \lambda+\delta\}), \qquad 
A_\delta:= \{ T_hu >\lambda+\delta\}.
$$ 
We have to prove that $ E_0 = A_0$.
First, notice that by the very definition of $T_hu$,  for every $\delta\ge 0$
\begin{equation}\label{scatole}
A_\delta \subseteq  E_\delta  
\end{equation}
so that in particular 
$A_0 \subseteq E_0$.
To prove the reverse inclusion, observe that 
\begin{equation}\label{unidis}
d_{\{u > \lambda+\delta\}}  \to d_{\{u > \lambda\}}
\end{equation}
uniformly as $\delta\to 0$. Moreover, $A_\delta \nearrow A_0$ in $L^1_{loc}$ as $\delta \to 0$ . By \eqref{unidis} and  by  the lower semicontinuity of $J$, it easily follows that  $A_0$ is a solution of \eqref{varprob} with $E$ replaced by $\{u > \lambda\}$ (or of \eqref{starstar} in the unbounded case, with  $E_2$ replaced by $\{u > \lambda\}$) . Moreover, by \eqref{scatole} it is the minimal one, i.e., it coincides with $E_0$.  
The similar proof of the second statement is left to the interested reader.
\end{proof}

Given a continuous function $u^0$ constant
outside of a bounded set, define 
{

\begin{equation}\label{defuh}
u_h(x,t)= T^{[t/h]}_h(u_0)(x)
\end{equation} 
 for every $h>0$, $t\ge 0$, where $[\cdot]$ denotes the integer part.

}
Proposition~\ref{prop:solmin0} applied to  $u_h(\cdot,(k-1) h)$ 
yields the following:
\begin{proposition}\label{prop:solmin}
For every $h,k>0$ and for every $\lambda \in\R$ we have 
$$
T^-_h(\{u_h(\cdot,(k-1) h) > \lambda\})= \{ u_h(\cdot,k h) >\lambda\}
$$
and
$$
T^+_h(\{u_h(\cdot,(k-1) h) \geq \lambda\})= \{ u_h(\cdot,k h) \geq \lambda\}\,.
$$
\end{proposition}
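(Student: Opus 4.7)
The plan is to obtain the statement as a direct corollary of Proposition~\ref{prop:solmin0}, the only thing to check being that the hypotheses of that proposition (uniform continuity and constancy outside a compact set) propagate through the iteration defining $u_h$.

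First, I would unravel the definition \eqref{defuh}: since $[kh/h]=k$, one has $u_h(\cdot,kh)=T_h^k u^0$, and writing $T_h^k=T_h\circ T_h^{k-1}$ gives the recurrence
\[
u_h(\cdot,kh)\;=\;T_h\bigl(u_h(\cdot,(k-1)h)\bigr).
\]
Hence the conclusion would follow from Proposition~\ref{prop:solmin0} with $u$ replaced by $u_h(\cdot,(k-1)h)$, provided the latter is uniformly continuous and constant outside a compact set.

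The verification of these two properties proceeds by induction on $k$. For $k=1$, $u_h(\cdot,0)=u^0$ satisfies both by assumption. For the inductive step, recall from the discussion preceding Proposition~\ref{prop:solmin0} that the transformation $v\mapsto T_h v$ preserves the modulus of continuity of $v$ (this was deduced from Lemma~\ref{lemcomp2} applied to successive superlevels), and that $T_h v$ remains constant outside a compact set thanks to Lemmas~\ref{lembd} and~\ref{lembd2} applied to the extreme superlevels of $v$ (which are respectively bounded and have bounded complement). Thus if $u_h(\cdot,(k-1)h)$ is uniformly continuous and constant outside a compact set, so is $u_h(\cdot,kh)=T_h(u_h(\cdot,(k-1)h))$, closing the induction.

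With this in place, Proposition~\ref{prop:solmin0} applied to $u:=u_h(\cdot,(k-1)h)$ yields
\[
T_h^-\bigl(\{u_h(\cdot,(k-1)h)>\lambda\}\bigr)\;=\;\{T_h u_h(\cdot,(k-1)h)>\lambda\}\;=\;\{u_h(\cdot,kh)>\lambda\},
\]
and analogously for the closed superlevels using $T_h^+$. No genuine obstacle arises: the whole content is already encoded in Proposition~\ref{prop:solmin0} and the stability of the regularity class under one step of the scheme; the present statement is just its iterated form.
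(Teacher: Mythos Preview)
Your proposal is correct and follows exactly the paper's approach: the paper simply states that Proposition~\ref{prop:solmin} is obtained by applying Proposition~\ref{prop:solmin0} to $u_h(\cdot,(k-1)h)$, and your proof spells out precisely this, together with the (already established in the text) fact that the iterates remain uniformly continuous and constant outside a compact set.
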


We have seen that for all $t$,
$u_h(\cdot,t)$ is uniformly continuous (with the same modulus
$\omega$ as $u^0$). Let us now study the regularity in time
of this function.

\begin{lemma}\label{lemUCT}
For any $\e>0$, there exists $\tau>0$ and $h_0>0$
 (depending on $\e$) such that for all  $|t-t'|\le \tau$ and $h\le h_0$ we have
$|u_h(\cdot,t)-u_h(\cdot,t')|< \e$.
\end{lemma}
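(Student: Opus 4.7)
The plan is to combine the uniform spatial modulus of continuity $\omega$ of $u_h(\cdot, t)$ (which is independent of both $h$ and $t$, as explained right after Lemma~\ref{lemcomp2}) with the ball barrier estimates provided by Lemmas~\ref{lembdh} and~\ref{lembdh2}. The key idea is that the superlevel sets of $u_h$ can be sandwiched, near a point $x$, between a ball $B_r(x)$ and its complement $\R^N\setminus B_r(x)$, and that neither of these barriers can swallow $x$ nor be swallowed by $x$ under the iterated discrete scheme on a time interval of length comparable to $r$, uniformly in $h$ small.

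Concretely, given $\e>0$, I would first pick $r>0$ with $\omega(r)<\e/2$ and set $R_0:=r/2$, $\sigma:=2$. By Lemma~\ref{lembdh}(ii) and the translation invariance of $J$, there exists $h_0>0$ such that $T_hB_\rho(x)\supseteq B_{\rho-h\oc(\rho/2)}(x)$ for every $x\in\R^N$, every $\rho\ge R_0$, and every $h\le h_0$. Since $\oc$ is nonincreasing (as noted after Definition~\ref{defbarc}), the quantity $M:=\max(\oc(R_0/2),0)$ bounds $\oc(\rho/2)$ from above whenever $\rho\ge R_0$, so a straightforward induction based on Lemma~\ref{lemcomp} yields $T_h^kB_r(x)\supseteq B_{r-khM}(x)$ as long as $r-(k-1)hM\ge R_0$. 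Choosing $\tau>0$ small enough that $2\tau M\le R_0$, for every $k$ with $kh\le 2\tau$ this barrier remains valid and, in particular, $x\in T_h^kB_r(x)$. An entirely parallel argument based on Lemma~\ref{lembdh2}(ii) and on the comparison for pairs of unbounded sets mentioned just after Lemma~\ref{lemcomp3} gives (possibly after reducing $h_0$) $x\notin T_h^k(\R^N\setminus B_r(x))$ under the same constraints on $k$ and $h$.

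Now fix $x\in\R^N$ and $0\le t<t'\le T$ with $t'-t\le\tau$ and $h\le\min(h_0,\tau)$. Setting $v:=u_h(\cdot,[t/h]h)$, $j:=[t'/h]-[t/h]$, and $\lambda:=v(x)$, the definition~\eqref{defuh} and the iterated Proposition~\ref{prop:solmin} give $u_h(x,t)=\lambda$, $u_h(x,t')=T_h^jv(x)$, and $jh\le(t'-t)+h\le 2\tau$. Since $v$ inherits the modulus $\omega$, for every $\delta>0$ we have $B_r(x)\subseteq\{v>\lambda-\omega(r)-\delta\}$ and $\{v>\lambda+\omega(r)\}\subseteq\R^N\setminus B_r(x)$. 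Applying Lemmas~\ref{lemcomp} and~\ref{lemcomp3} (together with the comparison between pairs of unbounded sets noted just after Lemma~\ref{lemcomp3}) and iterating Proposition~\ref{prop:solmin} $j$ times, we deduce
\[
T_h^jB_r(x)\subseteq\{T_h^jv>\lambda-\omega(r)-\delta\},\qquad\{T_h^jv>\lambda+\omega(r)\}\subseteq T_h^j(\R^N\setminus B_r(x)).
\]
Since $x$ lies in the left-hand side of the first inclusion and is excluded from the right-hand side of the second, letting $\delta\to 0$ gives $\lambda-\omega(r)\le T_h^jv(x)\le\lambda+\omega(r)$, whence $|u_h(x,t)-u_h(x,t')|\le\omega(r)<\e$.

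The main obstacle is to establish the quantitative barrier ``$x\in T_h^kB_r(x)$ for all $kh\le 2\tau$'' (together with its dual for $\R^N\setminus B_r(x)$) uniformly in $x$ and in $h\le h_0$. This relies on the shrinking estimate of Lemma~\ref{lembdh}(ii), on the monotonicity of $\oc$ which freezes the contraction speed along the iteration, and on the translational invariance of $\kappa$ which transports the bound to an arbitrary center. A minor technicality is the asymmetry between $T_h^-$ and $T_h^+$ in Proposition~\ref{prop:solmin}, which forces the $\delta$-shift used above to move from closed to open superlevels before passing to the limit.
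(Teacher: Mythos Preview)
Your proposal is correct and follows essentially the same route as the paper: you use the uniform spatial modulus $\omega$ to trap the superlevel sets of $u_h(\cdot,t)$ between a ball $B_r(x)$ and the complement $\R^N\setminus B_r(x)$, and then invoke Lemma~\ref{lembdh}(ii) and Lemma~\ref{lembdh2}(ii) (iterated via the comparison Lemmas~\ref{lemcomp} and~\ref{lemcomp3}) to control how far these barriers can move in time $\le 2\tau$. Your handling of the index $j=[t'/h]-[t/h]$ and the $\delta$-shift to pass from closed to open superlevels is in fact slightly more careful than the paper's presentation, but the underlying argument is the same.
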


\begin{proof}
Let $\e>0$ and  let $R_0:= \omega^{-1}(\e/2)/2$. 
Since $\omega$ is a modulus of continuity for $u_h$ it readly follows that for every $x$
\begin{equation}\label{nome0}
B(x,\omega^{-1}(\e/2)) \subseteq \{u_h(\cdot,t) > u_h(x,t) - \e\}. 
\end{equation}
We only treat the case where $\{u_h(\cdot,t) > u_h(x,t) - \e\}$ is bounded, the other being analogous. 
Let $\tau:= R_0/\oc(R_0/4)$. By part ii) of Lemma \ref{lembdh}, and using that $\oc$ is a monotone decreasing function,   
 there exists  $h_0$ depending on $R_0$
such that 
\begin{equation}\label{nome}
B(x,R_0)\subseteq B(x, \omega^{-1} (\e/2) - n h \oc(R_0/4))\subseteq T_h^n B(x,\omega^{-1} (\e/2))
\end{equation}
as long as $\omega^{-1} (\e/2) - n h \oc(R_0/4)\ge R_0$, i.e., as long as $nh\le \tau$. 

Now, let $t'>t$ such that $t' - t \le \tau$, and let $n:= [(t' - t)/h]$. Since $n h \le \tau$, by \eqref{nome0}, \eqref{nome}, Lemma \ref{lemcomp}, and Proposition~\ref{prop:solmin} we have
\begin{eqnarray*}
\{u_h(\cdot,t') > u_h(x,t') - \e\} =  \{u_h(\cdot,t + nh) > u_h(t,x) - \e\} = 
\\
T_h^n \{u_h(\cdot,t) > u_h(x,t) - \e\} \supseteq T_h^n B(x,\omega^{-1} (\e/2) )
\supseteq B(x,R_0).
\end{eqnarray*}
 In particular, 
 $
 u_h(x,t') > u_h(x,t) - \e. 
 $
 In order to show 
 $$
 u_h(x,t') < u_h(x,t) + \e 
$$
we proceed in a similar way. Precisely, we observe that  
$$ 
B(x,\omega^{-1} (\e/2)) \subseteq \{u_h(\cdot,t) < u_h(x,t) + \e\}, 
$$
 that is,  
$$
 \{u_h(\cdot,t) \geq u_h(x,t) + \e\}  \subseteq \R^N\setminus B(x,\omega^{-1} (\e/2))\,.
$$
We then proceed as in the first part of the proof, but now using Lemma~\ref{lembd2} instead of Lemma~\ref{lembd} and Lemma~\ref{lemcomp3} instead of Lemma~\ref{lemcomp}.

\end{proof}

\subsection{Convergence analysis} 
In this subsection we show that any limit  of the discrete evolutions is a viscosity solution.
Recalling Lemma~\ref{lemUCT} and the uniform continuity in space of $u_h$, by a straightforward variant of Ascoli-Arzel\`a's Theorem we deduce the precompactness of $u_h$. Moreover, in view of Remark \ref{remglobalbound} we deduce also that the limit $u$ is constant out of a compact set. Summarizing, the following proposition holds.
\begin{proposition}\label{Compro}
Let $T>0$. Up to a subsequence, $u_h$ converges  uniformly on $\R^N\times[0,T]$
as $h\to 0$ to a function $u(x,t)$, which is bounded and  uniformly continuous, and constant 
out of a compact set.  
\end{proposition}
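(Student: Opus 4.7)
The proof is essentially an application of a generalized Ascoli--Arzel\`a theorem, once we have collected enough uniform estimates on the family $\{u_h\}$. The plan is to gather, in order: a uniform $L^\infty$ bound, a uniform spatial modulus of continuity, a uniform temporal modulus of continuity, and a uniform bound on the ``support of non-constancy''; once these four ingredients are in place, compactness and the constant-at-infinity property follow immediately.

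First, I would observe that $\|u_h(\cdot, t)\|_\infty \le \|u^0\|_\infty$ for every $t\ge 0$. Indeed, the constants $m := \inf u^0$ and $M := \sup u^0$ are fixed points of $T_h$ (their superlevel sets are either $\R^N$ or $\emptyset$), and by the monotonicity of $T_h$ recorded right after Lemma~\ref{lemcomp2} we obtain $m \le T_h u \le M$ whenever $m \le u \le M$. Iterating gives the $L^\infty$ bound. Next, I have already remarked in the discussion preceding Proposition~\ref{prop:solmin0} that if $\omega$ is a modulus of continuity for $u^0$, then the inclusions $\{u^0 > s\} + B_{\omega^{-1}(s-s')} \subseteq \{u^0 > s'\}$ are preserved by $T_h$ thanks to Lemma~\ref{lemcomp2}, so $\omega$ is also a modulus of continuity for every $u_h(\cdot, t)$, uniformly in $h$ and $t$. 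The uniform temporal equicontinuity is exactly the content of Lemma~\ref{lemUCT}.

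It remains to produce a compact set $\mathcal K \subset \R^N$ and a constant $c_\infty \in \R$ such that $u_h \equiv c_\infty$ on $(\R^N \setminus \mathcal K) \times [0, T]$ for every $h$ small enough. Let $c_\infty$ denote the common constant value of $u^0$ outside a ball $B_R$, and fix $\lambda \ne c_\infty$. The superlevel set $\{u^0 > \lambda\}$ is then either contained in $B_R$ (if $\lambda > c_\infty$) or contains $\R^N \setminus B_R$ (if $\lambda < c_\infty$). Applying Remark~\ref{remglobalbound} iteratively along the $[t/h]$ steps of the scheme, together with Lemmas~\ref{lemcomp} and~\ref{lemcomp3}, one finds that for every $t \in [0, T]$ the superlevel set $\{u_h(\cdot, t) > \lambda\}$ is either contained in $B_{R + TK}$ or contains $\R^N \setminus B_{R + TK}$, where $K$ is the constant in~\eqref{lwrbdkappa}. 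Setting $R' := R + TK$, it follows that $u_h(x, t) = c_\infty$ for all $|x| \ge R'$, all $t \in [0, T]$, and all $h > 0$.

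With these four uniform estimates at hand, the usual Ascoli--Arzel\`a theorem applied on the compact set $\overline{B_{R'}} \times [0, T]$ furnishes a subsequence $u_{h_n}$ converging uniformly on $\overline{B_{R'}} \times [0, T]$ to a continuous function $u$. Since all $u_{h_n}$ agree with $c_\infty$ on $(\R^N \setminus B_{R'}) \times [0, T]$, the same is true of $u$, and hence the convergence is uniform on all of $\R^N \times [0, T]$; the limit $u$ is bounded, uniformly continuous (with a modulus controlled by $\omega$ in space and by the modulus from Lemma~\ref{lemUCT} in time), and constant outside $B_{R'}$. I do not anticipate any serious obstacle; the only point that requires a little care is the derivation of the uniform compact support property from Remark~\ref{remglobalbound}, where one must distinguish between bounded and unbounded superlevel sets and apply the appropriate version of the comparison lemma (Lemma~\ref{lemcomp} or Lemma~\ref{lemcomp3}) to each.
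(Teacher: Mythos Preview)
Your proposal is correct and follows essentially the same approach as the paper: the paper simply invokes the spatial equicontinuity established earlier, Lemma~\ref{lemUCT} for temporal equicontinuity, and Remark~\ref{remglobalbound} for constancy outside a compact set, then appeals to Ascoli--Arzel\`a. You have spelled out the details more fully (including the $L^\infty$ bound and the case analysis for bounded versus unbounded superlevel sets), but the structure is the same.
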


For every $r>0$, set
\begin{equation}\label{hatc}
\hat c(r) := \max \{1, \overline c(r)\}.
\end{equation}

Given $ r_0>0$, let $r(t)$ be the solution of the following ODE
\begin{equation}\label{ODE}
\left\{
\begin{array}{l}
\dot{r}(t) = - \hat c(r(t));\\
r(0) = r_0
\end{array}
\right.
\end{equation}
Notice that \eqref{ODE} admits a unique solution $r(t)$ until some extinction  time $T^*(r_0)$ with $r(T^*)=0$. 
\begin{proposition}\label{estingue}
Let $u(x,t)$ be the function given by Proposition \ref{Compro}, let $ \lambda \in \R$, and let $B(x_0,r_0)\subset \{u(\cdot,t_0) > \lambda\}$.
Then, $B(x_0,r(t- t_0))\subset \{u(\cdot,t) > \lambda\}$ for every $t \le T^*(r_0) + t_0$, where $r(t)$ is the solution of the ODE \eqref{ODE} and $T^*(r_0)$ is its extinction time.
The same   statement holds by replacing  the superlevel of $u$ with  its sublevel.
\end{proposition}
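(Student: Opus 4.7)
The plan is to transfer the statement to the discrete level, iterate the ball-shrinkage estimate of Lemma~\ref{lembdh}, and then pass to the limit in $h$. The uniqueness-style comparison provided by Lemma~\ref{lemcomp} and Proposition~\ref{prop:solmin} is the engine that drives the argument.

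First, I would fix $\e>0$ small and exploit the uniform continuity of $u$ together with the compactness of $\overline{B(x_0,r_0-\e)}$ to find $\eta>0$ such that $u(x,t_0)\ge \lambda+2\eta$ on this closed ball. Since $u_h\to u$ uniformly on $\R^N\times[0,T]$ by Proposition~\ref{Compro}, for every sufficiently small $h>0$ we have $u_h(\cdot,t^h_0)\ge \lambda+\eta$ on $B(x_0,r_0-\e)$, where $t^h_0:=[t_0/h]\,h$. This is the initial inclusion $B(x_0,r^h_0)\subseteq\{u_h(\cdot,t^h_0)\ge\lambda+\eta\}$ with $r^h_0:=r_0-\e$.

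Next, I would iterate the inclusion. Set $t^h_k:=t^h_0+kh$ and, assuming $B(x_0,r^h_k)\subseteq\{u_h(\cdot,t^h_k)\ge \lambda+\eta\}$ with $r^h_k\ge R_0$ for a fixed $R_0>0$, apply the operator $T^+_h$ to both sides. By the monotonicity given by Lemma~\ref{lemcomp} (together with Lemma~\ref{lemcomp3} when the complement becomes compact) and by Proposition~\ref{prop:solmin}, one gets $T^+_h B(x_0,r^h_k)\subseteq\{u_h(\cdot,t^h_{k+1})\ge \lambda+\eta\}$; by Lemma~\ref{lembdh}~ii) with $\sigma>1$ fixed close to~$1$ and $h$ small enough, $T^+_h B(x_0,r^h_k)\supseteq B(x_0,r^h_{k+1})$ for $r^h_{k+1}:=r^h_k-h\,\oc(r^h_k/\sigma)$. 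Since $\hat c\ge\oc$, the discrete sequence also satisfies $r^h_{k+1}\ge r^h_k-h\,\hat c(r^h_k/\sigma)$, which is the explicit Euler scheme for the ODE $\dot r=-\hat c(r/\sigma)$, $r(0)=r_0-\e$. A standard convergence argument for the Euler scheme (valid on any compact time interval on which $\hat c$ is continuous and bounded) gives $r^h_{[s/h]}\ge r^{\sigma,\e}(s)-\omega(h)$, with $\omega(h)\to 0$, where $r^{\sigma,\e}$ is the ODE solution. Letting first $h\to 0$, then $\sigma\to 1^+$ and $\e\to 0$, by continuous dependence on the data we recover the solution $r$ of~\eqref{ODE}.

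Finally, I would pass to the limit. For any $t\in[t_0,t_0+T^*(r_0))$ and any $\e'>0$, the inclusion $B(x_0,r(t-t_0)-\e')\subseteq\{u_h(\cdot,t^h_{k(h)})\ge\lambda+\eta\}$ holds for $h$ small, where $k(h)$ is chosen so that $t^h_{k(h)}\to t$. Using the uniform convergence $u_h\to u$ and the (spatial and temporal) uniform continuity of $u$ (Lemma~\ref{lemUCT} combined with Proposition~\ref{Compro}), we pass to the limit $h\to 0$ to obtain $B(x_0,r(t-t_0)-\e')\subseteq\{u(\cdot,t)\ge\lambda+\eta/2\}\subseteq\{u(\cdot,t)>\lambda\}$, and taking the union over $\e'>0$ yields the desired inclusion of the open ball $B(x_0,r(t-t_0))$. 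The statement for sublevel sets is obtained by the same scheme applied symmetrically: one rephrases $B(x_0,r_0)\subseteq\{u(\cdot,t_0)<\lambda\}$ as $\R^N\setminus B(x_0,r_0)\supseteq\{u(\cdot,t_0)\ge\lambda\}$ and uses Lemma~\ref{lembdh2}~ii) (the complementary version) in place of Lemma~\ref{lembdh}~ii), together with the monotonicity of $T^-_h$ for complements of balls coming from Lemma~\ref{lemcomp3}.

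The main technical nuisance will be handling the regime $r^h_k\to 0$ near the extinction time (where Lemma~\ref{lembdh}~ii) is no longer applicable in its stated form) and the fact that Lemma~\ref{lembdh}~ii) holds only for $\sigma>1$ strictly, which forces the intermediate ODE $\dot r=-\hat c(r/\sigma)$ and a further limit $\sigma\to 1^+$; these are both overcome by the standard device of stopping the iteration once $r^h_k$ drops below a small threshold $R_0$ and then sending $R_0\to 0$, exploiting the explicit ODE upper bound on the exit time.
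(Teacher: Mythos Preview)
Your proposal is correct and follows essentially the same approach as the paper: pass to the discrete scheme via uniform convergence, iterate Lemma~\ref{lembdh}~ii) together with the comparison of Lemma~\ref{lemcomp}/Proposition~\ref{prop:solmin}, compare the resulting recursion with the ODE $\dot r=-\hat c(r/\sigma)$, and send $h\to 0$, $\sigma\to 1$, and the initial radius to $r_0$. The only cosmetic difference is that the paper compares the ODE solution $r_\sigma$ directly with the discrete sequence $R_n$ by a one-line induction (showing $r_\sigma(nh)\le R_n$ using monotonicity of $\hat c$), which is slightly cleaner than invoking Euler-scheme convergence; you may want to adopt that shortcut.
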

\begin{proof}
We only treat the case of $\{u(\cdot,t) > \lambda\}$ bounded, since the other one is analogous. By assumption, if $R_0<r_0$, for $h$ small enough
$B(x_0,R_0)\subset \{u_h(\cdot,t_0) > \lambda\}$. Let $\sigma>1$ and
$R_0$ be defined recursively by $R_{n+1}=R_n-h\overline{c}(R_n/\sigma)$.
By Lemmas~\ref{lemcomp}, ~\ref{lembdh}, ~and~\ref{prop:solmin} one has that
$B(x_0,R_{[(t-t_0)/h]+1})\subset \{u_h(\cdot,t) > \lambda\}$ for $t\ge t_0$,
as long as $R_{[(t-t_0)/h]+1}>0$.
Let also $r_\sigma$ be the unique solution of $\dot{r}_\sigma(t)=-\hat c(r_\sigma(t)/\sigma)$ with initial value $r_\sigma(0)=R_0$.
One observes that if $r_\sigma(nh)\le R_n$, then
\begin{multline*}
r_\sigma((n+1)h) \le  R_n - 
\int_{nh}^{(n+1)h} \hat c\left(\frac{r_\sigma(s)}{\sigma}\right)\,ds
\\ \le
R_n - \int_{nh}^{(n+1)h} \hat{c}\left(\frac{R_n}{\sigma}\right)\,ds
\le
R_n-\int_{nh}^{(n+1)h} \overline{c}\left(\frac{R_n}{\sigma}\right)\,ds = R_{n+1}
\end{multline*}
since $\hat{c}$ is nondecreasing. As a consequence,
$B(x_0,r_\sigma(h[(t-t_0)/h]+h)\subset \{u_h(\cdot,t) > \lambda\}$ for $t\ge t_0$
as long as the radius is positive. We conclude sending $h\to 0$,
then $R_0\to r_0$ and $\sigma\to 1$. 
The proof of the last part of the proposition is very similar. One observes that by Lemmas~\ref{lemcomp3}, ~\ref{lembdh2}, ~and~\ref{prop:solmin}, we  have (with the same definition of $R_n$) $\{u_h(\cdot,t) > \lambda\}\subset \R^N\setminus B(x_0,R_{[(t-t_0)/h]+1})$, that is 
$B(x_0,R_{[(t-t_0)/h]+1})\subset \{u_h(\cdot,t) \leq \lambda\}$ for $t\ge t_0$,
as long as $R_{[(t-t_0)/h]+1}>0$. The conclusion then follows as before. 
\end{proof}
We are now in a position to state and prove the main result of this section.
\begin{theorem}\label{ThMM}
The function $u$ provided by Proposition \ref{Compro}
is a viscosity
solution of the Cauchy problem \eqref{levelsetf} in the sense of Definition \ref{defvisco}.
\end{theorem}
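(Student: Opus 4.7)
The plan is to verify separately the subsolution and supersolution parts of Definition~\ref{defvisco}. The initial condition $u(\cdot,0)=u_0$ is immediate from the uniform convergence $u_h\to u$ and the fact that $u_h(\cdot,0)=u_0$ for every $h$. I focus on the subsolution property; the supersolution case is fully symmetric, exploiting that by Proposition~\ref{prop:solmin} strict superlevels of $u_h$ evolve through $T_h^-$ while closed superlevels evolve through $T_h^+$. After the standard reductions of Remark~\ref{convo}, it suffices to consider a smooth test function $\f$ for which $u-\f$ has a strict maximum at some $(x_0,t_0)\in\R^N\times(0,T)$, with the level set $\{\f(\cdot,t_0)=\f(x_0,t_0)\}$ noncritical whenever $D\f(x_0,t_0)\neq 0$. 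Uniform convergence of $u_h$ (Proposition~\ref{Compro}) together with the strict-maximum reduction yield a sequence of local maximizers $(x_h,t_h)\to(x_0,t_0)$ of $u_h-\f$, with $t_h\in h\N$.

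Assume first $D\f(x_0,t_0)\neq 0$, and set $\lambda_h:=u_h(x_h,t_h)$, $E_h:=\{u_h(\cdot,t_h)\geq\lambda_h\}$, $F_h:=\{u_h(\cdot,t_h-h)\geq\lambda_h\}$. By Proposition~\ref{prop:solmin}, $E_h=T_h^+F_h$, hence $E_h$ is a minimizer of $W\mapsto J(W)+\frac{1}{h}\int_W d_{F_h}\,dy$, giving
\begin{equation}\label{eq:minpr1}
J(E_h)-J(\tilde E)\;\leq\;\frac{1}{h}\int_{\tilde E\setminus E_h} d_{F_h}\,dy\;-\;\frac{1}{h}\int_{E_h\setminus\tilde E}d_{F_h}\,dy
\end{equation}
for every measurable competitor $\tilde E$. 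The maximum property of $u_h-\f$ produces the inclusions $\{\f(\cdot,t_h)\geq\f(x_h,t_h)\}\subseteq E_h$ and $F_h\subseteq\{\f(\cdot,t_h-h)\geq\f(x_h,t_h)\}$. Fixing $\varepsilon>0$, I would use Lemma~\ref{lemmalsc} to pick $H\in\Reg$ with $H\supseteq\{\f(\cdot,t_0)\geq\f(x_0,t_0)\}$ and $\kappa(x_0,H)$ within $\varepsilon$ of $\kappa_*(x_0,D\f(x_0,t_0),D^2\f(x_0,t_0),\{\f(\cdot,t_0)\geq\f(x_0,t_0)\})$, and then deform $H$ by a smooth one-parameter family of $C^2$-diffeomorphisms whose outward normal velocity at $x_0$ slightly exceeds $|\kappa(x_0,H)|$. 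The resulting family is used both as a competitor $\tilde E$ in~\eqref{eq:minpr1} and, through Proposition~\ref{propsubgrad}, as the superlevels of a smooth test function to which the subgradient inequality~\eqref{intW} applies. Combining the two inequalities, dividing by $h$ and letting $h\to 0$, then sending $H$ to realize the supremum defining $\kappa_*$ and invoking the lower semicontinuity of $\kappa_*$ from Lemma~\ref{scif}, yields the viscosity inequality~\eqref{eqsubsol} at $(x_0,t_0)$.

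In the degenerate case $D\f(x_0,t_0)=0$, admissibility of $\f$ provides $f\in\F$ and a smooth $\omega$ with $\omega'(0)=0$ such that $|\f(x,t)-\f(x_0,t_0)-\f_t(x_0,t_0)(t-t_0)|\leq f(|x-x_0|)+\omega(|t-t_0|)$. Arguing by contradiction, suppose $\f_t(x_0,t_0)>0$. The strict maximality of $u-\f$ at $(x_0,t_0)$, combined with this estimate, gives, for $\tau>0$ small, an inclusion $B(x_0,r_0)\subset \{u(\cdot,t_0-\tau)<u(x_0,t_0)\}$ with $r_0=f^{-1}(\tfrac12\f_t(x_0,t_0)\tau)$. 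Applying the sublevel-version of Proposition~\ref{estingue}, the ball $B(x_0,r(\tau))$, where $r$ solves the ODE~\eqref{ODE} with initial radius $r_0$, remains in $\{u(\cdot,t_0)<u(x_0,t_0)\}$ as long as $r(\tau)>0$. The admissibility condition~\eqref{ISvera} forces $f'(\rho)\hat c(\rho)\to 0$ as $\rho\to 0^+$, so that the extinction time $T^*(r_0)=\int_0^{r_0}\hat c(r)^{-1}\,dr$ satisfies $T^*(f^{-1}(c\tau))/\tau\to\infty$ as $\tau\to 0^+$ for any $c>0$; in particular $r(\tau)>0$, and hence $x_0\in\{u(\cdot,t_0)<u(x_0,t_0)\}$, a contradiction.

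The central technical obstacle is the flowing-competitor construction in the non-degenerate case: one must produce a $C^2$-evolving family of sets whose normal velocity near $x_h$ matches $-\kappa(x_0,H)$ up to $o(1)$, whose superlevels can simultaneously serve as competitors in~\eqref{eq:minpr1} and as admissible test objects for the subgradient inequality~\eqref{intW}, with all estimates uniform in $h$. A delicate point is the order of limits: one must first send $h\to 0$ with $H$ fixed, and only afterwards optimize over $H$ in the definition of $\kappa_*$; this is made possible by the semicontinuity properties established in Lemmas~\ref{lemmalsc} and~\ref{scif}, together with the fact that, by Proposition~\ref{propsubgrad}, the subgradient inequality rests on the \emph{continuous} curvature of smooth sets and is therefore perfectly compatible with such an approximation procedure.
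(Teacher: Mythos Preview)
Your degenerate case is essentially fine and actually a bit cleaner than the paper's: you argue directly by contradiction via Proposition~\ref{estingue}, while the paper takes an auxiliary sequence $(\bar x,t_n)$ and a carefully tuned radius $r_n$; both rest on the same asymptotic $f'(\rho)\hat c(\rho)\to 0$.

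The non-degenerate case, however, has a real gap. You set up the minimality inequality~\eqref{eq:minpr1} correctly, but your ``flowing-competitor'' step is not an argument: you never say what $\tilde E$ is, nor how $J(E_h)-J(\tilde E)$ is related to the curvature of any smooth set. The difficulty is structural: you only know $\{\f(\cdot,t_h)\ge \f(x_h,t_h)\}\subseteq E_h$, not $E_h\subseteq H$, so $E_h$ is not sandwiched between smooth level sets and the subgradient inequality~\eqref{intW} cannot be applied to it. Picking $H\supseteq\{\f(\cdot,t_0)\ge\f(x_0,t_0)\}$ and moving it by diffeomorphisms does not produce a competitor whose $J$-energy difference with $E_h$ is expressible through $\kappa(x_0,H)$.

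The paper resolves this by a device you do not mention: submodularity. One perturbs $\f(\cdot,t_k)$ to $\f^\eta_{h_k}$ (so that the contact level is noncritical, via Lemma~\ref{Qlemma}), sets $W_\e=\{u_{h_k}(\cdot,t_k)\ge l_k-\e\}\setminus\{\f^\eta_{h_k}\ge l_k\}$, and then uses~\eqref{eq:submo} to transfer the minimality inequality for the non-smooth set $\{u_{h_k}(\cdot,t_k)\ge l_k-\e\}$ into an inequality for the \emph{smooth} set $\{\f^\eta_{h_k}\ge l_k\}$, namely
\[
J(\{\f^\eta_{h_k}\ge l_k\}\cup W_\e)-J(\{\f^\eta_{h_k}\ge l_k\})
+\frac{1}{h_k}\int_{W_\e} d_{\{\f(\cdot,t_k-h_k)\ge l_k-c_k-\e\}}\,dx\le 0.
\]
Since $W_\e\kto\{x_k\}$, the $J$-increment is controlled by $\kappa^+(x_k,\{\f^\eta_{h_k}\ge l_k\})=\kappa(x_k,\{\f^\eta_{h_k}\ge l_k\})$ via Definition~\ref{defofk}, not via~\eqref{intW}. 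One then lets $\e\to 0$, estimates the distance term by a Taylor expansion, and finally sends $\eta\to 0$, $k\to\infty$ using Lemma~\ref{scif}. Submodularity and the pointwise curvature characterization~\eqref{ineqbelow} are the missing ingredients in your sketch; without them there is no mechanism linking $J(E_h)-J(\tilde E)$ to a curvature of a smooth set.
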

\begin{remark}\textup{
We observe that this holds under assumptions C) and D) on the curvature. If
in addition C') holds, then the limit flow is unique and one also deduces
that the whole family $(u_h)_{h>0}$ converges uniformly as $h\to 0$.
}
\end{remark}

\begin{proof}
We denote by $u_{h_k}$ a subsequence of $u_h$ converging to $u$. Let us prove
that $u$ is a subsolution (the proof that it is a supersolution
is identical). Let $(\bar x, \bar t) \in \R^N\times (0,T)$.
Let $\f$  be a $C^{\regu}$ admissible test function at
$(\bar x, \bar t)$, and assume that  $(\bar x, \bar t)$ is a maximum point of $u-\f$.
We need to show that
\begin{equation}\label{viscosub}
\frac{\partial \f}{\partial t}(\bar x,\bar t)
+|D\f(\bar x,\bar t)|\kappa_*(\bar x,D\f(\bar x,\bar t),
D^2\f(\bar x,\bar t),\{\f(\cdot,\bar t)\ge\f(\bar x,\bar t)\})\ \le\ 0.
\end{equation}
\smallskip

\noindent\textbf{Step 1.}
Let us first assume that $D\f(\bar x,\bar t)\neq 0$.
By Remark \ref{convo} we can  assume that  this is a strict maximum point and that $\f$ is smooth.


If the maximum is strict,
then by standard methods we can find $(x_k,t_k)\to (\bar x,\bar t)$
such that $u_{h_k}-\f$ has a maximum at $(x_k,t_k)$.
Moreover, for $k$ large enough, $D\f(x_k,t_k)\neq 0$. 
We have that for all $(x,t)$,
\begin{equation}\label{ineqk}
u_{h_k}(x,t)\ \le\ \f(x,t) \,+c_k 
\end{equation}
where $c_k:= [u_{h_k}(x_k,t_k)-\f(x_k,t_k)]$,
with equality if $(x,t)=(x_k,t_k)$.

Let $\eta >0$ and set
\begin{equation}\label{deffeta}
\f^{\eta}_{h_k}(x)\ =\ \f(x,t_k) \,+\, c_k\,+
\, \frac{\eta}{2}Q(x-x_k)\,,
\end{equation}
where $Q$ is as in Lemma \ref{Qlemma} and $Q(z)=|z|^2$ for $|z|$ sufficiently small. 
Then, for all $x\in \R^N$,
\[
u_{h_k}(x,t_k)\ \le\ \f^{\eta}_{h_k}(x)
\]
with equality \textit{if and only if} $x=x_k$. We set $l_k:=u_{h_k}(x_k,t_k) = \f^{\eta}_{h_k}(x_k)$.

By Lemma \ref{Qlemma}, 
we can assume that  $\eta$ is  such that
the superlevel sets $\{\f^{\eta}_{h_k} \ge l_k\}$ are not critical for all $k$. 
Let $\e>0$ 
and set
{
\[
W_\e \ :=\ \{x\in\R^N\,:\, u_{h_k}(x,t_k)\ge l_k - \e\}
\setminus \{x\in\R^N\,:\, \f^{\eta}_{h_k}(x)\ge l_k\}\ .
\]
}
It is easy to see that for   $\e>0$ sufficiently small  $|W_\e|>0$, and converges to $\{x_k\}$ in the Hausdorff sense as
$\e\to 0$.
Now, if $\{ u_{h_k}(\cdot,t_k)\ge l_k - \e\}$ is bounded, by minimality we have
\begin{multline} \label{eq:bounded}
J(\{ u_{h_k}(\cdot,t_k)\ge l_k - \e\})\,+\,\frac{1}{h_k}
\int_{\{ u_{h_k}(\cdot,t_k)\ge l_k - \e\}} d_{\{ u_{h_k}(\cdot,t_k-h_k)\ge l_k - \e\}}(x)\,dx
 \\
 \le\,
J(\{ u_{h_k}(\cdot,t_k)\ge l_k - \e\}\cap\{\f^{\eta}_{h_k}\ge l_k\})
\\
 +\,\frac{1}{h_k}
\int_{\{ u_{h_k}(\cdot,t_k)\ge l_k - \e\}\cap\{\f^{\eta}_{h_k}\ge l_k \}}
 d_{\{ u_{h_k}(\cdot,t_k-h_k)\ge l_k - \e\}}(x)\,dx\,.
\end{multline}
Adding to both sides the term
$J(\{ u_{h_k}(\cdot,t_k)\ge l_k - \e\}\cup\{\f^{\eta}_{h_k}\ge l_k \})$
and using \eqref{eq:submo}, we obtain
\[
J(\{\f^{\eta}_{h_k}\ge l_k  \}\cup W_\e)-J(\{\f^{\eta}_{h_k}\ge l_k \})
\\+\,\frac{1}{h_k}\int_{W_\e}  d_{\{ u_{h_k}(\cdot,t_k-h_k)\ge l_k - \e\}}(x)\,dx \ \le\ 0\,.
\]
By \eqref{ineqk}, $\{ u_{h_k}(\cdot,t_k-h_k)\ge l_k - \e\}
\subseteq  \{\f(\cdot,t_k-h_k) \ge l_k  - c_k - \e\}$, so that we also have
\begin{multline}
\label{finalineq}
J(\{\f^{\eta}_{h_k}\ge l_k \}\cup W_\e)-J(\{\f^{\eta}_{h_k}\ge l_k \})
\\+\,\frac{1}{h_k}\int_{W_\e}  d_{\{\f(\cdot,t_k-h_k) \ge l_k-c_k - \e\}}(x)\,dx \ \le\ 0\,.
 \end{multline}

If instead $\{ u_{h_k}(\cdot,t_k)\ge l_k - \e\}$ is unbounded, then inequality \eqref{eq:bounded} must be replaced by
\begin{multline*} 
J(\{ u_{h_k}(\cdot,t_k)\ge l_k - \e\})\,+\,\frac{1}{h_k}
\int_{\{ u_{h_k}(\cdot,t_k)\ge l_k - \e\}\cap B_{R}} d_{\{ u_{h_k}(\cdot,t_k-h_k)\ge l_k - \e\}}(x)\,dx
 \\
 \le\,
J(\{ u_{h_k}(\cdot,t_k)\ge l_k - \e\}\cap\{\f^{\eta}_{h_k}\ge l_k\})
\\
 +\,\frac{1}{h_k}
\int_{\{ u_{h_k}(\cdot,t_k)\ge l_k - \e\}\cap\{\f^{\eta}_{h_k}\ge l_k \}\cap B_{R}}
 d_{\{ u_{h_k}(\cdot,t_k-h_k)\ge l_k - \e\}}(x)\,dx\,,
\end{multline*}
for $R$ sufficiently large, see \eqref{starstar}. Then, arguing as before, one obtains again \eqref{finalineq}.

Notice that for $z\in W_\e$ we have
\begin{equation}\label{E1}
l_k - \e < \f(z,t_k) + c_k +\frac\eta2Q(z-x_k)  < l_k.
\end{equation}
Since, in turn, $\f(z,t_k) + c_k\geq l_k - \e$ it follows that $\frac\eta2Q(z-x_k)<\e$ and thus, for $\e$ small enough,  
\begin{equation}\label{inparticular}
W_\e \subseteq B_{C\sqrt{\e}}(x_k).
\end{equation}
Moreover, for every $z\in W_\e$ 
\begin{equation}\label{E2}
\f(z,t_k-h_k) = \f(z,t_k)-h_k\partial_t \f(z,t_k) +
h_k^2 \int_0^1(1-s) \partial_{tt}^2 \f(z,t_k-sh_k) \,ds\,.
\end{equation}
Let $y$ be a point of minimal distance from $z$ such that $\f(y,t_k-h_k)=l_k-c_k -\e$.
Then, $|z-y|=|d_{\{\f (\cdot,t_k-h_k)\ge l_k-c_k -\e\}}(z)|$, 
and 
\begin{equation}\label{parallel}
(z-y)\cdot D\f(y,t_k-h_k)=\pm |z-y||D\f(y,t_k-h_k)|,
\end{equation}
 with
a `$+$' if $\f(z,t_k-h_k)>l_k-c_k -\e$ and a `$-$' else, so
that the sign is opposite to the sign of $d_{\{\f (\cdot,t_k-h_k)\ge l_k-c_k -\e\}}(z)$.
Hence,
\begin{multline}\label{E3}
\f(z,t_k-h_k)\,=\, \f(y,t_k-h_k)+(z-y)\cdot D\f(y,t_k-h_k)
\\+
\int_0^1(1-s) (D^2\f(y+s(z-y),t_k-h_k)(z-y))\cdot(z-y)\,ds
\\=\,l_k-c_k -\e - d_{\{\f (\cdot,t_k-h_k)\ge l_k-c_k -\e\}}(z)| D\f(y,t_k-h_k)|
\\+
\int_0^1(1-s) (D^2\f(y+s(z-y),t_k-h_k)(z-y))\cdot(z-y)\,ds\,.
\end{multline}

By \eqref{E1} we deduce in particular $ \ \f(x,t_k) \,+\, c_k\, < l_k$, i.e., 
{
\begin{equation}\label{E1bis}
  - \f(x,t_k)  \ge \, c_k\,  -  l_k.
\end{equation}
}
Combining \eqref{E1bis}, \eqref{E2}, and \eqref{E3}, we deduce 
\begin{multline*}
d_{\{\f(\cdot,t_k-h_k)\ge l_k-c_k -\e\}}(z)| D\f(y,t_k-h_k)|
\\ \ge\ -\e  + h_k\partial_t \f(z,t_k) \,-\,
h_k^2 \int_0^1(1-s) \partial_{tt}^2 \f(z,t_k-sh_k) \,ds\\ +\,
\int_0^1(1-s) (D^2\f(y+s(z-y),t_k-h_k)(z-y))\cdot(z-y)\,ds\,.
\end{multline*}
Note that, in view of \eqref{E1},  $|\f(z,t_k)-\f(y,t_k)|\leq \e+Ch_k=O(h_k)$,
provided that $\e<< h_k$ are small enough.
In turn, by \eqref{parallel} as $|D\f(y,t_k-h_k)|$ is bounded away from zero, we  have 
 $|z-y|=O(h_k)$ and, using also \eqref{inparticular}, we deduce
\begin{multline}
\label{ineqdist}
\frac{1}{h_k}d_{\{\f(\cdot,t_k-h_k)\ge l_k -c_k - \e\}}(z) \, \ge\,
\frac{\partial_t \f(z,t_k) -\frac{\e}{h_k}  +  O(h_k)}{| D\f(y,t_k-h_k)|}
\\=\,\frac{\partial_t \f(x_k,t_k) +O(\sqrt{\e}) -\frac{\e}{h_k}  +  O(h_k)}
{| D\f(x_k,t_k)| +O(\sqrt{\e}) + O(h_k)}\,.
\end{multline}

We now focus on the term
\[
J(\{\f^{\eta}_{h_k}\ge l_k\}\cup W_\e)-J(\{\f^{\eta}_{h_k}\ge l_k\})
\]
of inequality~\eqref{finalineq}. Thanks to~\eqref{ineqbelow},
if $\e$ is small enough we know that
\begin{multline}\label{ineqJfinal}
J(\{\f^{\eta}_{h_k}\ge l_k\}\cup W_\e)-J(\{\f^{\eta}_{h_k}\ge l_k\})
\\\ge\ 
|W_\e|(\kappa(x_k,\{\f^\eta_{h_k}\ge \f^\eta_{h_k}(x_k)\})-o_\e(1))\,,
\end{multline}
recalling that  $\f^\eta_{h_k}(x_k)$ is not a critical value of $\f^\eta_{h_k}$.

Using therefore~\eqref{finalineq}, \eqref{ineqdist} and~\eqref{ineqJfinal},
dividing by $|W_\e|$ and sending $\e\to 0$, we deduce that (for a.e.~$\eta>0$
small)
\[
\frac{\partial_t \f(x_k,t_k) +  O(h_k)}
{| D\f(x_k,t_k)| + O(h_k)}\,+\,\kappa(x_k,\{\f^\eta_{h_k}\ge \f^\eta_{h_k}(x_k)\})
\ \le\ 0\,.
\]

Letting simultaneously $\eta\to 0$ and $k\to\infty$
and using Lemma \ref{scif}  we deduce~\eqref{viscosub}.
\smallskip

\noindent\textbf{Step 2.}
Now we consider the case $D\f(\bar z)= 0$ and we  show that  $\f_t(\bar z) \le 0$. 
Let $\psi_n$ be defined as in \eqref{psienne}
and let 
$z_n=(x_n, t_n)$ be a sequence of 
{ 
maximizers 
}
of $u- \psi_n$, such that 
$x_n\to \bar x$ and $t_n\to \bar t^-$. If $x_n\neq \bar x$ for a (not relabeled) subsequence, then (for large $n$) 
$D  \psi_n(x_n, t_n)\neq 0$ and \eqref{viscosub} holds for $\psi_n$ at $z_n$. Passing to the limit and using the properties of $f$ (where $f$ is the function appearing in \eqref{psienne}), we deduce that  $\frac{\partial \f}{\partial t}(\bar z) \le 0$ (see \eqref{boh} for
the details). 

We therefore assume that $z_n=(\bar x, t_n)$  for all $n$ sufficiently large.
Set $b_n:=\bar t-t_n$ and set
\begin{equation}\label{errenne}
r_n:= f^{-1} (a_n  b_n) \,,
\end{equation}
where $a_n\to 0$ is chosen  so that
the extinction time $T^*(r_n)$ of the solution of \eqref{ODE} with $r_0$ replaced by  $r_n$, 
satisfies $T^*(r_n) \ge 2 b_n$ for $n$ large enough.
To show that such a choice for $a_n$ is possible, set
$$
g (t) = \sup_{0\le s\le t} \hat c (f^{-1} (s)) f' ( f^{-1}(s)),
$$
and notice that  $g (t) \leq \hat c(t)$ for $t$ small,   it  is non decreasing in $t$, and $g(t)\to 0$ as $t\to 0$  thanks to \eqref{ISvera}.
 We have
\begin{multline}\label{inclu}
\frac{T^*(r_n)}{b_n} \ge \frac{1}{b_n} \int_{r_n/2}^{r_n} \frac{1}{\hat c(r)} =  \frac{1}{b_n} \int_{f^{-1}(a_n b_n /2) }^{f^{-1}(a_n b_n)}  \frac{1}{\hat c(r)} 
\\
= \frac{a_n}{2} \medint_{a_n b_n /2}^{a_n b_n} \frac{1}{\hat c(f^{-1}(s)) f'(f^{-1}(s))} \, ds
\ge \frac{a_n}{2} \frac{1}{g(b_n)} = 2.
\end{multline}
where the last equality holds if we choose $a_n = 4 g(b_n) \to 0$.

By definition of $\psi_n$, we have that 
{
\begin{align*}
B(\bar x, r_n)& \subset \left\{ \psi_n(\cdot,t_n) \le \psi_n(\bar x, t_n) + 2f(r_n) \right\}\\
&\subset
 \left\{ u(\cdot,t_n) \le u(\bar x, t_n) + 2f(r_n) \right\}\,.
\end{align*}
}
Note that the last inclusion follows from  the maximality 
of $u-\psi_n$ at $z_n$ and the fact that $u(z_n)=\psi_n(z_n)$.
By \eqref{inclu} and Proposition \ref{estingue},
$$
\bar x\in \left\{ u(\cdot,\bar t) \le u(\bar x, t_n) + 2f(r_n)\right\}\,.
$$
Thus, using also the maximality of $u-\f$ at $\bar z$, and recalling \eqref{errenne}, we have
$$
\frac{\f(\bar x, t_n) - \f(\bar z)}{-b_n}\leq 
\frac{u(\bar x, t_n) - u(\bar x, \bar t)}{-b_n}\leq \frac{2f(r_n)}{-b_n} = -2 a_n.
$$
Passing to the limit, we conclude that $\partial_t\f(\bar z)\leq 0$.
\end{proof}

\subsection{Perimeter descent}\label{subsec:pd}
In this part we address the problem of the perimeter descent for variational curvature flows. The results refer to any viscosity solution $u:\RT\to \R$ to \eqref{levelsetf}, where $k$ is  the first variation of a generalized perimeter in the sense of Definition \ref{deffirstvar}. Throughout this subsection we also assume that the additional conditions stated in Subsection~\ref{subsec:1storder} and \ref{subsec:2ndorder} hold, so that such a solution is unique and coincides with the one built through the minimizing movements. 

First, we generalize to our setting a fact that is well known in the context of the mean curvature flow: whenever there is no fattening, the perimeter decreases in time. 
\begin{proposition}\label{propnoflat}
Let $0\le t_1\le t_2\le T$, let $\lambda\in\R$ and assume that  $| \{u(\cdot,t_2) = \lambda\}|=0$. Then, 
$$
J(\{u(\cdot,t_2)>\lambda\}) \le J(\{u(\cdot,t_1)>\lambda\}).
$$
\begin{proof}
Set $\tilde u^0:= d_{\{u(\cdot,t_1)>\lambda\}}$, and let $\tilde u:[0,T-t_1]\to \R$ be the viscosity solution of \eqref{levelsetf} with initial condition $\tilde u^0$. 
By Remark~\ref{intrinsicsuplev}, we get 
$$
\{\tilde u (\cdot, t)>\lambda\} = \{u(\cdot, t+t_1)>\lambda\}
$$ 
for every $t\in [0,T-t_1]$. Let now $\tilde u_h$ be the approximate solution  
defined in \eqref{defuh}. Then, by Proposition~\ref{prop:solmin} we have
$$
J(\{\tilde u_h(\cdot, t_2-t_1))>\lambda\} \le J(\{\tilde u_h(\cdot, 0))>\lambda\}.
$$
Since $\tilde u_h\to\tilde u$ pointwise (indeed, uniformly) and since  
$$
|\{\tilde u(\cdot,t_2-t_1) =\lambda\}|  = |\{u(\cdot,t_2) = \lambda\}|=0,
$$
we easily deduce that 
$$
\{\tilde u_h(\cdot, t_2-t_1))>\lambda\} \to \{\tilde u(\cdot, t_2-t_1))>\lambda\} = \{ u(\cdot, t_2))>\lambda\}
$$
in measure, as $h\to\infty$. By the lower semicontinuity of $J$ we conclude
\begin{multline}
J(\{ u(\cdot, t_2))>\lambda\})\le \liminf_h J(\{\tilde u_h(\cdot, t_2-t_1))>\lambda\} 
\\
\le J(\{\tilde u_h(\cdot, 0))>\lambda\} =
J(\{ u(\cdot, t_1))>\lambda\}. 
\end{multline}
\end{proof}
\end{proposition}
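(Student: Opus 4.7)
\medskip

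\noindent\textbf{Proof plan.} My plan is to transfer the problem to the minimizing movements scheme, use the discrete monotonicity of $J$ along the iterations, and then pass to the limit $h\to 0$ exploiting the no-fattening hypothesis.

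First, I would reduce to a flow that is naturally described as the minimizing movements limit starting from a function whose $\lambda$-superlevel is $\{u(\cdot,t_1)>\lambda\}$. A convenient choice is $\tilde u^0:=d_{\{u(\cdot,t_1)>\lambda\}}+\lambda$, continuous and constant outside a compact set. Letting $\tilde u$ denote the unique viscosity solution of \eqref{levelsetf} with initial datum $\tilde u^0$ (uniqueness from Theorem~\ref{th:CP1st} or Theorem~\ref{th:CP2nd}), Remark~\ref{intrinsicsuplev} applied at level $\lambda$ (after translating in value) gives $\{\tilde u(\cdot,s)>\lambda\}=\{u(\cdot,s+t_1)>\lambda\}$ up to negligible sets, for every $s\in[0,T-t_1]$. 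In particular the critical identity $\{\tilde u(\cdot,t_2-t_1)>\lambda\}=\{u(\cdot,t_2)>\lambda\}$ holds, and the no-fattening hypothesis transfers to $|\{\tilde u(\cdot,t_2-t_1)=\lambda\}|=0$.

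Next, I would record the discrete energy descent, which is the essential variational content. For any $E\in\Ins$, testing the incremental problem \eqref{varprob} with the competitor $F=E$ yields
\[
J(T_h^- E)+\frac{1}{h}\int_{E\Delta (T_h^- E)}\mathrm{dist}(x,\partial E)\,dx\ \le\ J(E),
\]
so in particular $J(T_h^- E)\le J(E)$. Let $\tilde u_h$ be the discrete-time evolution associated with $\tilde u^0$; by Proposition~\ref{prop:solmin}, $\{\tilde u_h(\cdot,kh)>\lambda\}=T_h^-\{\tilde u_h(\cdot,(k-1)h)>\lambda\}$. Iterating the descent estimate along the time steps up to the first $k$ with $kh\ge t_2-t_1$ gives
\[
J\bigl(\{\tilde u_h(\cdot,kh)>\lambda\}\bigr)\ \le\ J\bigl(\{\tilde u^0>\lambda\}\bigr)\ =\ J\bigl(\{u(\cdot,t_1)>\lambda\}\bigr).
\]

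Finally, I would pass to the limit $h\to 0$ at $s_h:=kh\to t_2-t_1$. By Theorem~\ref{ThMM} combined with uniqueness, the whole family $\tilde u_h$ converges uniformly to $\tilde u$. The no-fattening information $|\{\tilde u(\cdot,t_2-t_1)=\lambda\}|=0$ together with uniform convergence implies that $\chi_{\{\tilde u_h(\cdot,s_h)>\lambda\}}\to \chi_{\{\tilde u(\cdot,t_2-t_1)>\lambda\}}$ in $L^1_{\mathrm{loc}}(\R^N)$ (this is the step where the hypothesis is essentially used). The $L^1_{\mathrm{loc}}$-lower semicontinuity of $J$ (property iv) of Subsection~\ref{subsec:gp}) then yields
\[
J\bigl(\{u(\cdot,t_2)>\lambda\}\bigr)\le\liminf_{h\to 0}J\bigl(\{\tilde u_h(\cdot,s_h)>\lambda\}\bigr)\le J\bigl(\{u(\cdot,t_1)>\lambda\}\bigr),
\]
which concludes the proof. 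The only delicate point is the measure-theoretic convergence of the superlevel sets, which is exactly where the no-fattening assumption is indispensable; everything else is essentially a combination of the discrete minimality (comparison of the competitor $E$ with $T_h^- E$) and the identification of the limit granted by uniqueness.
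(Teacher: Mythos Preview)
Your proposal is correct and follows essentially the same route as the paper: reduce via Remark~\ref{intrinsicsuplev} to a flow starting from a signed-distance-type datum, use the discrete energy descent $J(T_h^-E)\le J(E)$ along the minimizing movements (which the paper invokes through Proposition~\ref{prop:solmin}), and pass to the limit using uniform convergence plus the no-fattening hypothesis to obtain $L^1_{\mathrm{loc}}$-convergence of the superlevels, concluding by lower semicontinuity of $J$. The only cosmetic difference is that you spell out the competitor argument $F=E$ explicitly and shift the initial datum by $+\lambda$; note, however, that with the paper's sign convention $d_E<0$ inside $E$, so both your $\tilde u^0$ and the paper's actually require $-d_E$ (or a flipped inequality) for the $\lambda$-superlevel to equal $\{u(\cdot,t_1)>\lambda\}$---a harmless slip shared by both.
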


\begin{remark}
{
\rm
A natural question is whether the assumption of Proposition \ref{propnoflat} is satisfied. It seems reasonable to believe that, whenever the initial set  
$\{u^0>\lambda\}$ is smooth enough, then there are no flat levels along the flow. To our knowledge, such a result is not known even for the canonical mean curvature flow. On the other hand, if the initial set $E^0$ is star shaped, one can build $u^0$ such that all its superlevels are homothetic to $E^0$.  
In view of the homogeneity properties of the mean curvature and of  the geometric evolution equation \eqref{levelsetf}, all the superlevels  
evolve staying homothetic to each other. As a consequence, superlevels are never flat, and in turn the perimeter decreases along the flow. 
This is the case whenever a generalized curvature is homogeneous with respect to dilations, i.e.   there exists $\alpha>0$ such that 
$\kappa(x,l E) = l^{-\alpha} \kappa(x,E) $ for every $l>0$ and $E\in\Reg$.
}
\end{remark}

Finally, we introduce a relaxed perimeter, defined on open sets,  that always decreases along the flow. 

\begin{definition}\label{relJ}
For every open set $A\subset \R^N$ with compact boundary set 
$
\tilde J(A) := \inf  \liminf J(A_n)
$
where the infimum is taken among all sequences of open sets { $A_n$ with $\bar A_n\in\Reg$,  $\bar{A_n}\subset A$  and $ \R^N\setminus A_n \to \R^N\setminus A$ in the Hausdorff sense.  }
\end{definition}

\begin{remark}\label{differente}
{\rm 
By the lower semicontinuity property of $J$, we  have  $\tilde J(A) \ge J(A)$ for every open set $A$ with compact boundary. The converse inequality is in general false. For instance let $J$ be the standard perimeter and let $A:= B_1\setminus \{xy =0\}$. Then, $J(A)= J(B_1)$, while it is easy to see that
$\tilde J(A)= J(B)+ 4$. It is well known (see \cite{EvansSpruckI}) that if $u^0= d_A$, then the level-set $\{ u(\cdot, t ) = 0\}$ is fat for every positive time. Moreover, 
$$
\lim_{t\to 0} J (\{ u(\cdot, t ) > 0\}) = \tilde J (A). 
$$  
In particular, the perimeter $J$ (instantaneously) increases along the geometric flow.
The example somewhat motivates Definition~\ref{relJ}.
 As we will see, the relaxed perimeter $\tilde J$ instead is always non increasing. 
}
\end{remark}

\begin{remark}\label{equidefre}
{\rm

Clearly, in Definition \ref{relJ} we can always assume that, { whenever $A$ is bounded, $A_n$ are compactly contained in $A_{n+1}$ for every $n$ (and a similar condition for unbounded sets)}. Moreover, we can remove the regularity assumption on $A_n$ without affecting the notion of $\tilde J$. 
Indeed, let $\hat J$ be defined as in Definition \ref{relJ}, but without the requirement of the $C^{\regu}$-regularity. 
Clearly $\hat J\le \tilde J$. To prove the converse inequality, consider an optimal sequence of open sets $\hat A_n$ such that $J(\hat A_n)\to\hat J(A)$. 
It is enough to regularize each $A_n$ in order to have an optimal sequence $\tilde A_n$ for $\tilde J$. This can be easily done in view of 
Lemma \ref{lemapprox}. The details are left to the reader. 
}
\end{remark}

 We state a lemma which clarifies the role of Definition \ref{relJ} in the viscosity approach to geometric flows. For the reader convenience, we omit its technical but straightforward proof, 

\begin{lemma}\label{provu}
Let $A_0 \subset  \R^N$ be open with compact boundary, and let $\bar{A_n}\in \Reg$ with $\bar{A_n}\subset A_n$ compactly contained in  for every $n$.

Then, there exists a one-Lipschitz function $u_{A_0}$ and a sequence $\lambda_n\to 0$ such that
\begin{itemize} 
\item[1)] $A_0 = \{ u_{A_0} > 0 \} $;
\item[2)] $A_n= \{ u_{A_0} > \lambda_n \}$; 
\item[3)] $u_{A_0} = d_{A_n} + c_n$ in a neighborhood of $\partial A_n$, for some suitable constant $c_n$. 
\end{itemize}
\end{lemma}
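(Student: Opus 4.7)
\emph{Proof plan.}
The plan is to construct $u_{A_0}$ explicitly as a supremum of capped translates of the signed distance functions $d_{A_n}$, and to read off properties 1)--3) from a careful choice of the parameters. First, by Remark~\ref{equidefre} (possibly after extracting a subsequence), I may assume that $\overline{A_n}$ is compactly contained in $A_{n+1}$ for every $n\geq 1$, so that $\bigcup_n A_n=A_0$ and the annular gaps $D_n:=\dist(\partial A_n,\partial A_{n+1})$ are strictly positive. I would then pick $\delta_n\in(0,D_n)$ with $\sum_{n\geq 1}\delta_n<+\infty$ (take for instance $\delta_n:=\min(D_n/2,2^{-n})$) and set $\lambda_n:=\sum_{k\geq n}\delta_k$; this gives a positive, strictly decreasing sequence converging to $0$ with $\lambda_n-\lambda_{n+1}=\delta_n<D_n$.

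For each $n\geq 1$ I would introduce the $1$-Lipschitz truncated translated distance
\[
f_n(x)\ :=\ \min\bigl\{\lambda_n-d_{A_n}(x),\ \lambda_{n-1}\bigr\},
\]
with the convention $\lambda_0:=\lambda_1+1$, and define $u_{A_0}(x):=\sup_{n\geq 1}f_n(x)$. Being a supremum of uniformly bounded $1$-Lipschitz functions, $u_{A_0}$ is itself $1$-Lipschitz. The crucial technical input is the following \emph{strict} path-crossing estimate: for $0\leq k<j$ any curve joining a point $x\notin A_j$ to $\overline{A_k}$ must cross each intermediate $\partial A_i$, and hence
\[
\dist(x,\overline{A_k})\ \geq\ \sum_{i=k}^{j-1}D_i\ >\ \sum_{i=k}^{j-1}\delta_i\ =\ \lambda_k-\lambda_j,
\]
with the strict inequality coming from $\delta_i<D_i$ (and with the natural interpretation that the sum runs over all $i\geq k$ when $j=0$).

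With this estimate in hand, I would verify the three properties by direct inspection of the definition of $f_n$. For 2): if $x\in A_j$ then $f_j(x)>\lambda_j$ (as the minimum of two quantities strictly exceeding $\lambda_j$); if $x\notin A_j$, the terms with $k\geq j$ satisfy $f_k(x)\leq\lambda_j$ by the definition of $f_k$ and the monotonicity of $(\lambda_k)$, while the terms with $k<j$ satisfy $f_k(x)<\lambda_j$ thanks to the strict estimate above. Property 1) is the special case $j=0$. For 3), in a sufficiently thin tubular neighborhood of $\partial A_n$ (of width less than $\min(\delta_{n-1},D_n-\delta_n,D_{n-1}-\delta_{n-1})$) the cap in $f_n$ is never saturated and every other $f_k$ with $k\neq n$ is strictly dominated by $f_n$; this yields $u_{A_0}=\lambda_n-d_{A_n}$ locally, i.e.~the desired relation with $c_n:=\lambda_n$ (up to the sign convention for $d_{A_n}$).

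The main technical obstacle is the bookkeeping of \emph{strict} inequalities in the path-crossing estimate: the condition $\delta_n<D_n$ (strictly) is precisely what guarantees that the super-level sets of $u_{A_0}$ coincide with the open sets $A_n$ and not with $\overline{A_n}$ or a slight dilation thereof; the borderline choice $\delta_n=D_n$ would only yield $\{u_{A_0}\geq \lambda_n\}=\overline{A_n}$, which is not sufficient for property 2).
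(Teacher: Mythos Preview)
The paper explicitly omits the proof of this lemma (``we omit its technical but straightforward proof''), so there is nothing to compare against directly. Your overall strategy---building $u_{A_0}$ as a supremum of truncated translates of signed distances, driven by the path-crossing estimate $\dist(x,\overline{A_k})>\lambda_k-\lambda_j$ for $x\notin A_j$---is the natural one, and your verification of properties 1) and 2) is correct.

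There is, however, a genuine error in your check of property 3). You assert that in a thin two-sided neighborhood of $\partial A_n$ every $f_k$ with $k\neq n$ is strictly dominated by $f_n$. This fails for $k=n+1$ on the \emph{exterior} side. Take $x$ at distance $\epsilon'>0$ outside $A_n$ with $\epsilon'<D_n-\delta_n$. Then $x\in A_{n+1}$ with $\dist(x,\partial A_{n+1})\geq D_n-\epsilon'>\delta_n$, so
\[
\lambda_{n+1}-d_{A_{n+1}}(x)\ \geq\ \lambda_{n+1}+D_n-\epsilon'\ >\ \lambda_{n+1}+\delta_n\ =\ \lambda_n,
\]
and the cap in $f_{n+1}$ saturates: $f_{n+1}(x)=\lambda_n$. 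But $f_n(x)=\lambda_n-\epsilon'<\lambda_n=f_{n+1}(x)$. Hence $u_{A_0}\equiv\lambda_n$ on an entire band just outside $\partial A_n$, and property 3) holds only in a one-sided (interior) neighborhood.

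The fix is minor: cap $f_n$ at the midpoint $\tfrac12(\lambda_{n-1}+\lambda_n)$ rather than at $\lambda_{n-1}$. Then, for $x$ at distance $\epsilon'<\delta_n/2$ outside $A_n$, one gets $f_{n+1}(x)=\tfrac12(\lambda_n+\lambda_{n+1})=\lambda_n-\delta_n/2<\lambda_n-\epsilon'=f_n(x)$, and your remaining arguments go through unchanged (properties 1) and 2) are unaffected, indeed slightly cleaner since now $u_{A_0}<\lambda_j$ strictly off $A_j$). Alternatively, observe that the one-sided version you actually prove already suffices for the application in the subsequent Proposition, where one only needs levels $\tilde\lambda_n$ slightly \emph{above} $\lambda_n$.
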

 
\begin{proposition}
The relaxed perimeter $\tilde J$ decreases along the geometric flow. More precisely, for every $\lambda\in \R$ the function 
$t\to \tilde J(\{u(\cdot, t) > \lambda \})$ is not increasing.  
\end{proposition}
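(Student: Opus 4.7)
The plan is to transfer an inner approximation of $A_0 := \{u(\cdot, t_1) > \lambda\}$ into one of $A_2 := \{u(\cdot, t_2) > \lambda\}$, combining the perimeter descent at non-fattening levels (Proposition~\ref{propnoflat}) with the intrinsic nature of the flow of open superlevels (Remark~\ref{intrinsicsuplev}). After subtracting $\lambda$ I may assume $\lambda = 0$. First I would fix an inner approximating sequence $(A_n)_n$ of $A_0$, with $\bar A_n \in \Reg$, $\bar A_n \subset\subset A_{n+1}$, $\bigcup_n A_n = A_0$, and $J(A_n) \to \tilde J(A_0)$.

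Using (a slight refinement of) the construction in Lemma~\ref{provu}, I would build a continuous function $u_{A_0}$, constant outside a compact set, with $\{u_{A_0} > 0\} = A_0$ and $\{u_{A_0} > \lambda_n\} = A_n$ for some $\lambda_n \to 0^+$, and with the additional property that for each $n$ there is $\delta_n > 0$ such that $u_{A_0} = d_{A_n} + \lambda_n$ on $\{|d_{A_n}| \le \delta_n\}$ (with $d_{A_n}$ the signed distance to $\partial A_n$, positive inside), while $u_{A_0} \ge \lambda_n + \delta_n$ on $\{d_{A_n} \ge \delta_n\}$. This ensures that $\{u_{A_0} > \lambda_n + \epsilon\} = \{d_{A_n} > \epsilon\}$ for every $\epsilon \in (0,\delta_n)$, i.e., the global superlevel coincides with the inner parallel set of $A_n$. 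Letting $v$ be the unique viscosity solution of the level-set equation starting from $u_{A_0}$, Remark~\ref{intrinsicsuplev} yields $\{v(\cdot, t_2 - t_1) > 0\} = A_2$.

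For each $n$, I would then pick $\epsilon_n \in (0, \delta_n)$ such that $\mu_n := \lambda_n + \epsilon_n$ is non-fattening for $v(\cdot, t_2 - t_1)$ (possible since the fattening levels form a countable set) and $J(\{d_{A_n} > \epsilon_n\}) \le J(A_n) + 1/n$. The latter uses the continuity $J(\{d_{A_n} > \epsilon\}) \to J(A_n)$ as $\epsilon \to 0^+$, which follows from formula \eqref{intkappa} in Proposition~\ref{propsubgrad}: the difference is bounded by $\sup|\kappa|$ times the volume of the $\epsilon$-strip around $\partial A_n$, which vanishes with $\epsilon$. Proposition~\ref{propnoflat} applied to $v$ at level $\mu_n$ then gives
$$J(B_n) \le J(\{u_{A_0} > \mu_n\}) = J(\{d_{A_n} > \epsilon_n\}) \le J(A_n) + 1/n,$$
where $B_n := \{v(\cdot, t_2 - t_1) > \mu_n\}$. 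Since $v$ is continuous and $\mu_n > 0$, one has $\bar B_n \subset A_2$, and $\R^N \setminus B_n \to \R^N \setminus A_2$ in the Hausdorff sense as $\mu_n \to 0^+$. Thus $(B_n)_n$ is an admissible inner-approximating sequence for $A_2$ (using Remark~\ref{equidefre} to waive the $C^{\ell,\beta}$ regularity), so
$$\tilde J(A_2) \le \liminf_n J(B_n) \le \lim_n J(A_n) = \tilde J(A_0).$$
The main delicate point is the construction of $u_{A_0}$ so that superlevels slightly above $\lambda_n$ really are inner parallel sets of $A_n$, combined with the use of \eqref{intkappa} to match $J$ of those parallel sets with $J(A_n)$; once this is arranged, the non-fattening perturbation of the levels and the passage to the limit are routine.
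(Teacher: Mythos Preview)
Your proof is correct and follows essentially the same strategy as the paper's: build an auxiliary initial datum $u_{A_0}$ whose superlevels near $0$ recover an optimal inner approximation of $A_0$, run the level-set flow from it, apply Proposition~\ref{propnoflat} at nearby non-fattening levels, and pass to the limit using Remark~\ref{intrinsicsuplev} and Remark~\ref{equidefre}. You are in fact slightly more careful on two minor points---you impose the non-fattening condition on $v(\cdot,t_2-t_1)$ (as Proposition~\ref{propnoflat} actually requires) and you justify the continuity $J(\{d_{A_n}>\epsilon\})\to J(A_n)$ explicitly via~\eqref{intkappa}.
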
 

\begin{proof}
To easy notations, we will assume $\lambda=0$.  Let $0\le t_1\le t_2\le T$. We have to prove that 
$$
\tilde J(\{ u(\cdot, t_2) > 0 \}) \le \tilde J(\{ u(\cdot, t_1) >0 \}).
$$
Let $(A_n)$ be an optimal sequence  for Definition \ref{relJ}  with $A$ replaced  by  
$A_0:= \{ u(\cdot, t_1) >0) \}$. Clearly, we may assume that  $\bar{A_n}\subset A_{n+1}$ for every $n$. Moreover, let $\lambda_n$, $u_{A_0}$ be as in  Lemma \ref{provu}. By property 3) of Lemma \ref{provu}, we have that the function that associate to $\lambda$ the corresponding superlevel set  of $u_{A_0}$ is continuous from  a neighborhood of each $\lambda_n$ to $\Reg$.
In particular, the function $\lambda \to J(\{ u_{A_0}(\cdot) >\lambda) \} )$ is continuous at each $\lambda_n$. Notice that all except countably many levels of $u_{A_0}$ have null measure.   Therefore, there exists a sequence $\tilde \lambda_n \to 0$ such that
\begin{itemize}
\item[i)] $| \{ u_{A_0}(\cdot) =\tilde \lambda_n) \} | = 0 $ for every $n$;
\item[ii)] $J(\{ u_{A_0}(\cdot) >\tilde \lambda_n \} ) \to \tilde  J(\{ u(\cdot, t_1) >\lambda \})$ as $n\to\infty$.
\end{itemize}

Let $\tilde u:[0,T-t_1]\to\R$ be the solution to \eqref{levelsetf} with initial condition $u_{A_0}$. By Proposition \ref{propnoflat} we have  
$$
J(\{\tilde u(\cdot, t_2 -t_1) >\tilde \lambda_n \}) \le  J(\{ \tilde u(\cdot, 0) >\tilde \lambda_n \}).
$$
Letting $n\to \infty$, by ii)  and by the very definition of $\tilde J$ we get
$$
\tilde J(\{ u(\cdot, t_2) > 0  \})  = \tilde J(\{\tilde u(\cdot, t_2 -t_1 ) > 0  \}) \le  \tilde J(\{ \tilde u(\cdot, 0) > 0 \}) = \tilde J(\{  u(\cdot, t_1) > 0\}),
$$
where the first equality follows by Remark \ref{intrinsicsuplev}.

\end{proof}

\bibliography{NLGF}

\end{document}